\documentclass{memo-l}

\usepackage{latexsym,amssymb,amsfonts,amsmath,amsthm,comment,url}
\usepackage[utf8]{inputenc}
\usepackage{xspace}
\usepackage[original]{imakeidx}\makeindex
\usepackage[refpage,notocbasic]{nomencl}\makenomenclature

%%%%%%%%%%%%%%%%%%
% Packages
%%%%%%%%%%%%%%%%%%

\usepackage[figuresright]{rotating}
\usepackage{tikz-cd}
\usetikzlibrary{decorations.pathmorphing}
\usetikzlibrary{arrows,arrows.meta}
\tikzcdset{arrow style=tikz, diagrams={>=stealth}}%looks better when having many parallel arrows
\usepackage{mathtools}
\usepackage[T1]{fontenc}
\usepackage[inline]{enumitem}
\usepackage{doi}
\usepackage{hyperref}
\hypersetup{colorlinks=true,allcolors=blue}
\pdfstringdefDisableCommands{\def\enspace{\space}\def\noindent{}} % https://tex.stackexchange.com/questions/654601/memo-l-documentclass-and-hyperref-warning-token-not-allowed-in-a-pdf-string
\usepackage[capitalise]{cleveref}
%\usepackage{lineno}\linenumbers

%%%%%%%%%%%%%%%%%%%%%%%%%%%%%%%%%%%%%%%%%%%%%%%%
% Comments
%%%%%%%%%%%%%%%%%%%%%%%%%%%%%%%%%%%%%%%%%%%%%%%%
  % Toggle this line to show/hide todo-notes, table of contents, etc.
\usepackage[draft]{optional}
\usepackage{xcolor}
\newcommand{\plan}[1]{}
\newcommand{\BA}[1]{}
\newcommand{\PN}[1]{}
\newcommand{\MS}[1]{}
\newcommand{\DT}[1]{}
\opt{draft}{
\renewcommand{\plan}[1]{\textcolor{blue}{Plan: #1}\PackageWarning{TODO}{TODO: #1}}
\renewcommand{\BA}[1]{\textcolor{orange}{BA: #1}\PackageWarning{TODO}{TODO: #1}}
\renewcommand{\PN}[1]{\textcolor{purple}{PN: #1}\PackageWarning{TODO}{TODO: #1}}
\renewcommand{\MS}[1]{\textcolor{magenta}{MS: #1}\PackageWarning{TODO}{TODO: #1}}
\renewcommand{\DT}[1]{\textcolor{red}{DT: #1}\PackageWarning{TODO}{TODO: #1}}
}
%%%%%%%%%%%%%%%%%%%%%%%%%%%%%%%%%%%%%%%%%%%%%%%%%%%%%
% Macros
%%%%%%%%%%%%%%%%%%%%%%%%%%%%%%%%%%%%%%%%%%%%%%%%%%%%%

\newcommand{\exo}[1]{#1^{\mathrm{e}}}
\newcommand{\Sig}{\mathsf{Sig}}
\newcommand{\Nat}{\mathbb{N}}
\newcommand{\exosum}{\mathbin{\exo{+}}}
\newcommand{\FinSet}{\mathsf{FinSet}}

\newcommand{\U}{\mathcal{U}}
\newcommand{\T}{\mathcal{T}}
\newcommand{\Tcat}{\mathcal{T_{\text{cat}}}}
\newcommand{\Ustrict}{\exo{\U}}
\newcommand{\Usharp}{\U^{\textrm{sharp}}}
\newcommand{\pathinduction}{$[=]$-induction\xspace}

\newcommand{\converts}{\equiv}
\let\steq\converts
\newcommand{\define}{:\converts}
\newcommand{\bottomlevel}{\bot}
\newcommand{\bottom}[1]{{#1}_\bottomlevel}
\newcommand{\Set}{\ensuremath{\mathsf{Set}}}
\newcommand{\strictiso}{\cong}

\newcommand{\derivcat}[2]{\ensuremath{{#1}'_{#2}}}
\newcommand{\derivdia}[1]{\ensuremath{{#1}'}}
\newcommand{\doublederivdia}[1]{\ensuremath{{#1}''}}

\renewcommand{\L}{\mathcal{L}}
\newcommand{\M}{\mathcal{M}}
\newcommand{\N}{\mathcal{N}}
\newcommand{\F}{\mathcal{F}}

\newcommand{\Struc}[1]{\mathsf{Str}({#1})}
\newcommand{\isIso}{\mathsf{LvlEquiv}}
\newcommand{\isAdjEq}{\mathsf{isAdjEq}}
\newcommand{\isBiInv}{\mathsf{isBiInv}}
\newcommand{\leqv}[1][]{\approxeq_{#1}}
\newcommand{\isEquiv}{\mathsf{isEquiv}}
\newcommand{\refl}{\mathsf{refl}}
 \newcommand{\uStruc}[1]{\mathsf{uStr}({#1})}

\newcommand{\VSS}{\mathsf{SSEquiv}}

\newcommand{\streqv}{\mathsf{SEquiv}}

\makeatletter
\def\twoheadrightarrowfill@{\arrowfill@{\relbar\joinrel\relbar}\relbar\twoheadrightarrow}
\newcommand\xtwoheadrightarrow[2][]{\ext@arrow 0055{\twoheadrightarrowfill@}{#1}{#2}}
\newcommand{\strucequiv}{\ensuremath{\xtwoheadrightarrow{\smash{\raisebox{-1.2mm}{$\scriptstyle\sim$}}}}}
\makeatother

\newcommand{\fanout}[2]{\ensuremath{\mathsf{Fanout}_{#2}(#1)}}
\newcommand{\fanoutfun}[3]{\ensuremath{{#1}_{#3}(#2)}}
\newcommand{\IC}{\mathsf{D\Sig}}

\newcommand{\Lcat}{\L_{\text{cat}}}
\newcommand{\LcatE}{\L_{\text{cat+E}}}

\newcommand{\eqdef}{\define}

\newcommand{\copair}[2]{\langle #1,#2\rangle}

\newcommand{\transport}{\ensuremath{\mathsf{transport}}}

\newcommand{\ob}[1]{\ensuremath{{#1}_0}}

\newcommand{\concat}{\ensuremath{\cdot}}
\newcommand{\idtoiso}{\ensuremath{\mathsf{idtoiso}}}
\newcommand{\idtolvle}{\ensuremath{\mathsf{idtolvle}}}
\newcommand{\idtoindisc}{\ensuremath{\mathsf{idtoindisc}}}
\newcommand{\isiso}{\ensuremath{\mathsf{isiso}}}

\newcommand{\idtovss}{\ensuremath{\mathsf{idtosse}}}
\newcommand{\lvletovss}{\ensuremath{\mathsf{lvletosse}}}
\newcommand{\idtoeqv}{\ensuremath{\mathsf{idtoeqv}}}
\newcommand{\ua}{\ensuremath{\mathsf{ua}}}

%%% Macros for n-types
\newcommand{\istype}[1]{\mathsf{is}\mbox{-}{#1}\mbox{-}\mathsf{type}}

\newcommand{\nminusone}{\ensuremath{(n-1)}}
\newcommand{\nminustwo}{\ensuremath{(n-2)}}

\newcommand{\defemph}[1]{\textbf{#1}}
\newcommand{\Prop}{\mathsf{Prop}}
\newcommand{\PropU}{\mathsf{Prop}_\U}
\newcommand{\PropUp}{\mathsf{Prop}_{\U'}}
\newcommand{\SetU}{\mathsf{Set}_\U}
\newcommand\C{\mathcal{C}}
\newcommand{\D}{\mathcal{D}}

\newcommand{\foldsiso}{\asymp}
\newcommand{\fiso}{\foldsiso}
\newcommand{\trans}[2]{{#1}_*(#2)}
\newcommand{\transfun}[1]{{#1}_*}
\newcommand{\zerotype}{\mathbf{0}}
\newcommand{\onetype}{\mathbf{1}}
\newcommand{\ttt}{\star}

\newcommand{\hetE}{\widetilde{E}} % Heterogeneous equality

\newcommand{\pro}{\mathsf{pro}\text{-}}

\newcommand{\match}{\mathbb{M}}
\newcommand{\func}[2]{[#1,#2]}
\newcommand{\rfunc}[2]{[#1,#2]_{\mathsf{Rfib}}}

%%%%%%%%%% Yoneda
\DeclareFontFamily{U}{min}{}
\DeclareFontShape{U}{min}{m}{n}{<-> udmj30}{}
\newcommand\yon{\!\text{\usefont{U}{min}{m}{n}\symbol{'210}}\!}
% see https://nforum.ncatlab.org/discussion/1298/yoneda-lemma/
% On Debian/Ubuntu: requires latex-cjk-japanese
%%%%%%%%%%%

\newcommand{\paperorbook}{book\xspace}

%%%%%%%%%%%%%%%%%%%%%%%%%%%%%%%%%%%%%%%%%%%%%%%%%%%%%%%
% Binders
%%%%%%%%%%%%%%%%%%%%%%%%%%%%%%%%%%%%%%%%%%%%%%%%%%%%%%%
\makeatletter
%%% Dependent products %%%

%% Call the macro like \prd{x,y:A}{p:x=y} with any number of
%% arguments.  Make sure that whatever comes *after* the call doesn't
%% begin with an open-brace, or it will be parsed as another argument.
% Currently the macro is configured to produce
%     {\textstyle\prod}(x:A) \; {\textstyle\prod}(y:B),\ 
% in display-math mode, and
%     \prod_{(x:A)} \prod_{y:B}
% in text-math mode.
\def\prd#1{\@ifnextchar\bgroup{\prd@parens{#1}}{\@ifnextchar\sm{\prd@parens{#1}\@eatsm}{\prd@noparens{#1}}}}
\def\prd@parens#1{\@ifnextchar\bgroup%
  {\mathchoice{\@dprd{#1}}{\@tprd{#1}}{\@tprd{#1}}{\@tprd{#1}}\prd@parens}%
  {\mathchoice{\@dprd{#1}}{\@tprd{#1}}{\@tprd{#1}}{\@tprd{#1}}}}
\def\@eatsm\sm{\sm@parens}
\def\prd@noparens#1{\mathchoice{\@dprd@noparens{#1}}{\@tprd{#1}}{\@tprd{#1}}{\@tprd{#1}}}
% Helper macros for three styles
\def\lprd#1{\@ifnextchar\bgroup{\@lprd{#1}\lprd}{\@@lprd{#1}}}
\def\@lprd#1{\mathchoice{{\textstyle\prod}}{\prod}{\prod}{\prod}({\textstyle #1})\;}
\def\@@lprd#1{\mathchoice{{\textstyle\prod}}{\prod}{\prod}{\prod}({\textstyle #1}),\ }
\def\tprd#1{\@tprd{#1}\@ifnextchar\bgroup{\tprd}{}}
\def\@tprd#1{\mathchoice{{\textstyle\prod_{(#1)}}}{\prod_{(#1)}}{\prod_{(#1)}}{\prod_{(#1)}}}
\def\dprd#1{\@dprd{#1}\@ifnextchar\bgroup{\dprd}{}}
\def\@dprd#1{\prod_{(#1)}\,}
\def\@dprd@noparens#1{\prod_{#1}\,}

%%% Dependent sums %%%

% Use in the same way as \prd
\def\sm#1{\@ifnextchar\bgroup{\sm@parens{#1}}{\@ifnextchar\prd{\sm@parens{#1}\@eatprd}{\sm@noparens{#1}}}}
\def\sm@parens#1{\@ifnextchar\bgroup%
  {\mathchoice{\@dsm{#1}}{\@tsm{#1}}{\@tsm{#1}}{\@tsm{#1}}\sm@parens}%
  {\mathchoice{\@dsm{#1}}{\@tsm{#1}}{\@tsm{#1}}{\@tsm{#1}}}}
\def\@eatprd\prd{\prd@parens}
\def\sm@noparens#1{\mathchoice{\@dsm@noparens{#1}}{\@tsm{#1}}{\@tsm{#1}}{\@tsm{#1}}}
\def\lsm#1{\@ifnextchar\bgroup{\@lsm{#1}\lsm}{\@@lsm{#1}}}
\def\@lsm#1{\mathchoice{{\textstyle\sum}}{\sum}{\sum}{\sum}({\textstyle #1})\;}
\def\@@lsm#1{\mathchoice{{\textstyle\sum}}{\sum}{\sum}{\sum}({\textstyle #1}),\ }
\def\tsm#1{\@tsm{#1}\@ifnextchar\bgroup{\tsm}{}}
\def\@tsm#1{\mathchoice{{\textstyle\sum_{(#1)}}}{\sum_{(#1)}}{\sum_{(#1)}}{\sum_{(#1)}}}
\def\dsm#1{\@dsm{#1}\@ifnextchar\bgroup{\dsm}{}}
\def\@dsm#1{\sum_{(#1)}\,}
\def\@dsm@noparens#1{\sum_{#1}\,}

%%%%%%%%%%%%%%%%%%%%%%%%%%%%%%%%%%%%%%%%%%%%%%%%%%%%%%%%%%%%%%%%%%%%%%                        
%%  End of Customization
%%%%%%%%%%%%%%%%%%%%%%%%%%%%%%%%%%%%%%%%%%%%%%%%%%%%%%%%%%%%%%%%%%%%%%                        

%\includeonly{}

\theoremstyle{plain}
\newtheorem{theorem}{Theorem}
\newtheorem{proposition}[theorem]{Proposition}
\newtheorem{lemma}[theorem]{Lemma}
\newtheorem{corollary}[theorem]{Corollary}
\theoremstyle{definition}
\newtheorem{definition}[theorem]{Definition}
\newtheorem{notation}[theorem]{Notation}

\newtheorem{remark}[theorem]{Remark}
\newtheorem{example}[theorem]{Example}

\numberwithin{theorem}{chapter}
\numberwithin{equation}{chapter}
\numberwithin{section}{chapter}

\newcommand{\tobeprovedas}[1]{To be proved as \cref{#1}}

\begin{document}

\frontmatter

%%%%%%%%%%%%%%%%%%%%%%%%%%%%%%%%%%%%%%%%%%%%%%%%%%%%%%%%%%%%%%%%%%%%%%                        
% Options
%%%%%%%%%%%%%%%%%%%%%%%%%%%%%%%%%%%%%%%%%%%%%%%%%%%%%%%%%%%%%%%%%%%%%%                        

\allowdisplaybreaks

%%%%%%%%%%%%%%%%%%%%%%%%%%%%%%%%%%%%%%%%%%%%%%%%%%%%%%%%%%%%%%%%%%%%%%                        
% End of Options
%%%%%%%%%%%%%%%%%%%%%%%%%%%%%%%%%%%%%%%%%%%%%%%%%%%%%%%%%%%%%%%%%%%%%%                        

%%%%%%%%%%%%%%%%%%%%%%%%%%%%%%%%%%%%%%%%%%%%%%%%%%%%%%%%%%%%%%%%%%%%%%                        
% End of Environments 
%%%%%%%%%%%%%%%%%%%%%%%%%%%%%%%%%%%%%%%%%%%%%%%%%%%%%%%%%%%%%%%%%%%%%%                        

%% Title information
\title{The Univalence Principle}

\author{Benedikt Ahrens}
\address{Delft University of Technology, The Netherlands, and University of Birmingham, United Kingdom}
%\curraddr{}
\email{B.P.Ahrens@tudelft.nl}
\thanks{}

\author{Paige Randall North}
\address{Department of Mathematics and Department of Electrical and Systems Engineering, University of Pennsylvania, Philadelphia, Pennsylvania, USA}
%\curraddr{}
\email{pnorth@upenn.edu}
\thanks{}

\author{Michael Shulman}
\address{Department of Mathematics, University of San Diego, San Diego, California, USA}
\curraddr{}
\email{shulman@sandiego.edu}
\thanks{}

\author{Dimitris Tsementzis}
\address{Princeton University and Rutgers University, New Jersey, USA}
\curraddr{}
\email{dimitrios.tsementzis@gmail.com}
\thanks{}

\date{}

\subjclass[2020]{Primary 18N99, 03B38; Secondary 03G30, 55U35}

%% Keywords
%% comma separated list
\keywords{univalence axiom, inverse category, higher structures, n-categories, homotopy type theory, univalent foundations, structure identity principle, categories, equivalence principle}

%%% This macro is defined with amsbook, but should only be used with memo-l,
%%% see https://tex.stackexchange.com/questions/581331/placement-of-dedication-in-amsbook-strange-why-and-how-to-change-it/581344#581344
% \dedicatory{In Memory of Vladimir Voevodsky} %Dedication text (use \\[2pt] for line break if necessary)

\begin{abstract}
  The Univalence Principle is the statement that equivalent mathematical structures are indistinguishable.
  We prove a general version of this principle that applies to all set-based, categorical, and higher-categorical structures defined in a non-algebraic and space-based style, as well as models of higher-order theories such as topological spaces.
  In particular, we formulate a general definition of indiscernibility for objects of any such structure, and a corresponding univalence condition that generalizes Rezk's completeness condition for Segal spaces and ensures that all equivalences of structures are levelwise equivalences.

  Our work builds on Makkai's First-Order Logic with Dependent Sorts, but is expressed in Voevodsky's Univalent Foundations (UF), extending previous work on the Structure Identity Principle and univalent categories in UF.
  This enables indistinguishability to be expressed simply as identification, and yields a formal theory that is interpretable in classical homotopy theory, but also in other higher topos models.
  It follows that Univalent Foundations is a fully equivalence-invariant foundation for higher-categorical mathematics, as intended by Voevodsky.
\end{abstract}

\maketitle

%%%% from
%%%% https://tex.stackexchange.com/questions/581331/placement-of-dedication-in-amsbook-strange-why-and-how-to-change-it/581344#581344
%    Dedication.  If the dedication is longer than a line or two,
%    remove the centering instructions and the line break.
\cleardoublepage
\thispagestyle{empty}
   % If this book uses the documentclass stml-l or mmono-s, change
   % 13.5pc to 10.5pc.
\vspace*{13.5pc}
\begin{center}
 \textit{In Memory of Vladimir Voevodsky}
\end{center}
\cleardoublepage

% \newpage
\setcounter{tocdepth}{0}  % For posting

\tableofcontents

\mainmatter

\chapter{Introduction}
\label{sec:intro}

\section{From structuralism to FOLDS}
\label{sec:struc-folds}

\noindent
What is the univalence principle?
Succinctly, it is the statement that
\begin{quote}
  \it Equivalent mathematical structures are indistinguishable.
\end{quote}
The meaning of ``equivalence'' varies with the mathematical structure in question.
For classical set-based structures such as groups, rings, fields, topological spaces, and so on, the relevant notion of equivalence is \emph{isomorphism}.
Every beginning abstract algebra student, for instance, learns that isomorphic groups are indistinguishable from the perspective of group theory: they have all the same ``group-theoretic properties''.
A general statement of this sort can be made using category theory: two isomorphic objects of any category are indistinguishable from the perspective of that category.
Philosophically, this categorical point of view has been advocated as an approach to \emph{mathematical structuralism}~\cite{Benac,Awo96,Awo04}.

However, one of the novel aspects of category theory is that the relevant notion of equivalence for categories \emph{themselves} is \emph{not} isomorphism, but rather \emph{equivalence of categories}.
The student of category theory likewise learns that equivalent categories are indistinguishable from the perspective of category theory, having all the same ``category-theoretic properties''.
One can generalize this to a statement about objects of 2-categories, and so on.

Indeed, nowadays an increasing number of mathematicians work with objects such as $\infty$-categories, for which the appropriate notion of equivalence is even weaker.
Moreover, the need to transfer properties across such equivalences is even greater, since $\infty$-categories have many very different-looking presentations that are used for different purposes.
A popular aim in $\infty$-category theory is to work ``model-independently''; that is, to only use properties and constructions on $\infty$-categories that are independent of the presentation chosen.
This essentially relies on the univalence principle: equivalent presentations of $\infty$-categories should be indistinguishable to $\infty$-category-theoretic operations and properties.

In trying to give precise mathematical expression to these ideas, however, various problems arise.
For instance, can we really make sense of a statement like ``equivalent categories have all the same category-theoretic properties'' as a theorem to prove, rather than merely a definition of what we mean by ``category-theoretic property''?
Certainly there are \emph{some} properties that equivalent categories, or even isomorphic groups, can fail to share; for instance, $G=\{0\}$ and $H = \{1\}$ are isomorphic groups (with their unique group structures) but $0\in G$ while $0\notin H$.

In the case of groups, one answer is that a property is group-theoretic if it is expressible in the formal first-order language of group theory.
This works for other set-based structures, but fails for categories: there is an ordinary first-order theory of categories, but it can distinguish categories up to isomorphism, not just up to equivalence.
This problem was solved by Blanc \cite{Blanc} and Freyd \cite{Freyd}, who devised a syntax for category-theoretic properties, and showed that such properties are invariant under equivalence of categories.
Informally, a property is category-theoretic if it is expressible in a form of first-order logic that never refers to equality of objects.
This makes isomorphic \emph{objects} of a category indistinguishable, and therefore makes equivalent \emph{categories} indistinguishable as well.

Blanc and Freyd's syntax relies on \emph{dependently typed logic} to entirely eliminate equality of objects from the theory of categories.
(The point is that two arrows can only be composed when the domain of one \emph{equals} the target of the other; but by stating composition instead as a \emph{family} of maps $C(y,z) \times C(x,y) \to C(x,z)$ we can avoid referring to equality of objects.)
Makkai~\cite{MFOLDS} generalized this to higher categories, introducing a language for higher-categorical properties called First Order Logic with Dependent Sorts (FOLDS), and proving that FOLDS-properties are invariant under FOLDS-equivalence.

Importantly, Makkai's FOLDS is not specific to $n$-categories (for any value of $n$, including $\infty$), but involves general notions of signature and equivalence for all kinds of higher-categorical structures.
Indeed, while categories and $n$-categories are undoubtedly important, they are really just the most prominent examples of a zoo of categorical and higher-categorical structures that play a growing role in mathematics and its applications.
This zoo includes various kinds of monoidal categories, multicategories, polycategories, fibred categories, enriched categories, enhanced categories, $\dagger$-categories, and many more that are continually being discovered.
But in principle, any such structure, known or yet unknown, can be encoded into Makkai's framework, leading to a notion of equivalence between structures and a notion of categorical property that is invariant under equivalence.

\section{Space-based definitions of higher structure}
\label{sec:space-based}

Makkai's framework is powerful, but it has limitations.
One is that it enforces a particular style of definition for higher categorical structures, which may be called \emph{non-algebraic} and \emph{set-based}.
An algebraic categorical structure is one whose operations (composition, identities, and so on) are specified by functions, as in the standard definitions of category and bicategory.
By contrast, a non-algebraic structure is one whose operations are determined by witnesses that stand in a relation to their inputs and outputs.
For example, the best-known definition of $\infty$-category, the quasi-categories defined by Joyal~\cite{joyal:qcat-kan} and used extensively by Lurie~\cite{Lurie,lurie:ha}, is non-algebraic in that composites of 1-simplices are witnessed by 2-simplices---although the \emph{identities} in a quasi-category are given algebraically by the simplicial degeneracy maps, so a quasi-category is still too algebraic to fit in Makkai's framework.

A quasi-category is also set-based, in the sense that a quasicategory is a collection of \emph{sets} equipped with structure.
The alternative to this is a \emph{space-based} structure, exemplified by Rezk's complete Segal spaces~\cite{rezk:css} (CSS) as a model for $\infty$-categories: a CSS is a collection of \emph{spaces} (in the sense of $\infty$-groupoids, represented by Kan complexes or CW-complexes) equipped with structure.

Of course, in the standard set-theoretic foundations for mathematics, a space is also defined in terms of sets, so a space-based definition can be expanded out to a set-based one.
However, space-based definitions have many advantages, foremost among which is a simpler definition of equivalence: an equivalence of CSS is simply a functor that is a \emph{levelwise} equivalence on each underlying space.
This often leads to better-behaved Quillen model categories (e.g.,~\cite{rezk:theta}), and makes it easier to ``internalize'' by replacing spaces with objects of any sufficiently structured $\infty$-category (e.g.,~\cite{lurie:ha}).
But most importantly for us, it means that the univalence principle for space-based structures can be reduced directly to the univalence principle for spaces: when the constituent spaces of two CSS are indistinguishable, so are the CSS themselves.

However, the space-based approach requires some care to formulate correctly: we can't just copy a set-based definition and make all the sets into spaces, since then there would be superfluous data.
We need to ensure that the ``internal'' homotopical structure of each constituent space coincides with the ``naturally defined'' homotopical structure on its set of points induced by the higher morphisms present in the structure being defined.
In~\cite{rezk:css,rezk:theta} this condition is called \emph{completeness}, and in other contexts it can be viewed as a \emph{stack} condition.
We will call it \emph{univalence}, because it is a ``local'' version of the univalence principle for the \emph{objects} of our space-based structure: if two isomorphic objects of a category are truly indistinguishable, then they should be related by a path in the \emph{space} of objects.

Part of what we achieve in this \paperorbook is to formulate a version of Makkai's theory for space-based structures.
This requires, firstly, giving a general definition of a univalence property for such structures, which requires a general notion of when two \emph{objects} of such a structure are equivalent.
The latter is a nontrivial task because of the generality of the ``structures'' in question: they may have many kinds of ``morphism'' with different shapes and behavior, and it is not always obvious which of these should figure into a notion of equivalence, and how.

Our solution is to take the local univalence principle as a definition: we define two objects to be \emph{indiscernible} if they cannot be distinguished by \emph{any} of the higher ``morphisms'' of the structure in question (see \cref{sec:indis-intro} for more discussion).
In familiar cases this reduces, by a Yoneda-like argument, to familiar notions of isomorphism and equivalence; but it also gives a correct answer in more unusual situations.
Then we will define a structure to be univalent if the path-space between any two objects is equivalent to the space of indiscernibilities between them, and prove that these univalent structures have the good behavior of CSS: the equivalences between them are the levelwise equivalences of underlying spaces.
Finally, we use this to deduce a univalence principle for univalent structures from the univalence principle for spaces: two equivalent univalent structures are indistinguishable by any property expressible in a certain dependently typed language.

\section{Univalent foundations}
\label{sec:uf-intro}

Another limitation of Makkai's framework is that his univalence principle pertains only to \emph{properties}; i.e., statements about a single structure that could be either true or false.
By contrast, mathematics is concerned not just with properties of a single structure, but with \emph{constructions} on objects and relations \emph{between} objects.

With this in mind, and inspired by Makkai (see \cite[p.~1279]{voevodsky_2015}), Voevodsky conceived Univalent Foundations (UF) with a more ambitious goal: a foundational language for mathematics, \emph{all} of whose constructions are invariant under equivalences of structures.
Since in UF proofs are particular constructions, this implies a similar invariance of properties.

For a foundational language for mathematics to satisfy such a ``global'' univalence principle, we must exclude any properties such as $0\in G$ mentioned above.
Like the structure-specific languages of Blanc, Freyd, and Makkai, UF achieves this using dependent types.
In fact UF is a form of homotopy type theory (HoTT), and so we often speak of ``HoTT/UF''.
We give a brief review of HoTT/UF in \cref{sec:review-uf}.

More than this is true, however.
The univalence principle is a generalization of the \emph{indiscernibility of identicals}---the statement that \emph{equal} objects have the same properties---in which equality is replaced by a suitable notion of equivalence.
However, in a foundational language for mathematics, the converse \emph{identity of indiscernibles} generally also holds automatically, because of the presence of haecceities.
Philosophically, a \emph{haecceity}\index{haecceity} is the property of an object being itself; mathematically we use it to refer to the property of ``being equal to $x$'', for some fixed $x$.
Now if $x$ and $y$ have \emph{all} the same properties, then this applies also to this property of ``being equal to $x$''; hence, since $x$ is equal to $x$, also $y$ is equal to $x$.

This means that in any foundational language satisfying the global univalence principle, \emph{equivalent structures must be equal}.
This may seem impossible, but HoTT/UF achieves it by expanding the notion of ``equality'', enabling it to carry information and coincide with equivalence.
The basic objects of HoTT/UF are \emph{types}, which behave not like discrete sets but like spaces in homotopy theory, and the foundational notion of ``equality'' behaves like \emph{paths} in such a space.
(We generally refer to this notion of equality as ``identification'', to avoid the conceptual baggage that comes along with words like ``equality'' and ``path''.)

An essential feature of HoTT/UF is Voevodsky's \emph{univalence axiom}\index{axiom!univalence}
\begin{equation}
 \mathsf{univalence} : \forall{(X, Y: \U)},\;  (X = Y)  \simeq  (X \simeq Y),
 \label{eq:ua}
\end{equation}
which says that for any two types (e.g., sets) $X$ and $Y$, the \emph{identification type} $X=Y$ is equivalent to the \emph{equivalence type} $X\simeq Y$.
By the indiscernibility of identicals, this then implies that equivalent types are indistinguishable by \emph{all} properties, and also all constructions.
In other words, in HoTT/UF, the univalence principle for single types (the most trivial sort of ``mathematical structure'') holds essentially by definition.

It was observed by Coquand and proven in~\cite{COQUAND20131105} and in~\cite[Section~9.8]{HTT}\footnote{The formalization of \cite{COQUAND20131105} compares the two independent results.} that this single postulate implies that the same kind of strong univalence principle also holds for a wide range of set-based mathematical structures such as groups and rings.
Namely, the type of identifications between two structures is equivalent to the type of \emph{isomorphisms} between them, and therefore isomorphic structures are indistinguishable by all properties and constructions.
This latter result has become known as the \emph{Structure Identity Principle}, a term coined by Aczel~\cite{Aczel_SIP}.
As pointed out in~\cite{Awo13} and~\cite{tsem-ufassf}, philosophically this can be viewed as the ultimate formulation of mathematical structuralism.
Similarly, in \cite[Theorem~6.17]{AKS13} the univalence principle was proven for categories: identifications of univalent categories (those satisfying the ``local'' univalence principle for objects, as in space-based structures such as CSS) are equivalent to equivalences of categories.

In this \paperorbook, we generalize these results to other higher-categorical structures.
Using a general notion of structure inspired by Makkai's, and the notions of indiscernibility of objects and local univalence described earlier, we show in HoTT/UF that identifications of univalent structures are equivalent to equivalences of structures.

Thus, equivalent univalent structures are indistinguishable in HoTT/UF: not only logical properties, but also all mathematical constructions, are invariant under such equivalence.
This result, formulated precisely as the centerpiece of this work in \cref{thm:hsip2}, goes a long way towards showing that HoTT/UF really does realize Voevodsky's goal of a fully equivalence-invariant foundation.

The connection between our \cref{thm:hsip2} and the (set-theoretic) space-based approaches discussed in \cref{sec:space-based} is that the former yields the latter by way of a model construction.
Specifically, in the simplicial model of HoTT/UF constructed by Voevodsky~\cite{KL12} in the model category of simplicial sets, the types of HoTT/UF are interpreted by Kan complexes, i.e., spaces of homotopy theory.
Thus, any structure defined inside HoTT/UF immediately yields a space-based structure in set-theoretic mathematics.
Hence, proving the internal univalence principle for structures in HoTT/UF immediately entails an analogous theorem for space-based structures in simplicial sets.

Even if we were not interested in HoTT/UF for its own sake, the type-theoretic approach also has other advantages over working more explicitly with space-based structures in homotopy theory.
For instance, the native presence of homotopy theory makes it easy to incorporate higher homotopy types in the definition of our structures, in particular allowing group actions and higher group actions to appear even in our non-algebraic context.
Type-theoretic \emph{universes} provide an extremely convenient language for working with classifying spaces, allowing us to view their points as literally being the objects they classify.
And type-theoretic arguments can much more easily be verified for correctness using a computer proof assistant.

The inductive nature of type-theoretic arguments also suggests useful new abstractions.
For instance, by isolating those properties of Makkai's signatures that are essential for our arguments, we are led ineluctably to a more general notion of signature.
These ``functorial signatures'', defined in \cref{sec:abstract-signatures}, turn out to be general enough to encompass \emph{higher}-order logic, including structures such as topological and uniform spaces and suplattices within our theory.
Moreover, unlike ordinary higher-order logic, functorial signatures carry enough data to determine a non-invertible notion of morphism of structures, which specializes to the correct notions in examples such as continuous or sup-preserving maps.

A final, very significant, advantage of HoTT/UF is that simplicial sets are not its only model.
Although the type-theoretic language sounds and feels as if we were talking about ordinary spaces, working with concrete points and paths between them, there is nevertheless a machine that ``compiles'' this language to yield definitions and theorems that make sense in any $\infty$-topos.\footnote{For a long time, some of the pieces of this machine were missing from the literature; indeed, resolving one of these was the last project Voevodsky was working on.
However, with the appearance of~\cite{shulman:univinj,bblm:initiality} (though still unpublished), all the pieces of relevance to the current \paperorbook seem to be resolved.}

Thus, the advantage of space-based structures mentioned in \cref{sec:space-based} that they can be more easily internalized in other $\infty$-categories is achieved \emph{automatically} if such structures are defined internally in HoTT/UF.
For instance, in this way the univalent categories of~\cite{AKS13} can be interpreted as \emph{stacks} of 1-categories over any site, and similarly for other higher-categorical structures.

\section{Indiscernibility}
\label{sec:indis-intro}

We now say a few more words about the notion of \emph{indiscernibility} for objects of a structure that underlies all of our work.
The name comes from the fact that it is a relativization of the \emph{identity of indiscernibles} to a particular structure.
We mentioned above that because of haecceities, identity of indiscernibles (two objects with all the same properties are identical) is automatic in a foundational theory.
But if we restrict the ``properties'' in question to those expressible in terms of a particular structure, we obtain a nontrivial notion that turns out to specialize to a correct definition of isomorphism/equivalence for objects of any categorical structure.

We can already see this in operation for set-level structures.
Consider the example of a preordered set: a set $P$ together with a binary relation $\leq$ that is reflexive and transitive.
The univalence principle of \cite[Section~9.8]{HTT} for preordered sets says that the type of isomorphisms between two preordered sets (i.e., isomorphisms of underlying sets respecting the relation) is equivalent to the type of identifications.

However, the univalence principle for preordered sets we arrive at in this \paperorbook is somewhat different, based on the above notion of indiscernibility.
In the case of a preordered set $P$, two elements $x,y$ of $P$ are \emph{indiscernible} if they behave in exactly the same way: that is, $x \leq z$ iff $y \leq z$ for all $z$, and $z \leq x$ iff $z \leq y$ for all $z$ in $P$.
But this is equivalent to saying that $x \leq y$ and $y \leq x$.

Since our general notion of equivalence involves indiscernibilities, we then find that two preordered sets are \emph{equivalent} if there are functions between the underlying sets $f: P \leftrightarrows Q : g$, respecting the relations, such that $ gf(x) $ is not necessarily equal to, but \emph{indiscernible} from, $x$ and likewise $fg (y) $ is indiscernible from $y$.
For the univalence principle to hold for this notion of equivalence, it must be that such equivalences coincide with isomorphisms, which means that indiscernibilities must coincide with equalities.
But this says precisely that $P$ is antisymmetric, i.e., a partial order.

Of course, this is nothing but a specialization of the notions of isomorphism of objects and equivalence of categories, when preorders are regarded as categories with at most one morphism between any two objects.
For a more novel example, consider topological spaces: sets $X$ together with a subset $\mathcal O$ of their powerset $\mathcal P (X)$ satisfying suitable axioms.
It turns out that two points $x,y $ of a topological space $X$ are indiscernible when $x \in U$ iff $y \in U$ for every open set $U$ in $\mathcal O$.
Then a \emph{univalent} topological space is exactly a $T_0$-space, and we find that for two univalent topological spaces, their identification type is equivalent to the type of equivalences up to indiscernibility.

Our notion of indiscernibility is inspired by Makkai's ``internal identity'' for objects of a FOLDS-structure; see, e.g., \cite{makkai-blpc}.
Other notions related to our indiscernibility have appeared elsewhere in the literature.
For instance, Levy \cite{DBLP:conf/popl/Levy17} studies isomorphism of types in simply-typed lambda calculi with effects, defining a notion of ``contextual isomorphism'' that is similar in spirit to our indiscernibilities; see \cref{sec:unnat} for a few more details.

We will argue that indiscernibility gives a correct notion of equivalence between objects of \emph{all} categorical and higher-categorical structures, when properly formulated; and that the resulting \emph{univalent structures} (where indiscernibility coincides with equality) are usually the correct notion of such structures to work with in HoTT/UF, and likewise when interpreted into homotopy theory yield the correct space-based definitions of such structures.
To that end, in \cref{sec:examples,sec:egs-higherorder} we will survey a large number of categorical structures and their notions of indiscernibility and univalence.
More broadly, we view this \paperorbook as laying out the foundations of a general approach to categorical and higher-categorical structures in HoTT/UF.

\section{Limitations of our theory and comparison to other work}
\label{sec:limitations}

In this \lcnamecref{sec:limitations}, we explain what we do \emph{not} achieve in this work, and how our work relates to other work on the univalence principle.

In \cref{sec:conclusion} we will discuss a number of open problems for future work, but to avoid disappointing the reader, we want to mention at the outset a few important ways in which our current theory is incomplete.
The first is that at present we consider only structures of \emph{finite} categorical dimension, e.g., $n$-categories for finite $n$ but not $\infty$-categories, $(\infty,n)$-categories, or $(\infty,\infty)$-categories.
We hope that our definitions and results should have infinite-dimensional analogues, but since $\infty$ introduces unique complications (e.g., there is more than one candidate notion of $(\infty,\infty)$-category, depending on whether the equivalences are defined ``inductively'' or ``coinductively'') we have chosen to begin with the simpler finite-dimensional case.

The second is that our framework is still, like Makkai's, entirely non-algebraic.
The ambient structure of HoTT/UF certainly allows, and even encourages, us to define operations and algebraic structures.
However, at present our notion of indiscernibility is only defined for purely ``relational'' or non-algebraic structures.
It is always possible to encode operations non-algebraically, as we show in many examples, but it would be preferable not to have to go through this encoding step manually.

Furthermore, the notion of equivalence of structures that appears in our univalence principle is what may be called a \emph{strong} equivalence, which in the case of 1-categories specializes to a pair of functors in both directions together with natural isomorphisms relating their composites to identities.
However, the univalence principle for 1-categories of~\cite{AKS13} also applies to \emph{weak} equivalences---single functors that are fully faithful and essentially surjective---which implies, in particular, that any weak equivalence between univalent 1-categories is a strong equivalence.
(Indeed, this is one of the reasons that univalent categories are the ``good'' notion of category when working in HoTT/UF.)
We can define a notion of weak equivalence of arbitrary structures, but we have been unable to extend our univalence principle to such equivalences (though we have no counterexample either).

Previous work on the Structure Identity Principle \cite{COQUAND20131105, HTT} has already been mentioned.
One the one hand, our results are much more general than those of~\cite{COQUAND20131105} and~\cite[Section~9.8]{HTT} in that they also include higher-categorical and higher-order examples.
On the other hand, they are also less general in that they apply only to structures built from types in a way specified by a kind of signature, rather than to additional structure of any sort added to objects of any category.
See also \cref{rem:diff-sip-up}.

The relationship between FOLDS and HoTT/UF has also been explored, from a different perspective, in~\cite{tsem-foliso}, wherein an extension of the syntax for FOLDS is developed that interprets ``natively'' into the HoTT/UF notion of equality.
A broader investigation (not focused on HoTT/UF) of the relation between FOLDS (and dependently-typed first-order logic more generally) and the semantics of dependent type theories was pursued in~\cite{palmgren2019categories}.

\section{Structure of this work}
\label{sec:synopsis}

This work is structured in three \lcnamecrefs{sec:general-theory}, preceded by an introduction to univalent foundations and two-level type theory in \cref{sec:review-uf}.

\subsection{Coarse structure}

In this work, we introduce two notions of \emph{theory} and \emph{model of a theory}.
Given a theory $\T$ and two models $M$ and $N$ of $\T$, there are three notions of ``sameness'' for them:
\begin{enumerate}
 \item Identification $M = N$;
 \item Levelwise equivalence $M \leqv N$;
 \item Equivalence $M \simeq N$.
\end{enumerate}
We first show
that Voevodsky's univalence axiom entails that identifications coincide with levelwise equivalence for any models $M$ and $N$.
We then prove
that for \emph{univalent} structures $M$ and $N$, levelwise equivalence also coincides with equivalence.
The composition of these results yields our \textbf{Univalence Principle}:
for univalent models,
\[ (M = N) \simeq (M \simeq N) .\]

In \cref{sec:general-theory}, we introduce \emph{diagram theories} and their \emph{models}, and state our main definitions and results for such diagram theories.

In \cref{sec:examples}, we study many examples of diagram theories and compare the indiscernibilities and equivalences in these examples to the usual notions of sameness and equivalence.

In \cref{sec:ho}, we introduce \emph{functorial theories}---generalizing diagram theories---and their \emph{models}. We give all definitions in detail and state and prove our results for such functorial theories.
We also give a translation from diagram theories to functorial theories.

\subsection{Fine structure}

\Cref{sec:general-theory} is dedicated to \emph{diagram theories} and their \emph{univalent models}.
Throughout this \lcnamecref{sec:general-theory}, most proofs, and even some definitions, are deferred to \cref{sec:ho}, where we study a more general notion of theory (functorial theories, \cref{def:abstract_signature}) and their models.

Before defining diagram theories in general, we start out, in \cref{sec:folds-cats}, by considering an example diagram theory in detail: the theory of categories.
We compare there our notion of model of the theory of categories with a more traditional definition of categories, our notion of indiscernibility of objects in a model with categorical isomorphism, and equivalence of models with categorical equivalence.
This chapter also serves to connect our work to existing ideas in the HoTT/UF literature, for those readers familiar with the latter.

In \cref{sec:folds-signatures-two}, we introduce \emph{diagram theories} and their \emph{models}.
Readers with backgrounds and interests in mathematical/categorical logic may want to pay special attention to this chapter, since it introduces the ``syntax'' and ``semantics'' of the system in which we will be stating our results.
This system is different from traditional logic in certain ways, e.g., our signatures are best thought of as categories, our notion of ``proposition'' is defined ``semantically'' rather than syntactically, etc.

In \cref{sec:indisc-folds}, we sketch our definitions of \emph{indiscernibility} and \emph{univalence} of models of a diagram theory.

In \cref{sec:hsip-folds}, we state our Univalence Principle for the special case of diagram theories.
This section is intended to give a fairly concrete understanding of our definitions and results in the setting most familiar to working geometers, topologists, and homotopy theorists, who may be less interested in the detailed inductive structures required for the proofs in \cref{sec:ho}.

In \cref{sec:examples}, we present diagram theories for many mathematical structures.
We usually spell out the signatures explicitly, but describe the axioms only informally.
However, most axioms could be formally stated in the language of FOLDS described in \cref{sec:foldssig-by-eg}, and thus obtained via the translation sketched in \cref{rem:axioms-from-folds}.

Specifically, we start in \cref{sec:set-egs} by considering theories whose underlying signatures are of height less or equal to $2$; univalent models are then sets equipped with some structure.

In \cref{sec:1cat-egs} we consider theories for categories with extra structure, such as certain limits, functors, natural transformations, multicategories, categorical structures for the interpretation of type theories, and many others.
These structures can all be defined and studied with essentially the same technology used for ordinary categories in~\cite{HTT}, but our framework provides a unifying perspective.

In \cref{sec:hcat-egs} we study theories for higher-categorical structures, such as bicategories and double categories.
This is the place where we first start to see the real advantages of our framework for doing higher category theory in HoTT/UF.
While the particular case of univalent bicategories have been previously studied by~\cite{DBLP:conf/rta/AhrensFMW19}, our theory provides a general machine for defining these and other higher-categorical structures.

The structures considered until this point are \emph{weak} in the higher-categorical sense.
In \cref{sec:strict-egs} we show how to encode \emph{strict} categorical structures;
importantly, they are obtained by adding, to the weak theories, additional structure and properties.

In \cref{sec:graph-egs} we study theories with signatures of height 3 that are not categorical, i.e., of theories of objects and arrows but without composition or identities, such as directed multigraphs and Petri nets.
These are interesting mainly as counterexamples, and to explore the boundaries of the generality of our theory.

In \cref{sec:restr-indis-egs} we present theories of ``enhanced'' (higher) categories, that is, categories with additional structure that is not categorical---such as a $\dagger$-structure.
We show that, in such structures, our notion of indiscernibility coincides with a well-known notion of ``good'' isomorphism.

In \cref{sec:unnat} we study theories involving, in particular, object-only functors and unnatural transformations.
Such structures frequently arise in the study of semantics of programming languages.
Again, our notion of indiscernibility coincides, for these examples, with well-known notions of ``good'' isomorphisms for these examples.

Throughout \cref{sec:examples}, we often omit the adjective ``diagram''; by ``signatures'' and ``theories'', we always mean diagram signatures and diagram theories, as opposed to the functorial signatures and diagrams of \cref{sec:ho}.

In \cref{sec:ho}, we start out, in \cref{sec:abstr-sign-transl}, with an in-depth study of diagram signatures---specifically, the notion of ``derivation'' of such signatures that was introduced in \cref{sec:general-theory}.
The results of this study suggest a more general, (co)inductive definition of signatures: our \emph{functorial signatures} of \cref{def:abstract_signature}.
These signatures, and their corresponding structures, are easier to reason about in the abstract, and we prove most of the statements of \cref{sec:general-theory} only for structures of functorial signatures.
At the same time, the study immediately yields a translation of diagram signatures into functorial signatures, made explicit in \cref{thm:translation}.
We conclude this \lcnamecref{sec:abstr-sign-transl} with the definition of \emph{functorial theories} and their \emph{models}, in complete analogy to diagram theories and their models.

\Cref{sec:structures} is dedicated to the study of levelwise equivalence of functorial structures. We prove here that identifications coincide with levelwise equivalences.

In \cref{sec:FOLDS-iso-uni}, we define notions of indiscernibility and univalence for structures of functorial signatures, and for models of functorial theories.
We then prove two results about the homotopy levels of structures and models.

\Cref{sec:hsip} is dedicated to the proof of the main result of our work.
Models of functorial theories again admit three notions of sameness; the main result of this work,  \cref{thm:hsip2}, shows that for univalent models, all three coincide.

We conclude this work, in \cref{sec:egs-higherorder}, with some examples of functorial theories that are not, to our understanding, expressable as diagram theories.

\section{Version history}

An extended abstract for this \paperorbook was published in the conference proceedings of LICS 2020 \cite{hsip_lics}.
There, diagram signatures and functorial signatures were called FOLDS-signatures and abstract signatures, respectively.
We have also changed the title and all other instances of the phrase `Higher Structure Identity Principle' to `Univalence Principle' in this \paperorbook.

Compared to the extended abstract, this \paperorbook contains some new results:
\begin{itemize}
\item a comparison of $\L$-structures with Reedy-fibrant diagrams, see \cref{thm:reedy-struc}, and
\item a variant of the univalence principle for \emph{essentially} split-surjective equivalences, see \cref{thm:hsip2}.
\end{itemize}
We also benefit from the additional space provided here to make our exposition more pedagogical; specifically, we give
\begin{itemize}
\item an overview of our results in the special case of diagram signatures (see \cref{sec:general-theory}) and a translation from diagram signatures to functorial signatures in \cref{sec:abstr-sign-transl}, and
\item many more examples of diagram signatures (in \cref{sec:examples}) and functorial signatures (in \cref{sec:egs-higherorder}).
\end{itemize}

\newcommand{\grantsponsor}[3]{#2}
\newcommand{\grantnum}[2]{#1}

\section{Acknowledgments}

  Nicolai Kraus provided helpful advice on 2LTT.
  Paul Blain Levy pointed out a possible connection to his work on contextual isomorphisms, and provided helpful comments on an earlier version.
  Elif Uskuplu pointed out typos in a previous version of this work.
  We are very grateful to all of them for their input.
  We furthermore thank the anonymous referees of the LICS version, and of the present AMS Memoir, for their helpful suggestions.

  Ahrens and North acknowledge the support of the \grantsponsor{}{Centre for Advanced Study (CAS)}{} in Oslo, Norway, which funded and hosted the research project \emph{\grantnum{}{Homotopy Type Theory and Univalent Foundations}} during the 2018/19 academic year.

This work was partially funded by \grantsponsor{}{EPSRC}{https://epsrc.ukri.org/} under agreement \grantnum{EP/T000252/1}{EP/T000252/1}.

This material is based
on research sponsored by \grantsponsor{id}{The United States Air Force Research
  Laboratory}{https://www.wpafb.af.mil/AFRL/} under agreement number \grantnum{FA9550-15-1-0053}{FA9550-15-1-0053}, \grantnum{FA9550-16-1-0212}{FA9550-16-1-0212}, \grantnum{FA9550-17-1-0363}{FA9550-17-1-0363},
\grantnum{FA9550-21-1-0009}{FA9550-21-1-0009}, and \grantnum{FA9550-21-1-0334}{FA9550-21-1-0334}. The
U.S.\ Government is authorized to reproduce and distribute reprints
for Governmental purposes notwithstanding any copyright notation
thereon. The views and conclusions contained herein are those of
the authors and should not be interpreted as necessarily
representing the official policies or endorsements, either
expressed or implied, of the United States Air Force Research
Laboratory, the U.S. Government, or Carnegie Mellon University.

  This material is based upon work supported by the
  \grantsponsor{GS100000001}{National Science
    Foundation}{http://dx.doi.org/10.13039/100000001} under Grant
  No.~\grantnum{DMS-1554092}{DMS-1554092}.
  Any opinions, findings, and
  conclusions or recommendations expressed in this material are those
  of the authors and do not necessarily reflect the views of the
  National Science Foundation.

\chapter[Introduction to HoTT/UF]{Introduction to two-level homotopy type theory and univalent foundations}
\label{sec:review-uf}

In this \lcnamecref{sec:review-uf} we give a brief introduction to the formal language of Homotopy Type Theory and Univalent Foundations (HoTT/UF), including Two-Level Type Theory (2LTT).
Semantically, HoTT/UF can be viewed as a convenient syntax for working with a Quillen-model-category-like structure, namely a category equipped with a class of morphisms called ``fibrations'' (the cofibrations and weak equivalences of a model category are not represented explicitly in the syntax, although they play a role in constructing concrete interpretations).\index{fibration!in a model category}\index{cofibration!in a model category}
For intuition, the reader is free to think of this as the category of topological spaces.
The formal system used in~\cite{HTT} assumes that all objects are fibrant (e.g., the category of fibrant objects of a model category); we will instead use a form of two-level type theory~\cite{2LTT} that doesn't make this assumption.

The formal language of HoTT/UF is based on Martin-L\"{o}f Type Theory (or MLTT for short), which has, as primitive objects, \defemph{types}\index{type} and \defemph{elements}\index{element} (a.k.a.\ \defemph{terms}).\index{term}
Each element is an element of a specific type.
We write $t : T$ to say that $t$ is an element of the type $T$, e.g., we write $2 : \Nat$ to say that $2$ is an element of the type of natural numbers.
We write $s,t:T$ to abbreviate ``$s:T$ and $t:T$''.

Generally speaking, types and their elements are used in the same way that sets and their elements are used in a ZFC-based foundation for mathematics.
At any given point in a mathematical construction or proof, we are working in a \defemph{context}\index{context} containing a number of \defemph{variables}\index{variable}, each declared to belong to a particular type (which we also write $x:T$).
The types and elements we then construct can depend on the values of these variables, for which particular elements of the appropriate types can later be substituted.

Semantically, types that don't depend on any variables represent objects of the category, and terms that don't depend on any variables represent global elements (morphisms out of the terminal object).
More generally, a term of type $T$ depending on variables $x:A$ and $y:B$, say, represents a morphism $A\times B \to T$.
Likewise, a type depending on $x:A$ and $y:B$ represents an object $P$ equipped with a morphism $P \to A\times B$, i.e., an object of the slice category over $A\times B$.
A term belonging to such a type dependent on $x:A$ and $y:B$ represents a \emph{section} of $P \to A\times B$, i.e., a morphism $A\times B \to P$ such that the composite $A\times B \to P \to A\times B$ is the identity.

Two-level type theory~\cite{2LTT} enhances this picture by distinguishing between \emph{fibrant types}\index{type!fibrant} and \emph{non-fibrant types}.
Semantically, a non-fibrant type in some context represents an arbitrary morphism into that context, while a fibrant type represents a morphism into the context that is a ``fibration'' of some sort.
Importantly, when regarding HoTT/UF as a foundational language for mathematics, \emph{only the fibrant types} are ``true'' mathematical objects; the non-fibrant types should be regarded as a sort of ``internalized metatheory'' making it easier to reason generically about the fibrant ones.
In line with this philosophy, we adopt a suggestion of Ulrik Buchholtz and refer to non-fibrant types as \defemph{exotypes}\index{exotype}, reserving the unadorned word \emph{type} to refer to the fibrant ones (though we sometimes retain the adjective ``fibrant'' for emphasis).
All types are exotypes, but not all exotypes are types.

The syntax of 2LTT is described in \cite[\S2.1]{2LTT}; in the rest of this \lcnamecref{sec:review-uf} we summarize it.
A reader already familiar with 2LTT can skip most of this \lcnamecref{sec:review-uf}, except for \cref{sec:sharp} where we introduce an apparently-new notion of \emph{sharp exotype}.
The upshot of \crefrange{sec:basic-types}{sec:cofibrancy} is that we work in 2LTT with axioms M2 (Russell universes), T1--T3 (strictness of conversion), and A5 (exo-equality reflection) from~\cite[Section~2.4]{2LTT}.\index{axiom!of 2LTT}
We will briefly mention the semantics of these axioms in \cref{sec:semantics}, but our main purpose in assuming them is expositional clarity; e.g., it allows us to develop our theory with \emph{two} notions of equality rather than three.
None of our results depend essentially on the axioms, and in particular should be just as valid if exo-equality only satisfies the UIP axiom.
Notationally, when an operation has both a fibrant version and an exo-version, we decorate the \emph{exo-version} with a superscript ``e'' (for ``exo-''), rather than decorating the fibrant version as in~\cite{2LTT}.

\section{Basic types and type constructors}
\label{sec:basic-types}

Firstly there are a few basic types: an empty type $\zerotype$, a singleton type $\onetype$ with unique element $\ttt:\onetype$, and a type of natural numbers $\Nat$, with suitable term constructors.\nomenclature[0]{$\zerotype$}{empty type}\nomenclature[1]{$\onetype$}{singleton type}\nomenclature[1]{$\ttt$}{unique term of $\onetype$}\nomenclature[N]{$\Nat$}{type of natural numbers}
Moreover, given exotypes $A$ and $B$, we can form the function exotype $A \to B$ of functions from $A$ to $B$ and the product exotype $A \times B$ of pairs of elements from $A$ and $B$, each of which is a type (i.e., fibrant) if $A$ and $B$ are.\nomenclature[\]{$A \to B$}{(exo)type of functions between (exo)types $A$ and $B$}\nomenclature[\]{$A \times B$}{product (exo)type of (exo)types $A$ and $B$}

The situation for disjoint sums is somewhat different, corresponding to the fact that fibrations are not in general closed under such sums: two exotypes $A$ and $B$ have a disjoint sum exotype $A \exosum B$, but it is not generally a type even if $A$ and $B$ are; instead for two types $A$ and $B$ we have a distinct disjoint sum type $A + B$.\nomenclature[\]{$A \exosum B$}{disjoint sum exotype of $A$ and $B$}\nomenclature[\]{$A + B$}{disjoint sum type of types $A$ and $B$}
Similarly, there is an empty exotype $\exo{\zerotype}$ and an exotype of exo-natural numbers $\exo{\Nat}$ that may not coincide with $\zerotype$ and $\Nat$ (since the initial object and natural numbers object may not be fibrant).\nomenclature[0]{$\exo{\zerotype}$}{empty exotype}\nomenclature[n]{$\exo{\Nat}$}{exotype of exo-natural numbers}
What distinguishes these type/exotype pairs is that the fibrant versions can only be mapped out of into other fibrant types; e.g., there is a unique map $\zerotype \to A$ for any type $A$, but if $A$ is only an exotype we can only get a unique map $\exo{\zerotype} \to A$.
Similarly, we can define functions into any exotype by recursion on $\exo{\Nat}$, but to define a function by recursion on $\Nat$ we must know that the target is fibrant.

There is also a ``type of types'', called a universe\index{universe}, and denoted by $\U$. Its elements are types, e.g., $\Nat : \U$.\nomenclature[u]{$\U$}{universe}
This allows us to specify a type \emph{family}, parametrized by elements of a type, say, $A$, as a function, say, $B : A \to \U$, from the parametrizing type $A$ into the universe.
We also have an exo-universe $\Ustrict$, an exotype whose elements are exotypes.\nomenclature[u]{$\Ustrict$}{exo-universe}

Semantically, $\U$ is a classifying space of fibrations, so that $B:A\to \U$ can also be regarded as a fibration with codomain $A$, whose fibers are the types $B(x)$.
Similarly, $\Ustrict$ is a classifying space of arbitrary maps (not necessarily fibrations).
To avoid paradoxes \`{a} la Russell, there is a hierarchy of such universes, but we sweep this detail under the rug with the conventional typical ambiguity~\cite[Section~1.3]{HTT}, using $\U$ and $\Ustrict$ to denote unspecified universes.

The type constructions $\to$ and $\times$ generalize to (exo)type families.
Specifically, given $A : \Ustrict$ and $B : A \to \Ustrict$,
\begin{itemize}
 \item We can form the exotype $\prd{a : A}B(a)$ of dependent functions. An element $f : \prd{a : A}B(a)$ is a function that returns, on an input $a : A$, an element of the type $B(a)$.
  \nomenclature[p]{$\prd{a : A}B(a)$}{(exo)type of dependent functions from $A$ to $B$}
   If $b[x]$ is a term containing a variable $x : A$, then we can build the function $\lambda x. b[x] : \prd{a : A}B(a)$.\footnote{This is the type-theoretic way to write the function $x \mapsto b[x]$.}\nomenclature[l]{$\lambda x. b[x]$}{possibly-dependent function sending input $x$ to output $b[x]$}
   Function application is written $f(a)$ as usual, or sometimes just $f\, a$.
 \item We can form the exotype $\sm{a : A}B(a)$ of dependent pairs. An element $s : \sm{a: A}B(a)$ consists of two components, where the first component $\pi_1(s) : A$ and the second component $\pi_2(s) : B(\pi_1(s))$. Given $a : A$ and $b : B(a)$, we write $(a,b) : \sm{a : A}B(a)$.\nomenclature[s]{$\sm{a : A}B(a)$}{(exo)type of dependent pairs consisting of elements from $A$ and $B$}\nomenclature[p]{$\pi_1$}{first component of a dependent pair}\nomenclature[p]{$\pi_2$}{second component of a dependent pair}\nomenclature[\]{$(a,b)$}{dependent pair of $a:A$ and $b:B(a)$}
   When $B$ is regarded as a fibration or map into $A$, then $\sm{a : A}B(a)$ is the domain of this map.
\end{itemize}
Both $\prd{a : A}B(a)$ and $\sm{a : A}B(a)$ are fibrant types if $A$ and $B$ are, i.e., if $A:\U$ and $B:A\to \U$.

A function of two variables can be expressed as usual as $f:A\times B\to C$, but it is more usual to write it in ``curried'' form $f:A\to (B\to C)$, the type of which we abbreviate as $A\to B\to C$.
Similarly, a function of two dependent variables can be expressed either as $\big(\sm{x:A} B(x)\big) \to C$ or as $\prd{x:A} \big(B(x) \to C\big)$, and we write the latter curried form as $\prd{x:A} B(x) \to C$.
Of course, $C$ could also depend on $x:A$ and $y:B(x)$.

\section{Notions of identity}
\label{sec:identity}

Given two elements $a,b: A$ of the same (exo)type, we can ask whether they are equal.
In fact, there are two different ways to ask this question.

The first, called \emph{strict equality} or \defemph{exo-equality}\index{equality!exo-}, and written $a\steq b$, corresponds semantically to ``point-set level'' equality, i.e., actual equality of objects or morphisms in a model category.\nomenclature[\]{$a \steq b$}{exo-equality between $a$ and $b$}
This equality is a congruence for all constructions in type theory.
In particular, it is \defemph{convertible},\index{convertible} meaning that if we have $a\steq b$ and an (exo)type family $P:A\to \Ustrict$, then any element $u:P(a)$ \emph{is itself also} an element of $P(b)$, i.e., $u:P(b)$.

Aside from the rules making it a congruence, exo-equality has more ``interesting'' generators such as $\pi_1(a,b) \converts a$ and $\pi_2(a,b) \converts b$, and $(\lambda x.b[x])(a) \converts b[a]$.
(Here, $b[a]$ denotes the term obtained by substituting $a$ for $x$ in $b[x]$.)\nomenclature[\]{$b[a]$}{substitution of $a$ for $x$ in $b[x]$}
Dually, for any $s:\sm{a:A}B(a)$ we have $s \converts (\pi_1(s),\pi_2(s))$, and for any $f:\prd{a:A}B(a)$ we have $f\converts \lambda x.f(x)$; and more simply, for any $u:\onetype$ we have $u\converts \ttt$.
In particular, the universal properties of $\Sigma$-types, $\Pi$-types, function types, and product types hold up to exo-equality.
In addition, any \emph{definition} gives an exo-equality: when defining a new symbol $t$ to equal an expression $b$, we write $t\eqdef b$, and ever after we have an exo-equality $t\steq b$.\nomenclature[\]{$a \eqdef b$}{definition of $a$ as $b$}

Exo-equality is internalized, in the sense that for any exotype $A$ and elements $a,b:A$ there is an exotype $a\steq b$ such that to give an element of $a\steq b$ means to show that $a$ and $b$ are exo-equal, and there can be only one such element, which we call \defemph{an exo-equality}.
Semantically, the exotype $x\steq y$ depending on the variables $x,y:A$ is the diagonal $A \to A\times A$.
Importantly, even if $A$ is fibrant, this diagonal is not usually a fibration; hence even if $A$ is a type, the exo-equality is only an exotype.

The other notion of equality is called the \emph{(Martin-Löf) identity type} or the \emph{identification type}.
Semantically, this represents a \emph{fibrant replacement} of the diagonal, a.k.a.\ a \emph{path type}.
Internally, this means for any (fibrant!)\ type $A$ and elements $a,b:A$ there is a fibrant type written $a =_A b$, or simply $a = b$.\nomenclature[\]{$a = b$}{type of identifications of $a$ and $b$}
In particular, for any $a : A$, we have $\refl_a : a = a$.\nomenclature[refl]{$\refl_a$}{reflexivity identification of $a$}
But unlike the exo-equality type, there can be more than one element of $a=b$: if $A$ represents a space-like object, then distinct elements of $a=b$ represent distinct paths or homotopies from $a$ to $b$.
We refer to elements of $a=b$ as \defemph{identifications}\index{type!identity} of $a$ with $b$.

The identification type is not convertible in the sense outlined above; instead it is \defemph{transportable}.\index{transportable}
That is, given $p : a = b$ and a type family $P:A\to \U$, any $u:P(a)$ induces a different, but corresponding, element of $P(b)$, written $\transport^P(p,u)$ or $\trans{p}{u}$.\nomenclature[t]{$\transport^P(p,u)$}{transport of $u:P(a)$ along $p:a=b$}\nomenclature[\]{$\trans{p}{u}$}{transport of $u:P(a)$ along $p:a=b$}
More generally, given a type family
\[P : \tprd{a, b : A} (a = b) \to \U,\]
to construct a function $f$ of type $\prd{a, b : A}{p : a = b} P(a,b, p)$ it suffices to specify, for any $a : A$, an element of $P(a, a, \refl_a)$. We refer to this principle as ``\pathinduction''.

Note that by convertibility for exo-equality, for any fibrant type $A$ and $a, b : A$, we have a map $(a \steq b) \to (a = b)$.
We sometimes use this implicitly to ``coerce'' an exo-equality to an identification.
Indeed, recalling the pairs of type and exotype formers such as $\exo{\Nat}$ and $\Nat$, and $\exo{\zerotype}$ and $\zerotype$, we could also write $a\converts b$ as $a \mathbin{\exo{=}} b$.
In general, the exotype versions of these pairs of operations satisfy their universal property up to exo-equality, while the fibrant versions satisfy their universal property up to identifications: e.g., for any exotype $A$ the map $\exo{\zerotype}\to A$ is unique up to exo-equality, while for any fibrant type $A$ the map $\zerotype\to A$ is unique up to identification.

We say that a function $f:A\to B$ between exotypes is an \defemph{isomorphism}\index{isomorphism!exo-} (or \defemph{exo-isomorphism} for emphasis) if there is $g:B\to A$ such that $g\circ f \steq 1_A$ and $f\circ g \steq 1_B$.
For instance, if an exotype $A$ is isomorphic to a fibrant type $B$, we assume that $A$ is itself fibrant; this is axiom (T3) of~\cite{2LTT}.\index{axiom!of 2LTT}
The exotype of isomorphisms from $A$ to $B$ is defined as
\[A \cong B \eqdef \sm{f:A \to B}{g:B \to A} (f \circ g \steq 1_B) \times (g \circ f \steq 1_A).\]
Similarly, we say that a function $f:A\to B$ between (fibrant!)\ types is an \defemph{equivalence} if there is $g:B\to A$ such that $g\circ f = 1_A$ and $f\circ g = 1_B$.
In particular, given two (fibrant) types $A$ and $B$, if a function $f : A \to B$ is an exo-isomorphism, then it is also an equivalence (since exo-equalities give rise to identifications).
The type $A \simeq B$ of equivalences from $A$ to $B$ requires some care to define; see \cref{sec:hlevel}.

\begin{remark}
It is very important that the ``mathematical'' notion of equality is the identification type, \emph{not} the exo-equality.
  In other words, when making a piece of mathematics formal in 2LTT, equality should be expressed using identifications, and mathematical structures such as groups or number systems should be built from (fibrant) types, not from exotypes.
  For instance, the correct definition of ``category'' internal to 2LTT is the one we will give in \cref{def:precategory}, not the notion of exo-category that we will give in \cref{def:exo-category}.

Exo-equality should be regarded as a sort of ``metatheoretic'' or ``syntactic'' equality, used for convenience but somewhat accidental in its behavior.
For instance, we can of course prove by induction that for $m,n:\Nat$ we have $m+n=n+m$, but the corresponding exo-equality $m+n\steq n+m$ cannot be proven, since it is not fibrant and we cannot use induction on $\Nat$ to construct an element of an exotype.\footnote{Of course, we \emph{can} prove that $m+n\steq n+m$ for all $m,n:\exo{\Nat}$, by induction on $\exo{\Nat}$.}
Indeed, if addition on $\Nat$ is defined by recursion on its second argument, then we cannot even prove $0+n \steq n$ for all $n:\Nat$, although we do have $n+0 \steq n$ by definition (and the situation is reversed if we define addition the other way); this sort of thing is what we mean by ``accidental''.
\end{remark}

As we will see in later sections, our signatures will involve exo-equality, and thus we will have only an exotype \emph{of} signatures.
This is reasonable because the study of general signatures and theories is properly a meta-mathematical activity.
But for a fixed signature, the types of structures, of maps between structures, of indiscernibilities within a structure, and so on, will all be fibrant, which is as we would hope because these all belong to mathematics proper.
In other words, we only need 2LTT (as opposed to the type theory of~\cite{HTT} without proper exotypes) because we want to treat all signatures, of all (finite) dimensions, uniformly.

\section{Stratification of types by their ``homotopy level''}
\label{sec:hlevel}

The type of identifications $a =_A b$ is itself a type, so has its own type of identifications; given $p, q : a =_A b$, we can form the type $p =_{a = b} q$, and so on.
The resulting tower of identification types extracts the higher homotopical information in the type $A$, and in general it need never trivialize.

By contrast, the similar tower of exo-equality exotypes trivializes after one step: if $p,q : a\steq b$ then necessarily $p\steq q$.
In other words, we assume the axiom called Uniqueness of Identity Proofs (UIP) for exo-equality.\index{axiom!uniqueness of identity proofs}
This corresponds to the fact that a model category is itself a 1-category, but represents an $\infty$-category through its notions of homotopy.

Voevodsky devised a stratification of types according to the ``complexity'' of their identity types as follows.\index{homotopy level}
\begin{itemize}
  \item Say that $A$ is a $(-2)$-type (or \defemph{contractible})\index{type!contractible} when there is an element of $\sm{a : A}\prd{x : A} x = a$. Intuitively, this means that $A$ has a unique element.
  \item Inductively, say that $A$ is an \defemph{$(n+1)$-type} if all of its identity types $a = a'$ are $n$-types, i.e., there is an element of $\prd{a:A}{a':A} \istype{n}(a=a')$.\nomenclature[istype]{$\istype{n}(A)$}{the proposition that $A$ is a homotopy $n$-type}
\end{itemize}
The unit type $\onetype$ is contractible --- and indeed, a type $A$ is contractible if and only if it is equivalent to $\onetype$.
Furthermore, given a type $A$ and $a : A$, the type $\sm{y : A} y = a$ is contractible;
this follows from \pathinduction (cf.~\cref{sec:identity}).

The next two levels above $-2$ are especially important:
\begin{itemize}
\item A $(-1)$-type is also called a \defemph{proposition}.\index{proposition}
  Equivalently, $A$ is a proposition if one can construct a function $\prd{a, a' : A} a = a'$.
  Intuitively, a proposition contains \emph{at most one} element (up to identification).
  Types that are propositions are used to represent logic inside of HoTT/UF: see \cref{sec:logic}.
\item A $0$-type is also called a \defemph{set}.\index{set}
  The types that appear in ordinary mathematics, such as the natural numbers, the real numbers, and so on, are all sets.
  Types that are not sets typically appear when working category-theoretically with large collections of structures.
\end{itemize}

The above stratification applies only to fibrant types, and refers to the fibrant identity type.
We can similarly stratify all the exotypes with reference to exo-equality, obtaining notions of \defemph{exo-contractible}, \defemph{exo-proposition}, and \defemph{exo-set}.
Because we assume UIP for exo-equality, as noted above, \emph{all exotypes are exo-sets} (including fibrant types that may have higher homotopy level in the fibrant sense); thus the exo-hierarchy has only three levels.

Now suppose given a type $A:\U$ and a family $B : A \to \U$ such that $B$ is pointwise a proposition.
Then for two elements $u,v : \sm{x:A}B(x)$, the identity type $u=v$ in $\sm{x:A}B(x)$ is equivalent to the identity type $\pi_1(u) = \pi_1(v)$ in $A$.
For this reason we refer to $\sm{x:A}B(x)$ as a \defemph{subtype} of $A$ (see also \cref{sec:logic}).
Similarly, given an exotype $A:\Ustrict$ and a family $B : A \to \Ustrict$ such that $B$ is pointwise an exo-proposition, we refer to $\sm{x:A}B(x)$ as a \defemph{sub-exotype} of $A$.

As an example, it can be proven that a function $f : A \to B$ between types is an equivalence if and only if its fibers are contractible, i.e., if there is an element of
\[\isEquiv(f) \eqdef \prd{b : B}\istype{(-2)}\Big(\tsm{a : A}f(a) = b\Big).\]\nomenclature[isequiv]{$\isEquiv(f)$}{the proposition that $f$ is an equivalence of types}%
Unlike the more na\"ive $\sm{g:B\to A} (g\circ f = 1_A)\times (f\circ g = 1_B)$, the above type turns out to be a proposition.
Thus, $\sm{f : A \to B} \isEquiv(f)$ is a subtype of $A\to B$.
We take this as the definition of the type of equivalences:
\[(A \simeq B) \eqdef \sm{f : A \to B} \isEquiv(f). \]\nomenclature[\]{$A \simeq B$}{type of equivalences between types $A$ and $B$}%
This ensures that for equivalences $f$ and $g$, the type $f=g$ is independent (up to equivalence) of whether we regard $f$ and $g$ as elements of $A\to B$ or $A\simeq B$.
(There are also many other ways to achieve this; see~\cite[Chapter 4]{HTT}.)

The type $\onetype$ is contractible, $\zerotype$ is a proposition, and $\Nat$ is a set.
In addition, the identification types of ``composite'' types can be characterized from their constituent pieces, e.g.
\begin{itemize}
\item For $a, a' : A$ and $b, b' : B$, we have $\bigl((a, b) = (a', b')\bigr) \simeq (a = a') \times (b = b')$.
\item For $a, a' : A$, $b : B(a)$, and $b' : B(a')$, we have $\bigl((a, b) = (a', b')\bigr) \simeq \sm{p : a = a'} \trans{p}{b} = b'$.
\item For $f, g : \prd{a : A}B(a)$, we have $(f = g) \simeq \textstyle\prd{a : A} f(a) = g(a)$.\footnote{Technically this is an additional axiom, called \defemph{function extensionality}.\index{axiom!function extensionality}
  Voevodsky showed that it follows from his univalence axiom~\eqref{eq:univalence-axiom}.}
\end{itemize}
More specifically, in each case we have a particular equivalence that sends the reflexivity element on the left to an element built from reflexivities on the right; by the specification of $=$ above, this specifies the map from left to right uniquely.
Corresponding facts (expressed with $\cong$ instead of $\simeq$) are also true for exotype constructors and exo-equality.

Similarly, given $A, B : \U$, we have an equivalence
\begin{equation}
 (A = B) \simeq (A \simeq B)
 \label{eq:univalence-axiom}
\end{equation}
mapping $\refl_A$ to the identity equivalence on $A$.
That this map is an equivalence is Voevodsky's \defemph{univalence axiom}\index{axiom!univalence}.
It entails in particular that $\U$ is not a set, since the type $\Nat : \U$ has non-trivial automorphisms and thus, by the univalence axiom, non-trivial self-identifications.

There is no analogue of the univalence axiom for $\Ustrict$: it is not assumed to be fibrant, so we cannot even form a type ``$A =_{\Ustrict} B$'', while there is little intelligible we can say about $A \steq B$ for exotypes $A,B:\Ustrict$.
This is another sense in which exo-equality is ``accidental''.

\section{Logic in UF}
\label{sec:logic}

While in ZF(C), logic is wrapped around set theory, in HoTT/UF (as in type-theoretic foundations for mathematics more generally) it is the other way round: types contain logic.
Specifically, in HoTT/UF the logical propositions are those types that are called propositions (i.e., $(-1)$-types; cf.\ \cref{sec:hlevel}), and a proof of such a proposition $A$ is given by an element of $A$. By definition, such a proof is unique (up to identifications) if it exists.

In this representation, the basic connectives of logic correspond to already-extant operations on types.
For instance, if $A$ and $B$ are propositions, then so are $A\to B$ and $A\times B$, and they represent the logical propositions ``if $A$ then $B$'' and ``$A$ and $B$'' respectively.
This integrated approach to logic is sometimes referred to as the ``Curry--Howard correspondence'' or the ``propositions-as-types principle''.

A predicate on a type $A$ is, accordingly, a function $P : A \to \U$ that is pointwise a proposition.
Given such a predicate, we can form the types $\prd{a: A} P(a)$ and $\sm{a : A} P(a)$. The former is a proposition and corresponds to $\forall(x : A).Px$.
However, the latter is not in general a proposition; different elements in $A$ satisfying $P$ give rise to different elements in $\sm{a : A}P(a)$.
Indeed, recall that in \cref{sec:hlevel} we called $\sm{a : A} P(a)$ a \emph{subtype} of $A$; it behaves like the set $\{ a:A \mid P(a) \}$ of elements satisfying $P$.

To represent $\exists$, therefore, we employ another type construction of HoTT/UF that maps any type $A$ universally to a proposition $\Vert A\Vert$, called the \defemph{propositional truncation} of $A$.\nomenclature[\]{$\Vert A \Vert$}{propositional truncation of a type $A$}
Intuitively, $\Vert A\Vert$ is the proposition that ``$A$ has an element''; but if this is the case, then $\Vert A\Vert$ has only one element no matter how many elements $A$ has.
Then $\exists (x:A).Px$ corresponds to $\Vert \sm{a : A}P(a)\Vert$.
Similarly, for propositions $A$ and $B$ the disjoint sum $A+B$ is not generally a proposition;
its propositional truncation $\Vert A+B\Vert$ is what represents the logical proposition ``$A$ or $B$''.
This representation of logic using propositional truncation distinguishes HoTT/UF from the more traditional propositions-as-types principle used in Martin-L\"{o}f Type Theory.

We write $\PropU \eqdef \sm{A:\U} \istype{(-1)}(A)$ for the type of propositions, which is a subtype of $\U$.\nomenclature[prop]{$\PropU$}{type of propositions}
This plays the role of the ``set of truth values'' or ``subobject classifier''.
In particular, a predicate can equivalently be defined as a map $P:A\to \PropU$.
We will also sometimes refer to such predicates as \defemph{subsets}; i.e., we identify subsets with their characteristic functions.

The \defemph{Law of Excluded Middle} is the assertion that $\PropU$ is equivalent to $\onetype + \onetype$, so that the above logic is classical.
This is an additional axiom which it is consistent to assume (and many readers may prefer to do so); but omitting it permits more general models (see \cref{sec:semantics}), and we will have no need for it.\index{excluded middle}

As an example of the role played by propositional truncation, we say that a function $f:A\to B$ is \defemph{surjective} if $\prd{b:B} \Vert \sm{a:A} fa = b \Vert$.
This notion behaves like the usual notion of surjectivity, where preimages exist but are not specified.
By contrast, we say $f$ is \defemph{split-surjective} if $\prd{b:B} \sm{a:A} fa = b$; in this case it is equipped with a specified section $s:B\to A$ such that $f \circ s = 1_B$.
The statement that every surjective function between sets is split-surjective is a form of the \defemph{Axiom of Choice}; like the Law of Excluded Middle this is consistent to assume, but we will not use it.

The ``dual'' of a surjective map is an \defemph{embedding}\index{embedding}, which is a function $f:A\to B$ that induces equivalences on identification types, $(x=_A y) \simeq (fx =_B fy)$ for all $x,y:A$.
This is a homotopical refinement of injectivity, and is equivalent to saying that each of its fibers $\sm{a:A} fa = b$ is a proposition.
A function that is both surjective and an embedding is automatically an equivalence.

Finally, we note that in general, for different universes $\U$ and $\U'$, the types $\PropU$ and $\PropUp$ are not equivalent.
All we can say is that if $\U$ is contained in $\U'$, there is an embedding from $\PropU$ to $\PropUp$.
The axiom of \defemph{propositional resizing}\index{axiom!propositional resizing} from~\cite[\S3.5]{HTT} asserts that this embedding is always an equivalence.
This axiom follows from excluded middle, and moreover holds in all higher topos models.
We will use the propositional resizing axiom when discussing examples from higher-order logic in \cref{sec:egs-higherorder}, but nowhere else.

\section{Finiteness}
\label{sec:finiteness}

For any natural number $n:\Nat$, there is a corresponding \defemph{finite type}\index{type!finite}, written $\Nat_{<n}$.\nomenclature[n]{$\Nat_{<n}$}{finite type}
If $n$ is a concrete external numeral such as 2 or 3, then $\Nat_{<n}$ is equivalent to the sum of $n$ copies of $\onetype$:
\[\Nat_{<n} \simeq \overbrace{\onetype + \cdots + \onetype}^{n}.\]
Similarly, for any exo-natural number $n:\exo\Nat$, there is an \defemph{exofinite exotype}\index{exotype!exofinite} $\exo{\Nat}_{<n}$, which for a numeral $n$ is isomorphic to the exo-sum of $n$ copies of $\onetype$:
\[\exo{\Nat}_{<n} \cong \overbrace{\onetype \exosum \cdots \exosum \onetype}^{n}.\]\nomenclature[n]{$\exo{\Nat}_{<n}$}{exofinite exotype}%
There is a unique map $\exo{\Nat}\to \Nat$ preserving zero and successor, which we treat as an implicit coercion, allowing us to regard any exo-natural number as a natural number.
For a numeral $n$ we will sometimes write $\Nat_{<n}$ and $\exo{\Nat}_{<n}$ as $\mathbf{n}$ and $\exo{\mathbf{n}}$ respectively; thus for instance $\mathbf{3} \simeq \onetype + \onetype+\onetype$ while $\exo{\mathbf{3}} \cong \onetype \exosum\onetype \exosum\onetype$.

In both cases, $\Sigma$-types over concrete (exo)finite (exo)types reduce to (exo)sums.
For instance, if $A:\mathbf{3} \to \U$ and $B:\exo{\mathbf{3}}\to \Ustrict$, we have
\[ (\tsm{j:\mathbf{3}} A_j) \simeq A_0 + A_1 + A_2
  \qquad\text{and} \qquad
  (\tsm{j:\exo{\mathbf{3}}} B_j) \cong B_0 \exosum B_1 \exosum B_2.
\]

\section{Semantics of HoTT/UF and 2LTT}
\label{sec:semantics}

Building on work such as~\cite{HoffStreich,AW2007}, Voevodsky devised a model of MLTT in simplicial sets \cite{KL12} that satisfies the univalence axiom~\eqref{eq:univalence-axiom}.
Each type $A$ is interpreted as a Kan complex (a concrete model for an $\infty$-groupoid), with elements $a:A$ representing 0-cells, identifications $p:a=a'$ representing 1-cells, and so on.
The universe $\U$ is interpreted by the base of a universal Kan fibration: a fibration of which every fibration (with fibers belonging to some universe) is a pullback.
Thus, a type family $B:A\to \U$ induces by pullback a fibration $B\twoheadrightarrow A$.
The type constructors such as $\times,\to,+,\Sigma,\Pi$ are interpreted by standard categorical constructions (see the references).
Of particular note is the identity type $(\cdot=\cdot) : A\to A \to \U$, which is interpreted by a \emph{path object} $A^{\Delta^1}$.\nomenclature{$A^{\Delta^1}$}{path object of a simplicial set $A$}
This model was extended to two-level type theory in~\cite[\S2.5]{2LTT}, including axioms T1--T3 and A5.

Subsequent work (e.g.,~\cite{lw:localuniv,ls:hits,shulman:univinj}) has shown that HoTT/UF can be interpreted in a much wider class of Quillen model categories, which are general enough to present all Grothendieck--Lurie $\infty$-toposes~\cite{Lurie}.
The basic ideas of the interpretation are the same; the main difference is that unlike simplicial sets, these models fail to satisfy nonconstructive principles such as the Law of Excluded Middle or the Axiom of Choice (providing a motivation even for a purely classical mathematician to avoid nonconstructive principles).
Thus, any mathematics written in HoTT/UF without using these principles can be automatically interpreted ``internally'' to any higher topos.
See, e.g.,~\cite{shulman_logic_space} for further discussion.
These more general models have not been formally extended to two-level type theory, but the same techniques should apply --- although axiom T1 should not be expected to hold, since in such cases the notion of ``fibration'' used in defining the universe is often structure rather than a mere property.\index{axiom!of 2LTT}

Other interpretations are also possible, e.g., in cubical sets \cite{bezem_et_al:LIPIcs:2014:4628} (see also \cite[\S2.5.2]{2LTT}).
These have the advantage of good computational and constructive behavior.

Finally, 2LTT can be interpreted in presheaves over any model of MLTT.
This is used in \cite[Proposition 2.18]{2LTT} to show that 2LTT with axioms T1, T2, and A5 (but not T3) is conservative over the type theory of \cite{HTT}.

\section{Fibrations and cofibrations}
\label{sec:cofibrancy}

Recall that we assume axiom T3: any exotype that is exo-isomorphic to a fibrant type is itself fibrant.
Following~\cite[Definition 3.7]{2LTT}, we say that a function $f:A\to B$ between exotypes is a \defemph{fibration}\index{fibration}\index{fibration!in 2LTT} if each of its exo-fibers is fibrant, i.e., if
\[ \prd{b:B} \sm{F:\U} \Big(F \steq \big(\tsm{a:A} f(a) \steq b\big)\Big). \]
This is equivalent to the existence of a type family $F:B\to \U$ and an exo-isomorphism $A \cong \sm{b:B} F(b)$ over $B$; in other words, $\U$ is a ``classifier of fibrations''.
Semantically, this coincides with the model-categorical notion of fibration.

As per~\cite[Corollary~3.19(i)]{2LTT}, we call an exotype $B$ \defemph{cofibrant}\footnote{\cite{2LTT} also defines a notion of when a function is a \emph{cofibration},\index{cofibration!in 2LTT} but we will have no need for that.}\index{exotype!cofibrant} if for any family of fibrant types $Y : B \to \U$, the exotype $\prd{b:B} Y(b)$ is fibrant, and moreover if each $Y(b)$ is contractible then so is $\prd{b:B} Y(b)$.
Although this notion of cofibrancy is similar to the model-categorical one\footnote{Specifically, to the pullback-corner axiom relating cofibrations and fibrations in a monoidal or enriched model category.}, it does not coincide with it.
In particular, in the simplicial-set-based models mentioned in \cref{sec:semantics}, all objects are cofibrant; but not all types can be shown to be cofibrant in 2LTT.
What can be shown is the following (see~\cite[Lemma~3.24]{2LTT}):
\begin{itemize}
\item All fibrant types are cofibrant.
\item $\exo{\zerotype}$ is cofibrant, and if $A$ and $B$ are cofibrant so are $A \exosum B$ and $A\times B$.
  In particular, all exofinite exotypes are cofibrant.
\item If $A$ is cofibrant and $B:A\to \Ustrict$ is such that each $B(a)$ is cofibrant, then $\sm{a:A} B(a)$ is cofibrant.
\end{itemize}
It does not seem to be possible to prove that $\exo{\Nat}$ is cofibrant, but this is a reasonable axiom to add since it holds in higher topos models.
We will not need it in this \paperorbook, but we expect it may prove useful when extending our results to signatures of infinite height.
(In~\cite{2LTT} this axiom is called A3, while A2 is a stronger but still semantically reasonable version of it; both hold in the models of~\cite{KL12,shulman:univinj}.)\index{axiom!of 2LTT}

\section{Sharpness}
\label{sec:sharp}

In a model category, every object has a fibrant replacement: a weak equivalence to a fibrant object.
This cannot be internalized in 2LTT because it is not stable under pullback; see~\cite[\S 2.7]{2LTT}.
But there are \emph{some} exotypes that do admit a fibrant replacement, and this yields a useful notion intermediate between cofibrancy and fibrancy.
(Unlike the rest of this \lcnamecref{sec:review-uf}, the material in this \lcnamecref{sec:sharp} is new.)

\begin{definition}
  An exotype $B$ is \defemph{sharp}\index{exotype!sharp} if it is cofibrant and it has a ``fibrant replacement'', meaning that there is a fibrant type $RB$ and a map $r:B\to RB$ such that for any family of fibrant types $Y:RB \to \U$, the precomposition map
  \begin{equation}
    (-\circ r) : \Big(\prd{c:RB} Y(c)\Big) \to \Big( \prd{b:B} Y(r b) \Big)\label{eq:sharp-precomp}
  \end{equation}
  is an equivalence of types.
  (Note that the codomain of~\eqref{eq:sharp-precomp} is fibrant because $B$ is assumed cofibrant.)
\end{definition}

In model category theory, a morphism is called sharp~\cite{rezk:sharp-maps}\footnote{Other terms used for sharp maps include ``right proper maps'', ``weak fibrations'', ``h-fibrations'', ``W-fibrations'', and ``fibrillations''.} if pullback along it preserves weak equivalences; but in a right proper model category, this is equivalent to its having a pullback-stable fibrant replacement.
This motivates our use of the word.

\begin{lemma}\label{thm:sharp-variations}
  Let $B$ be a cofibrant exotype, $RB$ a fibrant type, and $r:B\to RB$ a map.  The following are equivalent:
  \begin{enumerate}
  \item The map~\eqref{eq:sharp-precomp} is an equivalence for any $Y$, so that $B$ is sharp.\label{item:sv1}
  \item The map~\eqref{eq:sharp-precomp} has a section for any $Y$.\label{item:sv2}
  \item The map~\eqref{eq:sharp-precomp} is an equivalence whenever $Y \eqdef \lambda x. Z$ is constant at some fibrant type $Z:\U$, hence $(RB \to Z) \simeq (B\to Z)$.\label{item:sv3}
  \end{enumerate}
\end{lemma}
\begin{proof}
  This is analogous to other universal properties in type theory such as~\cite[Theorems 5.5.5 and 7.7.7]{HTT}.
  Of course~(\ref{item:sv1}) implies~(\ref{item:sv2}) and~(\ref{item:sv3}).

  Assuming~(\ref{item:sv2}), to show~(\ref{item:sv1}) it suffices to show that for any $f,g:\prd{c:RB} Y(c)$, if $f\circ r = g\circ r$, then $f=g$.
  But this follows from~(\ref{item:sv2}) applied to the (fibrant) type family $Y'(c) \eqdef (f(c) = g(c))$.

  Finally, assuming~(\ref{item:sv3}) and given any $Y:RB\to \U$, we define $Z\eqdef \sm{c:RB} Y(c)$, and let $\sigma:(B\to Z) \to (RB\to Z)$ be an inverse of $(-\circ r)$ for this $Z$.
  To show~(\ref{item:sv2}), let $f:\prd{b:B} Y(rb)$ and define $f' : B \to Z$ by $f'(b) \eqdef (rb,fb)$.
  Then we have $\sigma f' : RB \to Z$ and an identification $q : \sigma f' \circ r = f'$.
  Hence, in particular, for any $b:B$ we have $\pi_1(q_b) : \pi_1(\sigma f'(r(b))) = \pi_1(f'(b)) \converts rb$, which is to say that $\pi_1 \circ \sigma f' \circ r : RB \to B$ is equal to $r$.
  Thus, by~(\ref{item:sv3}) for the codomain $RB$, we have $p : \pi_1 \circ \sigma f' = 1_{RB}$ that precomposes with $r$ to yield $\pi_1 q$.

  In particular, for any $c:RB$ we have some $p_c : \pi_1(\sigma f'(c)) = c$ such that $p_{rb} = \pi_1(q_b)$.
  Define $g:\prd{c:RB} Y(c)$ by $g(c) \eqdef \trans{(p_c)}{\pi_2(\sigma f'(c))}$.
  Then we have $g(rb) = \trans{(q_b)}{\pi_2(\sigma f'(rb))}$; but $\pi_2(q_b)$ identifies this with $\pi_2(f'(b))$, which is $fb$.
  So $g\circ r = f$, and~(\ref{item:sv2}) holds.
\end{proof}

\begin{lemma}\label{lem:sharp}\
  \begin{enumerate}
  \item All fibrant types are sharp.
  \item $\exo{\zerotype}$ is sharp, and if $A$ and $B$ are sharp so are $A \exosum B$ and $A\times B$.
  \item If $A$ is sharp and $B:A\to \Ustrict$ is such that each $B(a)$ is sharp, then $\sm{a:A} B(a)$ is sharp.
  \item Each $\exo{\Nat}_{<n}$ is sharp.
  \item If $\exo{\Nat}$ is cofibrant, then it is sharp.
  \end{enumerate}
\end{lemma}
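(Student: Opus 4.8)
The plan is to prove each clause by constructing the fibrant replacement $RB$ explicitly and verifying the universal property~\eqref{eq:sharp-precomp}, leaning on the cofibrancy facts already established in \Cref{sec:cofibrancy} (since sharp demands cofibrant as a precondition). For clause~(1), if $B$ is already fibrant we take $RB \eqdef B$ and $r \eqdef 1_B$; then $(-\circ r)$ is the identity, which is certainly an equivalence, and fibrant types are cofibrant by the cited \cite[Lemma~3.24]{2LTT}. For clause~(2), the replacement of $\exo{\zerotype}$ is $\zerotype$, with $r:\exo{\zerotype}\to\zerotype$ the unique map; since maps out of $\exo{\zerotype}$ into any exotype and maps out of $\zerotype$ into any fibrant type are both unique (up to exo-equality and identification respectively), both sides of~\eqref{eq:sharp-precomp} are contractible and the map between them is an equivalence. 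For the sum, I would set $R(A\exosum B)\eqdef RA + RB$ with $r$ the evident map induced by the two replacements; the universal property of $+$ for fibrant targets gives $\prd{c:RA+RB}Y(c)\simeq \big(\prd{a:RA}Y(\mathsf{inl}\,a)\big)\times\big(\prd{b:RB}Y(\mathsf{inr}\,b)\big)$, and the exo-sum $\exo{+}$ satisfies the analogous property for arbitrary targets, so the precomposition map factors as a product of the two precomposition maps for $A$ and $B$, each an equivalence by hypothesis. For the product $A\times B$, take $R(A\times B)\eqdef RA\times RB$ and use currying: $\prd{c:RA\times RB}Y(c)\simeq \prd{a:RA}\prd{b:RB}Y(a,b)$, reducing to applying sharpness of $A$, then sharpness of $B$ fiberwise (using that $\prd{b:RB}Y(a,b)$-valued families are still fibrant because $RB$ is fibrant hence cofibrant).

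For clause~(3), the dependent-sum case, I would set $R\big(\sm{a:A}B(a)\big)\eqdef \sm{a:RA} RB'(a)$, where $RB'$ is obtained from the family $B$ by first transporting along $r:A\to RA$. The subtlety here is that $B$ is a family over $A$, but I want a family of fibrant types over $RA$; the sharpness of $A$ applied to a suitable universe-valued family should let me extend $a\mapsto RB(a)$ along $r$ (or more precisely, I should organize the data so that the replacement is formed coherently). The universal property to check is that $\prd{c:R\Sigma}Y(c)$ matches $\prd{s:\sm{a:A}B(a)}Y(rs)$, which after currying both sides reduces to a two-step application: first use sharpness of $A$ to handle the outer variable, then use sharpness of each $B(a)$ for the inner variable. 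Finally clause~(4), each $\exo{\Nat}_{<n}$, follows from clauses~(2): for a concrete numeral $n$ we have $\exo{\Nat}_{<n}\cong \onetype\exosum\cdots\exosum\onetype$, and $\onetype$ is fibrant hence sharp by~(1), so finitely many applications of the $\exo{+}$-case of~(2) — together with the fact (from axiom T3) that an exotype isomorphic to a sharp one is sharp — give sharpness of $\exo{\Nat}_{<n}$.

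The main obstacle I anticipate is clause~(3): unlike cofibrancy, where the $\Sigma$-closure is comparatively formal, sharpness requires producing an honest fibrant type $RB'(a)$ over the \emph{replaced} base $RA$ and a replacement map coherent with the one on the base, and then checking that the composite precomposition map is an equivalence rather than just that each stage is. Getting the family $B:A\to\Ustrict$ to ``descend'' to a fibrant family over $RA$ is exactly the kind of step that fails for naive fibrant replacement, so the argument must use the equivalence~\eqref{eq:sharp-precomp} for $A$ applied to a $\U$-valued family (i.e.\ to the family $a\mapsto RB(a)$, itself already fibrant-valued), which is legitimate since the codomain $\prd{a:A}\U$-type is fibrant by cofibrancy of $A$. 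Once the family is in place, the verification of the universal property is a routine currying-and-composition argument, and the potential coherence issues with the replacement maps are absorbed because~\eqref{eq:sharp-precomp} only asks for an equivalence, not a strict section.
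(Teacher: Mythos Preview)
Your proposal is correct and follows essentially the same approach as the paper: trivial replacement for fibrant types, $\zerotype$ for $\exo{\zerotype}$, $RA+RB$ and $RA\times RB$ for the binary cases, and for the dependent sum you correctly identify the key move of extending the fibrant family $a\mapsto RB(a)$ along $r:A\to RA$ via sharpness of $A$ applied to a $\U$-valued family (the paper calls the result $\widetilde{RB}$), then verifying the universal property by a chain of curried equivalences. One small imprecision: in clause~(4) you argue for ``a concrete numeral $n$'', but the statement is for all $n:\exo{\Nat}$, so the paper proceeds by induction on $n$ using $\exo{\Nat}_{<0}\cong\exo{\zerotype}$ and $\exo{\Nat}_{<n+1}\cong\exo{\Nat}_{<n}\exosum\onetype$---this is the rigorous form of your ``finitely many applications''.
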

\begin{proof}
  If $B$ is fibrant, we can take $RB \define B$.
  The fibrant replacement of $\exo{\zerotype}$ is $\zerotype$; then both domain and codomain of~\eqref{eq:sharp-precomp} are contractible.
  The fibrant replacement of $A \exosum B$ is $RA + RB$, and --- writing $\mathsf{inl}$ and $\mathsf{inr}$ for the canonical functions of type $RA \to RA + RB$ and $RB \to RA + RB$, respectively --- we have a commutative square
  \[
    \begin{tikzcd}
      \prd{c:RA+RB} Y(c) \ar[d] \ar[r,"\simeq"]  &
      \big(\prd{a:RA} Y(\mathsf{inl}(a))\big) \times \big(\prd{b:RB} Y(\mathsf{inr}(b))\big) \ar[d,"\simeq"]\\
      \prd{b:A\exosum B} Y(r b) \ar[r,"\cong"'] &
      \big(\prd{a:A} Y(\mathsf{inl}(r a))\big) \times \big(\prd{b:B} Y(\mathsf{inr}(r b))\big)
    \end{tikzcd}
  \]
  showing that the left-hand map is an equivalence.
  Similarly, the fibrant replacement of $A\times B$ is $RA \times RB$.

  For $\sm{x:A} B(x)$, since each $B(a)$ is sharp we have a pointwise fibrant replacement $RB : A \to \U$, and since $A$ is sharp this can be extended to $\widetilde{RB} : RA \to \U$ with equivalences $e : \prd{a:A} RB(a) \simeq \widetilde{RB}(ra)$.
  Now we can take the fibrant replacement of $\sm{x:A} B(x)$ to be $\sm{x:RA} \widetilde{RB}(x)$, and for any $Y:(\sm{x:RA} \widetilde{RB}(x)) \to \U$ we can decompose~\eqref{eq:sharp-precomp} into a chain of equivalences
  \begin{align*}
    \Big(\tprd{z:\sm{x:RA} \widetilde{RB}(x)} Y(z)\Big)
    &\simeq \tprd{x:RA}{y:\widetilde{RB}(x)} Y(x,y) \\
    &\simeq \tprd{a:A}{y:\widetilde{RB}(ra)} Y(ra,y) \\
    &\simeq \tprd{a:A}{y:RB(a)} Y(ra,e_ay) \\
    &\simeq \tprd{a:A}{b:B(a)} Y(ra,e_a(rb)) \\
    &\simeq \tprd{z:\sm{a:A}B(a)} Y(rz).
  \end{align*}

  We prove that the fibrant replacement of $\exo{\Nat}_{<n}$ is $\Nat_{<n}$ by induction on $n$, using the facts that $\exo{\Nat}_{<0} \cong \exo{\zerotype}$ and $\exo{\Nat}_{<n+1} \cong \exo{\Nat}_{<n} \exosum \onetype$, and similarly for $\Nat_{<n}$.

  Finally, suppose $\exo{\Nat}$ is cofibrant.
  We take its fibrant replacement to be $\Nat$, with $r:\exo{\Nat} \to \Nat$ defined by recursion such that $r(\exo{0})\converts 0$ and $r(m + \exo{1}) \converts r(m)+1$.
  We will prove that \cref{thm:sharp-variations}(\ref{item:sv2}) holds.
  By recursion on $n:\Nat$, we define a function assigning to every $Y:\Nat\to\U$ and $f:\prd{m:\exo{\Nat}} Y(rm)$ an element $s(n,Y,f) : Y(n)$.
  (This is a valid induction on $\Nat$ since $\prd{Y:\Nat\to\U}{f:\prd{m:\exo{\Nat}} Y(rm)} Y(n)$ is fibrant, by the assumed cofibrancy of $\exo{\Nat}$.)
  First, when $n\eqdef 0$, we take $s(0,Y,f) \eqdef f(\exo{0})$, which is valid since $r(\exo{0})\converts 0$.
  Next, inductively assuming $s(n,Y,f)$ defined for all $Y$ and $f$, we define
  \[s(n+1,Y,f) \eqdef s(n,(\lambda x. Y(x+1)),(\lambda x.f(x+\exo{1}))),\]
  which is valid since $r(m+\exo{1}) \converts r(m)+1$.
  This completes the definition of $s$.
  It remains to show that $s(rm,Y,f) = fm$ for any $m:\exo{\Nat}$; but this is immediate by induction on $m$.
\end{proof}

In fact, we do not know of any exotypes that can be proven cofibrant but cannot be proven sharp.\footnote{Since the identification type is semantically a fibrant replacement of the diagonal, it is natural to guess that $a=b$ might also be a fibrant replacement of $a\steq b$ internally.  However, in general $a\steq b$ need not be cofibrant, and moreover $a=b$ only has a universal property as a \emph{family} with at least one endpoint varying.  So to include this case we would need to generalize our notion of ``sharp exotype'' to consider ``sharp families'' or ``sharp functions''.}

For our purposes, the main advantage of sharp types over cofibrant ones is that they have identity types, in the following sense.
If $B$ is sharp with fibrant replacement $r:B\to RB$, and $x,y:B$, we write
\begin{align*}
  (x=_B y) &\define (rx =_{RB} ry)\\
  \refl_x &\define \refl_{rx}.
\end{align*}

\begin{lemma}\label{lem:sharp-id}
  If $B$ is sharp and $b:B$, then for any $Y: \prd{y:B} (b=y) \to \U$ and $d:Y(b,\refl_b)$, there is a
  \[ j : \prd{y:B}{p:b=y} Y(y,b) \]
  and an identification (not an exo-equality) $j(b,\refl_b) = d$.
\end{lemma}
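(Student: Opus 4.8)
The plan is to reduce this to ordinary \pathinduction on the \emph{fibrant} type $RB$, using sharpness to move the motive $Y$ up along $r$. (I read the target type of $j$ as $\prd{y:B}{p:b=y} Y(y,p)$, the displayed ``$Y(y,b)$'' being a typo; recall that $b=_B y$ abbreviates $rb =_{RB} ry$ and $\refl_b$ abbreviates $\refl_{rb}$.)

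First I would use sharpness to extend $Y$ along $r$. Let $G : RB \to \U$ be the family $G(c) \define \big((rb =_{RB} c)\to\U\big)$; this is a family of fibrant types, since $RB$ is fibrant, $\U$ is fibrant, and fibrant types are closed under path types and function types. Applying the sharpness equivalence to $G$, precomposition with $r$ is an equivalence $e : \big(\prd{c:RB} G(c)\big) \to \big(\prd{y:B} G(ry)\big)$, whose codomain is precisely the type $\prd{y:B}\big((rb=ry)\to\U\big)$ of $Y$. Setting $\widetilde Y \define e^{-1}(Y) : \prd{c:RB}\big((rb=c)\to\U\big)$ and applying function extensionality twice to the identification $e(\widetilde Y) = Y$, I obtain identifications $\theta_{y,p} : \widetilde Y(ry)(p) = Y(y)(p)$ in $\U$, for all $y:B$ and $p : rb=ry$.

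Now $\widetilde Y : \prd{c:RB}(rb=c)\to\U$ is a family over the \emph{fibrant} type $RB$, so ordinary \pathinduction at the basepoint $rb$ applies. Let $\widetilde d : \widetilde Y(rb)(\refl_{rb})$ be the transport of $d : Y(b)(\refl_{rb})$ along $\theta_{b,\refl_{rb}}^{-1}$; then \pathinduction yields $\widetilde j : \prd{c:RB}{p:rb=c}\widetilde Y(c)(p)$ together with an identification $\widetilde j(rb,\refl_{rb}) = \widetilde d$. I would then define $j(y,p) : Y(y)(p)$ to be the transport of $\widetilde j(ry,p)$ along $\theta_{y,p}$, giving $j : \prd{y:B}{p:b=y}Y(y,p)$. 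Evaluating at $(b,\refl_b) = (b,\refl_{rb})$ and unwinding: $j(b,\refl_b)$ is the transport of $\widetilde j(rb,\refl_{rb})$ along $\theta_{b,\refl_{rb}}$, which is identified (via the computation rule for $\widetilde j$) with the transport of $\widetilde d$ along $\theta_{b,\refl_{rb}}$, which in turn is identified with $d$ since $\widetilde d$ is by construction the transport of $d$ along $\theta_{b,\refl_{rb}}^{-1}$; concatenating these gives the required identification $j(b,\refl_b)=d$.

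The only load-bearing point is that $Y(y,p)$ depends on $y:B$, not merely on $ry:RB$, so \pathinduction cannot be applied to $Y$ directly; sharpness is exactly what produces the extension $\widetilde Y$ over $RB$ that removes this dependence, and this is where I expect the conceptual work to lie. Everything afterwards is the routine bookkeeping of transporting the base datum $d$ and the conclusion $\widetilde j$ across the identifications $\theta$, and since the $\theta$ are identifications rather than exo-equalities, so is the resulting $j(b,\refl_b)=d$, as the statement claims. If desired, the conclusion can be strengthened to contractibility of $\sm{j}\big(j(b,\refl_b)=d\big)$ by transporting, along the same $\theta$, the contractibility that \pathinduction furnishes on the $RB$ side.
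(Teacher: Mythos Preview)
Your proof is correct and follows essentially the same approach as the paper: use sharpness at the family $G(c)\define (rb=c)\to\U$ to extend $Y$ to a $\widetilde Y$ over $RB$, apply ordinary \pathinduction in the fibrant type $RB$, and transport back. The paper phrases the comparison maps as equivalences $e_{y,p}:Y(y,p)\simeq\widetilde Y(ry,p)$ rather than identifications $\theta_{y,p}$ in $\U$, and uses the exo-equality computation rule for \pathinduction to get $j(b,\refl_b)\steq e_{b,\refl_b}^{-1}(e_{b,\refl_b}(d))=d$; but this is only a cosmetic difference from your transport-along-$\theta$ formulation.
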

\begin{proof}
  Since $B$ is sharp and $(b=y) \to \U$ is fibrant, there is a
  \[\widetilde{Y} : \prd{z:RB} (rb = z) \to \U\]
  and an equivalence $e_{y,p}: Y(y,p) \simeq \widetilde{Y}(ry,p)$ for all $y:B$ and $p:b=y$.
  Thus, by \pathinduction for the identification type of $RB$, we have an $f:\prd{z:RB}{p:rb = z} \widetilde{Y}(z,p)$ such that $f(rb,\refl_{rb}) \steq e_{b,\refl_{rb}}(d)$.
  Let $j(y,p) \define e_{y,p}^{-1}(f(ry,p))$; then $j(b,\refl_b) \steq e_{b,\refl_b}^{-1}(e_{b,\refl_b}(d)) = d$ as desired.
\end{proof}

We define a function between sharp exotypes to be an \defemph{equivalence}\index{equivalence!of sharp exotypes} if it induces an equivalence between their fibrant replacements.
Note that in general, such an equivalence may not have any inverse map backwards at the level of the sharp exotypes.
In particular, every sharp exotype is equivalent to its fibrant replacement.
Moreover, the sharpness-preserving type constructors from \cref{lem:sharp} such as $\times$, $\exosum$, and $\Sigma$ all preserve equivalences in this sense, as do $\Pi$-types with sharp domain and fibrant codomain.

\section{Further reading}

More information on UF may be found, e.g., in \cite{grayson-intro-to-uf}.
An introduction to HoTT/UF with an emphasis on synthetic mathematics is given in \cite{shulman_logic_space}.
The book \cite{HTT} constitutes a comprehensive reference on HoTT.
And, of course, \cite{2LTT} is the main reference for 2LTT.

\part{Theory of diagram structures}\label{sec:general-theory}

In this \lcnamecref{sec:general-theory} we summarize our theory and results for a particular class of signatures and their (univalent) structures.
We call these \emph{diagram signatures}; they are essentially the same as Makkai's ``vocabularies'' \cite{MFOLDS}, but formulated in the setting of 2LTT. Almost accidentally, formulating vocabularies in 2LTT turns out to make them more general, see, e.g., \cref{eg:species,eg:unbiased-sym-monoidal}.
Eventually, in \cref{sec:ho}, we will formulate and prove our results in the even more general context of ``functorial signatures''.
But we begin with diagram signatures because they are an easier context in which to explain the ideas, and general enough to include the majority of examples.

We start, in \cref{sec:folds-cats}, by studying the particular signature $\LcatE$ of categories.
We review the notion of (univalent) categories in HoTT, and analyze how to recover this notion in terms of $\LcatE$-structures, in a way that can be transferred to other signatures.
The case of categories also provides one of several running examples for the whole of \cref{sec:general-theory}.

In \cref{sec:folds-signatures-two} we define general diagram signatures and their structures.
This requires some care to ensure that the exotype of structures is \emph{fibrant}, that is, lives in the HoTT fragment of 2LTT.
We also define notions of ``axiom'' and ``theory'', but to avoid introducing a new formal syntax we take these to be semantic in nature, appealing to the notion of property provided by the ambient logic (HoTT/UF).

In \cref{sec:indisc-univ-folds} we state the definitions of indiscernibility and univalence for diagram structures, generalizing the notions of isomorphism and univalence in categories that we reviewed in \cref{sec:folds-cats}.
We furthermore state results, in \cref{thm:hlevel-folds,thm:hlevel1-folds}, which give an upper bound for the homotopy level of the types within a univalent $\L$-structure, and for the type of univalent $\L$-structures, respectively, in terms of the height of $\L$.
This generalizes the fact that in a univalent category, the hom-types are sets (0-types) and the type of objects is a 1-type.

Finally, in \cref{sec:hsip-folds}, we formulate our univalence principle for diagram signatures.
We define a notion of equivalence for structures, consisting of ``fiberwise'' split-surjective maps.
Our univalence principle states that, for any two univalent $\L$-structures $M$ and $N$, the type of equivalences $M \simeq N$ is equivalent, via a canonical map, to the type of identifications $M = N$.
This principle, along with the results stated in \cref{sec:indisc-univ-folds}, will be proven in \cref{sec:ho} in the more general context of functorial signatures.

\chapter{Categories: an extended example}
\label{sec:folds-cats}

In \cref{sec:folds-signatures-two} we will study diagram theories and their models in general, but first we present, in this \lcnamecref{sec:folds-cats}, our prototypical example: the theory of categories.
This discussion will help to motivate the general definitions.

In \cref{sec:foldssig-by-eg} we introduce diagram signatures and axioms informally, with reference to the diagram signature $\LcatE$ for categories.
(Formal definitions will follow in \cref{sec:foldssig}.)

In \cref{sec:categories-hott} we review the ``reference definition'' of categories in HoTT/UF, first given in~\cite{AKS13} and~\cite[Chapter 9]{HTT}.
This definition is ``algebraic'' in that identities and composition are given by operations rather than by relations.

In \cref{sec:struct-our-sign} we identify suitable axioms of $\LcatE$-structures to carve out the (pre)categories amongst the $\LcatE$-structures;
this yields the theory $\Tcat$ for (pre)categories.
We construct an equivalence of types between the type of categories and the type of $\LcatE$-structures satisfying these properties.

The axioms of this theory can be partitioned into ``categorical'' and ``homotopical'' axioms.
In \cref{sec:univ-cond-categ} we show that the homotopical axioms are equivalent to a \textbf{univalence condition} on the models of that theory.
This univalence condition entails a univalence principle for these models.
Its definition does not rely on the categorical axioms, and can be generalized to structures of other diagram signatures.

\section{Diagram theories by example}
\label{sec:foldssig-by-eg}

Intuitively, a diagram signature specifies the sorts, and the dependencies between sorts, in a kind of mathematical structure.
It consists of a particular kind of category, whose objects specify the sorts, and whose morphisms $A \to B$ specify a dependency of sort $A$ on sort $B$.

For instance, a graph consists of objects and, for any two objects, arrows between them.
Since the arrows are parametrized by pairs of objects, the structure of graphs is specified by the following diagram signature:
\[
  \begin{tikzcd}
    A \ar[d, shift right] \ar[d, shift left]
    \\
    O
  \end{tikzcd}
\]
To first approximation, a structure for a diagram signature should be a diagram of that shape in the category of types.
Thus, a structure for the diagram signature of graphs would consist of two types $MA$ and $MO$ and two functions $MA \rightrightarrows MO$.
However, the intent is that $MA$ should be, not one type, but a \emph{family} of types indexed by pairs of objects, $(MA(x,y))_{x,y: MO}$.
In 2LTT we can express this by saying that the induced function $MA \to MO\times MO$ is a fibration.
We will make this precise for an arbitrary diagram signature in \cref{sec:foldssig}; for now we observe that such a reinterpretation of diagrams is only possible for a very restricted class of categories, namely the \emph{inverse} ones.
A similar restriction on categories for this purpose was used by Makkai~\cite{MFOLDS}, who called them \emph{simple}.

When we say that a diagram signature is ``for'' a given class of structures, we generally mean that its structures \emph{include} that class and that we are primarily interested in them.
For instance, the signature for pointed sets\index{set!pointed} looks as follows:
\[
  \begin{tikzcd}
    P \ar[d]
    \\
    X
  \end{tikzcd}
\]
A structure $M$ for this signature is given by a type $MX$ together with a type family $(MP(x))_{x: MX}$, where $p: MP(x)$ signifies that $x: MX$ is the chosen point.
For this to truly represent a pointed set we require that such a $p$ exists for exactly one $x:MX$ (and that $MX$ is a set and each $MP(x)$ is a proposition).

To cut down the class of structures to those of interest, Makkai~\cite{MFOLDS} introduced a formal logic over diagram signatures, called First-Order Logic with Dependent Sorts (FOLDS).
Its formulas are built from $\top$ (``true'') and $\bot$ (``false'') as atomic predicates, and with universal and existential quantification over sorts, as well as logical connectives $\wedge$ and $\vee$, for recursively constructing more complicated formulas.
Any formula in this logic can be interpretated as a predicate on structures in HoTT/UF via the usual interpretation of logic; note that the interpretation of $\exists$ and $\vee$ involves propositional truncation.
A \emph{theory} is a pair of a signature and a collection of formulas called \emph{axioms} over it, and a \emph{model} of a theory is a structure for the signature that satisfies all of the axioms.

For instance, the ``existence'' requirement for a structure as above to be a pointed set\index{set!pointed} can be expressed by the axiom
\[ \exists (x : X). \exists (p : P(x)). \top,\]
which we will also abbreviate as
\[ \exists (x:X). P(x). \]
A given structure $M$ satisfies this axiom if there exist $x : MX$ and $p : MP(x)$; in other words, the axiom is interpreted in $M$ straightforwardly as $\exists (x:MX). MP(x)$.

\begin{notation}\label{notn:no-models-cats}
  Because of the ``tautological'' nature of this interpretation of syntactic axioms, when only one structure $M$ is being discussed, we will often abuse notation by dropping the ``$M$'' in front of its interpretation of the sorts.
  Thus, for example, when discussing a structure for the signature for pointed sets, we will talk simply about $X:\U$ and $P : X\to \U$ (rather than $MX$ and $MP$).
  However, when more than one structure is under consideration (such as when we discuss morphisms of structures in \cref{sec:eq-of-cat}), we will always retain the structure names on all the sorts.
\end{notation}

Similarly, the ``uniqueness'' requirement for a structure to be a pointed set can be expressed by the axiom
\[ \forall (x,y:X). \forall \bigl(p : P(x) \land P(y)\bigr). (x=y). \]
This uses the abbreviation introduced above, and we in turn abbreviate it as
\[ \forall (x,y:X). \bigl(P(x) \land P(y) \to (x=y)\bigr). \]
However, note that this also requires an ``equality'' proposition.
As in ordinary first-order logic, equality can be considered a basic part of the language, but such a ``logic with equality'' can always be represented inside ``logic without equality'' by making equality into an atomic relation symbol.
In our situation, this means modifying the above signature to
\begin{equation}
  \begin{tikzcd}
    P \ar[d] & E \ar[dl,shift left] \ar[dl,shift right]
    \\
    X
  \end{tikzcd}
  \label{eq:sig-pointed-set}
\end{equation}
with a new sort $E$ interpreted by a type family $(ME(x,y))_{x,y:MX}$.
Now uniqueness can be expressed as
\[ \forall (x,y:X). (P(x) \land P(y) \to E(x,y)). \]
When adding equality in this way, we always include axioms making it an equivalence relation, and moreover a \defemph{congruence}\index{congruence} for all the other predicates.  
By a congruence for a family of predicates, we mean an equivalence relation that is compatible with the predicates, i.e., such that if $E(x,y)$ then $x$ and $y$ are interchangeable in any place of any of the predicates.
In the case of pointed sets this means, again using the two abbreviations introduced above,
\begin{gather}
  \forall (x:X). E(x,x) \label{eq:pset-refl}\\
  \forall (x,y:X). (E(x,y) \to E(y,x)) \label{eq:pset-sym}\\
  \forall (x,y,z:X). (E(x,y) \land E(y,z) \to E(x,z)) \label{eq:pset-trans}\\
  \forall (x,y:X). (E(x,y) \land P(x) \to P(y)).\label{eq:pset-PE}
\end{gather}

Of course, in an arbitrary structure for the extended signature, the relation $E$ may not actually be interpreted by equality; if it is, one calls the structure \emph{standard}.\footnote{With reference to ``identity of indiscernibles'' from \cref{sec:uf-intro}, standard equality amounts to adding haecceities into the structure explicitly.}
We will see in \cref{sec:indisc-univ-folds} (particularly \cref{eg:ht2-univalence}) that standardness, along with ``homotopy level'' requirements such as that $X$ is a set and each $P(x)$ is a proposition, will follow automatically from our notion of univalence for structures.\footnote{Makkai, working in a set-theoretic framework, equipped each signature with a collection of top-level sorts regarded as ``relations'', and required that in any model the corresponding sets were propositions.}

More generally, we can represent any \emph{predicate} by a sort, with axioms ensuring that its corresponding types are propositions, and we can represent any \emph{function} by the predicate of its graph.\footnote{It is not unreasonable to directly include functions in addition to dependent sorts in a signature, obtaining something like Cartmell's \cite{Cart86} Generalized Algebraic Theories.  Indeed, a diagram signature can be regarded as an especially simple sort of GAT; see also \cite[pp.~1--6]{MFOLDS}.  It is an interesting question whether the results of this \paperorbook can be extended to more general GATs; for now we restrict ourselves to the simple case.
The relationship of our functorial signatures (\cref{sec:ho}) to GATs is unclear to us.}
For instance, the diagram signature $\Lcat$ of categories\index{signature!of categories} is shown in \cref{fig:signatures}, along with the related diagram signature $\LcatE$ of categories with equality (see below).
Here the sort $I$ is a predicate for ``being an identity arrow'', while the sort $T$ is a predicate for ``$h$ is the composite of $f$ and $g$'' ($T$ stands for ``triangle'').
On the left we have written numbers indicating the \emph{rank} of each sort.
The \emph{height} of a signature is one more than the maximal rank of any sort; thus $\Lcat$ and $\LcatE$ have height 3, while the signatures for graphs and pointed sets have height 2.

Note that there are some relations on the composite arrows in $\Lcat$, as shown (e.g., the two composites $I\to A\rightrightarrows O$ are equal).
When an $\Lcat$-diagram is reinterpreted using families of types, the types corresponding to any given sort $A$ depend on as many copies of each other sort $B$ as there are morphisms $A\to B$ in the signature.
For instance, since there is one arrow $I\to A$ and one arrow $I\to O$, the type family corresponding to $I$ is $(MI_x(f))_{x:MO, f:MA(x,x)}$.
Similarly, since there are three arrows $T\to A$, whose composites with $d,c$ are equal in pairs yielding three arrows $T\to O$, the type family of $T$ is
\[(MT_{x,y,z}(f,g,h))_{x,y,z:MO, f:MA(x,y),g:MA(y,z),h:MA(x,z)}.\]

\begin{figure}
\[
  \begin{tikzcd}[ampersand replacement = \&]
2 \&
    T \ar[rd, "t_2" description, shift left = .5, bend left=25]  \ar[rd, "t_0" description, shift right = .5,bend right = 25] \ar[rd, "t_1" description]
    \&
    I \ar[d, "i" near start]
    \&
    T \ar[rd, "t_2" description, shift left = .5, bend left=25]  \ar[rd, "t_0" description, shift right = .5,bend right = 25] \ar[rd, "t_1" description]
    \& 
    I \ar[d, "i" near start]
    \&
    E \ar[ld, "e_1"' near start, shift right] \ar[ld, "e_2" near start, shift left]
    \\
1 \&
    \&
    A \ar[d, "d"', shift right] \ar[d, "c", shift left]
    \&
    \&
    A \ar[d, "d"', shift right] \ar[d, "c", shift left]
    \\
0 \&
    \&
    O 
    \&
    \&   
    O
 \end{tikzcd}
\]

\begin{equation*}
 ci = di,
 ct_0 = dt_1,
 dt_0 = dt_2,
 ct_1 = ct_2,
 de_1 = de_2,
 ce_1 = ce_2
\end{equation*}

\caption{The diagram signatures $\Lcat$, for categories (left), and $\LcatE$, for categories with equality predicate on arrows (right).
The morphisms are subject to the indicated equalities.
}

\label{fig:signatures}
\end{figure}

Of course, we need to impose axioms to restrict to the structures that represent categories.
For instance, we require that any two composable arrows have a composite:
\begin{equation}
  \forall (x,y,z:O). \forall (f:A(x,y)). \forall (g:A(y,z)). \exists (h:A(x,z)). T_{x,y,z}(f,g,h).
  \label{eq:has-comp}
\end{equation}
Other axioms involve equality, e.g., the uniqueness of composites
\begin{multline}
  \forall (x,y,z:O). \forall (f:A(x,y)). \forall (g:A(y,z)). \forall (h,h':A(x,z)).\\ T_{x,y,z}(f,g,h)\land T_{x,y,z}(f,g,h') \to (h=h').
  \label{eq:comp-unique}
\end{multline}
As in the case of pointed sets, we can represent this inside ``logic without equality'' by adding an equality sort (and suitable congruence axioms for it).
Since there are no axioms involving equality of \emph{objects} (this is one of the virtues of a dependently typed formulation of categories), it suffices to add a single sort $E$ representing equality of arrows, as shown in $\LcatE$ on the right of \cref{fig:signatures}.
As usual, we add axioms making $E$ an equivalence relation, and a congruence for the other top-level sorts $T$ and $I$ (where all free variables should be considered to be universally quantified):
\begin{gather}
  E_{x,y} (f,f) \label{eq:E-refl}\\
  E_{x,y}(f,g) \to E_{x,y}(g,f) \label{eq:E-sym}\\
  E_{x,y}(f,g) \land E_{x,y}(g,h) \to E_{x,y}(f,h) \label{eq:E-trans}\\
  E_{x,x}(f,g) \land I_x(f) \to I_x(g) \label{eq:E-I}\\
  E_{x,y} (f,f') \land E_{y,z} (g,g') \land E_{x,z} (h,h') \land T_{x,y,z}(f,g,h) \to T_{x,y,z}(f',g',h').\label{eq:E-cong}
\end{gather}

We will not make any formal use of the logic of FOLDS in this \paperorbook; we mention it mainly to help motivate the inclusion of equality sorts in a signature.
(We will see later that there are also other good reasons for doing this; see \cref{rem:equality-in-axioms,rmk:equality}.)
It is straightforward to add an equality on any sort, like $A$, that is one rank below the top; as we will see, this restriction also makes sense semantically since in HoTT/UF it is only these sorts that can be expected to consist of sets.
In \cref{sec:strict-egs} we will discuss a way to add equalities to other sorts as well.

\begin{remark}\label{rem:equality-in-axioms}
  In fact, in the particular case of categories, the reference to the equality sort $E$ in axioms might be avoidable, by replacing uses of $E_{x,y}(f,g)$ by something like $\exists(i:A(x,x)).I_x(i) \land T_{x,x,y}(i,f,g)$.
  But even for general signatures, where such a trick might not be available, we do not really need to add a ``local'' equality predicate for the purpose of stating axioms.
    This is because the foundation we are working in (homotopy type theory, or more precisely 2LTT), has a ``global'' equality predicate, namely the type of identifications.
  Since the only notion of ``axiom'' we consider in this work (given in \cref{sec:axioms}) is very general and can refer to the type of identifications,
  adding equality sorts to our signatures (e.g., to the signature of pointed sets or of categories) is not necessary to be able to express the axioms of a pointed set or of a category.

  In future work there may be reasons to consider more restricted notions of axiom, for which purpose explicit equality sorts might be necessary.
  However, the main reason for adding equality sorts to signatures is that they can change the (univalent) models significantly---compare, for instance, the signatures and their models of \cref{eg:prop,eg:set}.
  We say more on this topic in \cref{rmk:equality}.
\end{remark}

With the general concept of diagram signature and structure in hand (though with formal definitions deferred to \cref{sec:folds-signatures-two}), in the rest of this \lcnamecref{sec:folds-cats} we investigate the $\LcatE$-structures in more detail.
In particular, we want to know what further requirements must be imposed on such structures to make them ``behave like categories'', both syntactically (internal to HoTT/UF) and semantically (in the higher-topos models thereof).

\section{Categories in HoTT}
\label{sec:categories-hott}

In this \lcnamecref{sec:categories-hott} we review the definition of \defemph{category} given in~\cite{AKS13} and also in ~\cite[Chapter 9]{HTT}.
Both start by defining a \defemph{precategory}\index{category!pre-} $\C$ as follows.

\begin{definition}
  \label{def:precategory}
  A \defemph{precategory} $\C$ consists of the following:
  \begin{enumerate}
  \item A type $\ob{\C}$ of objects.
    \label{item:cat-ob}
  \item For each $a,b:\ob{\C}$, a type $\C(a,b)$ of {morphisms}.
  \item For each $a:\ob{\C}$, a morphism $1_a:\C(a,a)$.
  \item For each $a,b,c:\ob{\C}$, a function\label{item:cat-comp}
    \[  (\circ) : \C(b,c) \to \C(a,b) \to \C(a,c). \]
  \item For each $a,b:\ob{\C}$ and $f:\C(a,b)$, we have $f = {1_b\circ f}$ and $f = {f\circ 1_a}$.
    \label{item:cat-unit}
  \item For each $a,b,c,d:\ob{\C}$ and $f:\C(a,b)$, $g:\C(b,c)$, $h:\C(c,d)$, we have ${h\circ (g\circ f)} ={(h\circ g)\circ f}$.
    \label{item:cat-assoc}
  \item For any $a, b : \ob{\C}$, the type $\C(a,b)$ is a set.
    \label{item:cat-homset}
  \end{enumerate}
\end{definition}

The definitions of functors, natural transformations, and other categorical notions are straightforward; see~\cite{AKS13} and~\cite[Chapter 9]{HTT}.

Note that $\ob{\C}$ may not be a set, and for ``large'' precategories it almost never is.
For instance, $\ob{\Set}$ is the type of sets, which by univalence is a proper 1-type.
However, allowing arbitrary types of objects is problematic too.
For instance, while the statement ``a fully faithful and essentially surjective functor is an equivalence'' in ZF is equivalent to the axiom of choice, for precategories in HoTT/UF it is generally false, even if the axiom of choice is assumed.

Precategories are also ``wrong'' semantically.
For instance, when interpreted in Voevodsky's simplicial set model~\cite{KL12}, precategories do \emph{not} correspond to traditional categories defined in set-theoretic foundations.
Roughly speaking, the interpretation in this model of a precategory consists of a category $C$ (of the usual set-based sort), a Kan complex (i.e., a homotopy type or $\infty$-groupoid) $K$, and an essentially surjective functor $\Pi_1(K) \to C$, where $\Pi_1(K)$ denotes the fundamental groupoid.
In~\cite{af:flagged} this is called a \emph{flagged category}.

In terms of classical homotopy theory, the meaning of a precategory can be explained by defining its \emph{nerve} to consist of the types
\begin{align*}
  \C_0 &\phantom{\define} \text{(the type of objects)} \\
  \C_1 &\define \sm{a,b:\ob{\C}} \C(a,b) \\
  \C_2 &\define \sm{a,b,c:\ob{\C}}{f:\C(a,b)}{g:\C(b,c)}{h:\C(a,c)} h= g\circ f
\end{align*}
and so on, with $\C_n$ the type of ``$n$-simplices'' in $\C$ consisting of $n+1$ objects as the vertices, $\binom{n+1}{2}$ arrows as the 1-simplices, and identifications and higher identifications filling in the higher-dimensional faces.
This yields a semisimplicial simplicial set that satisfies the Segal condition and has degeneracies up to homotopy, and such that the map $\C_1 \to \C_0\times \C_0$ has fibers that are equivalent to discrete sets.
Thus, after a suitable rectification of the degeneracies, we can say that precategories correspond to \emph{Segal spaces} in the sense of~\cite{rezk:css} such that $\C_1 \to \C_0\times \C_0$ has homotopy-discrete fibers, which by the results of~\cite{af:flagged} are equivalent to the above description of flagged categories.

There are two ways to restrict Segal spaces to obtain a correct definition of $(\infty,1)$-categories (each of which therefore includes an equivalent copy of the collection of 1-categories).
The first, called a \emph{Segal category}~\cite{dks:hocomm}, requires the simplicial set of 0-simplices to be discrete (thus the above $K$ is discrete and the functor $K = \Pi_1(K) \to C$ is bijective on objects).
This corresponds to the following requirement on a precategory in HoTT/UF.

\begin{definition}
  A \defemph{strict category}\index{category!strict} is a precategory $\C$ such that the type $\ob{\C}$ is a set.
\end{definition}

By contrast, a \emph{complete Segal space}~\cite{rezk:css} requires the path-spaces in the simplicial set of 0-simplices to be equivalent, in a canonical way, to the spaces of equivalences obtained from the category structure.
That is, we require that $K$ is a homotopy 1-type, hence determined by its fundamental groupoid, and that the functor $\Pi_1(K) \to C$ is an equivalence onto the maximal subgroupoid of $C$.
Inside HoTT/UF, this corresponds to a ``local univalence'' condition on a precategory, ensuring that equality in the type of objects coincides with the relevant notion of ``sameness'' for objects of a category in category theory, namely isomorphism.
We now recall this condition precisely, starting with the usual definition of isomorphism:
\begin{definition}
  Let $\C$ be a precategory.
  For any morphism $f : \C(a,b)$ in $\C$, we define the type
  \[ \isiso(f) \eqdef \sm{g : \C(b,a)} (f \circ g = 1_b) \times (g \circ f = 1_a) \]
  
  The type of isomorphisms from $a$ to $b$ is then the type of pairs of a morphism $f : \C(a,b)$ together with a witness of the fact that $f$ is an isomorphism:
  \[ (a \cong b) \enspace \eqdef \enspace \sm{f : \C(a,b)} \isiso(f). \]

  We define the family of functions
  \[ \idtoiso : \prd{a, b : \ob{\C}} (a = b) \to (a \cong b) \]
  by \pathinduction, sending $\refl_a$ to the identity isomorphism on $a$.
\end{definition}

As expected, one can show that, for a morphism $f : \C(a,b)$, the type $\isiso(f)$ is a proposition; in particular, the inverse of $f$ is unique if it exists.
This entails that the type $a \cong b$ of isomorphisms is a set.

\begin{definition}\index{category!univalent}
  \label{def:univ-cat}
  A \defemph{univalent category}\footnote{Due to the many advantages mentioned below, in HoTT/UF univalent categories are often called simply ``categories'', although in some references that unadorned word refers instead to precategories.} is a precategory $\C$ such that for any $a, b : \ob{\C}$, the function
  \[ \idtoiso_{a,b} : (a = b) \to (a \cong b) \]
  is an equivalence.
\end{definition}

In particular, in a univalent category, ``isomorphic objects are equal''.
Note also that in a univalent category $\C$, the type $\ob{\C}$ of objects is a 1-type, since its identity types are all sets (0-types).

The homotopy theories of Segal categories and of complete Segal spaces are equivalent~\cite{bergner:infty-one}.
Similarly, when strict categories and univalent categories are interpreted in the simplicial set model, they both yield a notion that is equivalent to ordinary 1-categories as defined in set theory.
However, although the notions of Segal category and strict category are arguably more obvious, there are numerous advantages to complete Segal spaces and univalent categories, such as:
\begin{enumerate}
\item For strict categories, the statement ``a fully faithful and essentially surjective functor is an equivalence'' is again equivalent to the axiom of choice.
  While this is an improvement over the situation for precategories, for univalent categories this statement is simply \emph{true}.
\item Internal to HoTT/UF, the vast majority of ``naturally occurring'' large categories, such as sets, groups, rings, fields, topological spaces, etc., are univalent, while practically none of them are strict.
\item Every precategory is weakly equivalent to a univalent category (its ``Rezk completion'' or ``univalent completion''), while it is impossible to prove in HoTT/UF that every precategory, or even every univalent category, is weakly equivalent to a strict one.
  Indeed, there are models of HoTT/UF in which not every 1-type admits a surjection from a set (see~\cite{ntypes-cover}).
  Regarding 1-types as univalent groupoids (i.e., univalent categories with all morphisms invertible), we find that in such a model not every precategory is weakly equivalent to a strict category.
\item The type of univalent groupoids is provably equivalent to the type of 1-types, but this is not the case for the type of strict groupoids \emph{even if} we assume that every univalent category is weakly equivalent to some strict category.
\item When HoTT/UF is interpreted in the $(\infty,1)$-topos of stacks of $\infty$-group\-oids on some site using the models of~\cite{shulman:univinj}, univalent categories correspond to stacks of 1-categories on the same site, while strict categories correspond to internal categories in the 1-topos of sheaves of sets on that site.
  The former are, generally speaking, much more important.
\item A map of complete Segal spaces is a category-theoretic equivalence just when it is a levelwise equivalence of bisimplicial sets.
  In HoTT/UF this has the following even more pleasing manifestation:
\end{enumerate}

\begin{theorem}[{\cite[Theorem~6.17]{AKS13}}]\label{thm:univalence-for-categories}\index{univalence principle!for categories}
  For univalent categories $\C$ and $\D$, let $\C \simeq \D$ be the type of categorical equivalences between $\C$ and $\D$; then
  \[ (\C= \D) = (\C\simeq \D). \]
\end{theorem}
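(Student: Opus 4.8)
The plan is to build an equivalence $(\C = \D) \simeq (\C \simeq \D)$ by factoring through the ``underlying precategory structure'' and using the univalence axiom together with the characterization of identity types of $\Sigma$-types from \Cref{sec:hlevel}. First I would recall that a precategory, unfolded, is an element of a large iterated $\Sigma$-type: the type of objects $\ob{\C}:\U$, then the hom-family $\C(-,-):\ob{\C}\to\ob{\C}\to\U$ (landing in sets), then the identities, composition, the unit and associativity laws, and the condition that each $\C(a,b)$ is a set. A univalent category is a precategory together with the (propositional) condition that $\idtoiso_{a,b}$ is an equivalence for all $a,b$. Since being a set, satisfying the laws, and being univalent are all propositions, by the usual ``$\Sigma$ over a proposition'' lemma the identity type of univalent categories is equivalent to the identity type of the underlying raw structures $(\ob{\C},\C(-,-),1,\circ)$, ignoring the propositional components.

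Next I would compute that identity type levelwise. An identification $\C = \D$ of such raw structures, by the characterization of identifications in $\Sigma$-types and in $\Pi$-types (function extensionality), unfolds to: an identification $p:\ob{\C}=\ob{\D}$; over it, a pointwise identification of the transported hom-families with $\D(-,-)$; over those, identifications matching up the identities and the composition operations. By the univalence axiom, $p:\ob{\C}=\ob{\D}$ is the same as an equivalence $F_0:\ob{\C}\simeq\ob{\D}$ on objects; transporting the hom-family along $\ua(F_0)$ and asking it to agree with $\D$'s hom-family is the same as giving a family of equivalences $\C(a,b)\simeq\D(F_0 a, F_0 b)$; and the remaining data says exactly that these equivalences are compatible with identities and composition. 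In other words, this whole package assembles into a functor $F:\C\to\D$ that is fully faithful (each hom-map is an equivalence) and a bijection on objects. So $(\C=\D)$ is equivalent to the type of such ``isomorphisms of categories'' — functors that are fully faithful and bijective-on-objects.

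The remaining step is to show that, for \emph{univalent} $\C$ and $\D$, this type of fully-faithful-bijective-on-objects functors is equivalent to the type $\C\simeq\D$ of categorical equivalences (fully faithful and essentially surjective functors, or equivalently functors admitting a pseudo-inverse). One inclusion is trivial: a bijection on objects is in particular essentially surjective. For the converse one uses univalence of the target: if $F$ is fully faithful and essentially surjective, then for each $d:\ob{\D}$ the type $\sm{c:\ob{\C}} (F c \cong d)$ is inhabited (by essential surjectivity) and, crucially, is a \emph{proposition}, because full faithfulness forces $F c \cong F c'$ to come from $c\cong c'$, which by univalence of $\C$ is $c = c'$, so the fiber is contractible; hence $F_0$ is an equivalence on objects. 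Thus on univalent categories the two notions coincide, and one checks this correspondence is an equivalence of types (it is a subtype inclusion on a type where the relevant conditions are propositions).

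The main obstacle I anticipate is the bookkeeping in the middle step: unwinding the iterated $\Sigma/\Pi$ identity type of the raw precategory structure and recognizing the transported data as precisely a bijective-on-objects fully faithful functor, while keeping the coherence with identities and composition straight under $\transport$ along $\ua(F_0)$. This is conceptually routine but technically fiddly; in \Cref{sec:ho} it is handled once and for all by the general ``levelwise equivalence = identification'' machinery for structures, so in fact the cleanest route is to observe that $\Tcat$-models correspond to (pre)categories and invoke that general result, with the univalence-specific part (essentially surjective $\Leftrightarrow$ bijective on objects for univalent categories) as the only genuinely category-theoretic input.
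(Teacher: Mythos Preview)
Your proposal is correct. The direct unfolding you describe --- compute the identity type of the iterated $\Sigma$-type of precategory data via univalence and function extensionality, obtain ``isomorphisms of precategories'' (fully faithful plus equivalence on objects), then use univalence of $\C$ and $\D$ to identify these with categorical equivalences --- is exactly the argument of the cited source \cite[Theorem~6.17]{AKS13}.

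The present paper, however, does not give an independent proof of this statement: it is quoted here as background and motivation. The paper's own route to this result is the one you mention in your final sentence: show that precategories are exactly $\Tcat$-models (\cref{lem:equiv-precat-folds}), establish the general univalence principle for arbitrary signatures (\cref{thm:hsip-folds}, proved in \cref{sec:hsip} as \cref{thm:hsip2}), and observe that the general notion of equivalence specializes to categorical equivalence for $\Tcat$ (\cref{eg:vss-cat}, \cref{sec:eq-of-cat}). So your ``obstacle'' --- the fiddly transport bookkeeping in the middle step --- is handled once and for all by \cref{prop:uni1} (identifications of structures are levelwise equivalences), and the category-specific input is exactly what you isolate: that for univalent categories, split essential surjectivity up to isomorphism coincides with being an equivalence on objects.

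One small clarification in your converse step: univalence of $\D$ is what replaces the fiber $\sm{c}(F_0 c = d)$ by $\sm{c}(Fc \cong d)$, while univalence of $\C$ (with full faithfulness) is what makes the latter a proposition. You invoke both, but the phrasing blurs which does what.
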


Our goal, therefore, is to formulate a general notion of ``univalence'' for other categorical structures, for which we can prove an analogue of \cref{thm:univalence-for-categories}.

\begin{remark}
  Since univalent categories are much better behaved than precategories, and most naturally-ocurring categories in HoTT/UF are univalent, one generally prefers working with univalent categories.
  However, non-univalent precategories do arise at times.
  For example, a na\"{\i}ve definition of the Kleisli category $\C_T$ of a monad $T : \C \to \C$ --- taking as objects those of $\C$, and as morphisms $\C_T(a,b) \eqdef \C(a,Tb)$ --- generally only yields a precategory, even if $\C$ is univalent.
  There are other definitions of the Kleisli category that yield a univalent version (e.g.\ as the category of free $T$-algebras); but for some applications it is the non-univalent one that is needed.

  One place where this arises is in the categorical semantics of programming languages, where various abstract structures essentially codify the non-univalence of categories of this sort.
  We will discuss some such structures in \cref{sec:unnat}.
  For instance, in \cref{eg:thunk-force} we discuss the theory of ``thunk-force categories'', which axiomatize the structure of a Kleisli (pre)category; they are used to interpret computations with side effects in functional programming languages.
  Our general notion of univalence for $\L$-structures (given in \cref{def:univalence-cond-on-diag-structures}), for any signature $\L$, yields, in particular, a notion of \emph{univalent} thunk-force category.
  Any thunk-force category, univalent or not, has an underlying precategory; but univalence of the thunk-force category in the sense of \cref{def:univalence-cond-on-diag-structures} does \emph{not} entail univalence of the underlying precategory.
  This is due to the presence of ``non-categorical'' structure in a thunk-force category; details are given in \cref{eg:thunk-force}.
\end{remark}

\section{Structures of our signature for categories}
\label{sec:struct-our-sign}

In this \lcnamecref{sec:struct-our-sign}, we give an equivalent definition of the precategories of the previous section, carving them out of the structures for the signature $\LcatE$.

Anticipating the explicit calculation from \cref{def:structures-lcat-lcate,ex:struc-lcat}, a structure $M$ for the diagram signature $\Lcat$ from \cref{fig:signatures} consists of
\begin{align*}
  MO&:\U\\
  MA&:MO\times MO \to \U\\
  MI &: \tprd{x:MO} MA(x,x) \to \U\\
  MT &: \tprd{x,y,z:MO} MA(x,y) \to MA(y,z) \to MA(x,z) \to \U\\
\intertext{while a structure for $\LcatE$ additionally contains a family}
  ME &: \tprd{x,y:MO} MA(x,y) \to MA(x,y) \to \U
\end{align*}

This forms the underlying data of a (pre)category: a type of objects, types of morphisms, and properties of ``being an identity'' and ``being a composite''.

To carve out from the type of $\LcatE$-structures the precategories of \cref{def:precategory}, we consider the following ``categorical'' $\LcatE$-axioms.
\begin{enumerate}
\item $E$ is a congruence with respect to $T$, $I$, and $E$ itself (an equivalence relation).
  \label{item:e-congruence}
\item Composition of any two composable arrows should exist:  \label{item:comp-exists}
  \[\forall(x,y,z : O).\forall(f:A(x,y)).\forall(g:A(y,z)).\exists (h:A(x,z)).T_{x,y,z}(f,g,h).\]
\item Composites are unique:  \label{item:comp-unique}
  \begin{multline*}
    \forall (x,y,z:O). \forall (f:A(x,y)). \forall (g:A(y,z)). \forall (h,h':A(x,z)).
    \\
    T_{x,y,z}(f,g,h)\land T_{x,y,z}(f,g,h') \to E_{x,z}(h,h').
    \end{multline*}
\item Identities are unique:  \label{item:id-unique}
  \[\forall (x : O).\forall (f, g : A(x,x)).I_x(f) \to I_x(g) \to E_{x,x}(f,g).\]
\item Composition is right-unital: \label{item:id-r-unital}
  \[ \forall(x,y : O).\forall(f : A(x,y).\forall(g : A(y,y)).I_y(g) \to T_{x,y,y}(f,g,f). \]
\item Composition is left-unital:   \label{item:id-l-unital}
  \[ \forall(x,y : O).\forall(f : A(x,x).\forall(g : A(x,y)).I_x(f) \to T_{x,x,y}(f,g,g).\]
\item Composition is associative: \label{item:comp-assoc}
  \begin{align*}
    &\forall (w,x,y,z).\forall(f : A(w,x)).\forall(g:A(x,y).\forall (h : A(y,z)).
    \\
    &\forall(k : A(w,y)).\forall(l : A(x,z)).\forall(m,n : A(w,z).
    \\
    &T_{w,x,y}(f,g,k) \to T_{x,y,z}(g,h,l) \to T_{w,y,z}(k,h,m) \to T_{w,x,z}(f,l,n) \to E_{w,z}(m,n).
  \end{align*}

\end{enumerate}
In addition to the categorical axioms, we ask the structures to satisfy the following ``homotopical'' $\LcatE$-axioms:

\begin{enumerate}[resume]
\item $T$, $I$, $E$ consist pointwise of propositions; \label{item:pw-prop}
\item $A$ consists pointwise of sets; \label{item:pw-set}
\item The structure is \defemph{standard} with equality $E$:\label{item:standard}
  \[ \forall(x,y : O).\forall(f,g : A(x,y)).E_{x,y}(f,g) \leftrightarrow f = g.\]
\end{enumerate}

Note that Axiom (\ref{item:standard}) above asserts (semantically) that the ``internal'' or ``formal'' equality predicate $E$ coincides (propositionally) with the ``actual'' identity type of $A$. 
Without this axiom, there would be more models of $\Tcat$-precategories in the sense of \cref{def:folds-precat} (immediately below) than precategories in the sense of \cref{def:precategory}, since the former could model $E$ as any congruence (not necessarily the one given by the identity type).

\begin{definition}
  \label{def:folds-precat}
  We define $\Tcat$ to be the theory with signature $\LcatE$, and with axioms given by \cref{item:e-congruence,item:comp-exists,item:comp-unique,item:id-unique,item:id-r-unital,item:id-l-unital,item:comp-assoc,item:pw-prop,item:pw-set,item:standard} above.
  A \defemph{$\Tcat$-precategory} is a $\Tcat$-model, i.e., an $\LcatE$-structure satisfying the $\Tcat$-axioms.
\end{definition}

We can now state and prove the following result:

\begin{lemma}
  \label{lem:equiv-precat-folds}
  The type of precategories of \cref{def:precategory} is equivalent to the type of $\Tcat$-precategories.
\end{lemma}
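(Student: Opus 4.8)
The plan is to build the equivalence as a chain of elementary type equivalences, reorganising the $\Sigma$-type of $\Tcat$-precategories into the $\Sigma$-type of precategories of \cref{def:precategory}. Throughout we will use the standard manipulations available in UF: $\Sigma$-types may be reassociated and their components reordered; a $\Sigma$-component that is contractible over the rest may be deleted; a family occurring under a $\Pi$ or $\Sigma$ may be replaced by a pointwise-equivalent one (by function extensionality and univalence); and propositional $\Sigma$-components may be shuffled freely. Since $MO$ is literally $\ob{\C}$ and $MA$ is literally the hom-family $\C(-,-)$, the real task is to show that, over a fixed pair $(MO,MA)$, the bundle of the remaining $\LcatE$-data together with all the $\Tcat$-axioms is equivalent to the bundle consisting of identities, composition, the unit laws, associativity, and the hom-set condition.

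First I would peel off \cref{item:pw-set} (``$MA$ is pointwise a set'') to the front as a proposition, and assume it for the rest of the argument. Next I would observe that the sub-bundle consisting of $ME$ together with its pointwise-propositionality (part of \cref{item:pw-prop}), the congruence axioms \cref{item:e-congruence}, and standardness \cref{item:standard} is \emph{contractible}: standardness gives a logical equivalence $ME_{x,y}(f,g)\leftrightarrow (f=g)$, and since both sides are propositions (the right-hand side because $MA$ is a set) propositional extensionality turns this into an identification $ME_{x,y}(f,g) = (f=g)$; hence $ME$ is forced to be the identity-type family, and for that family the propositionality, congruence and standardness data are uniquely determined (congruence being path induction and transport). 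Deleting this sub-bundle replaces every occurrence of $E$ — in particular those in \cref{item:comp-unique,item:id-unique,item:comp-assoc} — by the identity type.

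Now I would convert the relational data into operations. The conjunction of ``$MT$ is pointwise a proposition'' (\cref{item:pw-prop}), ``composites exist'' (\cref{item:comp-exists}) and ``composites are unique'' (\cref{item:comp-unique}) says exactly that $\sum_{h:MA(x,z)} MT_{x,y,z}(f,g,h)$ is contractible for all $x,y,z,f,g$: it is a proposition because $MA$ is a set and $MT$ is a proposition, and it is inhabited because the propositional truncation implicit in \cref{item:comp-exists} is redundant on a proposition. Hence the bundle ``$MT$ with \cref{item:pw-prop,item:comp-exists,item:comp-unique}'' is equivalent to the bare datum of a composition operation $(\circ):\prod_{x,y,z} MA(x,y)\to MA(y,z)\to MA(x,z)$ — this is the standard equivalence between a functional relation and the function it encodes — and under this equivalence $MT_{x,y,z}(f,g,h)\simeq (h = g\circ f)$. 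Similarly, the bundle ``$MI$ with its pointwise-propositionality, uniqueness of identities \cref{item:id-unique}, and existence of identities'' is equivalent to the bare datum of an assignment $1_{(-)}:\prod_{x} MA(x,x)$, with $MI_x(f)\simeq (f = 1_x)$. Rewriting the axioms that remain through these two equivalences, associativity \cref{item:comp-assoc} becomes the usual equation $h\circ(g\circ f) = (h\circ g)\circ f$, and the unital axioms \cref{item:id-r-unital,item:id-l-unital} — after contracting each quantification over an arrow that has been identified with $1_{(-)}$ down to its single instance — become exactly the unit laws $f = 1_b\circ f$ and $f = f\circ 1_a$ of \cref{def:precategory}. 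What is left is precisely a type of objects, a family of hom-types each of which is a set, identities, composition, the two unit laws and associativity, i.e.\ the type of precategories; so the composite chain is the asserted equivalence.

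The main obstacle is bookkeeping rather than any difficult idea. One must keep careful track of which axioms are propositions (so they may be commuted through the $\Sigma$'s unobstructedly), verify the ``functional relation $\simeq$ function'' equivalences together with the induced equivalences $MT_{x,y,z}(f,g,h)\simeq (h=g\circ f)$ and $MI_x(f)\simeq(f=1_x)$ (since these are what get substituted when rewriting the surviving axioms), and keep the variable conventions straight — notably that $MT_{x,y,z}(f,g,h)$ encodes $h = g\circ f$ whereas composition in \cref{def:precategory} is written $\C(b,c)\to\C(a,b)\to\C(a,c)$. The only other point to watch is that the $MI$-to-$1_{(-)}$ step needs the existence of identities in addition to \cref{item:id-unique}; with that in hand, none of the steps is deep, but together they make up the content of the proof.
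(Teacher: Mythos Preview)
Your approach is correct and is essentially a carefully worked-out version of the paper's three-line sketch: both proofs keep $MO$ and $MA$ fixed, set $MI_x(f)\simeq(f=1_x)$ and $MT_{x,y,z}(f,g,h)\simeq(h=g\circ f)$, and use unique choice to pass between the relational and functional presentations. Your caution about needing existence of identities is well-placed --- the paper's axiom list indeed omits it (only uniqueness \cref{item:id-unique} is stated), yet the paper's proof silently assumes it when writing ``let $1_x$ be the unique $f$ with $MI_x(f)$''; so you have caught a small gap in the paper rather than introduced one.
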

\begin{proof}
  The underlying data of $O$ and $A$ are the same.
  In one direction, let $I_{x}(f) \eqdef (f = 1_x)$ and $T_{x,y,z}(f,g,h) \eqdef (h = g\circ f)$.
  In the other, let $1_x$ be the unique $f:A(x,x)$ with $I_x(f)$, and $g\circ f$ the unique $h$ with $T_{x,y,z}(f,g,h)$.
  (Here we use the principle of unique choice, which holds in univalent foundations; see~\cite[\S3.9]{HTT}.)
\end{proof}

\begin{remark}[E-categories]\index{category!E-}
  An \emph{E-category} (see, e.g., \cite{palmgren:LIPIcs:2018:10055}) consists of a type of objects, and, for each pair $(a,b)$ of objects, a \emph{setoid}\index{setoid} $\hom(a,b)$ of morphisms from $a$ to $b$, together with operations of identity and composition satisfying the usual categorical axioms up to the setoid relation.
  Models of the theory given by the signature $\LcatE$ and the categorical axioms of \cref{item:e-congruence,item:comp-exists,item:comp-unique,item:id-unique,item:id-r-unital,item:id-l-unital,item:comp-assoc} are closely related to E-categories.
  One difference is that E-categories are ``algebraic'', in the sense that identities and composition are given as functions instead of (functional) relations.
\end{remark}

\section{Univalence at \texorpdfstring{$O$}{O}}
\label{sec:univ-cond-categ}
\label{sec:univalence-at-o}

We now consider how to express the univalence condition of \cref{def:univ-cat}, and, specifically, the notion of isomorphism, in $\Tcat$-precategories.
Of course, a trivial solution would be to rely on the equivalence of \cref{lem:equiv-precat-folds} to reuse the traditional definition of isomorphism.
Our goal, however, is to define a notion of isomorphism that
\begin{enumerate}
\item only depends on the diagram structure underlying a $\Tcat$-precategory, not on the axioms imposed on it; and
\item is applicable to other signatures.
\end{enumerate}
We will refer to the resulting notion of isomorphism as an \emph{indiscernibility}.

Recall that by the Yoneda lemma, an isomorphism $\phi : a\cong b$ in a category $\C$ is equivalently a natural family of isomorphisms of sets $\phi_{x\bullet} : \C(x,a) \cong \C(x,b)$, where naturality in $x$ means that $\phi_{y\bullet}(g) \circ f = \phi_{x\bullet}(g\circ f)$.
In the language of $\LcatE$-structures, the operation $\circ$ is replaced by the relation $T$, with a new variable $h$ for the composite $g\circ f$.
Thus, for $a, b : O$ in an $\LcatE$-model, we might attempt to define indiscernibilities $a \fiso b$ to consist of the following.
\begin{itemize}
\item For each $x:O$, an isomorphism $\phi_{x\bullet}:A(x,a) \cong A(x,b)$; and \label{item:foldsiso1}
\item For each $x,y:O$, $f:A(x,y)$, $g:A(y,a)$, and $h:A(x,a)$, we have\label{item:foldsiso2}
  $T_{x,y,a}(f,g,h) \leftrightarrow T_{x,y,b}(f,\phi_{y\bullet}(g),\phi_{x\bullet}(h))$.
\end{itemize}
This looks promising, but it privileges one of the variables of $A$ over the other, and the relation $T$ over $I$ (and $E$). To arrive at a mechanical definition of indiscernibility purely from the underlying diagram signature, we need to avoid such arbitrary choices.

It is more natural, therefore, to give equivalences between hom-sets with $a$ and $b$ substituted into \emph{all} possible ``collections of holes'':
\begin{align}
&\text{For any $x:O$, an isomorphism $\phi_{x\bullet}:A(x,a) \cong A(x,b)$;} \label{item:foldsiso1a} \\
&\text{For any $z:O$, an isomorphism $\phi_{\bullet z}:A(a,z) \cong A(b,z)$;} \label{item:foldsiso1b} \\
&\text{An isomorphism $\phi_{\bullet\bullet}:A(a,a) \cong A(b,b)$.}\label{item:foldsiso1c}
\end{align}
and similar logical equivalences between all possible ``relations with holes'':
\begin{align}
  T_{x,y,a}(f,g,h) &\leftrightarrow T_{x,y,b}(f,\phi_{y\bullet}(g),\phi_{x\bullet}(h)) \label{eq:Txya}\\
  T_{x,a,z}(f,g,h) &\leftrightarrow T_{x,b,z}(\phi_{x\bullet}(f),\phi_{\bullet z}(g),h) \label{eq:Txaz}\\
  T_{a,z,w}(f,g,h) &\leftrightarrow T_{b,z,w}(\phi_{\bullet z}(f),g,\phi_{\bullet w}(h)) \label{eq:Tazw}\\
  T_{x,a,a}(f,g,h) &\leftrightarrow T_{x,b,b}(\phi_{x\bullet}(f),\phi_{\bullet\bullet}(g),\phi_{x\bullet}(h)) \label{eq:Txaa}\\
  T_{a,x,a}(f,g,h) &\leftrightarrow T_{b,x,b}(\phi_{\bullet x}(f),\phi_{x\bullet}(g),\phi_{\bullet\bullet}(h)) \label{eq:Taxa}\\
  T_{a,a,x}(f,g,h) &\leftrightarrow T_{b,b,x}(\phi_{\bullet\bullet}(f),\phi_{\bullet x}(g),\phi_{\bullet x}(h)) \label{eq:Taax}\\
  T_{a,a,a}(f,g,h) &\leftrightarrow T_{b,b,b}(\phi_{\bullet\bullet}(f),\phi_{\bullet\bullet}(g),\phi_{\bullet\bullet}(h)) \label{eq:Taaa}\\
  I_{a,a}(f) &\leftrightarrow I_{b,b}(\phi_{\bullet\bullet}(f)) \label{eq:Iaa}  \\
   E_{x,a}(f,g) &\leftrightarrow E_{x,b}(\phi_{x\bullet }(f),\phi_{x\bullet}(g)) \label{eq:Exa}\\
   E_{a,x}(f,g) &\leftrightarrow E_{b,x}(\phi_{\bullet x}(f),\phi_{\bullet x}(g)) \label{eq:Eax}\\
   E_{a,a}(f,g) &\leftrightarrow E_{b,b}(\phi_{\bullet \bullet}(f),\phi_{\bullet \bullet}(g)) \label{eq:Eaa}
\end{align}
for all $x,y,z,w:O$ and $f,g,h$ of appropriate types.
Fortunately, the additional data here are redundant. Since $\phi_{x\bullet}$, $\phi_{\bullet z}$, and $\phi_{\bullet\bullet}$ preserve identities and $E$ is equivalent to identity by hypothesis, we obtain~\eqref{eq:Exa} to~\eqref{eq:Eaa}.
Just as~\eqref{eq:Txya} means the $\phi_{x\bullet}$ form a natural isomorphism,~\eqref{eq:Tazw} means the $\phi_{\bullet z}$ form a natural isomorphism, and~\eqref{eq:Txaz} means these natural isomorphisms arise from the same $\phi : a\cong b$.
Given this, any one of~\cref{eq:Txaa,eq:Taxa,eq:Taax} ensures that $\phi_{\bullet\bullet}$ is conjugation by $\phi$, and then the other two follow automatically, as do~\cref{eq:Taaa,eq:Iaa}.
This suggests the following definition.

\begin{definition}\label{defn:folds-iso-obj}\index{indiscernibility!of objects in a category}
  For $a,b$ objects of an $\LcatE$-structure, an \defemph{indiscernibility} from $a$ to $b$ consists of data as in \cref{item:foldsiso1a,item:foldsiso1b,item:foldsiso1c} satisfying \cref{eq:Txya,eq:Txaz,eq:Tazw,eq:Txaa,eq:Taxa,eq:Taax,eq:Taaa,eq:Iaa,eq:Exa,eq:Eax,eq:Eaa}.
  We write $a \foldsiso b$ for the type of such indiscernibilities.
\end{definition}

\begin{theorem}\label{thm:iso-foldsiso}
  In any $\Tcat$-precategory, the type $a\foldsiso b$ of indiscernibilities from $a$ to $b$ is equivalent to the type of isomorphisms $a\cong b$.\qed
\end{theorem}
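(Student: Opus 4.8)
The plan is to exhibit mutually inverse maps between $a\foldsiso b$ and $a\cong b$. Throughout I use \cref{lem:equiv-precat-folds} to regard the $\Tcat$-precategory as a precategory $\C$, so that $MT_{x,y,z}(f,g,h)$ becomes $h = g\circ f$, $MI_x(f)$ becomes $f = 1_x$, and — invoking the standardness axiom \cref{item:standard} — $ME_{x,y}(f,g)$ becomes $f = g$. Under this dictionary every one of \cref{eq:Txya,eq:Txaz,eq:Tazw,eq:Txaa,eq:Taxa,eq:Taax,eq:Taaa,eq:Iaa,eq:Exa,eq:Eax,eq:Eaa} is a biconditional between equalities of composites.

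The direction $(a\cong b)\to(a\foldsiso b)$ is the easy one. Given an isomorphism with underlying morphism $\phi:\C(a,b)$ and inverse $\phi^{-1}:\C(b,a)$, put $\phi_{x\bullet}(g)\define\phi\circ g$, $\phi_{\bullet z}(f)\define f\circ\phi^{-1}$ and $\phi_{\bullet\bullet}(h)\define\phi\circ h\circ\phi^{-1}$; these are isomorphisms of sets, with inverses $g\mapsto\phi^{-1}\circ g$, $f\mapsto f\circ\phi$ and $h\mapsto\phi^{-1}\circ h\circ\phi$. Each of the required biconditionals then unwinds to a statement that holds by associativity, the unit laws, and the inverse laws $\phi^{-1}\circ\phi=1_a$, $\phi\circ\phi^{-1}=1_b$ — for instance \cref{eq:Iaa} becomes $(f=1_a)\leftrightarrow(\phi\circ f\circ\phi^{-1}=1_b)$, \cref{eq:Txya} becomes $(h=g\circ f)\leftrightarrow(\phi\circ h=(\phi\circ g)\circ f)$, and \cref{eq:Exa} becomes $(f=g)\leftrightarrow(\phi\circ f=\phi\circ g)$, all of which follow by (left-)cancellation. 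This defines $\Theta$.

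For $\Xi:(a\foldsiso b)\to(a\cong b)$, given an indiscernibility $\Phi$ set $\phi_\Phi\define\phi_{a\bullet}(1_a):\C(a,b)$ and $\psi_\Phi\define\phi_{\bullet a}(1_a):\C(b,a)$. Instantiating \cref{eq:Txya} at $y=a$, $g=1_a$, $h=f$ gives $\phi_{x\bullet}(f)=\phi_\Phi\circ f$ for all $x:O$, $f:\C(x,a)$; symmetrically \cref{eq:Tazw} at $z=a$, $f=1_a$ gives $\phi_{\bullet z}(g)=g\circ\psi_\Phi$ for all $z:O$, $g:\C(a,z)$; and feeding these into one of \cref{eq:Txaa,eq:Taxa,eq:Taax} (together with the relation $\phi_\Phi\circ\psi_\Phi=1_b$ obtained below) gives $\phi_{\bullet\bullet}(h)=\phi_\Phi\circ h\circ\psi_\Phi$. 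Thus the whole datum $\Phi$ is determined by $\phi_\Phi$. To see $\phi_\Phi$ is an isomorphism: \cref{eq:Txaz} with $g=1_a$, using $\phi_{x\bullet}=\phi_\Phi\circ(-)$, yields $f=\psi_\Phi\circ\phi_\Phi\circ f$ for all $f:\C(x,a)$, hence $\psi_\Phi\circ\phi_\Phi=1_a$ at $x=a$, $f=1_a$; and since $\phi_{b\bullet}:\C(b,a)\to\C(b,b)$ is an isomorphism of sets equal to $\phi_\Phi\circ(-)$, the preimage $\psi'$ of $1_b$ satisfies $\phi_\Phi\circ\psi'=1_b$. A left inverse $\psi_\Phi$ and a right inverse $\psi'$ of $\phi_\Phi$ coincide, so $\phi_\Phi$ is an isomorphism with inverse $\psi_\Phi$. (Alternatively, \cref{eq:Txya,eq:Tazw} say exactly that $(\phi_{x\bullet})_x$ and $(\phi_{\bullet z})_z$ are natural isomorphisms of representable and corepresentable presheaves, so the Yoneda lemma for precategories produces the morphisms, and \cref{eq:Txaz} identifies them as mutually inverse.) Set $\Xi(\Phi)\define\phi_\Phi$ together with this (propositional) witness.

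Finally $\Theta$ and $\Xi$ are mutually inverse. We have $\Xi(\Theta(\phi))=\phi$ since $\phi_{a\bullet}(1_a)=\phi\circ1_a=\phi$ and $\isiso$ is a proposition. For $\Theta(\Xi(\Phi))=\Phi$, the displayed formulas above show that the underlying functions of $\Theta(\Xi(\Phi))$ coincide with those of $\Phi$; since ``being an isomorphism of sets'' is a proposition and each of \cref{eq:Txya,eq:Txaz,eq:Tazw,eq:Txaa,eq:Taxa,eq:Taax,eq:Taaa,eq:Iaa,eq:Exa,eq:Eax,eq:Eaa} is a proposition ($T$, $I$, $E$ being pointwise propositions by \cref{item:pw-prop} and $\C(a,b)$ a set by \cref{item:pw-set}), function extensionality upgrades this to an identification. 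The step demanding real care is the proof inside $\Xi$ that $\phi_\Phi\circ\psi_\Phi=1_b$: the naturality conditions \cref{eq:Txya,eq:Tazw,eq:Txaz} on their own make $\psi_\Phi$ only a one-sided inverse, and one genuinely needs the \emph{bijectivity} of the components $\phi_{x\bullet}$ (that they are isomorphisms of sets, not merely natural families) to supply the other side.
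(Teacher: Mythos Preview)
Your proof is correct and follows essentially the same approach as the paper. The paper gives only a sketch in the paragraph preceding the theorem (the \qed indicates no further proof is supplied): it observes that \cref{eq:Txya} makes $(\phi_{x\bullet})_x$ a natural isomorphism of representables, \cref{eq:Tazw} does the same for $(\phi_{\bullet z})_z$, \cref{eq:Txaz} forces these to come from the same isomorphism $\phi:a\cong b$, one of \cref{eq:Txaa,eq:Taxa,eq:Taax} pins $\phi_{\bullet\bullet}$ down as conjugation, and the remaining conditions follow. Your explicit construction of $\Theta$ and $\Xi$, the extraction $\phi_\Phi=\phi_{a\bullet}(1_a)$, the instantiations of \cref{eq:Txya,eq:Tazw,eq:Txaz} to recover $\phi_{x\bullet}=\phi_\Phi\circ(-)$ etc., and the use of bijectivity of $\phi_{b\bullet}$ to get the other inverse law, are precisely a detailed unwinding of that sketch.
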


\begin{definition}
  A \defemph{univalent $\Tcat$-category} is a $\Tcat$-precategory such that for all $a,b:MO$, the canonical map $(a=b)\to (a\foldsiso b)$ is an equivalence.
\end{definition}

We can conclude:

\begin{theorem}
  A $\Tcat$-precategory is univalent iff its corresponding precategory is a univalent category. \qed
\end{theorem}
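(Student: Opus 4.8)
The plan is to deduce the statement directly from \cref{thm:iso-foldsiso} and \cref{lem:equiv-precat-folds}. Let $M$ be a $\Tcat$-precategory and let $\C$ be its corresponding precategory under \cref{lem:equiv-precat-folds}; recall that under this correspondence $\ob{\C}$ is $MO$ and $\C(a,b) \converts MA(a,b)$ (the composition and identities of $\C$ being extracted from $MT$ and $MI$). By \cref{thm:iso-foldsiso} there is, for each $a,b:MO$, an equivalence $\psi_{a,b} : (a \foldsiso b) \simeq (a \cong b)$ between the type of indiscernibilities in $M$ and the type of isomorphisms in $\C$. Unwinding the definitions, ``$M$ is a univalent $\Tcat$-category'' asserts that the canonical map $e_{a,b} : (a=b) \to (a \foldsiso b)$ is an equivalence for all $a,b$, whereas ``$\C$ is a univalent category'' (\cref{def:univ-cat}) asserts that $\idtoiso_{a,b} : (a=b) \to (a \cong b)$ is an equivalence for all $a,b$. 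Since $\psi_{a,b}$ is an equivalence, and a map is an equivalence iff its composite with a fixed equivalence is, it therefore suffices to show that the triangle of canonical maps commutes, i.e.\ that $\psi_{a,b} \circ e_{a,b} = \idtoiso_{a,b}$: from this the biconditional follows immediately, levelwise in $a$ and $b$.

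So the proof breaks into three steps. First, I would assemble the data: the equivalences $\psi_{a,b}$ from \cref{thm:iso-foldsiso} and the identifications of object- and hom-types from \cref{lem:equiv-precat-folds}, checking that $e_{a,b}$, $\psi_{a,b}$, and $\idtoiso_{a,b}$ are all defined over the common object type $MO \converts \ob{\C}$. Second, I would prove the triangle identity $\psi_{a,b}\circ e_{a,b} = \idtoiso_{a,b}$. Both $e_{a,b}$ and $\idtoiso_{a,b}$ are defined by \pathinduction, sending $\refl_a$ to the identity indiscernibility (all of whose components $\phi_{x\bullet}$, $\phi_{\bullet z}$, $\phi_{\bullet\bullet}$ are identity maps) and to the identity isomorphism $1_a$ respectively; hence a map out of $(a=b)$ is determined by its value at $\refl_a$, and it is enough to check that $\psi_{a,a}$ carries the identity indiscernibility at $a$ to $1_a$. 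Third, I would invoke the purely formal fact just mentioned — composing a map with an equivalence preserves and reflects the property of being an equivalence — to conclude.

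The main obstacle is the second step, and specifically its last reduction: to confirm that $\psi_{a,a}$ sends the identity indiscernibility to $1_a$, one must look inside the construction of $\psi$ in the proof of \cref{thm:iso-foldsiso} and verify that it really is the ``canonical'' comparison — the one extracting from an indiscernibility the morphism $\phi:a\cong b$ for which each $\phi_{x\bullet}$ is postcomposition by $\phi$, as in the Yoneda-style discussion of \cref{sec:univ-cond-categ} — rather than some unrelated bijection of types. Once that is pinned down, the identity indiscernibility (whose $\phi_{x\bullet}$ is the identity on $A(x,a)$) extracts the morphism that acts as the identity by postcomposition, which by the unit axioms and standardness of $E$ in a $\Tcat$-precategory is exactly $1_a$; everything else is bookkeeping, so the whole argument hinges on making the equivalence of \cref{thm:iso-foldsiso} sufficiently explicit that this identity-to-identity compatibility is visible.
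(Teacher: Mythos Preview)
Your proposal is correct and matches the paper's intent: the theorem is marked \qed{} immediately after the definition of univalent $\Tcat$-category, so the paper treats it as an immediate consequence of \cref{thm:iso-foldsiso}, and your triangle argument (reducing to the check that the identity indiscernibility maps to the identity isomorphism via \pathinduction) is precisely the detail one would supply.
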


The point of our definition of indiscernibility is that it can be derived algorithmically from the diagram signature for categories, by an algorithm which applies equally well to
\begin{enumerate*}[label=(\arabic*)]
\item any sort in $\LcatE$ and
\item any diagram signature.
\end{enumerate*}
We will give this mechanism explicitly in \cref{sec:indisc-folds,sec:FOLDS-iso-uni}.
Then, for any $a,b:MK$ in some structure $M$, there will be a canonical map $(a=_K b)\to (a\foldsiso b)$, and we call $M$ \textbf{univalent at $K$} if these are equivalences.

This leads to an obvious question: what does univalence mean at the other sorts of $\LcatE$?

\section{Univalence at top-level sorts}
\label{sec:univalence-at-t}

Let $M$ be an $\LcatE$-structure, and let $t,t':MT_{x,y,z}(f,g,h)$.
Following the same recipe as for $O$ in the previous \lcnamecref{sec:univ-cond-categ}, the type $t\foldsiso t'$ of indiscernibilities should consist of consistent equivalences between all types dependent on $t$ and $t'$.
But there are no such types in the signature, so $t\foldsiso t'$ is contractible.
The same reasoning applies to $I$ and $E$.
Thus, the univalence condition for these sorts will assert simply that all of their path-types are contractible, i.e., that they are propositions.
Note that this is just \cref{item:pw-prop} of \cref{def:folds-precat}.

\begin{definition}
  An $\LcatE$-structure $M$ is \defemph{univalent at $T$, $I$, and $E$} if and only if the canonical maps
  \begin{align*}
    (t= t') &\to (t\foldsiso t')
    \\
    (i=i') &\to (i\foldsiso i')
    \\
    (e=e') &\to (e\foldsiso e')
  \end{align*}
   are equivalences
  for all inhabitants of the types $MT$, $MI$, and $ME$, respectively. \qed
\end{definition}

\begin{theorem}
  An $\LcatE$-structure $M$ is univalent at $T$, $I$, and $E$ if and only if \cref{item:pw-prop} of \cref{def:folds-precat} is satisfied.\qed
\end{theorem}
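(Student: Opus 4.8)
The plan is to unwind both sides of the asserted equivalence and observe that they both reduce to the same statement, namely that $MT$, $MI$, and $ME$ consist pointwise of propositions. The key observation, already made in the surrounding text just before \Cref{sec:univalence-at-t}, is that for a top-level sort $K$ of $\LcatE$ (that is, $K \in \{T, I, E\}$) there are no sorts depending on $K$ in the signature, so an indiscernibility $w \fiso w'$ between two elements $w, w' : MK_{\vec s}$ (for a fixed tuple $\vec s$ of lower-sort data) carries no data at all: it is the unit type $\onetype$, hence contractible. Therefore the canonical map $(w = w') \to (w \fiso w')$ has contractible codomain, so it is an equivalence precisely when its domain $w = w'$ is also contractible.

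First I would make precise the claim that $w \fiso w'$ is contractible for $w, w' : MK$ with $K$ top-level. This follows from the general recipe for indiscernibility sketched in \Cref{sec:univ-cond-categ}: an indiscernibility consists of consistent equivalences between all the types dependent on the two elements, together with compatibility with all the relations that reference them. For a top-level sort there are no such dependent types and no such relations, so the data is empty and the type $w \fiso w'$ is (equivalent to) $\onetype$, which is contractible. I would then recall the standard fact that a map into a contractible type is an equivalence iff its domain is contractible, and that $w \fiso w'$ being always contractible means $(w = w') \to (w \fiso w')$ is an equivalence iff $w = w'$ is contractible for all $w, w'$, i.e., iff each $MK_{\vec s}$ is a $(-1)$-type, i.e., a proposition.

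Assembling: $M$ is univalent at $T$, $I$, and $E$ iff for all relevant tuples and all $t, t' : MT_{x,y,z}(f,g,h)$ the map $(t = t') \to (t \fiso t')$ is an equivalence, and similarly for $I$ and $E$; by the above this holds iff $MT_{x,y,z}(f,g,h)$, $MI_x(f)$, and $ME_{x,y}(f,g)$ are all propositions, which is exactly \Cref{item:pw-prop} of \Cref{def:folds-precat}. Conversely, if \Cref{item:pw-prop} holds then each of these identity types is contractible (a proposition has contractible identity types) and the target $w \fiso w'$ is also contractible, so the canonical map is an equivalence between contractible types. This gives both implications.

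The only genuine content — and thus the one place I would be careful — is the claim that $w \fiso w'$ is contractible when $K$ is a top-level sort. This depends on the precise definition of indiscernibility for an arbitrary sort, which the excerpt defers to \Cref{sec:indisc-folds,sec:FOLDS-iso-uni}; at this point in the text it is argued only heuristically ("there are no such types in the signature, so $t \fiso t'$ is contractible"). So strictly the proof is conditional on that later definition having the expected form on top-level sorts; granting the excerpt's own assertion that this is so, everything else is the routine "equivalence iff both contractible" manipulation and unpacking of \Cref{def:folds-precat}.
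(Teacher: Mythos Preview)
Your proposal is correct and matches the paper's approach exactly: the paper gives no proof beyond the preceding paragraph (the theorem is marked \qed), which already observes that $t\fiso t'$ is contractible for top-level sorts and hence univalence there reduces to the identity types being contractible, i.e., the sorts being pointwise propositions. You have simply spelled out that informal argument in slightly more detail, including the honest caveat that contractibility of $w\fiso w'$ at top-level sorts is deferred to the general definition in \Cref{sec:indisc-folds,sec:FOLDS-iso-uni}.
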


\section{Univalence at \texorpdfstring{$A$}{A}}
\label{sec:univalence-at-a}

Finally, we define indiscernibilities for arrows of an $\LcatE$-structure.
We assume that the structure is univalent at $T$, $I$, and $E$, that is, that $T$, $I$, and $E$ are propositions pointwise. Furthermore, we assume that $E$ is a congruence for $T$ and $I$.
Let $a, b : O$ and $f, g : A(a,b)$.

An indiscernibility between $f,g:A(a,b)$ in $M$ should consist of logical equivalences between instances of $T$, $I$, and $E$ with $f$ replaced by $g$ in ``all possible ways'', clearly beginning with
\begin{align}
  T_{x,a,b}(u,f,v) &\leftrightarrow T_{x,a,b}(u,g,v) \label{eq:fia1}\\
  T_{a,x,b}(u,v,f) &\leftrightarrow T_{a,x,b}(u,v,g) \label{eq:fia2}\\
  T_{a,b,x}(f,u,v) &\leftrightarrow T_{a,b,x}(g,u,v) \label{eq:fia3}
\end{align}
for all $x:O$ and $u,v$ of appropriate types.
But 
how do we put $f$ in two or three of the places in $T$ in the most general way?
In \cref{sec:indisc-folds,sec:FOLDS-iso-uni} we will see that the answer is to assume an equality between objects and transport $f$ along it.

\begin{definition}\label{defn:foldsiso-arrows}
  For $f,g:A(a,b)$ in an $\LcatE$-structure that is univalent at $T$, $I$, and $E$, an \defemph{indiscernibility} from $f$ to $g$ consists of the logical equivalences shown in~\cref{eq:fia1,eq:fia2,eq:fia3,eq:fia4,eq:fia5,eq:fia6,eq:fia7,eq:fia8,eq:fia9,eq:fia10,eq:fian}, for all $p:a=a$, $q:b=a$, and $r:b=b$.
\end{definition}

\begin{align}
  T_{a,a,b}(\trans{q}{f},f,u) &\leftrightarrow T_{a,a,b}(\trans{q}{g},g,u) \label{eq:fia4}\\
  T_{a,b,b}(\trans{p}{f},u,f) &\leftrightarrow T_{a,b,b}(\trans{p}{g},u,g) \label{eq:fia5}\\
  T_{a,a,b}(u,\trans{r}{f},f) &\leftrightarrow T_{a,a,b}(u,\trans{r}{g},g) \label{eq:fia6}\\
  T_{a,a,b} (\trans{(p,q)}f,\trans r f,f) &\leftrightarrow T_{a,a,b}(\trans{(p,q)}g,\trans r g,g) \label{eq:fia7}\\
  I_{a} (\trans q f) &\leftrightarrow I_a(\trans q g) \label{eq:fia8}\\
  E_{a,b}(f,u) &\leftrightarrow E_{a,b}(g,u) \label{eq:fia9}\\
  E_{a,b}(u,f) &\leftrightarrow E_{a,b}(u,g) \label{eq:fia10}\\
  E_{a,b} (\trans{(p,r)}f,f) &\leftrightarrow E_{a,b} (\trans{(p,r)}g,g)\label{eq:fian}
\end{align}

Since $T$, $I$, and $E$ are propositions, so is the type $f\foldsiso g$ of indiscernibilities.
And $f\foldsiso f$, so by \pathinduction we have $(f=g) \to (f\foldsiso g)$.

\begin{theorem}\label{thm:2-univ-is-1-univ}
  Let $M$ be univalent at $T$, $I$, and $E$, and let $E$ be a congruence for $T$ and $I$. Then the following are equivalent:
  \begin{enumerate}
  \item the map $(f=g) \to (f\foldsiso g)$ is an equivalence for all $f,g$; \label{item:A-univ}
  \item $M$ is standard and $MA$ is a set pointwise. \label{item:standard-set}
  \end{enumerate}
\end{theorem}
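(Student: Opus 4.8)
The plan is to prove the two implications separately, leaning in both cases on facts already recorded above: that $f\foldsiso g$ is always a proposition (since, by univalence at $T$, $I$, $E$, the families $MT$, $MI$, $ME$ are pointwise propositions), and that there is a canonical map $(f=g)\to(f\foldsiso g)$ obtained by \pathinduction. Two elementary tools will be used repeatedly. First, a map between two propositions is an equivalence as soon as there is a map in the opposite direction, so for the equivalence statement it suffices to produce a logical equivalence. Second, transporting along a path of objects preserves the relation $E$: applying $\transport$ to the type family $(x,y)\mapsto \sm{u,v:MA(x,y)}ME_{x,y}(u,v)$ and reading off the first two components via the standard description of transport in a $\Sigma$-type shows that $E_{a,b}(f,g)$ entails $E_{a',b'}(\trans w f,\trans w g)$ for every $w:(a,b)=(a',b')$ in $MO\times MO$.

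For \cref{item:standard-set}$\Rightarrow$\cref{item:A-univ}: under \cref{item:standard-set} the type $f=_{MA(a,b)}g$ is a proposition, and so is $f\foldsiso g$; hence it suffices to exhibit a logical equivalence, and the forward direction is already the canonical map. For a backward map, given an indiscernibility, instantiate the equivalence \cref{eq:fia9} with $u\define g$ to obtain $E_{a,b}(f,g)\leftrightarrow E_{a,b}(g,g)$; the right-hand side holds by reflexivity of $E$, so $E_{a,b}(f,g)$ holds, and standardness then yields $f=g$.

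For \cref{item:A-univ}$\Rightarrow$\cref{item:standard-set}: that $MA$ is pointwise a set is immediate, since $(f=g)\simeq(f\foldsiso g)$ and the right-hand side is a proposition, so each identity type of $MA(a,b)$ is a proposition. For standardness we must produce $E_{a,b}(f,g)\leftrightarrow(f=g)$; the implication $(f=g)\to E_{a,b}(f,g)$ is \pathinduction together with reflexivity of $E$, so the real work is to pass from $E_{a,b}(f,g)$ to $f=g$. Using the hypothesis we may invert the equivalence $(f=g)\simeq(f\foldsiso g)$, so it is enough, assuming $E_{a,b}(f,g)$, to build an element of $f\foldsiso g$, i.e.\ to verify the biconditionals \cref{eq:fia1,eq:fia2,eq:fia3,eq:fia4,eq:fia5,eq:fia6,eq:fia7,eq:fia8,eq:fia9,eq:fia10,eq:fian}. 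The untransported ones, \cref{eq:fia1,eq:fia2,eq:fia3,eq:fia9,eq:fia10}, follow from $E_{a,b}(f,g)$ together with congruence of $E$ for $T$ and for $E$, i.e.\ \cref{eq:E-cong} and the equivalence-relation axioms — reflexivity supplying the ``unchanged'' arguments, symmetry supplying the converse direction. For the transported ones, \cref{eq:fia4,eq:fia5,eq:fia6,eq:fia7,eq:fia8,eq:fian}, first apply the transport-preserves-$E$ observation to get $E$ between the appropriate transports of $f$ and of $g$, and then conclude by congruence of $E$ for $T$ and for $I$ (\cref{eq:E-cong,eq:E-I}); for \cref{eq:fian}, which has $E$ itself on both sides, one additionally chains with symmetry and transitivity of $E$. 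This produces the desired indiscernibility.

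The only place where anything genuinely has to be checked rather than quoted is this last step: confirming that each of the eleven biconditionals defining $f\foldsiso g$ really does follow from $E_{a,b}(f,g)$, the congruence axioms for $E$, and the transport-preserves-$E$ lemma. There is no conceptual obstacle; one simply has to be careful about which argument of $MA$ each of the transports $\trans p{-},\trans q{-},\trans r{-},\trans{(p,q)}{-},\trans{(p,r)}{-}$ acts on, and to invoke reflexivity, symmetry, or transitivity of $E$ in the right places.
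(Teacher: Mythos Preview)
Your proof is correct and follows essentially the same route as the paper: for \ref{item:standard-set}$\Rightarrow$\ref{item:A-univ} you extract $E_{a,b}(f,g)$ from the indiscernibility via~\eqref{eq:fia9} and invoke standardness, and for \ref{item:A-univ}$\Rightarrow$\ref{item:standard-set} you build an indiscernibility from $E_{a,b}(f,g)$ using the congruence axioms. The paper compresses the second step into the single remark ``which holds since $E$ is a congruence for $T$ and $I$'', while you spell out the transport-preserves-$E$ lemma and the case analysis for the transported clauses --- this is useful detail the paper leaves implicit.
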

\begin{proof}
  Since $f\foldsiso g$ is a proposition, the former condition implies that each $A(a,b)$ is a set.
  Thus, for \ref{item:A-univ} $\Rightarrow$ \ref{item:standard-set}, it suffices to show $E_{a,b}(f,g)\Rightarrow (f\foldsiso g)$, which holds since $E$ is a congruence for $T$ and $I$.

  For \ref{item:standard-set} $\Rightarrow$ \ref{item:A-univ}, we must show $(f\foldsiso g) \Rightarrow (f=g)$ in a standard structure.
  (The maps back and forth are then automatically equivalences, since by assumption both sides are propositions.)
  But since $E_{a,b}(f,f)$ always, $f\foldsiso g$ implies $E_{a,b}(f,g)$, hence $f=g$ by standardness.
\end{proof}

Thus, by extending the ``univalence'' condition of a category from the sort $O$ to the sort $A$, we encompass automatically the assumption that the hom-types in a precategory are sets and that the structure is standard.
These are the remaining homotopical axioms (\cref{item:pw-set,item:standard}) from \cref{def:folds-precat}.

\begin{remark}[On equality predicates in a signature]
  More generally, suppose $\L$ is a signature containing a sort $S$ at rank one below top-level and an equality predicate $E_S \rightrightarrows S$.
  If $M$ is an $\L$-structure satisfying axioms saying that $E_S$ is a congruence for all the sorts that depend on $S$,
  then univalence of $M$ means that the type (family) $MS$ is pointwise a set with identifications given by $ME_S$.

  In \cref{eg:pre-po-sets} we describe in detail the effect of adding an equality sort to the signature of a type with a binary relation on it.  
  In \cref{sec:strict-egs} we study how to add equality predicates to sorts at lower rank, e.g., to the sort of objects of a category.
\end{remark}

In sum, all the ad-hoc-looking homotopical conditions on a $\Tcat$-precategory are equivalent to univalence conditions on the various sorts of $\LcatE$.
A category is hence equivalently an $\LcatE$-structure that
\begin{enumerate*}[label=(\arabic*)]
\item satisfies the categorical axioms; and
\item is univalent at all of its sorts.
\end{enumerate*}
Our goal in the rest of \cref{sec:general-theory}, therefore, is to define notions of indiscernibility and univa\-lence for any signature $\L$, generalizing the theory of univalent categories to arbitrary higher-categorical structures.

\section{Equivalence of categories}
\label{sec:eq-of-cat}

In \cref{sec:univ-cond-categ}, we described a notion of \emph{isomorphism} or \emph{indiscernibility} that
\begin{enumerate}
  \item only depends on the diagram structure underlying a $\Tcat$-precategory, not on the axioms imposed on it; and \label{item:only-depend-struc}
  \item is applicable to other signatures. \label{item:applic-other-strux}
  \end{enumerate}
In this \lcnamecref{sec:eq-of-cat}, we develop a notion of \emph{equivalence} that is not only equivalent to the usual notion of equivalence between univalent categories, but that also satisfies \cref{item:only-depend-struc,item:applic-other-strux} above.

Consider a fully faithful and essentially surjective functor \cite[Def.~6.7]{AKS13} between univalent $\Tcat$-categories $M$ and $N$. Putting this in terms of $\Tcat$-structures, a functor $e: M \to N$ consists of the following functions.\index{equivalence!of categories}
(As noted in \cref{notn:no-models-cats}, we retain the structure names on the interpretations of sorts when multiple structures are present.)
\begin{align*}
  e_O & :  MO \to NO \\
  e_A & : \tprd{x, y: MO} MA(x, y) \to NA(ex, ey) \\
  e_I & : \tprd{x : MO, i : MA(x,x)} MI_{x}(i) \to NI_{ex} (ei) \\
  e_T & :
        \begin{multlined}[t][0.85\linewidth]
          \tprd{x,y,z : MO, f : MA(x,y), g : MA(y,z), h : MA(x,z)}\hfill \\
          MT_{x,y,z}(f,g,h) \to NT_{ex,ey,ez} (ef,eg,eh)
        \end{multlined}\\
  e_E & : \tprd{x,y : MO, f : MA(x,y), g : MA(x,y)} ME_{x,y}(f,g) \to NE_{ex,ey} (ef,eg)
\end{align*}
The functor $e$ is essentially surjective just when $e_O$ is pointwise surjective (this is because $N$ is univalent), and $e$ is fully faithful just when $e_A$ is a pointwise equivalence.

This second condition, that each $e_A(x,y)$ is an equivalence, is equivalent to the condition that $e_A(x,y)$ is surjective and an embedding \cite[Cor.~4.6.4]{HTT}. 
If $e_A(x,y)$ is an embedding, then (by definition) the function $(f = g) \to (ef = eg)$ for $f, g: e_A(x,y)$ is an equivalence. 
Since our categories are univalent, $f=g$ is equivalent to the type of indiscernibilities $f \foldsiso g$, which is equivalent to $ME_{x,y}(f,g)$ by \cref{thm:2-univ-is-1-univ}; and similarly, $ef=eg$ is equivalent to $NE_{ex,ey}(ef,eg)$. Thus, $e_A(x,y)$ is an embedding if and only if for all $f, g: e_A(x,y)$ the function $e_E: ME_{x,y}(f,g) \to NE_{ex,ey}(ef,eg)$ is an equivalence --- or, equivalently, a surjection, since $ME_{x,y}(f,g)$ and $NE_{ex,ey}(ef,eg)$ are propositions. 
Thus, the condition that $e_A$ is a pointwise equivalence is itself equivalent to the condition that $e_A$ and $e_E$ are pointwise surjections.

Note also that the morphisms $e_I$ and $e_T$ are also pointwise surjections whenever $e$ is fully faithful. 
To see that $e_I$ is a surjection at each $x : MO$ and $f : MA(x,x)$, for each $j : NI_{e(x)} (ef)$, we need to find terms $i : MI_x(f)$ and $p : e_I i = j$. Note that the axioms for a category imply that any two identity morphisms are the same (that is, if $\alpha, \beta: A(a,a)$ and $I_a(\alpha)$, $I_a(\beta)$ are inhabited, then $\alpha = \beta$ is). Consider then the identity $1_x$. It is sent to an identity on $ex$, so we find that $e(1_x) = ef$. But since $e_A(x,x)$ is an embedding, we find that $1_x = f$. Thus there is some term $i : MI_x(f)$ and $p : e_I i = j$ is satisfied since its ambient type, $NI_{e(x)} (ef)$, is a proposition. A similar argument shows that $e_T$ is a pointwise surjection.

Thus, we can say that an \emph{equivalence} between two univalent $\Tcat$-structures $M$ and $N$ consists of functions $e_O, e_A, e_I, e_T, e_E$ as above which are all pointwise surjections. We have the following.

\begin{theorem}
  An equivalence between two univalent $\Tcat$-categories, in the above sense, is exactly an equivalence of univalent categories in the usual sense.
\end{theorem}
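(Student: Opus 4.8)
The plan is to observe that the theorem is essentially an assembly job: almost all of the real content has already been established in the paragraphs leading up to the statement, and what remains is to package it correctly. Via \Cref{lem:equiv-precat-folds}, each univalent $\Tcat$-category $M$ determines a univalent category $\C_M$, and under this correspondence univalence of the $\Tcat$-structure matches univalence of $\C_M$. The first thing I would make precise is that the \emph{data} of a functor $e : M \to N$ of $\Tcat$-structures (the tuple $e_O, e_A, e_I, e_T, e_E$ displayed above) is equivalent to the data of a functor $\C_M \to \C_N$. The point is that in a $\Tcat$-precategory the families $MI$, $MT$, $ME$ are propositional, with $MI_x(i)$ equivalent to $(i = 1_x)$, $MT_{x,y,z}(f,g,h)$ equivalent to $(h = g\circ f)$, and $ME_{x,y}(f,g)$ equivalent to $(f=g)$ by standardness; hence $e_I$, $e_T$ and $e_E$ carry no information beyond the mere propositions ``$e$ preserves identities'', ``$e$ preserves composites'', and ``$e$ preserves the equality relation''. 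The first two are exactly the functoriality laws, and the third holds automatically (it is the action of $e_A$ on identifications), so summing over these propositional components leaves precisely the data of a functor of precategories.

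Next I would transport, along this equivalence of functor-types, the condition of being a $\Tcat$-equivalence --- that each of $e_O, e_A, e_I, e_T, e_E$ is a pointwise surjection --- to a condition on $\C_M \to \C_N$. Pointwise surjectivity of $e_O$ says that for every $y:NO$ there merely exists $x$ with $e_O x = y$; since $\C_N$ is univalent and, by \Cref{thm:iso-foldsiso}, indiscernibilities at $O$ coincide with isomorphisms, this is exactly essential surjectivity. That $e_A$ and $e_E$ are jointly pointwise surjective is equivalent to $e_A$ being a pointwise equivalence, hence to full faithfulness: this is precisely the argument given just before the theorem, using \cite[Cor.~4.6.4]{HTT} and \Cref{thm:2-univ-is-1-univ} to move between ``$e_A$ is an embedding'' and ``$e_E$ is a surjection of propositions''. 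Finally, as also observed there, once $e$ is fully faithful the maps $e_I$ and $e_T$ are automatically pointwise surjective, so those two clauses in the definition are redundant. Consequently $e$ is a $\Tcat$-equivalence if and only if the corresponding functor $\C_M \to \C_N$ is fully faithful and essentially surjective.

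To conclude, I would invoke the fact (\cite[Def.~6.7]{AKS13}; see also \cite[Chapter~9]{HTT}) that for univalent categories a functor is fully faithful and essentially surjective exactly when it is an equivalence of categories in the usual sense, and compose this with the equivalence of functor-types from the first paragraph; this gives that the type of $\Tcat$-equivalences $M \to N$ is equivalent to the type of equivalences of the univalent categories $\C_M$ and $\C_N$. The only genuinely delicate steps are the bookkeeping of the first paragraph --- verifying that $e_I$, $e_T$, $e_E$ really are pure propositions in a $\Tcat$-precategory, so that the two notions of functor agree without coherence data --- and confirming that the $e_I$- and $e_T$-surjectivity clauses are subsumed by full faithfulness, so that the definition of $\Tcat$-equivalence is neither too weak nor over-determined relative to ff-plus-eso; everything else is a routine translation through \Cref{lem:equiv-precat-folds}, \Cref{thm:iso-foldsiso}, and \Cref{thm:2-univ-is-1-univ}.
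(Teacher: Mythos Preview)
Your proposal is correct and follows essentially the same approach as the paper. In fact, the paper provides no separate proof environment for this theorem at all: the entire argument is the discussion in the paragraphs immediately preceding the statement, and you have faithfully reproduced and slightly formalized exactly that reasoning (the reduction of $e_O$-surjectivity to essential surjectivity via univalence, the equivalence of $e_A$-being-an-equivalence with $e_A,e_E$-surjectivity via \Cref{thm:2-univ-is-1-univ}, and the redundancy of the $e_I,e_T$ clauses once full faithfulness is in hand).
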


This notion of equivalence (a levelwise, pointwise surjection) motivates our general notion of equivalence between structures. It should be noted that we actually consider \emph{split} surjections (\cref{def:vss}) in the general case; although non-split surjections suffice for categories, we do not know if they suffice for all theories.
Moreover, we introduce many different shades of equivalence to accomplish our goals (e.g., levelwise equivalence (\cref{def:lvleqv-folds}, \cref{def:iso-of-structures}), relative equivalence (\cref{def:eqv})), but when the structures in question are univalent, these are all equivalent, as we will show.

\chapter{Diagram signatures in Two-Level Type Theory}
\label{sec:folds-signatures-two}

To state and prove general theorems about higher-categorical structures, we need a general definition of what is meant by a ``higher-categorical structure''.
There are many approaches to this; we will take the ``geometric'' or ``non-algebraic'' one, in which a structure is specified by a \emph{diagram} of sets or spaces with properties.
Specifically, we will use \emph{Reedy fibrant diagrams} of spaces (i.e., types) on certain \emph{inverse \mbox{(exo-)cat}\-egories}, which are ``maximally non-algebraic'': the functorial actions can all be encapsulated by type dependency.

This latter point was already realized by Makkai~\cite{MFOLDS}, who used diagrams of sets on inverse categories to give a similar general context for higher-categorical structures, along with a language called First-Order Logic with Dependent Sorts (FOLDS).
In contrast to HoTT/UF, FOLDS is not a foundational system for mathematics, but a kind of first-order logic designed for higher categorical structures.
We will not use the logical syntax of FOLDS, but we adopt and generalize its notions of signature and structure.
We will refer to the particular inverse categories we use as as \emph{diagram signatures} (Makkai called them ``vocabularies'').

\section{Exo-categories}
\label{sec:s-categories}

Before we can give our first definition of signature in \cref{sec:foldssig}, we review, in this section, the definition of \emph{exo-categories} in 2LTT (see also \cite[Definition~3.1]{2LTT}).

\begin{definition}\index{category!exo-}
  \label{def:exo-category}
An \defemph{exo-category} $\C$ is given by the following data:
\begin{enumerate}
\item An exotype $\ob{\C}$ of \emph{objects} (also often denoted $\C$);
\item For each $x,y \colon \ob{\C}$ an exotype $\C(x,y)$ of \emph{arrows};
\item For each $x \colon \ob{\C}$ an arrow $1_x \colon \C(x,x)$; and
\item A \emph{composition} map $(\circ) \colon \C(y,z) \rightarrow \C(x,y) \rightarrow \C(x,z)$ that is associative and for which $1$ is a left and right unit, both up to exo-equality.
\end{enumerate}
\end{definition}

\begin{remark}[Precategories vs.\ exo-categories]\index{category!pre-}
  For emphasis, we list here the differences between precategories (\cref{def:precategory}) and exo-categories (\cref{def:exo-category}):
  \begin{enumerate}
  \item In precategories, the exotypes of objects and morphisms are required to be fibrant.
  \item In precategories, the axioms are formulated with respect to identifications, while in exo-categories they are formulated with respect to exo-equalities.
  \item In precategories, \cref{item:cat-homset} of \cref{def:precategory} ensures that equality of arrows is a property; in particular, that the associativity and unitality witnesses are unique.
    Such a condition would not make sense for general exo-categories; but even if the hom-exotypes of an exo-category are fibrant, they may not be sets.
  \end{enumerate}

\end{remark}

\begin{example}
Any exouniverse $\Ustrict$ gives rise to an exo-category, also denoted $\Ustrict$, with objects $A : \Ustrict$ and morphisms $\Ustrict(A,B) \eqdef A \to B$.
The corresponding fibrant universe $\U$ is a full sub-exo-category of $\Ustrict$.
\end{example}

\begin{definition}\index{functor!exo-}\index{natural transformation!exo-}
An \defemph{exo-functor} $F : \C \to \D$ consists of a function $\ob{F} : \ob{\C} \to \ob{\D}$ and functions $F_{x,y} : \C(x,y) \to \D(\ob{F}x,\ob{F}y)$ preserving identity and composition up to exo-equality. We denote both $\ob{F}$ and $F_{x,y}$ by just $F$.
An \defemph{exo-natural transformation} $\alpha : F \Rightarrow G : \C \to \D$ consists of a family of morphisms $(\alpha_x : \D(Fx, Gx))_{x : \ob{\C}}$ satisfying the naturality axiom by an exo-equality.
\end{definition}

\section{Diagram signatures}
\label{sec:foldssig}

After the introduction of diagram signatures by example in \cref{sec:folds-cats}, we now move on to a formal definition of diagram signatures in 2LTT.
Our diagram signatures are indexed by their height; diagram signatures of height $n$ and their morphisms form an exo-category.
Moreover, each diagram signature is itself an ``inverse exo-category'', with exotypes of objects and morphisms: this turns out to give a very useful midway point between the entirely internal (with types of objects and morphisms) and the entirely external (an inverse category in the metatheory, with sets of objects and morphisms).

\begin{definition}[{\cite[\S4.2]{2LTT}}]\index{category!exo-!inverse}
  An \defemph{inverse exo-category} is an exo-category $\L$ together with a functor $\mathsf{rk}:\L\to (\exo{\Nat})^{\mathrm{op}}$ (where $\exo{\Nat}$ is regarded as an exo-category with $\exo{\Nat}(m, n) \eqdef (m \leq n)$) that reflects identities.\nomenclature[rk]{$\mathsf{rk}$}{rank function of a diagram signature}
  Thus each object is assigned a natural number, called its \defemph{rank}, such that every nonidentity morphism strictly decreases rank.
  An inverse exo-category has \defemph{height} $p$ if all of its objects have rank $<p$.
  (In particular, only the empty exo-category has height 0.)
\end{definition}

In particular, therefore, an inverse exo-category has an exotype of objects $\ob{\L}$ equipped with a function $\mathsf{rk}:\ob{\L}\to\exo{\Nat}$.
It is often convenient to regard this instead as a \emph{family} of exotypes \emph{indexed} by $\exo{\Nat}$.
That is, if we write $\L(n)$ for the exotype of objects of rank $n$:
\[\L(n) \eqdef \sm{L:\ob{\L}} (\mathsf{rk}(L) \converts n),\]
then we have $\ob{\L}\cong \sm{n:\exo{\Nat}}\L(n)$.
More precisely, the slice exo-category $\Ustrict / \exo{\Nat}$ is equivalent to the functor exo-category $\exo{\Nat}\to\Ustrict$ (where $\exo{\Nat}$ is here regarded as a discrete exo-category).
Thus, if we define an \defemph{indexed inverse exo-category} to be a type family $\L:\exo{\Nat}\to\Ustrict$ together with the structure of an inverse exo-category on its image $\ob{\L}$ in $\Ustrict / \exo{\Nat}$, we obtain an equivalent notion of inverse exo-category.
(Once we define morphisms of diagram signatures in \cref{sec:funct-deriv}, we can say that the exo-categories of inverse exo-categories and of indexed inverse exo-categories are equivalent.)
We will generally pass back and forth between these two viewpoints silently, trusting the context to disambiguate.

Note that neither $\L(n)$ nor the hom-types $\hom_\L(K,L)$ need be fibrant.
However, in a diagram signature we will require sharpness of the types $\L(n)$ and cofibrancy of the following \emph{fanout} exotype, which gathers all the dependencies of a sort.

\begin{definition}
  Given an inverse exo-category $\L$, the \defemph{fan\-out exotype of $K:\L(n)$ at $m<n$} is 
  \[ \fanout{K}{m} \define \sm{L:\L(m)} \hom_\L(K,L).\]\nomenclature[fanout]{$\fanout{K}{m}$}{fanout of an inverse exo-category at sort $K$ and rank $m$}
\end{definition}

\begin{definition}\label{def:material-sig}\index{signature!diagram}
A \defemph{diagram signature of height $p$} is an inverse exo-category $\L$ of height $p$ for which 
\begin{enumerate}
 \item each $\L(n)$ is sharp; and
 \item each exotype $\fanout{K}{m}$ is cofibrant.
\end{enumerate}
 The exotype of diagram signatures of height $p$ is denoted by $\IC(p)$.\nomenclature[dsig]{$\IC(p)$}{exotype of diagram signatures of height $p$}
\end{definition}

There are several reasons for these restrictions.
One is that, as we will see in \cref{sec:deriv-folds}, they ensure that the type of structures for a diagram signature is fibrant.
They are also necessary for the definition of indiscernibility (\cref{def:indisc-in-diag-structure}).

\begin{remark}\label{rmk:exofinite-sigs}
  Many, if not most, naturally-occurring diagram signatures are finite.\footnote{Or, in the case of signatures of infinite height, they are ``locally finite'' in that the sets $\L(n)$ and $\hom_\L(K,L)$ are finite.}
  For instance, the diagram signatures $\Lcat$ and $\LcatE$ shown in \cref{fig:signatures} have four and five objects respectively, and their homsets are also finite.
  When interpreting such pictures as exo-categories in 2LTT, we interpret these finite sets as \emph{exofinite exotypes} $\exo{\Nat}_{<n}$, which as shown in \cref{sec:sharp} are sharp.

  Indeed, if we instead used finite types $\Nat_{<n}$ (at least in a na\"ive way), we would not in general obtain an exo-category, since then the associativity law could only be proven to hold up to identification, rather than up to strict equality.
  (The unit laws are not a problem, since inverse (exo-)categories have no nontrivial endomorphisms, so composition with identity morphisms can just be defined to be the identity operation.
  Note also that nontrivial instances of associativity only arise for inverse exo-categories of height $\ge 4$.)
  
  However, it will be crucial for our inductive approach explained in \cref{sec:deriv-folds} that our signatures are not \emph{required} to be finite, since the ``derivative'' operation does not preserve finiteness.
  Semantically, this extra generality is closely related to the \emph{internal inverse categories} of~\cite{shulman_univalence_ei}.

  In addition to the non-finite examples produced by derivation, in \cref{sec:examples} we will also encounter some naturally-occurring examples of infinite signatures (though still of finite height).
  For those which are \emph{countably} infinite, we can remain close to the spirit of exofiniteness by using $\exo{\Nat}$ for countably infinite families of sorts and morphisms, as long as we assume that $\exo{\Nat}$ is cofibrant (and hence sharp, by \cref{lem:sharp}).
  In addition, arbitrary types of sorts and morphisms (such as $\Nat$) are unproblematic in signatures of height $\le 3$, since then there are no nontrivial associativity relations to prove.
  All the example signatures we consider in this \paperorbook will be covered by one of these two cases.
  Moreover, we expect that uncountably infinite signatures of height $\ge 4$ can probably also be represented as strict exo-categories using a technique like that of~\cite{shulman_univalence_ei}, which encodes composition and strict associativity using type dependency.
\end{remark}

\section{Reedy fibrant diagrams}
\label{sec:struct-foldssig}

As suggested in \cref{sec:foldssig-by-eg}, a structure for a diagram signature $\L$ should be an exo-functor $\L\to \Ustrict$ such that the image of each sort ``is'' a family of fibrant types dependent on all the relevant types of lower rank.
This can be formalized with the notion of \emph{Reedy fibration}\index{fibration!Reedy} imported from homotopy theory.

Let $\L$ be an inverse exo-category, $K:\L(n)$, and $M$ an exo-functor $\L\to\Ustrict$.
We will also refer to such an $M$ as an \defemph{exo-diagram}\index{diagram!exo-} on $\L$, or just a \defemph{diagram}\index{diagram} for short.
The \emph{matching object} of $M$ at $K$ is designed to capture a family of elements of $M$ at lower-rank sorts that together provide all the dependencies that an object of $MK$ might have.
Two ways of formalizing this can be found in~\cite[Definition 4.4 and Lemma 4.5]{2LTT}; we give a third in terms of our fanout types.

\begin{definition}[{\cite[\S4.3]{2LTT}}]\label{def:matching}
  The \defemph{matching object} of an exo-functor $M:\L\to\Ustrict$ at $K:\L(n)$, denoted $\match_K M$, is the sub-exotype (cf.~\cref{sec:identity}) of
  \[ \prd{m<n}{(L,f):\fanout{K}{m}} ML \]
  consisting of those $d$ such that for any ${m_2<m_1<n}$, given ${(L_1,f_1):\fanout{K}{m_1}}$ and $(L_2,f_2):\fanout{K}{m_2}$ with ${g:\hom_\L(L_1,L_2)}$ such that $f_2 \steq g \circ f_1$, we have
  \[Mg(d(m_1,L_1,f_1)) \steq d(m_2,L_2,f_2).\]\nomenclature[matching]{$\match_K M$}{matching object of diagram $M$ at sort $K$}%
  We say $M$ is \defemph{Reedy fibrant} if for all $K$, the induced map
  \[ MK \to \match_K M \]
  that sends $x:MK$ to $\lambda m. \lambda (L,f). Mf(x)$, is a fibration.
  Let $\rfunc{\L}{\Ustrict}$ denote the exo-category of Reedy fibrant exo-diagrams.
\end{definition}

It is straightforward to verify that this definition is equivalent to the ones found in \cite{2LTT}, which in turn are rephrasings of the standard homotopy-theoretic definition.

\begin{example}[Reedy fibrancy for structures of height 1]
  If $K$ has rank 0, then there are no $m<0$, hence $\match_K M \cong \onetype$.
  Thus, Reedy fibrancy at rank-0 sorts simply means that $MK$ is fibrant.
\end{example}

\begin{example}[Reedy fibrancy for graph structures]\label{eg:rfib-gph}
  For an exo-diagram $M$ on the signature $A \rightrightarrows O$ for graphs, we have $\match_A M \cong MO \times MO$.
  Thus, $M$ is Reedy fibrant if $MO$ is fibrant and the map $MA \to MO\times MO$ is a fibration, which is to say that $M$ is determined by a type $MO:\U$ and a type family $MA : MO \to MO \to \U$.
\end{example}

\begin{example}[Reedy fibrancy for category structures]\label{def:structures-lcat-lcate}
  For an exo-diagram $M$ on the signature $\Lcat$ for categories, we have $\match_A M \cong MO \times MO$ as for graphs.
  The matching object $\match_I M$ is the pullback of $MA$ along the diagonal $MO \to MO\times MO$, or equivalently $\sm{x:MO} MA(x,x)$.
  Similarly, $\match_T M$ is the triple fiber product of three pullbacks of $MA$ to $MO\times MO\times MO$, or equivalently $\sm{x,y,z:MO} MA(x,y)\times MA(y,z) \times MA(x,z)$.
  Thus, a Reedy fibrant diagram on $\Lcat$ is determined by a type $MO:\U$, a type family $MA : MO \to MO \to \U$, and two further type families
  \begin{align*}
    MI &: \Big(\sm{x:MO} MA(x,x) \Big) \to \U\\
    MT &: \Big(\sm{x,y,z:MO} MA(x,y)\times MA(y,z) \times MA(x,z)\Big) \to \U
  \end{align*}
  or equivalently
  \begin{align*}
    MI &: \prd{x:MO} MA(x,x) \to \U\\
    MT &: \prd{x,y,z:MO} MA(x,y) \to MA(y,z) \to MA(x,z)\to \U.
  \end{align*}
  A Reedy fibrant diagram on $\LcatE$ adds to this a further type family
  \[ ME : \prd{x,y:MO} MA(x,y) \to MA(x,y) \to \U.\]
\end{example}

The meaning of ``determined by'' in these examples is somewhat subtle.
For instance, in \cref{eg:rfib-gph} it does \emph{not} mean that the exotype of Reedy fibrant diagrams is \emph{isomorphic} to the fibrant type $\sm{MO:\U} MO\to MO \to \U$.
Nor does it mean that they are \emph{equivalent as types}; indeed that doesn't even make sense, since the former may not be fibrant.
What is true is that the \emph{exo-category} $\rfunc{\L}{\Ustrict}$ is \emph{equivalent as an exo-category} to one whose (exo)type of objects is $\sm{MO:\U} MO\to MO \to \U$.

This situation is generic: the exo-category of Reedy fibrant diagrams on any diagram signature is equivalent, as an exo-category, to an exo-category with a fibrant type of objects.\footnote{For this result it would suffice to assume that each $\L(n)$ is cofibrant, as is each fanout exotype.  Our stronger assumption of sharpness of $\L(n)$ will not be needed until \cref{sec:indisc-folds,sec:FOLDS-iso-uni}; see, e.g., \cref{def:partial-struc}.}
This is essentially proven in~\cite[\S4.5]{2LTT}; we will give a different proof in \cref{sec:deriv-folds}.
It is the elements of this fibrant type that we will refer to as \textbf{$\L$-structures}.\index{structure!diagram}

\section{Derivatives of signatures and diagram structures}
\label{sec:deriv-folds}

In \cref{sec:struct-foldssig} we gave a version of the usual definition of Reedy fibrant diagrams for a diagram signature.
This definition is well-suited to arguments that are ``inductive at the top'': that is, where in the inductive step we assume that something has been done at all sorts of rank $<n$ and proceed to extend it to rank $n$.

However, our arguments will be ``inductive at the bottom'': we assume that something has been done at all sorts of rank $>0$ and proceed to extend it to rank $0$.
For this purpose we need a different characterization of Reedy fibrant structures.
The crucial observation is that if we fix the value of a structure on the rank-0 sorts, then the rest of that structure can be represented as a diagram on the following \emph{derived} signature.

\begin{definition}\label{app:defn:derivcat}\index{derivative!of a diagram signature}
Let $\L$ be an inverse exo-category of height $p >0$, and let $M : \L(0) \to \Ustrict$. The \defemph{derivative of $\L$ with respect to $M$} is the inverse exo-category $\derivcat{\L}{M}$ of height $p-1$ with objects and morphisms defined as follows:\nomenclature[L]{$\derivcat{\L}{M}$}{derivative of diagram signature $\L$ with respect to $M$}
\begin{align*}
\derivcat{\L}{M}(n) &\define \sm{K : \L(n+1)} \prd{F: \fanout{K}{0}}  M(\pi_1 F)\\
\hom_{\derivcat{\L}{M}}((K_1, \alpha_1), (K_2, \alpha_2)) &\define \sm{f: \hom(K_1,K_2)} \prd{F: \fanout{K_2}{0}}\alpha_1 (F \circ f) \steq \alpha_2(F)
\end{align*}
where $\pi_1 : \fanout{K}{0} \to \L(0)$ is the projection and $F \circ f$ denotes the 
function $\fanout{K_2}{0} \to \fanout{K_1}{0}$ given by precomposition.
\end{definition}

\begin{example}[Derivation of a structure of height 1]\label{ex:deriv-empty}
  If $p \converts 1$ then $\L_{>0}$ is empty.
  Thus, no matter what $M : \L(0) \to \Ustrict$ we choose, $\derivcat{\L}{M}$ is the empty signature.
\end{example}

\begin{example}[Derivation of a structure of height 2]\label{ex:deriv-height-one}
  If $\L$ has height $2$, then it consists of two exotypes $\L(0)$ and $\L(1)$ and a family of hom-exotypes $\hom_\L: \L(1) \to \L(0)\to \Ustrict$.
  Then for any $M : \L(0) \to \Ustrict$, the derivative $\derivcat{\L}{M}$ has height $1$, consisting of just a single exotype of sorts of rank $0$.
  Each such sort is, by definition, a sort $K:\L(1)$ in $\L$ of rank $1$ together with a function $\fanout{K}{0} \to ML$.

  As a particular example, for the diagram signature $A \rightrightarrows O$ of graphs, we have $\L(0) \strictiso \L(1) \strictiso \onetype$ and $\hom_\L(A,O) \converts \exo{\mathbf{2}}$.
  Thus $\fanout{A}{0} \strictiso \onetype \times \exo{\mathbf{2}}$, which is isomorphic to $\exo{\mathbf{2}}$.
  Hence for $M : \onetype \to \Ustrict$, which is determined up to exo-equality by a single type $MO:\U$, the derivative $\derivcat{\L}{M}$ has rank-0 sorts indexed by $\onetype \times ((\onetype \times \exo{\mathbf{2}}) \to MO)$, which is isomorphic to $MO\times MO$.
  In the future we will generally elide isomorphisms of this sort.
\end{example}

\begin{example}[Derivation of a category structure]\label{ex:deriv-lcat}
  We have $\Lcat(0) \strictiso \onetype$ (the single sort $O$), so a type family $M:\Lcat(0) \to \U$ is determined by a single type $MO$.
  The derivative $\derivcat{(\Lcat)}{MO}$ then has rank-0 sorts $A(x,y)$ indexed by (a type isomorphic to) $MO\times MO$, one family of rank-1 sorts $I(x)$ indexed by (a type isomorphic to) $MO$, and a second family of rank-1 sorts $T(x,y,z)$ indexed by (a type isomorphic to) $MO\times MO\times MO$.
  To be precise, this means the exotype of rank-1 sorts is isomorphic to $MO \exosum (MO\times MO\times MO)$; note that this is neither exofinite nor fibrant, but it is sharp.
  There is an arrow from $I(x)$ to $A(x,x)$, and arrows from $T(x,y,z)$ to $A(x,y)$, $A(y,z)$, and $A(x,z)$.

  The derivative $\derivcat{(\LcatE)}{M}$ is similar, but with a third family of rank-1 sorts $E(x,y)$ indexed by $MO\times MO$, with two arrows from $E(x,y)$ to $A(x,y)$.

  If we take the ``second derivative'' of $\Lcat$ at some $MA:MO\times MO \to \U$, we obtain a height-1 signature $\derivcat{(\derivcat{(\Lcat)}{MO})}{MA}$ whose exotype of rank-0 sorts is (isomorphic to)
  \[
    \Big(\sm{x:MO} MA(x,x)\Big) \exosum \Big(\sm{x,y,z:MO} MA(x,y) \times MA(y,z) \times MA(x,z) \Big).
  \]
  The second derivative $\derivcat{(\derivcat{(\LcatE)}{MO})}{MA}$ is similar, with an extra exo-summand $\sm{x,y:MO} MA(x,y)\times MA(x,y)$.
\end{example}

Intuitively, in $\derivcat{\L}{M}$ we take the ``indexing'' of all sorts by $O$ and move it ``outside'' the signature, incorporating it into the types of sorts.
Note that this would be impossible if our inverse categories were metatheoretic in the ordinary sense, e.g., syntactic and externally finite.
2LTT is just right.

\cref{app:defn:derivcat} applies to any inverse exo-category, but it preserves diagram signatures:

\begin{proposition}\label{prop:deriv-good}
  Let $\L$ be a diagram signature of height $p>0$ and $M : \L(0) \to \U$.
  Then the inverse exo-category $\derivcat{\L}{M}$ is a diagram signature.
\end{proposition}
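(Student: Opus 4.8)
The plan is to verify the two defining conditions of \cref{def:material-sig} for $\derivcat{\L}{M}$. Since \cref{app:defn:derivcat} already presents $\derivcat{\L}{M}$ as an inverse exo-category of height $p-1$ (the reflection-of-identities condition being inherited from $\L$, because the extra datum in a morphism of $\derivcat{\L}{M}$ is a proof of an exo-equality and hence unique), it suffices to show: (i) each exotype $\derivcat{\L}{M}(n)$ is sharp; and (ii) each fanout exotype of $\derivcat{\L}{M}$ is cofibrant.

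For (i), recall $\derivcat{\L}{M}(n) = \sm{K:\L(n+1)}\prd{F:\fanout{K}{0}}M(\pi_1 F)$. Since $\L$ is a diagram signature, $\L(n+1)$ is sharp and each $\fanout{K}{0}$ is cofibrant; and since $M:\L(0)\to\U$, the family $F\mapsto M(\pi_1 F)$ consists of fibrant types. Cofibrancy of $\fanout{K}{0}$ then makes $\prd{F:\fanout{K}{0}}M(\pi_1 F)$ fibrant, hence sharp by \cref{lem:sharp}(1), and \cref{lem:sharp}(3) (closure of sharpness under $\Sigma$ over a sharp base) yields that $\derivcat{\L}{M}(n)$ is sharp.

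For (ii), fix $(K,\alpha):\derivcat{\L}{M}(n)$ and $m<n$. Unfolding $\derivcat{\L}{M}(m)$ and $\hom_{\derivcat{\L}{M}}$ and reassociating the iterated $\Sigma$'s --- legitimate because $\hom_\L(K,L)$ does not depend on the compatibility datum $\beta$ --- and using $\fanout{K}{m+1} = \sm{L:\L(m+1)}\hom_\L(K,L)$ (valid as $m+1 < n+1 = \mathsf{rk}_\L(K)$), one obtains an exo-isomorphism
\[
  \fanout{(K,\alpha)}{m}\;\cong\;\sm{(L,f):\fanout{K}{m+1}}\Bigl(\sm{\beta:\prd{G:\fanout{L}{0}}M(\pi_1 G)}\;\prd{F:\fanout{L}{0}}\bigl(\alpha(F\circ f)\steq\beta(F)\bigr)\Bigr),
\]
the exo-equalities being well-typed since $\pi_1(F\circ f)\steq\pi_1 F$. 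The key point is that the inner exotype is a singleton: it is inhabited by $\bigl(\lambda F.\alpha(F\circ f),\,\lambda F.\srefl\bigr)$, and any inhabitant $(\beta,e)$ is exo-equal to it, since exo-function-extensionality applied to $e$ forces $\beta\steq\lambda F.\alpha(F\circ f)$ and proofs of exo-equality are unique. Therefore $\fanout{(K,\alpha)}{m}\cong\fanout{K}{m+1}$, which is cofibrant because $\L$ is a diagram signature; since cofibrancy is invariant under exo-isomorphism (using axiom T3), $\fanout{(K,\alpha)}{m}$ is cofibrant as well.

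The substantive content is entirely in the last step, and the main obstacle is the bookkeeping: carrying out the reassociation of the nested $\Sigma$-exotypes and recognising that the compatibility data $\beta$ carried by a morphism of $\derivcat{\L}{M}$ is completely determined by its underlying $\L$-morphism together with the source object, so that it contributes nothing and the fanout of $(K,\alpha)$ in $\derivcat{\L}{M}$ collapses to the fanout of $K$ in $\L$ one rank higher. The sharpness claim (i) is a direct application of \cref{lem:sharp}, and no genuine difficulty is anticipated.
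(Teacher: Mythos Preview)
Your proposal is correct and follows essentially the same approach as the paper's own proof: both arguments verify sharpness of $\derivcat{\L}{M}(n)$ via closure of sharpness under $\Sigma$ over a sharp base with fibrant fibers, and both establish cofibrancy of the fanouts by unfolding the definitions, contracting the singleton $\sm{\beta}\prd{F}(\alpha(F\circ f)\steq\beta(F))$, and obtaining the key isomorphism $\fanout{(K,\alpha)}{m}\cong\fanout{K}{m+1}$. Your write-up is in fact a little more explicit about the singleton contraction than the paper's, which simply states the isomorphism to $\onetype$.
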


\begin{proof}
Since each $\fanout{K}{0}$ is cofibrant and each $M(\pi_1 F)$ is fibrant, we have that $\prd{F: \fanout{K}{0}} M(L)$ is fibrant. Since $\L(n+1)$ is sharp, so is 
\[\sm{K : \L(n+1)} \prd{F: \fanout{K}{0}} M(\pi_1 F).\]

  Now consider $n: \exo{\Nat}_{< p}$, $m: \exo{\Nat}_{< n}$, and $(K,\alpha):\derivcat{\L}{M}(n)$, 
  We have
  \begin{align*}
    \fanout{K,\alpha}{m}
    &\converts \sm{(L,\beta): \derivcat{\L}{M}(m)} \hom_{\derivcat{\L}{M}} (K, L)
    \\ 
   & \strictiso \sum_{\substack{L: \L(m+1) \\ \beta: \prd{\fanout{L}{0}} M(L) \\  f: \hom( K,L)}}
   \prd{(N,g): \fanout{L}{0} } \alpha (N, gf) \steq \beta(N, g) 
   \\
   & \strictiso \sum_{\substack{G: \fanout{K}{m+1} \\ \beta: \prd{\fanout{L}{0}}M(L)}}  
 \prd{(N,g): \fanout{L}{0} } \alpha (N, gf) \steq \beta(N, g)
   \\
nn      & \strictiso \sm{G: \fanout{K}{m+1}} 
  \onetype   \\
  & \strictiso \fanout{K}{m+1}
  \end{align*}
  Here, we expand ${\derivcat{\L}{M}(m)}$ and $ \hom_{\derivcat{\L}{M}} (K, L)$ to get the first isomorphism. We rearrange pairs and use the definition of $\fanout{K}{m+1}$ to get the second isomorphism.
  To get the third, observe that 
  \[    \sm{\beta: \prd{\fanout{L}{0}} M(L)}  
   \prd{(N,g): \fanout{L}{0} } \alpha (N, gf) \steq \beta(N, g) 
 \]
 is isomorphic to $\onetype$.
 
Since $\fanout{K}{m+1}$ is cofibrant, so is $ \fanout{K,\alpha}{m}$.
\end{proof}

Now we can state our ``bottom-up'' characterization of Reedy fibrant diagrams, although we postpone the proof until \cref{sec:abstr-sign-transl}; see \cref{thm:deriv-reedy,thm:reedy-struc}.

\begin{definition}\label{def:folds-struc}
  Let $\L$ be a diagram signature; we define the type $\Struc{\L}$ of \defemph{$\L$-structures} inductively on its height.\nomenclature[Str]{$\Struc{\L}$}{type of structures of diagram signature $\L$}
  If $\L: \IC(0)$, we define $\Struc{\L} \define \onetype$.
  If $\L:\IC(n+1)$, we define
  \[\Struc{\L} \define  \sm{\bottom{M} : \L(0) \to \U } \Struc{\derivcat{\L}{\bottom{M}}}.\]
  We write the two components of $M:\Struc{\L}$ as $(\bottom{M},\derivdia{M})$.
\end{definition}

\begin{remark}
Technically, this is a definition by recursion of a function
\[\Struc{-} : \prd{n:\exo{\Nat}} (\IC(n) \to \U).\]
The closure properties of $\U$, and the cofibrancy of $\L(0)$, ensure that this function is well-defined.
Thus, in particular, \emph{each $\Struc{\L}$ is a fibrant type}.
In the future we will make more definitions of this sort.
\end{remark}

\begin{remark}
  Recall that the rank functor is part of the data of a diagram signature.
  It is not obvious from the definitions that the $\L$-structures of a diagram signature $\L$ are independent of the rank functor of $\L$.
  This independence will be shown in \cref{cor:struc-norank}; the comparison goes via the Reedy fibrant diagrams of \cref{sec:struct-foldssig}.
\end{remark}

\begin{notation}
  \label{notation:no-bottom}
  Given a signature $\L$, an $\L$-structure $M$, and $K : \L(0)$, we often write $MK$ instead of $\bottom{M}K$. Similarly, for $L : \L(1)$, we write $ML$ instead of $\bottom{(\derivdia{M})}L$ and so on.
\end{notation}

\begin{theorem}[\tobeprovedas{thm:reedy-struc}]
  For any diagram signature $\L$, the exo-category $\rfunc{\L}{\Ustrict}$ has fibrant hom-types, and is equivalent to an exo-category whose (exo)type of objects is $\Struc{\L}$.
\end{theorem}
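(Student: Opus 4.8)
The plan is to induct on the height $p$ of $\L$, peeling off the rank-$0$ sorts one layer at a time via the derivative construction of \cref{app:defn:derivcat}, in lockstep with the recursive definition of $\Struc{\L}$ in \cref{def:folds-struc}.

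For the base case $p\converts 0$ the signature $\L$ is empty, so $\rfunc{\L}{\Ustrict}$ is the unit exo-category: one object, only the identity morphism. Its unique hom-exotype is $\onetype$, which is fibrant, and $\Struc{\L}\converts\onetype$; done. For the inductive step with $\L$ of height $n+1$, the heart of the matter is a ``Grothendieck''-type decomposition of $\rfunc{\L}{\Ustrict}$ over the assignment $\bottom{M}\mapsto\rfunc{\derivcat{\L}{\bottom{M}}}{\Ustrict}$, indexed by $\bottom{M}:\L(0)\to\U$. Concretely: (i) since Reedy fibrancy at a rank-$0$ sort merely says that the corresponding type is fibrant (as illustrated by \cref{eg:rfib-gph,def:structures-lcat-lcate}), restriction to rank $0$ sends a Reedy fibrant $\L$-diagram to a family of fibrant types $\bottom{M}:\L(0)\to\U$; (ii) unwinding the matching-object formula of \cref{def:matching}, the remaining data of a Reedy fibrant $\L$-diagram lying over a fixed $\bottom{M}$ is exactly a Reedy fibrant diagram $\derivdia{M}$ on $\derivcat{\L}{\bottom{M}}$; and (iii) a morphism $M\to N$ of Reedy fibrant $\L$-diagrams is exactly a pair consisting of its rank-$0$ part $\phi_0:\prod_{K:\L(0)}(MK\to NK)$ together with a morphism $\derivdia{M}\to(\phi_0)^{\ast}\derivdia{N}$ in $\rfunc{\derivcat{\L}{\bottom{M}}}{\Ustrict}$, where $(\phi_0)^{\ast}\derivdia{N}$ is the restriction of $\derivdia{N}$ along the rank-preserving exo-functor $(\phi_0)_{\ast}:\derivcat{\L}{\bottom{M}}\to\derivcat{\L}{\bottom{N}}$ obtained by postcomposing the ``$\alpha$''-labels with $\phi_0$. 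The key point making (iii) work is that $(\phi_0)^{\ast}\derivdia{N}$ stays Reedy fibrant: by the computation in the proof of \cref{prop:deriv-good} the fanout exotypes $\fanout{(K,\alpha)}{m}$ of a derivative signature are independent of the label $\alpha$ (they reduce to $\fanout{K}{m+1}$ in $\L$), so $(\phi_0)_{\ast}$ induces isomorphisms on matching objects and hence preserves Reedy fibrations.

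Granting the decomposition, we invoke the inductive hypothesis. By \cref{prop:deriv-good}, $\derivcat{\L}{\bottom{M}}$ is a diagram signature of height $n$, so $\rfunc{\derivcat{\L}{\bottom{M}}}{\Ustrict}$ has fibrant hom-exotypes and is equivalent, as an exo-category, to one whose object exotype is $\Struc{\derivcat{\L}{\bottom{M}}}$. Plugging this into (i)--(iii), the object exotype of $\rfunc{\L}{\Ustrict}$ becomes $\sum_{\bottom{M}:\L(0)\to\U}\Struc{\derivcat{\L}{\bottom{M}}}$, which is exactly $\Struc{\L}$ by \cref{def:folds-struc}; and the hom-exotype from $M$ to $N$ becomes
\[ \textstyle\sum_{\phi_0\,:\,\prod_{K:\L(0)}(MK\to NK)} \hom_{\rfunc{\derivcat{\L}{\bottom{M}}}{\Ustrict}}\!\bigl(\derivdia{M},\,(\phi_0)^{\ast}\derivdia{N}\bigr). \]
The second factor is fibrant by the inductive hypothesis (both of its arguments are Reedy fibrant), and the first factor $\prod_{K:\L(0)}(MK\to NK)$ is fibrant because each $MK\to NK$ is a map of fibrant types and $\L(0)$ is cofibrant (part of \cref{def:material-sig}; see the closure properties in \cref{sec:cofibrancy}); since a $\Sigma$ of a fibrant family over a fibrant exotype is fibrant, the whole hom-exotype is fibrant. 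Together these prove both assertions, once one checks that the equivalences constructed at each stage are natural enough in $M$ and $N$ to glue into a single equivalence of exo-categories with object exotype $\Struc{\L}$; this is routine since all the constructions above are defined uniformly in the signature.

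The main obstacle is step (iii): setting up the correspondence between morphisms of Reedy fibrant $\L$-diagrams and pairs $(\phi_0,\eta')$ carefully enough that it is functorial --- i.e.\ an equivalence of exo-categories, not just a levelwise bijection on objects and arrows --- and in particular checking that restriction along $(\phi_0)_{\ast}$ preserves Reedy fibrancy. Everything else is bookkeeping with the matching-object formula of \cref{def:matching} and the closure properties of fibrant and cofibrant exotypes.
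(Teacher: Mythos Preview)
Your proposal is correct and follows essentially the same inductive skeleton as the paper's proof (\cref{thm:reedy-struc}): induct on height, decompose a Reedy fibrant $\L$-diagram into its rank-$0$ part $\bottom{M}:\L(0)\to\U$ together with a Reedy fibrant diagram on the derivative $\derivcat{\L}{\bottom{M}}$, and apply the inductive hypothesis.

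The packaging differs somewhat. Your steps (i)--(iii) are proved in the paper as separate lemmas: the object-level decomposition is \cref{prop:deriv-struc}, which is phrased as an equivalence between $\func{\L}{\Ustrict}$ and the Artin gluing $\mathsf{Gl}(\derivcat{\L}{})$ and proved via the universal property of $\L$ as the collage of a profunctor; the Reedy-fibrancy transfer is \cref{thm:deriv-reedy}, established through \cref{lem:matching-fibers}; and your observation that $(\phi_0)_\ast$ preserves Reedy fibrancy is the paper's \cref{prop:dfib-matching} combined with \cref{prop:deriv-opfib} (the derivative functor is a discrete opfibration). The paper then routes the induction through the more general notion of \emph{functorial} signature, defining $\Struc{-}$ as an exo-category with explicit composition and pullback, and proves by \emph{mutual} induction both the equivalence and its compatibility with pullback along discrete opfibrations---this second clause is precisely what you flag at the end as ``natural enough to glue,'' and the paper makes it an explicit inductive invariant rather than a uniformity remark. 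Your direct argument is more elementary and avoids introducing functorial signatures; the paper's detour buys a reusable abstraction that drives the rest of \cref{sec:ho}.
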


\begin{example}[Structures for the signature of graphs]\label{ex:struc-gph}
  For the signature $A\rightrightarrows O$ of graphs, the type $\Struc{\L}$ is, to be completely precise,
  \begin{align*}
    \sm{MO : \onetype \to \U} \Struc{\derivcat{\L}{MO}}
    &\converts
    \sm{MO : \onetype \to \U}{MA : \onetype \times ((\onetype \times \exo{\mathbf{2}}) \to MO) \to \U}
    \Struc{\derivcat{(\derivcat{\L}{MO})}{MA}}\\
    &\converts
    \sm{MO : \onetype \to \U}{MA : \onetype \times ((\onetype \times \exo{\mathbf{2}}) \to MO) \to \U} \onetype
  \end{align*}
  (see \cref{ex:deriv-height-one}).
  However, this is isomorphic to
  \[\sm{MO : \U} (MO\times MO \to \U)
  \]
  and we will generally elide isomorphisms of this sort.
\end{example}

\begin{example}[Structures for the signature of categories (with equality)]\label{ex:struc-lcat}
  For the signature $\Lcat$ for categories (see \cref{ex:deriv-lcat}), the type $\Struc{\Lcat}$ is (isomorphic to)
  \begin{multline*}
    \sm{MO:\U}{MA:MO\times MO\to \U}
    \Big(\big(\tsm{x:MO} MA(x,x)\big) \to \U\Big) \times\\
    \Big(\big(\tsm{x,y,z:MO} MA(x,y)\times MA(y,z) \times MA(x,z)\big) \to \U\Big).
  \end{multline*}
  Similarly, $\Struc{\LcatE}$ is (isomorphic to)
  \begin{multline*}
    \sm{MO:\U}{MA:MO\times MO\to \U}
    \Big(\big(\tsm{x:MO} MA(x,x)\big) \to \U\Big) \times\\
    \Big(\big(\tsm{x,y,z:MO} MA(x,y)\times MA(y,z) \times MA(x,z)\big) \to \U\Big) \times\\
    \Big(\big(\tsm{x,y:MO} MA(x,y)\times MA(x,y)\big) \to \U\Big).
  \end{multline*}
  These are exactly as we claimed in \cref{def:structures-lcat-lcate}.
\end{example}

Essentially by definition, $\rfunc{\L}{\Ustrict}$ is a ``weak classifier'' for Reedy fibrant diagrams, in that every Reedy fibrant diagram on $\L$ is a pullback of a generic one over $\rfunc{\L}{\Ustrict}$.
However, this pullback is not in general unique.
By contrast, $\Struc{\L}$ is a \emph{strong} classifier of Reedy fibrant diagrams, in the same way that the fibrant universe $\U$ is a strong classifier of types.
This is a consequence of \cref{prop:uni1-folds} below, which we will prove in \cref{sec:structures} as \cref{prop:uni1} more generally for functorial signatures.

\begin{definition}\label{def:lvleqv-folds}\index{equivalence!of Reedy fibrant diagrams!levelwise}
  A morphism $f:M\to N$ of Reedy fibrant diagrams is a \defemph{levelwise equivalence} if each commutative square
  \[
    \begin{tikzcd}
      MK \ar[d] \ar[r] & NK \ar[d] \\
      \match_K M \ar[r] & \match_K N
    \end{tikzcd}
  \]
  is a homotopy pullback, i.e., each induced map of (fibrant) fibers is an equivalence.
  Let $M \leqv N$ denote the (fibrant) type of levelwise equivalences.\nomenclature[\bequiv1]{$M \leqv N$}{type of levelwise equivalences between diagram structures $M$ and $N$}
\end{definition}

\begin{proposition}[\tobeprovedas{prop:uni1}]\label{prop:uni1-folds}
  For any diagram signature $\L$ and $M,N:\Struc{\L}$, the canonical map $\idtolvle : (M = N) \to (M \leqv N)$ is an equivalence.\nomenclature[idtolvle]{$\idtolvle$}{function from identifications to levelwise equivalences of diagram structures}
\end{proposition}

The proof of \cref{prop:uni1}, hence also that of \cref{prop:uni1-folds}, relies on the univalence axiom;\index{axiom!univalence}
conversely, the univalence axiom can be recovered as an instance of \cref{prop:uni1-folds}, for the signature consisting of just one sort.

\begin{example}[Levelwise equivalence of pointed sets]\label{ex:sip-pointed-set}
 Consider the theory of pointed sets of \cref{eq:sig-pointed-set}, with the following underlying signature.
 \[
   \begin{tikzcd}
    P \ar[d] & E \ar[dl,shift left] \ar[dl,shift right]
    \\
    X 
  \end{tikzcd}
 \]
 A levelwise equivalence $f : M \to N$ of models of that theory is precisely an isomorphism of pointed sets, i.e., an isomorphism of sets preserving the chosen point.
\end{example}

\begin{example}[Levelwise equivalence of set-structures]\label{ex:sip-set-dia}
  Consider two exo-dia\-grams $M$ and $N$ of a diagram signature $\L$ such that at every sort of $\L$, $M$ and $N$ are sets.
  Then a levelwise equivalence between $M$ and $N$ is a natural transformation $M \to N$ that is an isomorphism at every sort of $\L$.
  Thus, in such cases levelwise equivalence produces the appropriate notion of sameness for set-level structures: that which would often be called an isomorphism of structured sets.
  We will see a number of examples of this sort in \cref{sec:set-egs}.
\end{example}

\begin{remark}\label{rem:diff-sip-up}
In particular, for set-level structures that can be encoded using diagram signatures, \cref{prop:uni1-folds} reproduces the results of \cite{COQUAND20131105} and \cite[Section 9.8]{HTT}.\footnote{The notion of signatures for set-level structures considered in \cite{COQUAND20131105} and \cite[Section 9.8]{HTT} is \emph{prima facie} more general, though we do not know of any examples of their work that cannot be expressed in ours.}
However, our work is more general because we deal also with higher-categorical structures, in which case levelwise equivalence is not the ``correct'' notion of sameness, as shown by the following example.
\end{remark}

\begin{example}[Levelwise equivalence of (pre)categories]\label{ex:sip-cat}
  Levelwise equivalences between $\Tcat$-precategories correspond precisely to \emph{isomorphisms of precategories} from \cite[Def.~6.9]{AKS13} and~\cite[Def. 9.4.8]{HTT}.
  These are functors $f:M\to N$ and $g:N\to M$ that induce equivalences on hom-types and also equivalences on types of objects (relative to homotopical \emph{identifications} of objects, not isomorphisms in the category structure).
\end{example}

In general, isomorphisms of precategories are too strong of a notion.
Instead, we would expect to consider \emph{equivalences of (pre)categories}, where the composites $g \circ f$ and $f\circ g$ on objects are only \emph{isomorphic} to the identity rather than \emph{identified} with it.
Thus, we want to replace the notion of levelwise equivalence in \cref{prop:uni1-folds,prop:uni1} with a kind of equivalence ``up to'' a notion of sameness that is \emph{derived from the structure}.
  
This latter notion of sameness for elements of a structure is what we will call \emph{indiscernibility}.
We will define it in \cref{sec:indisc-univ-folds,sec:hsip-folds}, following the ideas we described in \cref{sec:folds-cats} for the case of $\LcatE$.
Specifically, indiscernibilities are defined in \cref{def:indisc-in-diag-structure}, and equivalences of structures are defined in \cref{def:eqv-folds}.
We will then improve \cref{prop:uni1-folds} to \cref{thm:hsip-folds}, which deals with this improved kind of equivalence.

First, however, we conclude this \lcnamecref{sec:folds-signatures-two} with a definition of \emph{axioms} and \emph{theories} over a diagram signature.

\section{Axioms and theories}
\label{sec:axioms}

Unlike Makkai's notion of axiom defined using FOLDS, our axioms are \emph{not} syntactically defined through an inductive set of sentences.
Instead, we use the notion of \emph{proposition} of our ambient HoTT/UF to obtain a semantic notion of axiom.

\begin{definition}\label{def:diag-axiom}\index{axiom!for a diagram signature}
 Let $\L$ be a diagram signature. An \defemph{$\L$-axiom} is a function 
 $\Struc{\L} \to \PropU$.
\end{definition}

\begin{example}[Axioms for pointed sets]
  Consider the diagram signature $P \to X$ for pointed sets from \cref{sec:foldssig-by-eg}.
  Recall that the axiom $\exists (x : X).P(x)$ is a shorthand for $\exists (x:X).\exists (p : P(x)).\top$.
  The latter formula straightforwardly translates to the axiom
  \begin{align*}
    \Struc{\L} & \to \PropU
    \\
    M & \mapsto \exists (x:MX).\exists (p : MP(x)).\onetype.
  \end{align*}
  Here, we use the notational convention of \cref{notation:no-bottom}.
\end{example}

\begin{remark}[Axioms from FOLDS]
  \label{rem:axioms-from-folds}
  More generally, any FOLDS-axiom gives rise, in a mechanical way, to an axiom in the sense of \cref{def:diag-axiom}:
  for this, we map
  \begin{itemize}
  \item   $\top$ and $\bot$ to $\onetype$ and $\zerotype$, respectively (both of which are propositions), and
    
  \item  $\forall$, $\exists$, $\wedge$, and $\vee$ to their logical counterparts in HoTT/UF
  (where the translation of $\exists$ and $\vee$ uses propositional truncation).
  \end{itemize}

\end{remark}

\begin{example}[Axioms for categories]
 The axioms given in \cref{eq:has-comp,eq:comp-unique,eq:E-refl,eq:E-sym,eq:E-trans,eq:E-I,eq:E-cong} in \cref{sec:foldssig-by-eg} straightforwardly give rise to axioms for the signature $\LcatE$ via the translation sketched in \cref{rem:axioms-from-folds}.
\end{example}

\begin{definition}\label{def:diagram-theory}\index{theory!diagram}\index{model!of a diagram theory}
  A \defemph{diagram theory} is a pair $(\L,T)$ of a diagram signature $\L$ and a family $T$ of $\L$-axioms indexed by a cofibrant exotype.  A \defemph{model of a theory $(\L,T)$} then consists of an $\L$-structure $M$ together with a proof of $t(M)$ for each axiom $t$ of $T$.
  A \defemph{morphism of models} is a morphism of the underlying structures.
\end{definition}
For instance, a list of five $\L$-axioms can be specified by a family indexed by the exofinite exotype $\exo{\mathbf{5}}$.
The cofibrancy condition on the indexing exotype ensures that the exotype of models of a theory is fibrant. The exotypes of morphisms, of isomorphisms, and of equivalences of models are fibrant as well.

In \cref{sec:examples}, we will discuss a wide range of particular theories and their univalent models.

\chapter{Indiscernibility and univalence for diagram structures}
\label{sec:indisc-univ-folds}
\label{sec:indisc-folds}

In this \lcnamecref{sec:indisc-univ-folds} and the next we state our definitions and results about indiscernibility and univalence for diagram signatures.
We postpone many proofs until \cref{sec:ho}, where we will give them in the context of a more general notion of signature that we define in \cref{sec:abstr-sign-transl}.
However, since most of our examples are diagram signatures, we can discuss them first in \cref{sec:examples}.

We start in this \lcnamecref{sec:indisc-univ-folds} with most of the definitions of indiscernibility of objects within an $\L$-structure.%
\footnote{Our notion of indiscernibility is inspired by Makkai's notion of ``internal identity'', which has so far only been discussed in talks, but not appeared in print. See, for instance, \cite{makkai-blpc}.}
We then define a structure to be univalent when indiscernibility coincides with identification of objects.

Let $M$ be an $\L$-structure, $K : \L(0)$, and $a, b : MK$.%
\footnote{Here and below, we make use of the convention of \cref{notation:no-bottom}, writing $MK$ for $\bottom{M}K$.}
(To deal with sorts of rank $>0$, we simply derive $\L$ and $M$ enough times to bring the sort down to rank 0.)
To define indiscernibilities from $a$ to $b$, we
consider a new $\L$-structure obtained by adding to $M$ one element at sort $K$: a ``joker'' element. 
We can substitute this new element by $a$ or by $b$; 
below, we call the obtained structures $\partial_a M$ and $\partial_b M$, respectively. 
An indiscernibility from $a$ to $b$ will be defined below to be a levelwise equivalence of structures from $\partial_a M$ to $\partial_b M$ that is the identity on all the sorts not depending on the joker element.
Intuitively, this means that $a$ and $b$ are indiscernible when one cannot discern one from the other using the rest of the structure $M$.

To make this more precise, recall that for any $N: \L(0) \to \U$, we have a \emph{derivative} signature $\derivcat{\L}{N}$ (\cref{app:defn:derivcat}), and that $M$ is determined by $\bottom{M} : \L(0) \to \U$ together with an $\derivcat{\L}{\bottom{M}}$-structure $\derivdia{M}$ (\cref{{def:folds-struc}}).
Let
\[ [K] \eqdef  \lambda L. \left(  K=L \right) : \L(0) \to \U, \]
which makes sense since $\L(0)$ is sharp.
We define $\hat a: \prd{L: \L(0)} ([K](L) \to ML)$ by applying \cref{lem:sharp-id} to $a : MK$.
Let $\bottom{M}+[K]$ denote the pointwise disjoint union in $\L(0)\to \U$, and $\copair{1_M}{\hat{a}}:  \prd{L: \L(0)}  (\bottom{M}+[K])(L)\to ML $ the pointwise copairing.
There is an induced morphism of diagram signatures $\derivcat{\L}{\copair{1_{\bottom{M}}}{\hat{a}}} : \derivcat{\L}{\bottom{M}+[K]} \to \derivcat{\L}{\bottom{M}}$, along which we can pull back $\derivdia{M}$, and define
\[\partial_a M  \eqdef  (\derivcat{\L}{\copair{1_{\bottom{M}}}{\hat{a}}})^*\derivdia{M} : \Struc{\derivcat{\L}{\bottom{M} + [K]}}.\]\nomenclature{$\partial_a M$}{pullback of $\derivdia{M}$ along extension by $a$}%
This is not a complete definition since we have not defined morphisms of diagram signatures and the functoriality of derivatives and structures.
We will give these definitions in \cref{sec:abstr-sign-transl} (see specifically \cref{def:derivation_functorial_action,prop:deriv-opfib,rem:two-functorialities}), but this partial definition will suffice to state our theorems and allow the reader to understand the examples.

There is also an induced morphism $\iota_{\bottom{M}} : \derivcat{\L}{\bottom{M}} \to \derivcat{\L}{\bottom{M} + [K]}$, and the pullback of $\partial_a M$ along $\iota_{\bottom{M}}$ is $\derivdia{M}$.

\begin{definition}
  \label{def:indisc-in-diag-structure}
  For $\L:\IC(n+1)$, $K:\L(0)$, $M:\Struc{\L}$, and $a,b:MK$, we define an \defemph{indiscernibility from $a$ to $b$} to be a levelwise equivalence $\partial_a M \leqv \partial_b M$ that restricts along $\iota_{\bottom{M}}$ to the identity of $\derivdia{M}$.
  We write $a \foldsiso b$ for the (fibrant) type of indiscernibilities.\nomenclature[\bbindisc1]{$a \foldsiso b$}{type of indiscernibilities between $a$ and $b$ in a diagram structure}
\end{definition}

See \cref{def:iso-within-a-structure} for the full definition.
There is a canonical identity indiscernibility $a\foldsiso a$, which induces a map $(a=b) \to (a\foldsiso b)$.

\begin{definition}
  \label{def:univalence-cond-on-diag-structures}
  For $K:\L(0)$, a structure $M:\Struc{\L}$ is \defemph{univalent at $K$} if the map $(a=b) \to (a\foldsiso b)$ is an equivalence for all $a,b:MK$.
  We say $M$ is \defemph{univalent} if it and all its derivatives are univalent at all rank-0 sorts of their signatures.
\end{definition}

\begin{definition}
  Given a theory $\T \steq (\L,T)$, a $\T$-model is \defemph{univalent} if its underlying $\L$-structure is univalent.
\end{definition}

\begin{example}[Univalence for structures of height 1]\label{eg:ht1-univalence}
  Suppose $\L$ has height 1, hence is just a type ${\L}(0)$.
  Consider an $\L$-structure $M: \L(0) \to \U$ and $a,b:M(K)$.
  Then $\partial_a M$ and $\partial_b M$ are structures for the trivial signature of height 0, hence uniquely identified; thus $(a \fiso b)=\onetype$.
  So any structure of a signature $\L$ of height 1 is univalent just when it consists entirely of propositions.
\end{example}

\begin{example}[Univalence for types with a unary predicate]\label{eg:ht2-univalence}
  Suppose $\L$ is the signature of pointed sets of Diagram~\eqref{eq:sig-pointed-set}.
  In this case, we have $\L(0) \eqdef \onetype$ (whose single element we denote by $X$), and $\bottom{M} \eqdef MX : \U$, while $\derivdia{M}$ consists of the sorts $MP(x)$ and $ME(x,y)$.
  By  \cref{eg:ht1-univalence}, $\derivdia{M}$ is univalent exactly when all these types are propositions.

  We have $\derivcat{\L}{\bottom{M}}(0) \cong MX \exosum MX \times MX$, and thus
  \begin{align*}
  \derivcat{\L}{\bottom{M} + [X]}(0) &\cong (MX + \onetype) \exosum ((MX+ \onetype) \times (MX + \onetype))\\
  &\simeq (MX + \onetype) \exosum ((MX \times MX) + MX + MX + \onetype).
  \end{align*}
  The latter is an ``equivalence of sharp exotypes'' in the sense of \cref{sec:sharp}, which induces an equivalence of fibrant types upon mapping into $\U$.
  Thus, for $a : MX$, the structure $(\partial_a M)(0) : \derivcat{\L}{\bottom{M} + [X]}(0) \to \U$ is determined up to equivalence by the functions
  \begin{align*}
    (\lambda x. MP(x)) &: MX \to \U  \\
    (\lambda z. MP(a)) &: \onetype \to \U  \\
    (\lambda u. ME(\pi_1 u,\pi_2 u)) &: MX\times MX \to \U  \\
    (\lambda x.ME(x,a)) &: MX \to \U  \\
    (\lambda y.ME(a,y)) &: MX \to \U  \\
    (\lambda z.ME(a,a)) &: \onetype \to \U.
  \end{align*}
  An indiscernibility $a \fiso b$, a.k.a.\ a levelwise equivalence $\partial_a M \leqv \partial_b M$, thus consists (up to equivalence) of equivalences of types
  \begin{align}
    MP(x) & \simeq MP(x) \label{eq:pset-px}
    \\
    MP(a) & \simeq MP(b)\label{eq:pset-pab}
    \\
    ME(x,y) & \simeq ME(x,y) \label{eq:pset-exy}
    \\
    ME(x,a) & \simeq ME(x,b) \label{eq:pset-exab}
    \\
    ME(a,y) & \simeq ME(b,y) \label{eq:eset-eaby}
    \\
    ME(a,a) & \simeq ME(b,b) \label{eq:eset-aabb}
  \end{align}
  for all $x,y:MX$.
  The condition on restriction along $\iota$ says that the equivalences of \cref{eq:pset-px,eq:pset-exy} are the identity.

  In a univalent structure $M$, the types $MP(x)$ and $ME(x,y)$, and hence the type $a \fiso b$, are propositions.
  If we assume the axioms of \cref{eq:pset-refl,eq:pset-sym,eq:pset-trans,eq:pset-PE} stating that $E$ is a congruence, then we can show that the type $a \fiso b$ is equivalent to $ME(a,b)$, and hence the univalence condition for $X$ says that $M$ is standard.

  If we omit the sort $E$ from the signature, then an indiscernibility $a \fiso b$ is exactly an equivalence $MP(a) \simeq MP(b)$.
\end{example}

To illustrate the impact of an equality predicate on univalence, we consider, in the next example, partially ordered types.

\begin{example}[Univalence for sets with a binary relation]\label{eg:partially-ordered-types}
  Now suppose $\L$ is the following diagram signature:
    \[
    \begin{tikzcd}
     {[\le]} \ar[d, shift right] \ar[d, shift left]
     \\
     X
    \end{tikzcd}
  \]
  and let $M$ be an $\L$-structure.
  As in \cref{eg:ht2-univalence}, we have $\L(0) \eqdef \onetype$ (whose single element we denote by $X$), and $\bottom{M} \eqdef MX : \U$; here, $\derivdia{M}$ consists of the sorts $M[\leq](x,y)$, which we abbreviate as $x \leq y$.
   By  \cref{eg:ht1-univalence}, $\derivdia{M}$ is univalent exactly when all these types are propositions.

   We have that $\derivcat{\L}{\bottom{M}}(0) \cong MX \times MX$, and
  \begin{align*}
  \derivcat{\L}{\bottom{M} + [X]}(0) &\cong (MX+ \onetype) \times (MX + \onetype)\\
  &\simeq (MX \times MX) + MX + MX + \onetype.
  \end{align*}
  Thus, for $a : MX$, the structure $(\partial_a M)(0) : \derivcat{\L}{\bottom{M} + [X]}(0) \to \U$ is determined up to equivalence by the functions
  \begin{align*}
  (\lambda u.(\pi_1 u\leq \pi_2 u)) &: MX \times MX \to \U  \\
  (\lambda x.(x\leq a)) &: MX \to \U  \\
  (\lambda y.(a \leq y)) &:  MX \to U \\
  (\lambda z.(a \leq a)) &:  \onetype \to \U.
\end{align*}
An indiscernibility $a \fiso b$, a.k.a.\ a levelwise equivalence $\partial_a M \leqv \partial_b M$, thus consists of equivalences of types
  \begin{align}
    (x \leq y) & \simeq (x \leq y) \label{eq:potype-xy}
    \\
    (x \leq a) & \simeq (x \leq b) \label{eq:potype-xab}
    \\
    (a \leq y) & \simeq (b \leq y) \label{eq:potype-eaby}
    \\
    (a \leq a) & \simeq (b \leq b) \label{eq:potype-aabb}
  \end{align}
  for all $x,y:MX$.
  The condition on restriction along $\iota$ says that the equivalences of \cref{eq:potype-xy} are the identity.

  In a univalent structure $M$, the types $(x \leq y)$, and hence the type $a \fiso b$, are propositions.
  If we assume the axioms of reflexivity $x \leq x$ and antisymmetry $x \leq y \to y \leq x$, then we can show that the type $a \fiso b$ reduces to $(a \leq b) \wedge (b \leq a)$, as mentioned in \cref{sec:indis-intro}.
  The univalence condition then reads as
  \[ (a = b) \simeq (a \fiso b) \simeq \big((a \leq b) \wedge (b \leq a)\big); \]
  that is, it asserts antisymmetry.
\end{example}
Variants of this theory including an equality predicate are given and compared in \cref{eg:pre-po-sets}.

\begin{example}[Univalence for category structures]
  Recall from \cref{ex:deriv-lcat,ex:struc-lcat} that for $\L\converts\LcatE$, we have
  \begin{alignat*}{2}
    {\L}(0) &\eqdef \onetype &\qquad
    {(\derivcat{\L}{MO})}(0) &\eqdef MO \times MO \\
    \bottom{M} &\eqdef MO:\U &\qquad
    \bottom{(\derivdia{M})} &\eqdef MA : MO\times MO \to \U,
  \end{alignat*}
  while
   $\doublederivdia{M}$ consists of the sorts $MT_{x,y,z}(f,g,h)$, $MI_x(f)$, and $ME_{x,y}(f,g)$.
  By \cref{eg:ht1-univalence}, $\doublederivdia{M}$ is univalent just when all these types are propositions.
  Now for any $a,b:MO$, we have
  \[ (MA+[A(a,b)])(x,y) \simeq MA(x,y) + ((a=x)\times (b=y)). \]
  Thus, the height-1 signature $\derivcat{(\derivcat{\L}{MO})}{MA+[A(a,b)]}$ is equivalent to
  \begin{multline*}
    \big(\tsm{x,y,z:MO} (MA(x,y) + ((a=x) \times (b=y))) \\ \times (MA(y,z) + ((a=y)\times (b=z))) \times (MA(x,z) + ((a=x)\times (b=z)))\big) \\
    \exosum \big(\tsm{x:MO} (MA(x,x) + ((a=x) \times (b=x)))\big)\\
    \exosum \big(\tsm{x,y:MO} (MA(x,y) + ((a=x) \times (b=y))) \\ \times (MA(x,y) + ((a=x) \times (b=y))) \big).
  \end{multline*}
  By distributing $\sum$ and $\times$ over $+$, replacing $\exosum$ by $+$ up to equivalence,
  and using the fact that for fixed $a : A$, the type $\sm{y : A} a = y$ is contractible (cf.~\cref{sec:hlevel}) and hence equivalent to $\onetype$, the above sharp exotype is equivalent to
  \begin{align}
    \MoveEqLeft \big(\tsm{x,y,z:MO} MA(x,y)\times MA(y,z) \times MA(x,z)\big) \label{eq:sia0} \\
    &+ \big(\tsm{z:MO} MA(b,z)\times MA(a,z)\big)\label{eq:sia1}\\
    &+ \big(\tsm{x:MO} MA(x,a)\times MA(x,b)\big)\label{eq:sia2}\\
    &+ \big(\tsm{y:MO} MA(a,y)\times MA(y,b) \big)\label{eq:sia3}\\
    &+ \big((a=b)\times MA(a,b)\big)\label{eq:sia4}\\
    &+ \big(MA(a,a)\times (b=b)\big)\label{eq:sia5}\\
    &+ \big((a=a)\times MA(b,b)\big)\label{eq:sia6}\\
    &+ \big((a=b)\times (a=a)\times (b=b)\big)\label{eq:sia7}\\
    &+ \big(\tsm{x:MO} MA(x,x)\big) \label{eq:sia00}\\
    &+ \big((a=b)\big)\label{eq:sia8}\\
    &+ \big(\tsm{x,y:MO} MA(x,y) \times MA(x,y)\big) \label{eq:sia000}\\
    &+ \big(MA(a,b)\big)\label{eq:sia9}\\
    &+ \big(MA(a,b)\big)\label{eq:sia10}\\
    &+ \big((a=a) \times (b=b)\big).\label{eq:sian}
  \end{align}
  Thus for $f,g:MA(a,b)$, an identification $\partial_f M = \partial_g M$ consists of equivalences between instances of the predicates $MT,MI,ME$ indexed over the types~\eqref{eq:sia0}--\eqref{eq:sian}.
  The condition on restriction along $\iota$ says that the equivalences corresponding to~\eqref{eq:sia0}, \eqref{eq:sia00}, and~\eqref{eq:sia000} are the identity, while those corresponding to~\eqref{eq:sia1}--\eqref{eq:sia3}, \eqref{eq:sia4}--\eqref{eq:sia7}, \eqref{eq:sia8}, and~\eqref{eq:sia9}--\eqref{eq:sian} yield respectively the equivalences~\eqref{eq:fia1}--\eqref{eq:fia3}, \eqref{eq:fia4}--\eqref{eq:fia7}, \eqref{eq:fia8}, and~\eqref{eq:fia9}--\eqref{eq:fian} from \cref{sec:univalence-at-a}.
  Hence, indiscernibilities $f\fiso g$ in the sense of \cref{def:indisc-in-diag-structure,def:iso-within-a-structure} coincide with the indiscernibilities from \cref{defn:foldsiso-arrows}.

  Now moving back down to the bottom rank, an $(\derivcat{\L}{MO})$-structure consists of $MA : MO\times MO \to \U$ together with appropriately typed families $MT$, $MI$, and $ME$.
  Since \( (MO + [O]) = MO + \onetype\), for $a:MO$ the $0^{\mathrm{th}}$ rank of $\partial_a M$ is
  \[ (\partial_a M)A : (MO+\onetype) \times (MO+\onetype) \to \U \]
  or equivalently 
  \[ (\partial_a M)A : (MO\times MO) + MO + MO + \onetype \to \U \]
  consisting of the types $(MA(x,y))_{x,y:MO}$, $(MA(a,y))_{y:MO}$, $(MA(x,a))_{x:MO}$, and $MA(a,a)$.
  The $1^{\mathrm{st}}$ rank consists of $MT$, $MI$, and $ME$ pulled back appropriately to these families.
  Thus, a levelwise equivalence $\partial_a M \leqv \partial_b M$ consists of equivalences
  \begin{align}
    MA(x,y) &\simeq MA(x,y) \label{eq:sio0}\\
    MA(x,a) &\simeq MA(x,b) \label{eq:sio1}\\
    MA(a,y) &\simeq MA(b,y) \label{eq:sio2}\\
    MA(a,a) &\simeq MA(b,b) \label{eq:sio3}
  \end{align}
  for all $x,y:MO$ that respect the predicates $MT$, $MI$, $ME$.
  The condition on restriction along $\iota$ says that the equivalences~\eqref{eq:sio0} are the identity, while the remaining~\eqref{eq:sio1}--\eqref{eq:sio3} correspond respectively to the equivalences $\phi_{x\bullet}$, $\phi_{\bullet y}$, and $\phi_{\bullet\bullet}$ (\cref{item:foldsiso1a,item:foldsiso1b,item:foldsiso1c}) from \cref{sec:univalence-at-o}.
  Finally, respect for $MT$, $MI$, $ME$ specializes to \cref{eq:Txya,eq:Txaz,eq:Tazw,eq:Txaa,eq:Taxa,eq:Taax,eq:Taaa,eq:Iaa,eq:Exa,eq:Eax,eq:Eaa}.
  Thus, indiscernibilities $a\fiso b$ in the sense of \cref{def:indisc-in-diag-structure,def:iso-within-a-structure} coincide with the indiscernibilities from \cref{defn:folds-iso-obj}.
\end{example}

If $\L$ is a signature of height $n$, we can give an upper bound, in terms of $n$, for types occurring in a univalent $\L$-structure, and for the type of univalent $\L$-structures:

\begin{proposition}[\tobeprovedas{thm:hlevel}]\label{thm:hlevel-folds}
  If $n>0$, $\L$ has height $n$, $M:\Struc{\L}$ is univalent, and $K:\L(0)$, then $MK$ is an $(n-2)$-type.
\end{proposition}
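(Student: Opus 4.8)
The plan is to induct on the height of $\L$. When $\L$ has height $1$ (so $n=0$), an $\L$-structure is just a type family $M:\L(0)\to\U$, and by \cref{eg:ht1-univalence} univalence of $M$ asserts exactly that each $MK$ is a proposition, i.e.\ a $(-1)$-type, which is the claim. So assume $\L$ has height $n+1\geq 2$, that $M:\Struc{\L}$ is univalent, that $K:\L(0)$, and that the proposition already holds for all diagram signatures of height $\leq n$. Since $M$ is univalent at $K$, the canonical map $(a=b)\to(a\foldsiso b)$ is an equivalence for all $a,b:MK$, so it suffices to show that each type $a\foldsiso b$ is an $(n-2)$-type; then every identification type of $MK$ is an $(n-2)$-type, and hence $MK$ is an $(n-1)$-type.

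Fix $a,b:MK$ and put $\L'\define\derivcat{\L}{\bottom{M}+[K]}$, which is a diagram signature of height $n$ by \cref{prop:deriv-good}. By \cref{def:indisc-in-diag-structure}, $a\foldsiso b$ is the type of levelwise equivalences $\partial_a M\leqv\partial_b M$ that restrict along $\iota_{\bottom{M}}$ to the identity of $\derivdia{M}$. Unwinding the (recursive) description of a levelwise equivalence of $\L'$-structures, and using that the restriction condition forces such an equivalence to be the identity at every ``old'' sort of $\L'$ — i.e.\ every sort not depending on the adjoined ``joker'' — such data amount to the following: for each rank-$i$ sort $L$ of $\L'$ that \emph{does} depend on the joker, an equivalence between the fibre of $\partial_a M$ at $L$ and the fibre of $\partial_b M$ at $L$, subject to coherence conditions relating it to the equivalences at lower-rank sorts; all of these coherence conditions are propositions.

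It remains to bound the homotopy level of these fibres. Since $\partial_a M=(\derivcat{\L}{\copair{1_{\bottom{M}}}{\hat{a}}})^{*}\derivdia{M}$, the fibre of $\partial_a M$ at a rank-$i$ sort $L$ of $\L'$ is a fibre of $\derivdia{M}$ at the corresponding rank-$i$ sort of $\derivcat{\L}{\bottom{M}}$, with $a$ substituted into the joker slot — a substitution that changes which fibre we look at but not its homotopy level, and similarly for $\partial_b M$. Now $\derivdia{M}$ is univalent (this is part of the univalence of $M$), and $\derivcat{\L}{\bottom{M}}$ has height $n$, so applying the inductive hypothesis to the $i$-th derivative of $\derivdia{M}$ — a univalent structure over a signature of height $n-i$, whose rank-$0$ sorts are precisely the rank-$i$ sorts of $\derivcat{\L}{\bottom{M}}$ — shows that this fibre is an $(n-i-2)$-type. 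Consequently, at each rank $i\geq 0$ the relevant type of equivalences of $(n-i-2)$-types is again an $(n-i-2)$-type, hence (as $i\geq 0$ and $n\geq 1$) an $(n-2)$-type; and the coherence conditions, being propositions, are $(n-2)$-types as well. Since $a\foldsiso b$ is assembled from these pieces by iterated (fibrant) dependent products and sums over the finitely many ranks $i<n$ of $\L'$, it is an $(n-2)$-type, completing the induction.

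The real work — and the reason the paper defers this to \cref{sec:FOLDS-iso-uni} — lies in making the second and third paragraphs precise: this needs the explicit description of a (relative) levelwise equivalence of diagram structures, which rests on the functoriality of derivatives and of $\Struc{-}$ set up in \cref{sec:abstr-sign-transl}, together with the bookkeeping that the only potentially ``non-propositional'' data in such an equivalence occur at the sorts of least relevant rank. In practice it is cleanest to run this induction simultaneously with the companion bound on the homotopy level of levelwise equivalences (the general-setting analogue of \cref{thm:hlevel1}) rather than re-deriving the needed estimates at each stage.
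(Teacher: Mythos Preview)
Your overall strategy is correct and matches the paper's: induct on height, use univalence at $K$ to reduce to bounding the h-level of $a\foldsiso b$, and observe that the constituent data live in types whose h-levels are controlled by the inductive hypothesis applied to derivatives of $\derivdia{M}$. Your closing remark that the argument is cleanest as a simultaneous induction is exactly how the paper proceeds.

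The one point that deserves tightening is the identification of the companion statement. You gesture at ``the general-setting analogue of \cref{thm:hlevel1-folds}'', but that theorem only bounds the h-level of the type of \emph{univalent} structures, and $\partial_a M$, $\partial_b M$ need not be univalent --- they are merely \emph{pullbacks} of the univalent $\derivdia{M}$. The paper's actual companion hypothesis is the stronger
\[
Q(n)\;:\;\prd{\M,\N:\Sig(n)}\prd{\alpha:\hom(\M,\N)}\prd{N:\uStruc{\N}}\istype{(n-2)}\big(\alpha^*N=\alpha^*N\big),
\]
which tracks precisely this: identifications of pullbacks of a univalent structure. Your paragraph-3 argument (fibres of $\partial_a M$ are fibres of $\derivdia{M}$, hence bounded) is essentially a direct unwinding of $Q$ at each rank, so your informal analysis is compatible with the paper's; but the ``assembly'' step is where the recursion through levelwise equivalences introduces further pullbacks and transports, and this is exactly why one needs $Q$ (about pullbacks) rather than \cref{thm:hlevel1-folds} (about univalent structures) in the simultaneous induction. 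With that adjustment your sketch becomes the paper's proof.
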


\begin{proposition}[\tobeprovedas{thm:hlevel1}]\label{thm:hlevel1-folds}
  If $\L$ has height $n$, then the type of univalent $\L$-structures is an $(n-1)$-type.
\end{proposition}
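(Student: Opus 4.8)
The plan is to induct on the height $n$, establishing this statement alongside \Cref{thm:hlevel-folds} (so that the height-$(n+1)$ instance of the latter is available before we invoke it). Write $\uStruc{\L} \define \sm{M:\Struc{\L}}\mathsf{Univ}(M)$ for the type of univalent $\L$-structures, where $\mathsf{Univ}(M)$ is the conjunction of the univalence-at-$K$ conditions for $M$ and all its derivatives; since each of these is an ``$\isEquiv$'' statement, $\mathsf{Univ}(M)$ is a proposition and $\uStruc{\L}$ is a fibrant type. For $n=0$ the only diagram signature is empty, so $\Struc{\L}\steq\onetype$ and $\uStruc{\L}$ is contractible, in particular a $(-1)$-type.

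For the inductive step, let $\L$ have height $n+1$ and assume the statement for height $n$. Unfolding $\Struc{\L} = \sm{\bottom M:\L(0)\to\U}\Struc{\derivcat{\L}{\bottom M}}$ and using that the derivatives of $M=(\bottom M,\derivdia M)$ appearing in \Cref{def:univalence-cond-on-diag-structures} are exactly $\derivdia M$ together with its iterated derivatives (this uses the functoriality of $\derivcat{\L}{-}$ and $\Struc{-}$ set up in \cref{sec:abstr-sign-transl}), the proposition $\mathsf{Univ}(M)$ is logically equivalent, hence equivalent, to $\mathsf{Univ}_0(M)\times\mathsf{Univ}(\derivdia M)$, where $\mathsf{Univ}_0(M)$ — the conjunction of the univalence conditions at the sorts $K:\L(0)$ — is a proposition depending on both $\bottom M$ and $\derivdia M$. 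Rearranging the nested $\Sigma$-type yields
\[ \uStruc{\L} \;\simeq\; \sm{\bottom M:\L(0)\to\U}\; F(\bottom M), \qquad F(\bottom M) \define \sm{D:\uStruc{\derivcat{\L}{\bottom M}}}\mathsf{Univ}_0(\bottom M,D). \]
Now $\derivcat{\L}{\bottom M}$ is a diagram signature of height $n$ (\Cref{prop:deriv-good}), so the inductive hypothesis makes $\uStruc{\derivcat{\L}{\bottom M}}$ an $(n-1)$-type, and $\mathsf{Univ}_0$ is a proposition, hence also an $(n-1)$-type; since $(n-1)$-types are closed under $\Sigma$, each fibre $F(\bottom M)$ is an $(n-1)$-type.

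The one genuine obstacle is that the base $\L(0)\to\U$ of this outer sum has unbounded homotopy level, so closure of $n$-types under $\Sigma$ does not directly apply. This is exactly what \Cref{thm:hlevel-folds} remedies: for the height-$(n+1)$ signature $\L$, every univalent structure $M$ has $\bottom M K$ an $(n-1)$-type for each $K:\L(0)$, so there is a map $F(\bottom M)\to\prd{K:\L(0)}\istype{(n-1)}(\bottom M K)$. Because $\L(0)$ is cofibrant the codomain is a proposition, whence $F(\bottom M)\simeq\bigl(\prd{K}\istype{(n-1)}(\bottom M K)\bigr)\times F(\bottom M)$. Substituting this and using the distributivity equivalence $\sm{\bottom M:\L(0)\to\U}\prd{K}\istype{(n-1)}(\bottom M K)\simeq\prd{K:\L(0)}\bigl(\sm{X:\U}\istype{(n-1)}(X)\bigr)$, we obtain
\[ \uStruc{\L} \;\simeq\; \sm{g:\prd{K:\L(0)}\bigl(\sm{X:\U}\istype{(n-1)}(X)\bigr)}\; F(g), \]
where $F(g)$ abbreviates $F$ applied to the underlying family of $g$. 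The type $\sm{X:\U}\istype{(n-1)}(X)$ of $(n-1)$-types is an $n$-type, and $\L(0)$ is cofibrant, so the indexed product over it is an $n$-type; the fibres $F(g)$ are $(n-1)$-types, in particular $n$-types; and $n$-types are closed under $\Sigma$. Hence $\uStruc{\L}$ is an $n$-type, i.e.\ an $\bigl((n+1)-1\bigr)$-type, completing the induction.

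Beyond this arithmetic, the only things needing care are the bookkeeping lemmas that a product of $n$-types indexed by a cofibrant exotype is again an $n$-type (following from the cofibrancy properties recalled in \Cref{sec:cofibrancy} together with function extensionality) and that the type of $(n-1)$-types is an $n$-type, and the verification that the ``derivatives of the structure $M$'' are precisely $\derivdia M$ and its iterates so that $\mathsf{Univ}(M)$ factors as above. None of these is deep. The load-bearing input is \Cref{thm:hlevel-folds}: without the homotopy-level bound on $\bottom M K$ the outer sum over $\L(0)\to\U$ cannot be controlled, which is why it is cleanest to prove the two propositions by a single induction on height, deriving the height-$(n+1)$ case of \ref{thm:hlevel-folds} first.
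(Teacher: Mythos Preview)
Your decomposition of $\uStruc{\L}$ as a $\Sigma$-type over $\bottom M:\L(0)\to\U$, followed by absorbing the h-level constraint on $\bottom M K$ into the base via \cref{thm:hlevel-folds}, is correct and is essentially how the paper proves the inductive step of its auxiliary statement $Q(n+1)$ from $P(n)$ and $Q(n)$.

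However, the mutual induction you set up does not close. You defer the proof of \cref{thm:hlevel-folds} at height $n+1$ to ``the same induction'', but your stated inductive hypotheses---namely \cref{thm:hlevel-folds} and \cref{thm:hlevel1-folds} at heights $\le n$---are not strong enough to carry that step. To bound $\bottom M K$ you must bound the indiscernibility type $a\fiso b$, whose first component is an identification $\partial_a M = \partial_b M$ between structures for the height-$n$ signature $\derivcat{\L}{\bottom M+[K]}$. These structures are \emph{pullbacks} $(\derivcat{\L}{\copair{1}{\hat a}})^*\derivdia M$ of the univalent $\derivdia M$, and pullbacks need not be univalent. Hence your hypothesis that $\uStruc{\derivcat{\L}{\bottom M+[K]}}$ is an $(n-1)$-type says nothing about $\partial_a M = \partial_b M$.

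The paper handles this by strengthening the second inductive statement: instead of ``$\uStruc{\L}$ is an $(n-1)$-type'' it proves
\[
Q(n)\;\define\;\prd{\M,\N:\Sig(n)}\;\prd{\alpha:\hom(\M,\N)}\;\prd{N:\uStruc{\N}}\;\istype{(n-2)}\bigl(\alpha^*N = \alpha^*N\bigr),
\]
which bounds loop spaces of \emph{pullbacks} of univalent structures. This applies directly to $\partial_a M$, and (via the standard fact that if $(x=x)$ is a $k$-type then so is $(x=y)$) to $\partial_a M = \partial_b M$. Your argument can be repaired by carrying $Q(n)$ in the induction rather than the bare statement of \cref{thm:hlevel1-folds}; the latter then follows from $Q(n)$ by taking $\alpha$ to be the identity and invoking \cite[Thm.~7.2.7]{HTT}.
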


\begin{example}[Homotopy levels for univalent categories]
  For the diagram signature $\LcatE$ of height $3$, \cref{thm:hlevel-folds} states that the type of objects of a univalent $\LcatE$-structure is a $1$-type.
  In particular, the type of objects of a univalent $\Tcat$-category is a 1-type.

  Similarly, \cref{thm:hlevel1-folds} states that the type of univalent $\LcatE$-structures is a $2$-type.
  Thus, since this type contains the type of univalent $\Tcat$-categories as a subtype (cf.~\cref{sec:identity}), the latter is also a $2$-type.
\end{example}

Finally, and perhaps surprisingly, we note that in general, morphisms of structures need \emph{not} preserve indiscernibility.
The following toy example makes the point; we will see in \cref{eg:premonoidal} that this can also fail in ``real-world'' categorical structures.

\begin{example}[A structure morphism that doesn't preserve indiscernibilities]\label{eg:indis-notpres}
  Let $\L$ be the height-2 signature such that an $\L$-structure consists of a type $MA$ and a binary relation $MR : MA \to MA \to \U$.
  Univalence at $R$ means that each $MR(x,y)$ is a proposition; while $a\fiso b$, for $a,b:MA$, means that $MR(x,a)\leftrightarrow MR(x,b)$ for all $x$, $MR(a,x)\leftrightarrow MR(b,x)$ for all $x$, and $MR(a,a)\leftrightarrow MR(b,b)$.

  Let $M$ be the $\L$-structure with $MA = \{a,b\}$ and $MR(x,y)$ always false, and $N$ the $\L$-structure with $NA=\{a,b,c\}$ with $NR$ always false except that $NR(a,c)$ is true.
  Let $\bottom{f}:MA\to NA$ be the inclusion, so that $\derivdia{M} = (\derivcat{\L}{\bottom{f}})^* \derivdia{N}$.
  Then $a\fiso b$ in $M$, but $\bottom{f}a \not\fiso \bottom{f}b$ in $N$.
\end{example}

Functors between categories, and morphisms between most of the other categorical examples to be discussed in \cref{sec:examples}, do generally preserve indiscernibilities.
But this is only because the indiscernibilities in such cases admit an equivalent ``diagrammatic'' characterization by a suitable ``Yoneda lemma'' (as described for categories in \cref{sec:folds-cats}).
Note that the existence of such a Yoneda lemma depends on the theory (i.e., the axioms) as well as the signature.
We do not know a general condition on a theory ensuring that morphisms between its structures preserve indiscernibilities.

\chapter{The univalence principle for diagram structures}
\label{sec:hsip-folds}

Our goal is to prove a univalence principle for a notion of equivalence of univalent structures that is \textit{a priori} weaker than levelwise equivalence.
In the case of (pre)categories $M,N$, there are two natural candidates for such a notion.
(Recall the notions of surjective and split-surjective function from \cref{sec:logic}.)
\begin{itemize}
\item A \emph{weak equivalence} is a functor $f:M\to N$ that is fully faithful (each function $MA(x,y) \to NA(fx,fy)$ is an isomorphism of sets) and essentially surjective ($\prd{y:NO} \Vert \sm{x:MO} (fx \cong y) \Vert$).
\item A \emph{(strong) equivalence} is a functor $f:M\to N$ for which there is a functor $g:N\to M$ and natural isomorphisms $f g \cong 1_N$ and $g f \cong 1_M$.
  By~\cite[Lemma 6.6]{AKS13}, this is equivalent to being fully faithful and \emph{split} essentially surjective ($\prd{y:NO} \sm{x:MO} (fx \cong y)$).
\end{itemize}
In addition, there are two important related auxiliary notions:
\begin{itemize}
\item A \emph{surjective weak equivalence} is a functor $f:M\to N$ that is fully faithful (each function $MA(x,y) \to NA(fx,fy)$ is an isomorphism of sets) and surjective on objects ($\prd{y:NO} \Vert \sm{x:MO} (fx=y) \Vert$).
\item A \emph{split-surjective equivalence} is a functor $f:M\to N$ that is fully faithful (each function $MA(x,y) \to NA(fx,fy)$ is an isomorphism of sets) and split-surjective on objects ($\prd{y:NO} \sm{x:MO} (fx=y)$).
\end{itemize}
Note that the latter two do not require knowing what an isomorphism between objects is.
Furthermore, fully-faithfulness can be split into fullness (each function $MA(x,y) \to NA(fx,fy)$ is surjective\footnote{Or split-surjective; in the presence of faithfulness the two are equivalent.}) and faithfulness (each function $MA(x,y) \to NA(fx,fy)$ is injective), while faithfulness is equivalent to \emph{surjectivity on equalities}: each function $ME_{x,y}(p,q) \to NE_{fx,fy}(fp,fq)$ is surjective (which implies a similar property for $T$ and $I)$.
This suggests the following generalizations that apply to all diagram structures.

\begin{definition}\label{def:vss-folds}
  A morphism $f:M\to N$ of Reedy fibrant diagrams over a diagram signature $\L$ is a \defemph{surjective weak equivalence} (resp.\ a \defemph{split-surjective equivalence}) if for all sorts $K$, the maps on fibers induced by the commutative square
  \begin{equation}
    \begin{tikzcd}
      MK \ar[d] \ar[r,"f_K"] & NK \ar[d] \\
      \match_K M \ar[r,"\match_K f"'] & \match_K N
    \end{tikzcd}\label{eq:reedy-sq}
  \end{equation}
  are surjective (resp.\ split-surjective).
\end{definition}

Note that being a surjective or split-surjective weak equivalence is not obviously a symmetric notion:
it does not seem straightforward to ``invert'' such an equivalence.
But with the help of \cref{thm:hsip-vss-folds}, below, we can obtain an inverse to a split-surjective equivalence between \emph{univalent} structures.

Makkai defined surjective weak equivalences under the name \emph{very surjective morphisms}; other names for them include \emph{Reedy surjections} and \emph{trivial fibrations}.
Unfortunately, we are currently unable to prove our desired general result with surjective weak equivalences, so for the present we restrict to the split-surjective equivalences.
We write $M \strucequiv N$ for the type of split-surjective equivalences from $M$ to $N$.\nomenclature[\cequiv1]{$M\strucequiv N$}{type of split-surjective equivalences between diagram structures $M$ and $N$}
Our first main result is:

\begin{theorem}[\tobeprovedas{thm:hsip}]\label{thm:hsip-vss-folds}
  For any diagram signature $\L$ and $M,N: \Struc{\L}$ such that $M$ is univalent, the canonical map
  \[ \idtovss: (M = N) \to (M \strucequiv N)\]\nomenclature[idtosse]{$\idtovss$}{function from identifications to split-surjective equivalences of diagram structures}%
  is an equivalence.
\end{theorem}

Makkai was unable to define a general notion of non-surjective equivalence directly, instead considering \emph{spans} of surjective equivalences.
However, with our notion of indiscernibility we can avoid this detour.\footnote{Also, although spans of surjective equivalences give the correct \emph{relation} of equivalence, they do not give the correct \emph{homotopy type of} equivalences, unless the apices of the spans are constrained to be univalent so that \cref{thm:hsip-vss-folds} applies.}
If $z:\match_K M$, we denote the map on fibers induced by the square~\eqref{eq:reedy-sq} by $f_{K,z} : MK_z \to NK_{\match_K f(z)}$.

\begin{definition}\label{def:eqv-folds}
  A morphism $f:M\to N$ of Reedy fibrant diagrams over a diagram signature $\L$ is an \defemph{equivalence} if for all sorts $K$ and all $z:\match_K M$ we have $\prd{y:NK_{\match_K f(z)}}\sm{x:MK_z} f_{K,z}(x) \foldsiso y$.
  Similarly, it is a \defemph{weak equivalence} if for all $K$ and $z$ we have $\prd{y:NK_{\match_K f(z)}}\Vert \sm{x:MK_z} f_{K,z}(x) \foldsiso y\Vert$.
\end{definition}

We write $M\simeq N$ for the type of equivalences.\nomenclature[\equiv1]{$M\simeq N$}{type of equivalences between diagram structures $M$ and $N$}
Our second main result is:

\begin{theorem}[\tobeprovedas{thm:hsip2}]\label{thm:hsip-folds}
  For any diagram signature $\L$ and $M,N: \Struc{\L}$ that are both univalent, the canonical map
  \[\idtoeqv: (M = N) \to (M \simeq N)\]\nomenclature[idtoeqv]{$\idtoeqv$}{function from identifications to equivalences of diagram structures}%
  is an equivalence.
\end{theorem}

\begin{example}[Equivalence of univalent category structures]\label{eg:vss-cat}
  An equivalence between univalent $\Tcat$-categories is the same as a fully faithful and split essentially surjective functor, which by \cite[Lemma~6.6]{AKS13} is the same as an equivalence of categories.
  Thus, \cref{thm:hsip-folds} specializes to \cite[Theorem~6.17]{AKS13}. See \cref{sec:eq-of-cat} for a few more details.
\end{example}

One of Makkai's goals was to define, for a given (diagram) signature $\L$, a language for properties that are invariant under $\L$-equivalence.
He calls such invariance the ``Principle of Isomorphism'' \cite{MakSFAM}:
\begin{quote}
    The basic character of the Principle of Isomorphism is that of a constraint on the
    language of Abstract Mathematics; a welcome one, since it provides for the separation of sense from nonsense. 
\end{quote}
Working in 2LTT, we do not need to devise a language for invariant properties ourselves; instead, we can rely on the homotopical fragment of 2LTT to sufficiently constrain our language.
Recall our notion of ``axiom'' from \cref{def:diag-axiom}.

\begin{corollary}[of \cref{thm:hsip-folds}]
  \label{cor:axiom-invariance}
 Any $\L$-axiom $t$ is invariant under equivalence of univalent $\L$-structures:
 given univalent $\L$-structures $M$, $N$ and an equivalence $M \simeq N$, then $t(M) \leftrightarrow t(N)$.
\end{corollary}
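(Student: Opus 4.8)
The plan is to deduce \Cref{cor:axiom-invariance} directly from \Cref{thm:hsip-folds} (the univalence principle for univalent diagram structures) together with the defining property of axioms as maps into $\PropU$. The key observation is that an $\L$-axiom is by \Cref{def:diag-axiom} a function $t : \Struc{\L} \to \PropU$, and such a function respects identifications: any $p : M = N$ induces an identification $t(M) = t(N)$ in $\PropU$, hence an equivalence $t(M) \simeq t(N)$, and in particular a logical equivalence $t(M) \leftrightarrow t(N)$ (since $t(M)$ and $t(N)$ are propositions, this last step is just the underlying back-and-forth maps of the equivalence, or simply $\mathsf{transport}$ applied to the family $\lambda A.\,\pi_1(A)$ over $\PropU$).

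First I would note that by \Cref{thm:hsip-folds}, the canonical map $\idtoeqv : (M = N) \to (M \simeq N)$ is an equivalence whenever $M$ and $N$ are univalent; in particular it is surjective, so from the given equivalence $e : M \simeq N$ we obtain some $p : M = N$ with $\idtoeqv(p) = e$. (In fact we only need the \emph{surjectivity} of $\idtoeqv$, i.e., that every equivalence of univalent structures is ``realized'' by an identification; the full strength of the equivalence is not needed for this corollary.) Then I would transport the inhabitant of $t(M)$ along $p$: applying $\mathsf{transport}$ for the family $\lambda X.\, t(X)$ — or more precisely, first applying $\mathsf{ap}_t$ to $p$ to get $t(M) = t(N)$ in $\PropU$, then coercing this identification of propositions to an equivalence and thence to a function $t(M) \to t(N)$. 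The reverse implication $t(N) \to t(M)$ follows symmetrically (transport along $p^{-1}$), yielding $t(M) \leftrightarrow t(N)$ as desired.

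There is essentially no obstacle here: the corollary is a formal consequence of \Cref{thm:hsip-folds} and the fact that all constructions in HoTT/UF, including the application of the function $t$, respect identifications. One small point worth stating cleanly in the write-up is \emph{why} it suffices to land in $\leftrightarrow$ rather than $\simeq$: since $t(M), t(N) : \PropU$ are propositions, a pair of maps in both directions already constitutes an equivalence, so there is no loss; indeed one could equally well conclude the stronger $t(M) = t(N)$ in $\PropU$ by univalence for propositions. I would phrase the proof as: given the equivalence $M \simeq N$, by \Cref{thm:hsip-folds} we obtain an identification $M = N$; applying $t$ to this identification gives $t(M) = t(N)$, and since both are propositions this yields in particular $t(M) \leftrightarrow t(N)$.

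\begin{proof}
  Suppose $M$ and $N$ are univalent $\L$-structures and we are given an equivalence $M \simeq N$.
  By \Cref{thm:hsip-folds}, the canonical map $\idtoeqv : (M = N) \to (M \simeq N)$ is an equivalence; in particular it is surjective, so there is an identification $p : M = N$.
  Since $t : \Struc{\L} \to \PropU$ is a function, applying it to $p$ yields an identification $t(M) = t(N)$ in $\PropU$.
  Transporting along this identification (equivalently, coercing it to an equivalence $t(M) \simeq t(N)$ and taking the underlying maps in both directions) gives $t(M) \leftrightarrow t(N)$.
\end{proof}
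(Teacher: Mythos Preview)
Your proof is correct and is exactly the intended argument: the paper states this as an immediate corollary of \Cref{thm:hsip-folds} without giving a separate proof, and the reasoning you spell out---obtain $p:M=N$ from the equivalence via \Cref{thm:hsip-folds}, then apply $t$ and transport---is precisely how such a corollary is meant to be read.
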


\begin{remark}[Axioms invariant under weak equivalences]\label{eg:ax-weq}
  We anticipate that one can construct a ``univalent completion'' operation that associates, to any structure $M$ of a signature $\L$, its free univalent completion $\widehat{M}$, together with a weak equivalence $M \to \widehat{M}$.
  In light of this completion, it would make sense to restrict our notion of $\L$-axiom to those maps $\Struc{\L} \to \PropU$ that are invariant under weak equivalence.
  We have not checked that all the axioms presented in the examples of \cref{sec:examples} are indeed invariant under weak equivalence.
  Most of our axioms can be expressed in Makkai's language FOLDS \cite{MFOLDS}, which was designed to be invariant under equivalence; we expect it to serve this function in our context as well, though we have not verified it for our notion of ``weak equivalence''.
\end{remark}

\begin{remark}\label{rmk:equality}
  We can now finally give a more comprehensive explanation of the inclusion of equality sorts (and their associated axioms) in our theories.
  In \cref{sec:foldssig-by-eg} we introduced these sorts in order to state axioms involving equality in the style of Makkai's FOLDS.
  However, our notion of ``axiom'' in \cref{def:diag-axiom} is so general that it allows us to formulate such axioms without equality sorts; we can simply refer directly to the identification types of the other sorts.
  Why then do we include equality sorts in our signatures?

  One answer is that, as noted in \cref{eg:ax-weq}, we hope that there is a more restrictive notion of axiom that would be invariant under weak equivalence, and we expect that equality sorts would be needed to express axioms involving equality in such a way.
  But in addition, the inclusion or exclusion of equality sorts in a signature has a direct effect on the resulting notions of indiscernibility and univalence for its structures.
  We have seen that when a sort $B$ at one below top rank has an equality sort $E_B$ above it, then univalence at $B$ says simply that $B$ is a family of sets with standard equality $E_B$ (and in \cref{sec:strict-egs} we will discuss a way to extend this to sorts at lower ranks as well).
  But depending on the signature, omitting $E_B$ could lead to different notions of indiscernibility and univalence at $B$. For instance, in \cref{eg:partially-ordered-types}, we give a signature (without an equality sort) together with axioms whose univalent structures are partially ordered sets. In \cref{item:preset} of \cref{eg:pre-po-sets}, we add to that signature with axioms an equality sort together with axioms asserting that it is reflexive and a congruence for the other sorts: then the univalent structures are \emph{pre-ordered} sets. That is, by adding an equality sort, we are able to capture a wider class of structures.
  Thus, including or excluding equality sorts is one way to ``fine-tune'' the resulting notion of univalent structure.
\end{remark}

In \cref{sec:ho} we will prove all of the results stated above, in fact obtaining them as special cases of analogous results for a higher-order notion of signature.
However, before delving into that, in \cref{sec:examples} we will survey a large number of examples that fit into the first-order framework of diagram signatures.

\part{Examples of diagram structures}\label{sec:examples}

In this \lcnamecref{sec:examples}, we present diagram theories (as defined in \cref{def:diagram-theory}) for many mathematical structures.
We usually spell out the signatures explicitly, but describe the axioms only informally.
However, most axioms could be formally stated in the language of FOLDS described in \Cref{sec:foldssig-by-eg}, and thus obtained via the translation sketched in \Cref{rem:axioms-from-folds}.

For each theory, we describe its models, its indiscernibilities, and its univalent models.
In most cases, we also describe the morphisms and equivalences (in the sense of \cref{def:eqv-folds}) between univalent models (although sometimes full univalence is not needed to characterize the morphisms).
We aim to show that in most if not all cases, the indiscernibilities coincide with the ``expected'' notion of isomorphism or internal equivalence, while the equivalences between models correspond to the ``expected'' notions of equivalence used in practice.

We start in \Cref{sec:set-egs} by considering theories over signatures of height $\le 2$.
The univalent models of such theories are sets, or families of sets, equipped with some structure.
Of particular interest are \cref{eg:prop,eg:set}, which illustrate the effect of adding an equality sort as described in \cref{rmk:equality}.
Similarly, \cref{eg:pre-po-sets} continues \cref{eg:partially-ordered-types}, studying the difference between preordered sets and partially ordered sets, the latter presented both with and without an equality sort.

In addition, in \cref{eg:fol} we show that our diagram theories include the theories of traditional first-order logic.
Since our logic is purely relational, this requires encoding functions in terms of their graphs.

In \cref{sec:1cat-egs} we consider theories for categories with extra structure built from functors and natural transformations.
This includes categories with certain specified limits or colimits, as well as categories with monoidal structures, and so on.
We also consider other 1-categorical structures such as multicategories, categorical structures for the interpretation of type theories, and many others.

Again, since our structure is purely relational, functors must be encoded like functions, in terms of their graphs.
The relevant kind of ``graph of a functor'' is an \emph{anafunctor} (a notion also due to Makkai~\cite{makkai:avoiding-choice}), which we discuss in \cref{eg:anafunctors}.
Univalence helps to ensure that this representation is accurate: univalent saturated anafunctors correspond precisely to functors (without any need for an axiom of choice, in contrast to the situation in set-based category theory).

We also discuss in \cref{sec:1cat-egs} some interesting examples of signatures where the type of \emph{sorts} (of a given rank) is not a set but a higher type.
Such signatures are particularly useful to specify ``unbiased'' operations involving some symmetry, that is, operations defined for any arity where inputs can be swapped.
Examples include unbiased symmetric monoidal categories (\cref{eg:unbiased-sym-monoidal}) and unbiased (or ``fat'') symmetric multicategories (\cref{eg:fat-sym-multicats}).

In \cref{sec:hcat-egs} we study theories for higher-categorical structures, such as bicategories and double categories.
As always, the functorial operations on such structures must be encoded as anafunctors.
A prototypical example is the representation of bicategories as \emph{anabicategories} in \cref{eg:bicats}.
Although we do not discuss it in detail, similar methods can be used to represent weak $n$-categories for any finite $n$ (but not $n=\infty$, as our theory does not yet handle infinite-height structures).

In \cref{sec:strict-egs} we show how to encode \emph{strict} categorical structures in our framework.
This includes strict structures in the usual higher-categorical sense, such as strict 2-categories, as compared to the \emph{weak} notions such as bicategories that we studied in \Cref{sec:hcat-egs}.
But it also includes ``strict 1-categories'' in the sense of \cref{sec:categories-hott}, having a set of objects rather than a more general type of objects.
The two are closely related; e.g., a strict 2-category in the first sense must have hom-categories that are strict in the second sense.
Importantly, all sorts of strict structures are obtained by \emph{adding}, to the weak theories, additional structure and properties in the form of additional equality sorts.

The prototypical example, of strict 1-categories, is discussed in \cref{eg:strict-1-cat}.
When defining strict 2-categories (\cref{eg:strict-2-cat}), we do not equip the sort of \emph{objects} with a strict equality; thus while a strict 1-category is strict at the level of objects, a strict 2-category is strict only at the level of 1-morphisms.
This is a closer match for the way strict 2-categories are traditionally used.
In principle it would be perfectly possible to also consider ``ultra-strict'' 2-categories with equality of objects, but we expect such things to occur even more rarely in practice.

In \cref{sec:graph-egs} we study theories with signatures of height 3 that are not categorical: e.g., theories of objects and arrows that lack composition or identities, such as directed multigraphs and Petri nets.
The resulting notions of indiscernibility and univalence are a little strange, suggesting that our theory is better-suited to categorical structures; but with the imposition of strictness conditions as in \cref{sec:strict-egs} we can eliminate the strange behavior.

In \cref{sec:restr-indis-egs} we study ``enhanced'' (higher) categories, that is, categories with additional structure that is not described purely in terms of functors and natural transformations.
In most cases, we can show that the indiscernibilities in such structures coincide with a well-known notion of ``good'' isomorphism.
Perhaps the best-known example is $\dagger$-categories (\cref{ex:dagger}), which have a ``reversal'' operation on 1-morphisms; in this case the indiscernibilities are the ``unitary'' isomorphisms.

Finally, in \cref{sec:unnat}, we study theories of categories equipped with even less-categorical operations, such as non-functorial operations on objects, or unnatural transformations.
Such structures often arise in the study of semantics of programming languages.
Again, our notion of indiscernibility tends to coincide in these examples with well-known notions of ``good'' isomorphism.
For example, in a \emph{thunk-force category} (\cref{eg:thunk-force}; a.k.a.\ an \emph{abstract Kleisli category}), the structure includes a certain unnatural transformation, and the indiscernibilities of objects are the isomorphisms on which this transformation is natural.
Similarly, in a \emph{premonoidal category} (\cref{eg:premonoidal}) the indiscernibilities are the \emph{central isomorphisms}.

\subsection*{Terminology and Notation}\label{sec:no-models}

Throughout this \lcnamecref{sec:examples}, we often omit the adjective ``diagram''.
By ``signatures'' and ``theories'', we always mean diagram signatures and diagram theories, as opposed to the functorial signatures and diagrams of \Cref{sec:ho}.

  Moreover, as previously suggested in \cref{notn:no-models-cats}, when only one structure $M$ is being discussed, we will often abuse notation by dropping the ``$M$'' in front of its interpretation of the sorts, writing, for instance, $x : O$ instead of $x : MO$.
  However, when more than one structure is under consideration (such as when discussing morphisms of structures), we will always retain the structure names on all the sorts.

\chapter{Structured sets}
\label{sec:set-egs}

As previously noted, the goal of \cref{sec:examples} is to explore a large number of examples of mathematical structures, to get a feel for how widely applicable our notions of indiscernibility and univalence are.
In this chapter we begin with some fairly trivial examples; later we will build up to more complicated ones.

\begin{example}[Propositions]\label{eg:prop}\index{proposition}
  The theory of propositions has the following underlying signature:
  \[
    \begin{tikzcd}
      P
    \end{tikzcd}
  \]
  and no axioms.
  A univalent model of this theory is exactly a proposition.
  A morphism of such models is an implication; an equivalence is a bi-implication.
\end{example}

\begin{example}[Sets]\label{eg:set}\index{set}
  \label{eg:sets}
  The theory of sets has the following underlying signature:
  \[
    \begin{tikzcd}
      E \ar[d, shift right] \ar[d, shift left]
      \\
      X
    \end{tikzcd}
  \]
  We assume axioms turning $E$ into an equivalence relation.
  In a univalent model, $E(a,b)$ is a proposition for any $a,b:X$.
  For any $a,b:X$, the type $a \fiso b$ of indiscernibilities is then also a proposition, and furthermore $a \fiso b \leftrightarrow E(a,b)$.
  Thus, univalence at $X$ signifies that $X$ is a set with equality given by $E$.

  A morphism of univalent models is just a function of sets; an equivalence of such models is a bijection.
\end{example}

\begin{remark}
  In \cref{eg:sets}, we can view the signature of sets to be obtained from that of propositions by adding an equality predicate on top, thus ``bumping up'' the homotopy level. It is natural to ask whether one can similarly obtain a theory for 1-types, or $n$-types more generally.
  A naïve attempt to define a theory of 1-types might start out with the following signature:
  \[
    \begin{tikzcd}
      E_2 \ar[d, shift right] \ar[d, shift left]
      \\      
      E_1 \ar[d, shift right] \ar[d, shift left]
      \\
      X
    \end{tikzcd}
  \]
  However, it is not clear to us if there are suitable axioms on a structure for this signature ensuring that $X$ is a 1-type \emph{with identifications given by $E_1$}.
  One solution is to add sorts and axioms to the signature to obtain the theory of groupoids (which is just the theory of categories with an extra invertibility axiom); a univalent model then is exactly a 1-type.
  We are not aware of a simpler theory of 1-types.
  Similarly, the simplest theory of $n$-types that we know of is obtained by adding invertibility axioms to a theory of $n$-categories (see \cref{sec:restr-indis-egs}).
\end{remark}

\begin{example}[Preordered and partially ordered sets, continuing \cref{eg:partially-ordered-types}]\index{set}\index{preorder}\index{poset}
\label{eg:pre-po-sets}
  In this \lcnamecref{eg:pre-po-sets}, we consider three very similar theories:
  \begin{enumerate}
   \item \label{item:poset} \textit{Partially ordered sets with equality sort:}
Consider a theory of partially ordered sets with underlying signature:
  \[
    \begin{tikzcd}
     {[\le]} \ar[d, shift right] \ar[d, shift left]
     &
     E \ar[dl, shift right] \ar[dl, shift left]
     \\
     X.
    \end{tikzcd}
  \]
  We write the relation $[\le](x,y)$ infix as $x\le y$.
  We assume that $E$ is a congruence for $[\le]$, and we furthermore assume axioms for reflexivity, transitivity and antisymmetry: $(x \le y) \to (y \le x) \to E(x,y)$. 
  Given $a, b : X$ in a model, we have $a \fiso b \leftrightarrow E(a,b)$. If the model is univalent, then both these types are propositions (by univalence at $[\le]$ and $E$) and coincide with $a = b$ (by univalence at $X$).
  A univalent model of this theory thus consists of a set $X$ with equality given by $E$, equipped with a partial order.
  
  \item \label{item:preset} \textit{Preordered sets:}
    Now consider the theory of preordered sets, with underlying signature as above, where
    $E$ is assumed to be a congruence for $[\le]$, and we furthermore assume reflexivity, and transitivity, but not antisymmetry.
    Given $a, b : X$ in a model, we have that $a \fiso b \leftrightarrow E(a,b)$. If the model is univalent, then both these types are propositions and coincide with $a = b$.
    A univalent model of this theory thus consists of a set $X$ with equality given by $E$, equipped with a preorder.
  
  \item \label{item:potype} \textit{Partially ordered sets without equality sort:}
    For comparison, recall the theory studied in \cref{eg:partially-ordered-types}, which has underlying signature:
   \[
     \begin{tikzcd}
       {[\le]} \ar[d, shift right] \ar[d, shift left]
       \\
       X.
     \end{tikzcd}
   \]
   We assert axioms of reflexivity and transitivity, but not antisymmetry (note that there is no relation $E$ with respect to which this axiom could be stated).
   Given a model, univalence at $[\le]$ ensures that $[\le]$ is pointwise a proposition.
   Given $a,b : X$, the type $a \fiso b$ then is a proposition and equivalent to $(a \le b) \wedge (b \le a)$ (using reflexivity and transitivity).
   Univalence at $X$ therefore entails antisymmetry, stated with respect to identifications.
   Of course, it also entails that $X$ is a set.
   A univalent model of this theory thus consists of a set $X$ equipped with a partial order.
  \end{enumerate}
In conclusion, the univalent models of the theory of \cref{item:potype} are antisymmetric without this being explicitly postulated as an axiom; they are thus the same as the univalent models of the theory of partially ordered sets of \cref{item:poset}.
To obtain a theory of preordered sets where elements can be distinguished beyond the distinction induced by the order, it suffices to add, to the theory of \cref{item:potype}, a dedicated equality relation, as in \cref{item:preset}.

A morphism of univalent models of any of these theories is a function that is monotone, i.e., preserves the inequality.
It is an equivalence if it is a bijection that also reflects the inequality, i.e., an isomorphism of pre- or partially-ordered sets.
\end{example}

\begin{example}[First-order logic]\label{eg:fol}\index{logic!first-order}
  Consider an arbitrary many-sorted first-order theory $T$ with only relation symbols.\index{theory!first-order}
  We can make this a diagram signature with one rank-0 sort for each sort of $T$ and one rank-1 sort for each relation symbol of $T$, plus equality sorts (assumed to be congruences):
  \[
    \begin{tikzcd}
      E_1 \ar[d, shift right] \ar[d, shift left]
      &
      R_1 \ar[dl] \ar[dr]
      &
      E_2 \ar[d, shift right] \ar[d, shift left]
      &
      R_2 \ar[dlll] \ar[dr, shift right] \ar[dr, shift left]
      &
      E_3 \ar[d, shift right] \ar[d, shift left]
      &
      R_3 \ar[dl]
      \\
      A_1
      &
      &
      A_2
      &
      &
      A_3 & \dots
    \end{tikzcd}
  \]
  In the most common cases, $T$ has finitely many sorts and relations, so this is an exofinite signature.
  However, since it has only height 2, there are no nontrivial compositions, so it would be unproblematic to allow arbitrary sharp exotypes (including fibrant types) of sorts and relations (see \cref{rmk:exofinite-sigs}).

  As always, since $E_i$ and $R_i$ have nothing dependent on them, univalence at those sorts simply makes them proposition-valued.
  And since $E_i$ is a congruence, by a similar argument as in \cref{thm:2-univ-is-1-univ}, univalence at $A_i$ makes it a set whose equality is $E_i$.
  Thus, we recover first-order logic with equality.\index{logic!first-order with equality}
  Our logic has only relations and no functions, but as noted before we can always encode a function as a relation using its graph.
  Any instance of this example, with sorts $(A_i)_{i : I}$, is also an instance of the SIP \cite[Section~9.9]{HTT} over $\Set^I$, including, for instance, posets (in which case we recover \cref{eg:pre-po-sets}\ref{item:poset}), monoids, groups, and fields.\index{Structure Identity Principle}\index{poset}\index{monoid}\index{group}\index{field}
  In particular, any essentially algebraic theory is a first-order theory, hence can be represented via a signature of this form.\index{theory!essentially algebraic}

  A morphism of univalent structures is a function that preserves the truth of the relation symbols, i.e., a homomorphism of first-order structures.
  Note that when a function is encoded by their graphs, such preservation by a morphism is equivalent to its commuting with the functional actions.
  Such a morphism is an equivalence if it is a bijection that also reflects the truth of relations, i.e., an isomorphism of first-order structures.
\end{example}

In this way, finitary first-order theories of structured sets more or less coincide with theories formulated on exofinite diagram signatures of height 2.
Similarly, when interpreted semantically they yield the usual models of such first-order theories in the category of sets, or in more general 1-toposes.\index{set!category of}\index{topos}

However, our notion of diagram signature is more general than this, because the exotypes $\L(n)$ of sorts of rank $n$ are not required to be sets.\index{signature!diagram}
This allows the direct incorporation of group actions (or higher group actions) in structures, such as the following.\index{group!action}

\begin{example}[Combinatorial Species]\label{eg:species}\index{species!combinatorial}
  A \emph{combinatorial species}~\cite{joyal:species} is defined as a presheaf of sets on the groupoid of finite sets.\index{set!finite}
  We can represent these as models of a theory with a height-2 signature in which $\L(0)$ and $\L(1)$ are both $\FinSet$, the 1-type of finite sets defined by
  \begin{align*}
  \FinSet
  &\eqdef \tsm{X:\U} \exists(n:\Nat). (X \simeq [n])\\
  &\converts \tsm{X:\U} \big\Vert \tsm{n:\Nat} (X \simeq [n]) \big\Vert.
  \end{align*}
  We denote the elements of $\L(0)$ and $\L(1)$ corresponding to $X:\FinSet$ by $A_X$ and $E_X$ respectively.
  The signature includes two morphisms $E_X \rightrightarrows A_X$ for every $X$, and we assert axioms making each of these an equivalence relation on $A_X$.
  (Because the height is 2, there are no nontrivial compositions, so there is no problem with making this a strict exo-category even though it contains fibrant types --- see \cref{rmk:exofinite-sigs}.)
  The fact that $\L(0)$ and $\L(1)$ are not sets makes it hard to draw this signature non-misleadingly, but we can give it a try, denoting the identifications in these types by loops:
  \[
    \begin{tikzcd}
      E_{[0]} \ar[d,shift left] \ar[d,shift right] &
      E_{[1]} \ar[d,shift left] \ar[d,shift right] &
      E_{[2]} \ar[d,shift left] \ar[d,shift right] & &
      E_{[3]} \ar[d,shift left] \ar[d,shift right] & &
      \dots \\
      A_{[0]} &
      A_{[1]} &
      A_{[2]} \ar[loop,out=-30,in=30,looseness=4,"S_2"'] & &
      A_{[3]} \ar[loop,out=-30,in=30,looseness=4,"S_3"'] \ar[loop,out=-25,in=25,looseness=4] & &
      \dots
    \end{tikzcd}
  \]
  Here $[n]$ denotes the standard $n$-element finite set $\{0,1,2,\dots,n-1\}$, while $S_n$ denotes its automorphism group, the symmetric group on $n$ elements.\index{group!symmetric}\index{group!of automorphisms}
  These loop ``arrows'' here are not morphisms in the inverse exo-category that constitutes the signature for combinatorial species. Rather, they represent identifications --- for example, the permutation $(12) : S_3 \eqdef (A_{[3]} = A_{[3]}) $. These act (by transport) on the types $ A_{[n]}$ in any structure,%
\footnote{In terms of the corresponding functorial signature $\L$ (see \cref{sec:abstr-sign-transl}), whose underlying $\bottom{\L}$ is given by the type $\FinSet$, a structure includes, in particular, a map $\bottom{M} \steq MA : \FinSet \to \U$. Any isomorphism $[n] \cong [n]$ of finite sets corresponds to an identification by the univalence axiom, and thus is mapped, by $\bottom{M}$, to an identification, and hence an equivalence of types, $MA_{[n]} \simeq MA_{[n]}$.}
  and there is a sense in which both kinds of arrows can be regarded as morphisms in the same ``category'' (see, e.g.,~\cite{shulman_univalence_ei}).
  We have not notated loops at the types $E_{[n]}$, but they are there too.\index{category!exo-}\index{signature!functorial}\index{axiom!univalence}

  As in \cref{eg:fol}, univalence at $E_{[n]}$ makes it consist of propositions, while univalence at $A_{[n]}$ makes it a set with equality $E_{[n]}$.
  Thus, a univalent structure for this theory is simply a function $A : \FinSet \to \Set$, which is the natural formalization of a combinatorial species in HoTT/UF (see, e.g.,~\cite{yorgey:thesis}).
  And in the simplicial set model, $\Set$ and $\FinSet$ are respectively interpreted by homotopy 1-types that are equivalent to the nerves of the usual groupoids of sets and finite sets; thus a function $\FinSet \to \Set$ is equivalent to a functor between these groupoids, which is the classical notion of combinatorial species.\index{simplicial set model of univalent foundations}

  A morphism $f : M \to N$ of models of this theory consists of functions $f_{[n]} : MA_{[n]} \to NA_{[n]}$ that commute with the actions on $MA_{[n]}$ and $NA_{[n]}$  induced by $S_n$.%
  \footnote{In terms of the corresponding functorial theory (see \cref{sec:abstr-sign-transl}),
    a morphism of models $f : M \to N$ consists, in particular, of a morphism of type families $\bottom{M} \to \bottom{N}$, which automatically commutes with the equivalences induced by the identifications $S_n$.}
  A morphism of models is an equivalence precisely when all the $f_{[n]}$ are bijections.

  In summary, the action on arrows of combinatorial species, and the naturality condition on morphisms of species, are encoded in the homotopical structure of the type $\FinSet$ that serves as an indexing type in our theory of combinatorial species.
\end{example}

\chapter{Structured 1-categories}
\label{sec:1cat-egs}

Generally speaking, structures on 1-categories involve height-3 signatures.
Of course, the example of 1-categories themselves was discussed at length in \cref{sec:folds-cats}.

\begin{example}[Categories with binary products]\label{eg:bin-product}\index{category!with binary products}
 Binary products could be asserted to exist in a category purely in terms of axioms,
 on top of the signature $\Lcat$ of categories of \cref{sec:folds-cats}.
 However, the resulting morphisms of models would not say anything about products.
 
 Instead, we can integrate some data from the products into the signature as follows:
  \[
     \begin{tikzcd}[ampersand replacement=\&]
      T \ar[dr] \ar[dr, shift left] \ar[dr, shift right] \& I \ar[d] \& E \ar[dl, shift right] \ar[dl, shift left]
      \& P \ar[dll, shift left] \ar[dll, shift right]
      \\
      \& A \ar[d, shift left] \ar[d, shift right]
      \\
      \& O.
    \end{tikzcd}
\]
 Here we assert suitable equalities of arrows such that $P$ depends on three objects $x,y,z:O$ and two arrows $f:A(y,x)$ and $g:A(y,z)$.
 We assert with axioms that there is a $w:P_{x,y,z}(f,g)$ if and only if $x \xleftarrow{f} y \xrightarrow{g} z$ is a product diagram.

 Univalence at $P$ then means that $P$ is pointwise a proposition.
 We assert with axioms that the equality $E$ is a congruence for $P$ as well as for the other top-sorts, ensuring that univalence at $A$ still means that $A$ is pointwise a set with equality $E$.
 An indiscernibility $a \fiso b$ is an isomorphism $\phi : a \cong b$ that is compatible with $P$ in the sense that, e.g., $P_{x,a,z}(f,g) \leftrightarrow P_{x,b,z}(f \circ \phi^{-1}, g \circ \phi^{-1})$.
 But the compatibility with $P$ is automatic, and hence an indiscernibility $a \fiso b$ is simply an isomorphism $a \cong b$.
 
 A morphism of structures is a functor between the underlying categories that preserves product diagrams.
 An equivalence of univalent structures is an equivalence of categories (which of course preserves product diagrams).

 As variants of this theory we consider, in \cref{eg:cart-monoidal-cats}, the theory of cartesian monoidal categories, and, in \cref{eg:cat-w-binprod-functor}, the theory of categories with functorially specified binary products.
\end{example}

\begin{example}[Categories with pullbacks]\label{eg:pullbacks}\index{category!with pullbacks}
  Pullbacks can be added to the theory of categories analogously to binary products (\cref{eg:bin-product}).
  This gives rise to the following signature:
  \[
     \begin{tikzcd}[ampersand replacement=\&]
      T \ar[dr] \ar[dr, shift left] \ar[dr, shift right] \& I \ar[d] \& E \ar[dl, shift right] \ar[dl, shift left]
      \& P \ar[dll, shift left]\ar[dll, shift left=2] \ar[dll, shift left = 3]\ar[dll]
      \\
      \& A \ar[d, shift left] \ar[d, shift right]
      \\
      \& O.
    \end{tikzcd}
\]
  Here we assert suitable equalities of arrows such that $P$ depends on objects and morphisms forming a diagram
 \[
  \begin{tikzcd}[ampersand replacement=\&]
     z \ar[d, "h"] \ar[r, "k"] \&  x \ar[d, "f"]
     \\
     y \ar[r, "g"] \& w.
  \end{tikzcd}
 \]
 We assert with axioms that there is a $w : P_{w,x,y,z}(f,g,h,k)$ if and only if this diagram commutes and is a pullback square.

 The discussion of univalent models of this theory is then analogous to that of \cref{eg:bin-product};
 in particular, an indiscernibility $a \fiso b$ of objects $a,b:O$ is simply an isomorphism $a \cong b$.

 We can give a similar treatment to categories with other limits and/or co\-limits.
\end{example}

\begin{example}[Presheaves]\label{eg:presheaves}\index{presheaf}
 The theory of a category with a presheaf on it has the following underlying signature:
 \[
   \begin{tikzcd}
     T \ar[dr] \ar[dr, shift left] \ar[dr, shift right]
     &
     I \ar[d]
     &
     E \ar[dl, shift left] \ar[dl, shift right]
     &
     P_A \ar[d, shift left, "\overline{d}"] \ar[d, shift right, "\overline{c}"'] \ar[dll, "a"]
     &
     E_P \ar[dl, shift left] \ar[dl, shift right]
     \\
     &
     A \ar[d, shift left, "c"] \ar[d, shift right, "d"']
     &
     &
     P_O \ar[dll, "o"]
     \\
     &
     O.
   \end{tikzcd}
 \]
Here we assert the equations $da \steq o\overline{d}$ and $ca \steq o\overline{c}$, among others.
Given a structure for this signature, the proposition $P_A(f, a, b)$ signifies that the function $P(f)$ maps $b$ to $a$.
For this to be a well-defined function, the axioms we impose to carve out the presheaves among the structures must include
\[
 \begin{gathered}
  P_A(f, a, b) \to P_A(f, a', b) \to E_P(a, a')
  \\
 \forall (f:A(x,y)). \forall (b:P_O(y)). \exists (a:P_O(x)). P_A(f,a,b).
 \end{gathered}
 \]
 as well as functoriality axioms.
 We also assume that $E$ and $E_P$ are congruences for all the top-rank sorts, in addition to the usual category axioms for $(O,A,T,I,E)$.
 
Given a model, univalence at $P_A$ means that $P_A$ is a proposition pointwise, indicating that $P(f)(b) = a$.
Univalence at $P_O$ means that $P_O(x)$ is a set with equality $E_P$, as expected for a presheaf.
Similarly, univalence at $A$ means that $A(x,y)$ is a set with equality $E$, as in a category: the additional dependency $P_A$ doesn't disrupt this since $E$ is a congruence for it as well.

Finally, an indiscernibility between $x,y:O$ consists of an indiscernibility $x\fiso y$ (hence just an isomorphism $g:x\cong y$) in the underlying category, together with a coherent bijection on values of the presheaf $P_O(x) \cong P_O(y)$.
Since this ``coherence'' includes in particular respect for $P_A$, it follows by an argument similar to that of \cref{sec:folds-cats} that it must be simply the functorial action of $g$.
Thus, a univalent model of this theory is precisely a univalent category together with a presheaf on it.\index{category!univalent}

A map $f : \hom(M,N)$ of structures consists of a functor $f$ between the underlying categories (given by the components $O$, $A$, $T$, $I$, $E$), together with a natural transformation between the presheaves specified by $M$ and $N$ (given by the components $P_O$, $P_A$, and $E_P$). Here, the component of $f$ on $P_A$ encodes naturality.
If $f$ is an equivalence of structures, then its underlying functor is an equivalence; moreover essential surjectivity on the component $P_A$ implies injectivity of the underlying natural transformation, whereas on $P_O$ it implies that the natural transformation is pointwise surjective---thus it is a natural isomorphism.
\end{example}

\begin{example}[(Ana)functors; {\cite[Section~6]{MFOLDS}}]\label{eg:anafunctors}\index{functor}\index{anafunctor}
Just as we can represent functions between sets by relations, we can describe two categories and a functor between them by adding ``relations'' between their objects and morphisms:
\[
  \begin{tikzcd}
    T_D \ar[dr] \ar[dr, shift left] \ar[dr, shift right]
    &
    I_D \ar[d]
    &
    E_D \ar[dl, shift left] \ar[dl, shift right]
    &
    F_A \ar[d, shift left] \ar[d, shift right] \ar[dll] \ar[drr]
    &
    T_C \ar[dr] \ar[dr, shift left] \ar[dr, shift right]
    &
    I_C \ar[d]
    &
    E_C \ar[dl, shift left] \ar[dl, shift right]
    \\
    &
    A_D \ar[d, shift left] \ar[d, shift right]
    &
    &
    F_O \ar[dll] \ar[drr, shift right]
    &
    &
    A_C \ar[d, shift left] \ar[d, shift right]
    \\
    &
    O_D
    &
    &
    &
    &
    O_C
  \end{tikzcd}
\]
with the obvious equations on arrows. Here, $O_D$ is the sort of objects of the domain category (with $D$ in $O_D$ standing for ``domain''), $O_C$ the sort of objects in the codomain, and $F_O(x,y)$ the sort of ``witnesses that $F(x) = y$''.
For instance, if $F$ is a cartesian product functor, then an element of $F_O((x_1,x_2),y)$ would be a product diagram $x_1 \leftarrow y \to x_2$.
In general, if $F$ is an arbitrary functor, a standard way to make it an anafunctor is by defining $F_O(x,y)$ to be the set of isomorphisms $F x \cong y$.
Note that $F_O$ does not generally consist of mere propositions: an object $y:O_C$ can ``be the image'' of $x:O_D$ in more than one way.
(For instance, a single object can be a cartesian product of two objects $x_1,x_2$ in more than one way, and there can be more than one isomorphism $F x \cong y$.)
We impose an axiom stating that $F$ does have a value on each possible input object, i.e., $\forall (x : O_D). \exists (y : O_C). F_O(x,y)$.

Given two such witnesses $w_1:F_O(x_1,y_1)$ and $w_2:F_O(x_2,y_2)$ and morphisms $g:A_D(x_1,x_2)$ and $h:A_C(y_1,y_2)$, an element of $F_A(w_1,w_2,g,h)$ represents the assertion that $F(g) = h$ according to the witnesses $w_1$ and $w_2$.
We impose an axiom saying that given $w_1,w_2,g$ there exists a unique $h$ such that $F_A(w_1,w_2,g,h)$; we often denote this unique $h$ by $F_{w_1,w_2}(g)$.
We also assert that $E_D$ and $E_C$ are congruences for the relation $F_A$, and that composition and identities are preserved.
In the case of composition, this means $F_{w_1,w_3}(g\circ f) = F_{w_2,w_3}(g) \circ F_{w_1,w_2}(f)$; while in the case of identities it means $F_{w,w}(1_x) = 1_y$ for any $w:F_O(x,y)$.

The result is what Makkai~\cite{makkai:avoiding-choice} calls an \emph{anafunctor}, whose ``values'' can be specified only up to isomorphism.
If dependencies are forgotten, it can be thought of as a span of functors between (pre)categories in which the first leg is a surjective equivalence ($F_O$ is the type of objects of the middle category and $F_A$ its type of arrows).
Note that for any $w_1:F_O(x,y_1)$ and $w_2:F_O(x,y_2)$ we have $F_{w_1,w_2}(1_x) : A_C(y_1,y_2)$, which is an isomorphism with inverse $F_{w_2,w_1}(1_x)$; thus ``any two values of $F$ are canonically isomorphic''.
In addition, we impose the ``existential saturation'' axiom that for any $w:F_O(x,y)$ and isomorphism $g:y\cong z$ in the codomain, there exists a $w':F_O(x,z)$ such that $F_{w,w'}(1_x) = g$: that is, any object $z$ isomorphic to a value $y$ of $F(x)$ is also a possible value of $F(x)$, in such a way that the induced isomorphism $y\cong z$ is the specified one.

As in \cref{eg:presheaves}, univalence at $A_D$ and $A_C$ just means they are sets whose equalities are the congruences $E_D$ and $E_C$ (this requires the assumption that these are congruences for $F_A$ as well).
Univalence at $F_O$ is more subtle, since $F_O$ doesn't come with a specified congruence.
For $w_1,w_2:F_O(x,y)$, the type of indiscernibilities $w_1\fiso w_2$ is the proposition asserting that $w_1$ and $w_2$ induce the same action on all arrows whose domain or codomain (or both) is $x$.
That is, for any $g:A_D(x,x')$ and $w':F_O(x',y')$ we have $F_{w_1,w'}(g) = F_{w_2,w'}(g)$, and likewise when $g:A_D(x',x)$ or $g:A_D(x,x)$.
In particular, taking $w'=w_2$ and $g = 1_x$ this implies that $F_{w_1,w_2}(1_x) = F_{w_2,w_2}(1_x) = 1_y$.
But conversely, if we have $w_1,w_2:FO(x,y)$ such that $F_{w_1,w_2}(1_x) = 1_y$, then for any $g:A_D(x,x')$ and $w':F_O(x',y')$ we have
\[ F_{w_1,w'}(g) = F_{w_1,w'}(g\circ 1_x) = F_{w_2,w'}(g) \circ F_{w_1,w_2}(1_x) = F_{w_2,w'}(g) \circ 1_y = F_{w_2,w'}(g)
\]
by functoriality, and similarly in the other cases.

Thus, $w_1\fiso w_2$ is equivalent to $F_{w_1,w_2}(1_x) = 1_y$; hence in a univalent model this latter equality implies $w_1=w_2$.
This implies that for any $w:F_O(x,y)$ and $g:y\cong z$ the $w'$ in the existential saturation axiom is unique; for if $F_{w,w'}(1_x) = F_{w,w''}(1_x) = g$, then by functoriality $F_{w',w''}(1_x) = g \circ g^{-1} = 1_z$ and hence $w'=w''$.
Thus, a univalent model for our theory is a \emph{saturated} anafunctor in Makkai's sense, whose values are determined \emph{exactly} up to unique isomorphism.\index{anafunctor!saturated}
(The standard way of making a functor into an anafunctor, with $F_O(x,y) \eqdef (F x \cong y)$, is always saturated.)

Saturation, in turn, ensures that univalence at $O_D$ and $O_C$ reduces to ordinary univalence of the underlying domain and codomain categories.
For instance, \emph{a priori} an indiscernibility $y_1\fiso y_2$ in $O_C$ is an isomorphism $\phi : y_1\cong y_2$ in the codomain category (arising from the specified equivalences on the sorts $A_C$, $T_C$, $I_C$, and $E_C$) equipped with a transport function for $F_O$ that respects $F_A$.
Explicitly, this means we have bijections $\phi_{x\bullet}: F_O(x,y_1) \cong F_O(x,y_2)$ that commute with $F_A$, in the sense that
\begin{equation*}
  F_{w,w'}(g) = F_{\phi_{x\bullet}(w),w'}(g) \circ \phi
\end{equation*}
for all $w'$, and similarly in the other variable and in both variables together.
But by the uniqueness aspect of saturation, $F_{\phi_{x\bullet}(w),w'}(g) = F_{w,w'}(g)\circ \phi^{-1}$ uniquely determines $\phi_{x\bullet}(w)$ if it exists.
Specifically, setting $w'\eqdef w$ and $g \eqdef 1_x$ in the equation above yields the condition
  $F_{\phi_{x\bullet}(w), w}(1_x) = \phi$; by the uniqueness part of saturation, applied to $w$ and $\phi$, this condition determines $\phi_{x\bullet}(w)$.

On the other hand, for any $w$ a $\phi_{x\bullet}(w)$ with this property does exist, by existential saturation combined with functoriality.
Specifically, given $w : F_O(x, y_1)$, we set $\phi_{x\bullet}(w)$ to be the unique $\trans \phi w$ such that $F_{w,\trans \phi w}(1_x) = \phi$ using saturation.
  For $g : D_A(x, x')$ and $w' : F_O(x', y_2)$ we then have
  $F_{w, w'}(g) = F_{\trans \phi w, w'}(g) \circ F_{w, \trans \phi w}(1_x) = F_{\trans \phi w, w'}(g) \circ \phi$, as required.
Thus, an indiscernibility in $O_C$ is nothing but an ordinary isomorphism in the codomain category, so univalence at $O_C$ reduces to univalence of the latter category.

A similar argument applies at $O_D$.
The only difference is that we haven't asserted existential saturation directly on the domain, so we have to prove that for any isomorphism $\phi : x_1\cong x_2$ and $w:F_O(x_1,y)$ there is a $w':F_O(x_2,y)$ such that $F_{w,w'}(\phi)= 1_y$.
However, we have \emph{some} $w'':F_O(x_2,y')$ and an isomorphism $F_{w,w''}(\phi) : y \cong y'$, so by existential saturation we can transport $w''$ back to a $w':F_O(x_2,y)$ with the desired property.

Once we have univalence at both $F_O$ and $O_C$, we can prove that for each $x:O_D$ the type $\sm{y:O_C} F_O(x,y)$ is a proposition.
For if we have $y,y':O_C$ with $w:F_O(x,y)$ and $w':F_O(x,y')$, then there is an isomorphism $h:y\cong y'$ such that $F_{w,w'}(1_x)= h$.
By univalence at $O_C$, this $h$ comes from an identification $p:y=y'$, and the transported witness $\trans p w : F_O(x,y')$ acts on arrows by conjugating with $h$.
But since $F_{w,w'}(1_x)= h$, by functoriality this implies that $\trans p w$ acts the same as $w'$ on all arrows; hence by univalence at $F_O$ they are equal, and so $(y,w) = (y',w')$ in $\sm{y:O_C} F_O(x,y)$.

Thus $\sm{y:O_C} F_O(x,y)$ is a proposition; but since it is inhabited (this is one of the axioms of an anafunctor), it is contractible.
In particular, there is a \emph{function} $F:O_D \to O_C$ with $W: \prd{x:O_D} F_O(x,F(x))$.
We can then make this into the object-function of an ordinary functor $F: C \to D$, from which the original saturated anafunctor can be recovered in the standard way with $F_O(x,y) \eqdef (F x \cong y)$.
Thus, although Makkai originally introduced anafunctors in~\cite{makkai:avoiding-choice} to avoid using the axiom of choice, in univalent foundations any anafunctor is represented by an actual functor, since the only relevant choices are unique ones.\index{axiom of choice}
(This was also observed, in somewhat different terminology, in~\cite{AKS13}.)
However, the notion of anafunctor is still useful because it is the only way to represent a functor via a single structure in our framework.

Finally, a morphism of models $f:M\to N$ consists of functors between the domain and codomain categories, say $f_D : M_D \to N_D$ and $f_C :M_C \to N_C$, together with functions $MF_O(x,y) \to NF_O(f_D(x),f_C(y))$ that respect $F_A$.
Applying this to the $MW(x) : MF_O(x,MF(x))$ constructed above, we obtain $NF_O(f_D(x),f_C(MF(x)))$.
Since we also have \[NW(f_D(x)) : NF_O(f_D(x),NF(f_D(x))),\] we obtain an isomorphism $f_C(MF(x)) \cong NF(f_D(x))$.
Respect for $F_A$ implies that these isomorphisms are natural in $x$, and they determine the entire morphism of models uniquely (this uses saturation again).
That is, a morphism between saturated anafunctors is a square of functors that commutes up to isomorphism.
\end{example}

\begin{example}[Profunctors]\index{profunctor}
A profunctor from $C$ to $D$ is a functor $F: C^{\text{op}} \times D \to \Set$.
We can represent two categories and a profunctor between them using the following signature:
 \[
  \begin{tikzcd}
    T_C \ar[dr] \ar[dr, shift right] \ar[dr, shift left]
    &
    I_C \ar[d]
    &
    E_C \ar[dl, shift right] \ar[dl, shift left]
    &
    F_A \ar[d, shift left, "\overline{c}"] \ar[d, shift right, "\overline{d}"'] \ar[dll, "a"] \ar[drrr]
    &
    E_F \ar[dl, shift right] \ar[dl, shift left]
    &
    T_D \ar[dr] \ar[dr, shift right] \ar[dr, shift left]
    &
    I_D \ar[d]
    &
    E_D \ar[dl, shift right] \ar[dl, shift left]
    \\
    &
    A_C \ar[d, shift left, "c"] \ar[d, shift right, "d"']
    &
    &
    F_O \ar[dll, "o"] \ar[drrr]
    &
    &
    &
    A_D \ar[d, shift left, "c"] \ar[d, shift right, "d"']
    \\
    &
    O_C
    &
    &
    &
    &
    &
    O_D
 \end{tikzcd}
\]
This looks very much like the signature for anafunctors, but we include an equality relation on $F_O$, and moreover the composition equations are different: we have $da \steq o\overline{c}$ and $ca \steq o\overline{d}$ imposing contravariance in the first factor.
The rest of the theory of this structure is just a two-sided version of \cref{eg:presheaves}.

Note that an adjunction between two categories is uniquely determined by a profunctor satisfying ``representability'' axioms on both sides.
Thus, by adding axioms to this example we obtain a theory for two categories together with an adjunction between them.
A different theory for the latter could be obtained by explicitly representing two categories with one anafunctor (\cref{eg:anafunctors}) in each direction between them, together with unit and counit natural transformations (\cref{eg:nat-trans}); we leave the details to the reader.
\end{example}

\begin{example}[Natural transformations]\label{eg:nat-trans}\index{natural transformation}
 The signature for two categories $C$ and $D$, two (ana)functors $F, G : D \to C$, and a natural transformation $\Lambda : F \to G$ can be given as follows.
 \[
  \begin{tikzcd}[column sep=small]
    T_D \ar[dr] \ar[dr, shift left] \ar[dr, shift right]
    &
    I_D \ar[d]
    &
    E_D \ar[dl, shift left] \ar[dl, shift right]
    &
    F_A \ar[d, shift right] \ar[d, shift left] \ar[dll] \ar[drrrr]
    &
    \Lambda \ar[dl] \ar[dr] \ar[drrr]
    &
    G_A \ar[dllll] \ar[drr] \ar[d, shift left] \ar[d, shift right]
    &
    T_C \ar[dr] \ar[dr, shift right] \ar[dr, shift left]
    &
    I_C \ar[d]
    &
    E_C \ar[dl, shift right] \ar[dl, shift left]
    \\
    &
    A_D \ar[d, shift right] \ar[d, shift left]
    &
    &
    F_O \ar[dll] \ar[drrrr]
    &
    &
    G_O \ar[dllll] \ar[drr]
    &
    &
    A_C \ar[d, shift left] \ar[d, shift right]
    \\
    &
    O_D
    &
    &
    &
    &
    &
    &
    O_C
  \end{tikzcd}
 \]
Here the composites $\Lambda \to F_O \to O_D$ and $\Lambda \to G_O \to O_D$ are equal, so that $\Lambda$ depends on one element of $O_D$ (but two elements of $O_C$).
We write an instance of $\Lambda$ as $\Lambda_{x,y,z}(w_1, w_2, \lambda)$, with $x : O_D$, $y, z : O_C$, $w_1 : F_O(x,y)$, $w_2 : G_O(x,z)$, and $\lambda : A_C(y,z)$; this signifies that $\lambda$ is the value of $\Lambda$ on the object $x : O_D$ (relative to the values $y$ and $z$ for $F(x)$ and $G(x)$).
For instance, if $\Lambda : F\Rightarrow G$ is an ordinary natural transformation between ordinary functors, and we make $F$ and $G$ into anafunctors in the standard way with $F_O(x,y) \eqdef (F x \cong y)$ and similarly for $G$, then we would define $\Lambda_{x,y,z}(w_1, w_2, \lambda)$ to mean that $\lambda : A_C(y,z)$ is equal to the composite $y \cong F x \xrightarrow{\Lambda_x} G x \cong z$.

In addition to the saturated anafunctor axioms for $F$ and $G$ separately, we assert that for any $x : O_D$, the fiber of $\Lambda$ over $x$ is inhabited.
Since $\sm{y:O_C} F_O(x,y)$ and $\sm{z:O_C} G_O(x,z)$ are contractible this equivalently says that for any $x$, $y$, $z$, $w_1$, $w_2$ there exists a $\lambda$ such that $\Lambda_{x,y,z}(w_1, w_2, \lambda)$.
In addition, we assert that such a $\lambda$ is unique (using equality $E_C$), and that these $\lambda$s are natural with respect to morphisms in $A_D$, in the sense that for any $h:A_D(x,x')$ with $w_1:F_O(x,y)$ and $w_1':F_O(x',y')$ while $w_2:G_O(x,z)$ and $w_2':G_O(x',z')$, if $\Lambda_{x,y,z}(w_1,w_2,\lambda)$ and $\Lambda_{x',y',z'}(w_1',w_2',\lambda')$ then $\lambda' \circ F_{w_1,w_1'}(h) = G_{w_2,w_2'}(h) \circ \lambda$ (expressed precisely using $T_C$, of course).
Finally, we assert that $E_C$ is a congruence for the relation $\Lambda$.

This last axiom ensures that the addition of $\Lambda$ doesn't change the notion of indiscernibility in $A_C$.
To show that it also doesn't change the notion of indiscernibility in $F_O$, we must show that given $w,w':F_O(x,y)$ such that $F_{w,w'}(1_x) = 1_y$, we necessarily have $\Lambda_{x,y,z}(w,v,\lambda) \to \Lambda_{x,y,z}(w',v,\lambda)$ for any $v:G_O(x,z)$ and $\lambda:A_C(y,z)$.
But if $\Lambda_{x,y,z}(w,v,\lambda)$ and $\Lambda_{x,y,z}(w',v,\lambda')$, then by naturality we have
\[\lambda' \circ 1_y = \lambda' \circ F_{w,w'}(1_x) = G_{v,v}(1_x) \circ \lambda = 1_z \circ \lambda,\]
i.e., $\lambda' = \lambda$.
A similar argument applies to $G_O$; hence univalence at these sorts once again says just that $F$ and $G$ are saturated anafunctors.

An indiscernibility $y\fiso y'$ in $O_C$ consists of an isomorphism $\phi : y\cong y'$ such that additionally $\Lambda_{x,y,z}(w_1,w_2,\lambda) \to \Lambda_{x,y',z}(\trans \phi {w_1},w_2,\lambda\circ \phi^{-1})$, where $\trans \phi {w_2}$ is the (necessarily unique) witness of $F_O(x,y')$ obtained from $w_2$ and $\phi$ by saturation, and similarly for other holes.
But since $F_{w_1,\trans\phi{w_1}}(1_x) = \phi$, this follows automatically from naturality.

Similarly, an indiscernibility $x\fiso x'$ in $O_D$ consists of an isomorphism $\phi : x\cong x'$ such that $\Lambda_{x,y,z}(w_1,w_2,\lambda) \to \Lambda_{x',y,z}(\trans \phi {w_1},\trans \phi {w_2},\lambda)$, and similarly for other holes.
Here $\trans \phi {w_1}$ is obtained as sketched in \cref{eg:anafunctors}, by choosing some witness $w_1':F_O(x',y')$ and transporting it back across the isomorphism $F_{w_1,w_1'}(\phi)$.
Analogously, $\trans \phi {w_2}$ is obtained by choosing some witness $w_2':G_O(x',z')$ and transporting back along $G_{w_2,w_2'}(\phi)$.
Since we have $F_{w_1,\trans \phi {w_1}}(\phi) = 1_y$ and $G_{w_2, \trans \phi {w_2}}(\phi) = 1_z$, the desired implication follows again by naturality.
Thus, univalence at $O_D$ and $O_C$ reduce respectively to ordinary univalence of the domain and codomain categories.
\end{example}

\begin{example}[Structures on categories]\label{eg:cat-struc}\index{category!with structure}
  Any structure on a category or family of categories that can be expressed in terms of functors and natural transformations can be represented by combining copies of \cref{eg:anafunctors,eg:nat-trans}, perhaps with domains and/or codomains identified.
  For instance, here is the signature for a category equipped with an endofunctor:
  \[
   \begin{tikzcd}
    T \ar[dr] \ar[dr, shift left] \ar[dr, shift right]
    &
    I \ar[d]
    &
    E \ar[dl, shift left] \ar[dl, shift right]
    &
    F_A \ar[d, shift left] \ar[d, shift right] \ar[dll, shift left] \ar[dll, shift right]
    \\
    &
    A \ar[d, shift left] \ar[d, shift right]
    &
    &
    F_O \ar[dll, shift left] \ar[dll, shift right]
    \\
    &
    O
   \end{tikzcd}
 \]
 Here and in the remainder of this example, we leave it to the reader to formulate suitable axioms on top of the signatures we draw.\index{endofunctor}
 
  Here is the signature for a pointed endofunctor\index{endofunctor!pointed} (an endofunctor equipped with a natural transformation $\eta:\mathrm{Id}\to F$):
   \[
   \begin{tikzcd}
    T \ar[dr] \ar[dr, shift left] \ar[dr, shift right]
    &
    I \ar[d]
    &
    E \ar[dl, shift left] \ar[dl, shift right]
    &
    F_A \ar[d, shift left] \ar[d, shift right] \ar[dll, shift left] \ar[dll, shift right]
    &
    H \ar[dl] \ar[dlll]
    \\
    &
    A \ar[d, shift left] \ar[d, shift right]
    &
    &
    F_O \ar[dll, shift left] \ar[dll, shift right]
    \\
    &
    O
   \end{tikzcd}
  \]
  Since the domain of $\eta$ is the identity functor, we only need one arrow $H\to F_O$.
  An instance $H_{x,y}(w,\eta)$ for $w:F_O(x,y)$ and $\eta:A(x,y)$ says that $\eta$ is the component of the transformation at $x$ relative to the witness $w$ that $F(x) = y$.
  Similarly, here is the signature for a monad:
  \[
   \begin{tikzcd}
    T \ar[dr] \ar[dr, shift left] \ar[dr, shift right]
    &
    I \ar[d]
    &
    E \ar[dl, shift left] \ar[dl, shift right]
    &
    F_A \ar[d, shift left] \ar[d, shift right] \ar[dll, shift left] \ar[dll, shift right]
    &
    H \ar[dl] \ar[dlll]
    &
    M \ar[dll, shift left] \ar[dll, shift right] \ar[dllll]
    \\
    &
    A \ar[d, shift left] \ar[d, shift right]
    &
    &
    F_O \ar[dll, shift left] \ar[dll, shift right]
    \\
    &
    O
   \end{tikzcd}
  \]
  An instance $M_{x,y,z}(w,w',\mu)$ says that $\mu:A(z,y)$ is the component of the multiplication $F^2\to F$ at $x$ relative to the witnesses $w:F_O(x,y)$ and $w':F_O(y,z)$.
  Functors involving product categories can also be represented by adding extra dependencies.
  For instance, a category equipped with a functor $C\times C \to C$ has the signature
  \[
   \begin{tikzcd}
     T \ar[dr] \ar[dr, shift right] \ar[dr, shift left]
     &
     I \ar[d] & E \ar[dl, shift right] \ar[dl, shift left]
     &
     F_A \ar[d, shift right] \ar[d, shift left] \ar[dll, shift right] \ar[dll, shift left] \ar[dll]
     \\
     &
     A \ar[d, shift right] \ar[d, shift left]
     &
     &
     F_O \ar[dll, shift right] \ar[dll, shift left] \ar[dll]
     \\
     &
     O
   \end{tikzcd}
  \]
  where $FO(x,y,z)$ means that ``$z = F(x,y)$''.
  A category equipped with an object $U$ (about which we assert nothing) has the signature
    \[
   \begin{tikzcd}
     T \ar[dr] \ar[dr, shift right] \ar[dr, shift left]
     &
     I \ar[d] & E \ar[dl, shift right] \ar[dl, shift left]
     &
     U_A \ar[d, shift left] \ar[d, shift right] \ar[dll]
     \\
     &
     A \ar[d, shift right] \ar[d, shift left]
     &
     &
     U_O \ar[dll]
     \\
     &
     O
   \end{tikzcd}
  \]
  Here $U_O$ and $U_A$ represent an anafunctor whose domain is the terminal category, also known as an ``ana-object'' (with dependencies ignored, it is a functor out of a contractible groupoid).
  Univalence and saturation imply that $\sum_{x:O} U_O(x)$ is contractible, and that if we have some particular $u:O$ with $w:U_O(u)$ the type $U_O(x)$ for any other $x$ is equivalent to the type of isomorphisms $x\cong u$.
  The sort $U_A$ appears to be necessary, providing the ``anafunctorial action'' of the saturated ana-object: for $w_1:U_O(x_1)$ and $w_2:U_O(x_2)$ the proposition $U_A(w_1,w_2,g)$ says intuitively that $g$ is the composite isomorphism $x_1 \cong u \cong x_2$.

  Combining the latter two signatures with some natural transformations, here is the signature for a monoidal category:\index{category!monoidal}
  \[
    \begin{tikzcd}
      T \ar[drr] \ar[drr, shift left] \ar[drr, shift right]
      &
      I \ar[dr]
      &
      E \ar[d, shift left] \ar[d, shift right]
      &
      \otimes_A \ar[d, shift left] \ar[d, shift right] \ar[dl, shift left] \ar[dl, shift right] \ar[dl]
      &
      \otimes_3 \ar[dl, shift left] \ar[dl, shift right] \ar[dl, shift left = 2] \ar[dl] \ar[dll]
      &
      U_A \ar[d, shift right] \ar[d, shift left] \ar[dlll]
      &
      \otimes_l \ar[dlll] \ar[dl] \ar[dllll]
      &
      \otimes_r \ar[dllll] \ar[dll] \ar[dlllll]
      \\
      &
      &
      A \ar[d, shift left] \ar[d, shift right]
      &
      \otimes_O \ar[dl, shift left] \ar[dl, shift right] \ar[dl]
      &
      &
      U_O \ar[dlll]
      \\
      &
      &
      O
    \end{tikzcd}
  \]
  Here $\otimes_O$ and $\otimes_A$ represent the tensor product (ana)functor $C\times C\to C$, while $U_O$ and $U_A$ represent the unit (ana)object, and $\otimes_3$, $\otimes_l$, and $\otimes_r$ represent the associativity and unit natural transformations.
  In fact, in this case we can omit the sort $U_A$; it is necessary in the above signature for an arbitrary ana-object to ensure that two different ``values'' of the object are canonically isomorphic, but when the ana-object is the unit object of a monoidal category this is automatic: if $U_1$ and $U_2$ are two different units then we have a composite of unit isomorphisms $U_1 \cong U_1 \otimes U_2 \cong U_2$.

  We can upgrade the previous theory to the theory of \emph{symmetric} monoidal categories by adding, to its signature, a \emph{braiding}, that is, a natural transformation $B$ with components $B_{x,y} : x\otimes y \to y \otimes x$ as follows (we only draw an excerpt of the signature):\index{category!symmetric monoidal}
 \[
    \begin{tikzcd}
      &
      E \ar[d, shift left] \ar[d, shift right]
      &
      B \ar[dl] \ar[dr, shift left] \ar[dr, shift right]
      &
      \otimes_A \ar[d, shift left] \ar[d, shift right] \ar[dll, shift left] \ar[dll, shift right] \ar[dll]
      \\
      \ldots
      &
      A \ar[d, shift left] \ar[d, shift right]
      &
      &
      \otimes_O \ar[dll, shift left] \ar[dll, shift right] \ar[dll]
      &
      \ldots
      \\
      &
      O
    \end{tikzcd}
  \]
  Here, $B_{x,y,a, b}(w_1, w_2, \beta)$, with $w_1 : \otimes_O(x,y,a)$, $w_2 : \otimes_O(y,x,b)$, and $\beta : A(a, b)$ indicates that $\beta$ is the component of $B$ on $(x,y)$; we write $B(w_1,w_2) = \beta$.
  We assert that the braiding is symmetric, i.e., it satisfies $B(w_2, w_1) \circ B(w_1, w_2) = 1_{a}$ for any $w_1 : \otimes_O(x,y,a)$ and $w_2 : \otimes_O(y,x,b)$.
  (Of course, if we omitted this axiom we would obtain a theory for \emph{braided} monoidal categories.)\index{category!braided monoidal}

  In all these examples, we assert the same sorts of axioms, including existential saturation of the object-functor sorts (e.g., $F_O$, $\otimes_O$) and that the equalities on arrows are congruences for all rank-2 sorts.
  Univalence at the object-functor sorts then implies the uniqueness aspect of saturation, and together with functoriality and naturality it follows that univalence at each object sort $O$ reduces simply to ordinary univalence of the corresponding category.
\end{example}

\begin{example}[Cartesian monoidal categories]\label{eg:cart-monoidal-cats}\index{category!cartesian monoidal}
  A cartesian monoidal category can be characterized as a symmetric monoidal category (as specified in \cref{eg:cat-struc}) equipped with well-behaved diagonals and augmentations---specifically, with
  \begin{itemize}
  \item a natural transformation $\Delta : 1_C \to \otimes \circ \delta$ with $\delta : C \to C\times C$ the diagonal functor; and
  \item a natural transformation $H : 1_C \to U$;
  \end{itemize}
  satisfying certain conditions (\cite[Theorem 6.13]{heunen-vicary-lectures-quantum-mechanics}).
  We can add these natural transformations to the theory of symmetric monoidal categories by adding, to the underlying signature, the sorts $\Delta$ and $H$, and their dependencies, as follows:
 \[
    \begin{tikzcd}
      &
      E \ar[d, shift left] \ar[d, shift right]
      &
      \Delta \ar[dl] \ar[dr]
      &
      \otimes_A \ar[d, shift left] \ar[d, shift right] \ar[dll, shift left] \ar[dll, shift right] \ar[dll]
      &
      H \ar[dlll] \ar[dr]
      &
      U_A \ar[d, shift right] \ar[d, shift left] \ar[dllll]
      \\
      \ldots
      &
      A \ar[d, shift left] \ar[d, shift right]
      &
      &
      \otimes_O \ar[dll, shift left] \ar[dll, shift right] \ar[dll]
      &
      &
      U_O \ar[dllll]
      &
      \ldots
      \\
      &
      O
    \end{tikzcd}
  \]
  We assert the usual axioms for $\Delta$ and $H$ to represent natural transformations, along with the compatibility axioms mentioned above.
  As usual, univalence at the sorts $\Delta$ and $H$ entails that these are propositions pointwise; the addition of these natural transformations does not change indiscernibilities in the other sorts.
\end{example}

\begin{example}[Unbiased monoidal categories]\label{eg:unbiased-monoidal}\index{category!unbiased monoidal}
  In \cref{eg:cat-struc} we defined the theory of categories with a binary monoidal structure.
  Here, we define categories with \emph{unbiased} monoidal structure. By this, we mean a category equipped with a tensor product for any number $n$ of objects, instead of just two.

  Explicitly, such an unbiased monoidal category $C$ is equipped with
  \begin{enumerate}
   \item for any natural number $n$, an n-fold tensor product $\otimes^n : C^n \to C$;
   \item for any natural numbers $n, k_1, \ldots, k_n$, a natural isomorphism
          \[\gamma^{n;k_1,\ldots,k_n} : \otimes^n \circ \left(\otimes^{k_1} \times \ldots \times \otimes^{k_n}\right) \to \otimes^{\sum_{i=1}^n k_i}; \] and
   \item a natural isomorphism $\iota : 1_C \to \otimes^1$;
  \end{enumerate}
  subject to some axioms (for details, see, e.g., \cite[Def.~3.1.1]{higher-ops-higher-cats}).

  A suitable signature would look like this:
    \[
    \begin{tikzcd}[]
      T \ar[drr] \ar[drr, shift left] \ar[drr, shift right]
      &
      I \ar[dr]
      &
      E \ar[d, shift left] \ar[d, shift right]
      &
      \otimes_A^0 \ar[d, shift left] \ar[d, shift right] \ar[dl]
      &
      \iota \ar[dll] \ar [dr]
      &
      \otimes_A^1 \ar[d, shift left] \ar[d, shift right] \ar[dlll, shift right] \ar[dlll, shift left]
      &
      \gamma^{2;1,0} \ar[dl, shift left] \ar[dl, shift right] \ar[dr] \ar[dlll] \ar[dllll]
      &
      \otimes_A^2 \ar[d, shift left, "\quad\ldots"] \ar[d, shift right] \ar[dlllll, shift right] \ar[dlllll, shift left] \ar[dlllll]
      \\
      &
      &
      A \ar[d, shift left] \ar[d, shift right]
      &
      \otimes_O^0  \ar[dl]
      &
      &
      \otimes_O^1  \ar[dlll, shift right] \ar[dlll, shift left]
      &
      &
      \otimes_O^2  \ar[dlllll, shift right] \ar[dlllll, shift left] \ar[dlllll]
      \\
      &
      &
      O
    \end{tikzcd}
  \]
  Here we have only only drawn, as exemplary for the family $\gamma$ of natural isomorphisms, the component $\gamma^{2;1,0} : \otimes^2 \circ (\otimes^1 \times \otimes^0) \to \otimes^1$.
  Note that this signature has (countably) infinitely many sorts of ranks 1 and 2.
  If we assume that $\exo{\Nat}$ is cofibrant (hence sharp), we can take $\L(1)$ and $\L(2)$ to both be $\exo{\Nat}$, in a similar spirit to our usual exofinite signatures.
  However, since the height of this signature is only 3, there are no nontrivial associativity relations, so it would also be possible to use $\Nat$ (see \cref{rmk:exofinite-sigs}).

  Indiscernibilities at $O$ are exactly isomorphisms; the category underlying an unbiased monoidal category is univalent if the structure is univalent.

  A morphism of models for this theory is a strong monoidal functor (called ``weak'' in \cite[Def.~3.1.3]{higher-ops-higher-cats}).
\end{example}

\begin{example}[Unbiased symmetric monoidal categories]\label{eg:unbiased-sym-monoidal}\index{category!unbiased symmetric monoidal}
 In \cref{eg:cat-struc} we obtained a theory of symmetric monoidal categories from that of monoidal categories by adding, to the signature of the latter, a braiding operation.

 Another theory of symmetric monoidal categories is obtained by modifying the signature of \cref{eg:unbiased-monoidal}; specifically by replacing finite \emph{ordered} sets there (given by natural numbers) by finite \emph{unordered} sets; this approach is analogous to that taken in \cref{eg:species} (and which will reappear in \cref{eg:petri,eg:fat-sym-multicats}).
 Concretely, this means that the family of tensor products is indexed by $\FinSet$ instead of $\Nat$, and the family $\gamma$ is parametrized by families of finite sets $[n]; [k_1],\ldots,[k_n]$; we have
 \[\gamma^{[n];[k_1],\ldots,[k_n]} : \otimes^{[n]} \circ \left(\otimes^{[k_1]} \times \ldots \times \otimes^{[k_n]}\right) \to \otimes^{\sm{i : [n]} k_i}. \]
 (Again, because the height is only 3, there are no nontrivial associativities, hence no problem with having arbitrary types of sorts and morphisms in our signature.)
 We can visualize the resulting signature as follows:
 \[
    \begin{tikzcd}[]
      T \ar[drr] \ar[drr, shift left] \ar[drr, shift right]
      &
      I \ar[dr]
      &
      E \ar[d, shift left] \ar[d, shift right]
      &
      \otimes_A^{[0]} \ar[d, shift left] \ar[d, shift right] \ar[dl]
      &
      \iota \ar[dll] \ar [dr]
      &
      \otimes_A^{[1]} \ar[d, shift left] \ar[d, shift right] \ar[dlll, shift right] \ar[dlll, shift left]
      &
      \gamma^{[2];[1],[0]} \ar[dl, shift left] \ar[dl, shift right] \ar[dr] \ar[dlll] \ar[dllll]
      &
      \otimes_A^{[2]} \ar[d, shift left, "\quad\ldots"] \ar[d, shift right] \ar[dlllll, shift right] \ar[dlllll, shift left] \ar[dlllll]
      \\
      &
      &
      A \ar[d, shift left] \ar[d, shift right]
      &
      \otimes_O^{[0]}  \ar[dl]
      &
      &
      \otimes_O^{[1]}  \ar[dlll, shift right] \ar[dlll, shift left]
      &
      &
      \otimes_O^{[2]}  \ar[dlllll, shift right] \ar[dlllll, shift left] \ar[dlllll] \ar[dlllll] \ar[loop,out=-120,in=-60,looseness=5,"S_2"']
      \\
      &
      &
      O
    \end{tikzcd}
  \]
  The symmetric group $S_n$ acts on the sorts $\otimes_O^{[n]}$, as indicated by the loop in the signature.
  We also have more complicated actions on the sorts $\otimes_A^{[n]}$.
  Suitable axioms are asserted, see, e.g., \cite[Prop.~1.5]{Tannakian} or \cite{brandenburg_unbiased_sym_monoidal} for details.

  A morphism of such models is exactly an (unbiased) strong symmetric monoidal functor \cite[Def.~2.2]{brandenburg_unbiased_sym_monoidal}.
\end{example}

\begin{example}[Categories with a binary product functor]\label{eg:cat-w-binprod-functor}\index{category!with binary product functor}
 Here we present the theory of a category with a \emph{specified} cartesian monoidal structure.
 Its underlying signature is as follows:
 \[
     \begin{tikzcd}[ampersand replacement=\&]
     \&
     \&
     \&
     P_A \ar[d, shift left]  \ar[d, shift right]
             \ar[ddl] \ar[ddl, shift left]  \ar[ddl, shift right]
      \\
      T \ar[drr] \ar[drr, shift left] \ar[drr, shift right]
      \&
      I \ar[dr]
      \&
      E \ar[d, shift right] \ar[d, shift left]
      \& P_O \ar[dl, shift left]  \ar[dl, shift right]
      \\
      \&
      \& A \ar[d, shift left] \ar[d, shift right]
      \\
      \&
      \& O
    \end{tikzcd}
 \]
  Here, similar to \cref{eg:bin-product}, an element $p : (P_O)_{x,y,z}(f,g)$ denotes a product diagram $x \xleftarrow{f} y \xrightarrow{g} z$.
  Given another $p' : (P_O)_{x',y',z'}(f' ,g')$ and $h : A(x,x')$, $k : A(z, z')$, $\ell : A(y,y')$,
  an element $t : (P_A)_{x,y,z,x',y',z'}(p, p', h,k, \ell)$ signifies that $\ell = h \times k$.
  We assert as axioms that $P_O$ has a product for each input pair $(x,z)$.
  The sort $P_A$ is asserted to represent a function in $(p, p', k, h)$;
  univalence at $P_A$ ensures that $P_A$ is a proposition pointwise, and we write $(P_A)(p, p', h, k) = \ell$.
  We also assert functoriality axioms $(P_A)(p, p, 1_x, 1_z) = 1_y$
  and $(P_A)(p_2, p_3, k_2, h_2) \circ (P_A)(p_1, p_2, k_1, h_1) = (P_A)(p_1, p_3, k_2 \circ k_1, h_2 \circ h_1)$.
  Furthermore, we assert existential saturation: given $p : (P_O)_{a,y,b}(f,g)$ and $a \xleftarrow{f'} y' \xrightarrow{g'} b$ and $z : y \cong y'$ commuting with $f$, $f'$, $g$, and $g'$,
  \[
   \begin{tikzcd}[ampersand replacement = \&]
        \&   y  \ar[dl, "f"'] \ar[dr, "g"] \ar[dd, "\cong"', "z"]
      \\
      a    \&      \& b
      \\
            \&   y' \ar[ul, "f'"] \ar[ur, "g'"']
   \end{tikzcd}
  \]
  there exists $p' : (P_O)_{a, y', b}(f', g')$ such that $(P_A)(p, p', 1_a, 1_b) = z$.

  An indiscernibility $p \fiso p'$ in $P_O$ consists, in particular, of the assertion that $(F_A)(p, p', 1_a, 1_b) = 1_{ab}$; by functoriality, such an indiscernibility is nothing more than that.
  Univalence at $P_O$ then ensures the uniqueness part of saturation.

  If we have $E(f, f')$ and $w : (P_O)_{a, y, b}(f, g)$, then we obtain a unique $w' : (P_O)_{a, y, b}(f', g)$ by saturation; indeed, we have
  \[
   \begin{tikzcd}[ampersand replacement = \&]
        \&   y  \ar[dl, "f"'] \ar[dr, "g"] \ar[dd, "\cong"', "1"]
      \\
      a    \&      \& b
      \\
            \&   y' \ar[ul, "f'"] \ar[ur, "g"']
   \end{tikzcd}
  \]
  and the left-hand triangle commutes because $E$ is assumed to be a congruence for $T$, specifically,
  $T(1, f, f)$ and $E(f, f')$ imply $T(1, f', f)$.
  An indiscernibility $f \fiso f'$ is hence nothing more than an equality $E(f, f')$, and univalence at $A$ means exactly that $A$ is a set with equality $E$.

  Given objects $a, b : O$, an indiscernibility $a \fiso b$ consists of an isomorphism $a \cong b$ in the underlying category, together with transport functions for $P_O$ and $P_A$.
  But in $P_O$, these transport functions are uniquely specified by saturation, and in $P_A$ by the functorial axioms.
  Thus an indiscernibility $a \fiso b$ is just an isomorphism $a \cong b$, and the category underlying a univalent structure is univalent.

  A morphism of structures consists of a functor between the underlying categories and a natural transformation between the respective product functors (suitably composed with the functor between the categories); it is an equivalence if the functor is an equivalence and the natural transformation is an isomorphism.
\end{example}

\begin{example}[Multicategories/Colored operads]\label{eg:multicats}\index{multicategory}\index{operad}\index{operad!colored}
 A (non-symmetric) multicategory (or colored non-symmetric operad) (see, e.g., \cite[Section~I.2]{higher-ops-higher-cats}) has arrows of different arity, generalizing the notion of $n$-ary functions on sets.
 The data of a multicategory is specified via the signature below:
 \[
   \begin{tikzcd}[column sep=small]
     I\ar[rrrd]
     &
     T_{0;1} \ar[rrd] \ar[rd, shift left]\ar[rd, shift right]
     &
     T_{1;1} \ar[dr] \ar[dr, shift left] \ar[dr, shift right]
     &
     T_{0,0;2}\ar[dl] \ar[dl, shift left] \ar[dl, shift right] \ar[dr]
     &
     T_{1,1;2} \ar[ld, shift left]\ar[ld, , shift right] \ar[d, shift left] \ar[d, shift right]
     &
     T_{2;1}  \ar[dll] \ar[dl, shift left] \ar[dl, shift right]
     &
     \ldots
     &
     &
     E_i \ar[d, shift left] \ar[d, shift right]
     \\
     &
     &
     A_0 \ar[rd]
     &
     A_1 \ar[d, shift left] \ar[d, shift right]
     &
     A_2 \ar[dl] \ar[dl, shift left] \ar[dl, shift right]
     &
     \ldots
     &
     &
     \text{plus}
     &
     A_i
     \\
     &
     &
     &
     O
  \end{tikzcd}
 \]
 In this signature, $A_1$ is the sort of arrows known from categories, with one ``input'' object. The sort $A_0$ denotes arrows with no inputs, the sort $A_2$ arrows of 2 inputs, and so on.
 As in \cref{eg:unbiased-monoidal}, we can use either $\exo{\Nat}$ (if it is cofibrant) or $\Nat$ for the infinite families of objects at ranks 1 and 2.

 We then have composition operations for such morphisms.
 For instance, the sort $T_{1;1}$ denotes the composition of two arrows with one input each---the composition known from categories.
 Similarly, the sort $T_{1,1;2}$ denotes composition of two unary arrows with one arrow of two inputs, resulting in a composite arrow of two inputs.
 The general operation sort is $T_{k_1,\dots,k_n;n}$, denoting the composition of $n$ arrows having arities $k_1,\dots,k_n$ with one arrow having $n$ inputs.

 The signature for multicategories also includes an equality sort $E_i \rightrightarrows A_i$ for each $i$, but for readability we have omitted these from the main diagram.
 On a structure of this signature we can impose suitable axioms for the composition and identity in such a way that a model of the resulting theory is precisely a multicategory in the usual sense.

 An isomorphism $\phi : a \cong b$ in a multicategory is analogous to an isomorphism in a category: it consists of a morphism $f : A_1(a,b)$ together with $g : A_1(b,a)$ that is both pre- and post-inverse to $f$.
 Given a structure for the signature above, an indiscernibility $\phi : a \fiso b$ consists, in particular, of equivalences as in \cref{item:foldsiso1a,item:foldsiso1b,item:foldsiso1c,eq:Txya,eq:Txaz,eq:Tazw,eq:Txaa,eq:Taxa,eq:Taax,eq:Taaa,eq:Iaa,eq:Exa,eq:Eax,eq:Eaa} (where $A$ needs to be replaced by $A_1$);
 we have established in \cref{thm:iso-foldsiso} that this data determines uniquely an isomorphism in the multicategory.
 In addition, the indiscernibility $\phi$ consists of further analogous equivalences for the sorts of $n$-ary arrows $A_i$ and their compositions.
 For instance, it includes a family of equivalences
 \begin{equation*}
       \phi_{xy\bullet} : A_2(x,y;a) \cong A_2(x,y;b)
 \end{equation*}
 and a family of equivalences
 \begin{equation}\label{eq:nonsym-multicat-comp}
  (T_{2,1})_{w,x,y,a}(f, g, h) \leftrightarrow (T_{2,1})_{w,x,y,b}(f, \phi_{y\bullet}(g),\phi_{wx\bullet}(h)).
 \end{equation}
 This latter equivalence with $h \eqdef f$ and $g\eqdef 1_a$ shows that the family $\phi_{xy\bullet}$ is given by postcomposition with the isomorphism corresponding to $\phi$, and similarly for the other families of maps.
 Thus, indiscernibilities in a multicategory also coincide with isomorphisms, so a multicategory is univalent precisely when its underlying category is.

 A morphism of models accordingly corresponds to a functor between multicategories; it is an equivalence if the functor is an equivalence.
\end{example}

\begin{example}[(Fat) symmetric multicategories]\label{eg:fat-sym-multicats}\index{multicategory!symmetric}
 A symmetric multicategory is a multicategory with an action of the symmetric group $S_n$ on the arrows $A_n$; e.g., morphisms $A(x,y; z)$ correspond uniquely to morphisms $A(y,x; z)$.
 These actions are furthermore asserted to be compatible with composition.

 Here, we consider an equivalent formulation of symmetric multicategories, based on the notion of \emph{fat} symmetric multicategories (see, e.g., \cite[Appendix A.2]{higher-ops-higher-cats}).
 Its signature is similar to that of non-symmetric multicategories, but the arrows are instead indexed by unordered finite sets $[n]$ of cardinality $n$.
 That is, similarly to \cref{eg:species,eg:unbiased-sym-monoidal} (and, later, \cref{eg:petri}), the type $\L(1)$ of rank-1 sorts is $\FinSet$, a 1-type that is not a set.
 Similarly, the compositions are indexed by the type
 \[ \tsm{X:\FinSet} (X \to \FinSet) \]
 where $(X,Y)$ denotes the composition of one morphism whose inputs are indexed by $X$ with a family of $X$ morphisms of which the $x^{\mathrm{th}}$ has inputs indexed by $Y(x)$.
 As in \cref{eg:unbiased-sym-monoidal}, at height 3 there is no problem with using arbitrary types of sorts.
 This is even harder to draw non-misleadingly than our other examples with higher types of sorts, but we can give it a try:
 \[
   \begin{tikzcd}[column sep=small]
     I\ar[rrrd]
     &
     T_{[0];[1]} \ar[rrd] \ar[rd, shift left]\ar[rd, shift right]
     &
     T_{[1];[1]} \ar[dr] \ar[dr, shift left] \ar[dr, shift right]
     &
     T_{\{[0],[0]\};[2]}\ar[dl] \ar[dl, shift left] \ar[dl, shift right] \ar[dr] \ar[loop, out=120, in=60, looseness=7, "S_2"]
     &
     T_{\{[1],[1]\};[2]} \ar[ld, shift left]\ar[ld, , shift right] \ar[d, shift left] \ar[d, shift right] \ar[loop, out=120, in=60, looseness=7, "S_2"]
     &
     T_{[2];[1]}  \ar[dll] \ar[dl, shift left] \ar[dl, shift right] \ar[loop, out=120, in=60, looseness=7, "S_2"]
     &
     \ldots
     \\
     &
     &
     A_{[0]} \ar[rd]
     &
     A_{[1]} \ar[d, shift left] \ar[d, shift right]
     &
     A_{[2]} \ar[dl] \ar[dl, shift left] \ar[dl, shift right] \ar[loop, out=-30, in=-80, looseness=4, "S_2"]
     &
     \ldots
     \\
     &
     &
     &
     O
  \end{tikzcd}
 \]
 The sorts we have drawn, representing the elements of the 0-truncation of the types of sorts, are almost like the sorts for non-symmetric multicategories, but some get identified.
 For instance, for non-symmetric multicategories there are two different sorts $T_{0,1;2}$ and $T_{1,0;2}$ for composition of a binary operation with an ordered pair of a nullary and a unary operation, but for the symmetric variant, these two compositions collapse into one connected component that we have written $T_{\{[0],[1]\}; [2]}$, which has an $S_2$ symmetric action.
 In general, the isotropy group of $T_{\{[k_1],\dots,[k_n]\};[n]}$ is the semidirect product $(S_{k_1}\times\cdots\times S_{k_n})\rtimes S_n$.
 As in the non-symmetric case, we have omitted the equality sorts $E_{[n]}$ on $A_{[n]}$ for readability.
 We assert associativity and unitality axioms for composition as spelt out in \cite[Appendix A.2]{higher-ops-higher-cats}.

 Univalence at the top-level sorts entails that the equality, composition, and identity sorts are pointwise propositions;
 at $A_{[n]}$, it entails that $A_{[n]}$ are pointwise sets with equality given by their respective equality sorts.

 An indiscernibility $a \fiso b$ in $O$ consists of equivalences of sorts, e.g.,
 for $n\eqdef 1$ we have $\phi_{x\bullet} : A_{[1]}(\{x\};a) \cong A_{[1]}(\{x\}; b)$, and similar for the other hole and both holes in $A_{[1]}$.
 These equivalences are furthermore coherent with respect to the sorts $I$ and $T$, e.g., they satisfy the analogue of \cref{eq:nonsym-multicat-comp}.
 Given $a \fiso b$, we obtain in particular $\phi \eqdef \phi_{a\bullet}(1_a) : A_{[1]}(\{a\}; b)$. The morphism $\phi$ is an isomorphism; by the coherence with respect to $T$, the other equivalences for the sorts $A_{[n]}$ are given by suitable composition with $\phi$ or its inverse.
 Thus an indiscernibility $a \fiso b$ in $O$ is exactly an isomorphism $a \cong b$.

 A morphism of such models is precisely a functor of symmetric multicategories; it is an equivalence if the functor is an equivalence.

 If in this example we replace $\FinSet$ by $\Set$, we obtain a signature for a certain class of \emph{polynomial monads}, presented in a style with operations indexed by both their input and output sorts (see~\cite{capriotti:polynomials}).\index{monad!polynomial}
 Indeed, as shown in~\cite{ghk:analytic-monads}, symmetric $\infty$-multicategories can be identified with finitary polynomial $\infty$-monads on slice categories of $\infty$-groupoids, and hence symmetric 1-multicategories correspond to a subclass of the latter characterized by homotopy level restrictions (though more general than the classical class of finitary polynomial monads on slices of the category of sets: the polynomial data must be allowed to contain 1-types).
 Replacing $\FinSet$ by $\Set$ in our example removes the finiteness restriction, but retains the latter restriction.
 In \cref{sec:hcat-egs} we will define signatures and theories for higher categories, which could also be adapted to define fat symmetric $n$-multicategories for higher (finite) $n$.
\end{example}

\begin{example}[Semi-displayed categories; see also {\cite[p.~107]{MFOLDS}}]\label{eg:semi-displayed}\index{category!semi-displayed}
Displayed categories \cite{DBLP:journals/lmcs/AhrensL19} were developed, in particular, as a framework to define, in type theory, fibrations of categories without referring to equality of objects.\index{category!displayed}
A displayed category  $\D$  over a category $\C$
is given by, for any $c : \ob{\C}$, a type $\D(c)$ of ``objects over $c$'', and,
for any morphism $f : \C(a,b)$ and $x : \D(a)$ and $y : \D(b)$, a type $\D_f(x,y)$ of ``morphisms from $x$ to $y$ over $f$'', together with suitably typed composition and identity operations.
A na\"ive translation of this definition into a diagram signature might look like the following:
\[
  \begin{tikzcd}
    &
    &
    &
    T_D \ar[dlll] \ar[dr] \ar[dr, shift left] \ar[dr, shift right]
    &
    I_D \ar[dlll] \ar[d]
    &
    E_D \ar[dlll] \ar[dl, shift left] \ar[dl, shift right]
    \\
    T \ar[dr] \ar[dr, shift left] \ar[dr, shift right]
    &
    I \ar[d]
    &
    E \ar[dl, shift left] \ar[dl, shift right]
    &
    &
    A_D \ar[dlll] \ar[d, shift left] \ar[d, shift right]
    \\
    &
    A \ar[d, shift left] \ar[d, shift right]
    &
    &
    &
    O_D \ar[dlll]
    \\
    &
    O.
  \end{tikzcd}
\]
However, this is not well-behaved.
In particular, since $A$ has rank 1 in a height-4 signature, it might not be a set even in a univalent structure, and similarly $O$ might not be a 1-type.
The finger of blame can with some justification be pointed at the sort $E$, which cannot behave like an ordinary equality relation if it has a further sort $E_D$ depending on it.
Makkai makes essentially this point when discussing fibrations: it only makes sense to impose equality relations, in the usual sense, on sorts that are only one level below the top.

One way to solve this problem would be to allow the base category to be a bicategory (though the fibers are only 1-categories), as in \cref{eg:bicats} below.
In \cref{eg:disp-cat} we will see another way to solve it, using heterogeneous equality.
However, we can avoid the complexity of these approaches with the following signature due to Makkai, whose only dependency is for the objects:
\[
  \begin{tikzcd}
    T \ar[dr] \ar[dr, shift left] \ar[dr, shift right]
    &
    I \ar[d]
    &
    E \ar[dl, shift left] \ar[dl, shift right]
    &
    F_A \ar[rrd] \ar[lld]
    &
    T_D  \ar[dr] \ar[dr, shift left] \ar[dr, shift right]
    &
    I_D  \ar[d]
    & E_D \ar[dl, shift left] \ar[dl, shift right]
    \\
    &
    A \ar[dd, shift left] \ar[dd, shift right]
    &
    &
    &
    &
    A_D \ar[d, shift left] \ar[d, shift right]
    \\
    &
    &
    &
    &
    &
    O_D \ar[dllll]
    \\
    &
    O
  \end{tikzcd}
\]
The dependency $A_D \to A$ is replaced by the relation $F_A$, asserted to be a functional relation, and the dependencies $T_D \to T$ and $I_D \to I$ are replaced by axioms, e.g., $ (I_D)_{c,x}(\overline{f}) \wedge (F_A)_{c,c,x,x}(f,\overline{f}) \to I_{c}(f)$.
A model of this theory might be called a ``semi-displayed category''; it consists of, for any $c : \ob{\C}$, a type $\D(c)$ of objects over $\C$, and for any $x : \D(a)$ and $y : \D(b)$ a type $\D(x,y)$ with a function $\D(x,y) \to \C(a,b)$.
While they may appear more \textit{ad hoc} than displayed categories, semi-displayed categories do suffice to define notions involving strict fibers of functors, such as fibrations of categories.\index{fibration!of categories}

As usual, we assert that $E$ and $E_D$ are congruences for all the relations, including $F_A$.
Thus, in a univalent model, all the top sorts are propositions, both $A$ and $A_D$ are sets with standard equality, and each fiber category over $c:O$ is a univalent category in the usual sense.
An indiscernibility between objects $c,d:O$ consists of an ordinary isomorphism $\phi:c\cong d$ in the underlying base category together with all possible liftings of it in both directions, e.g., for any $x:O_D(c)$ a choice of a $y:O_D(d)$ and an isomorphism $x\cong y$ over $\phi$, and dually.
Since (assuming univalence at $O_D$ and above) such liftings are unique when they exist, the type of such indiscernibilities is a subtype of that of ordinary isomorphisms.
Thus, in a univalent semi-displayed category, $O$ is a 1-type, even though \cref{thm:hlevel-folds} only implies that it is a 2-type.
Moreover, in a univalent model, the underlying ordinary category of the base category is univalent if and only if the semi-displayed category is an isofibration (which is a pure existence axiom; cf.\ also \cite[Problem~5.11]{DBLP:journals/lmcs/AhrensL19}).

A morphism of models consists of a functor between the underlying categories and a ``semi-displayed functor'' above it; it is an equivalence when both functors are equivalences.

Note that semi-displayed categories that are not isofibrations are not fully ``categorical'', i.e., not all of their structure is functorial or natural (here, the fibers do not vary functorially even with isomorphisms in the base).
As we will see in \cref{sec:unnat}, non-categorical structure on categories leads quite generally to indiscernibilities that are more restricted than ordinary isomorphisms, as we have seen in this case.
\end{example}

\begin{example}[Categorical structures for the interpretation of Martin-Löf Type Theory]\label{eg:cwf-cwa}
  Various structures on categories have been devised for the interpretation of Martin-Löf Type Theory.
  An analysis of some of these structures in univalent foundations is given in \cite{alv1-lmcs} (to which we also refer the reader for references to the original literature).
  We look here at split type categories (a.k.a.\ categories with attributes) and categories with families (a.k.a.\ natural models).\index{category!split type}
  
  A \defemph{split type category} consists of a category $\C$ equipped with
  \begin{itemize}
  \item a presheaf $T : \C^{\textrm{op}} \to \Set$;
  \item a ``comprehension structure'', associating to any $\Gamma : \ob{\C}$ and $B : T(\Gamma)$ an object $\Gamma.B$ and a morphism $\pi_B : \Gamma.B \to \Gamma$;
  \item for any $\Gamma$ and $B$ as above, and any $f:\Delta \to \Gamma$,
    \[
      \begin{tikzcd}
        \Delta.f^*B \ar[d, "\pi"] \ar[r, dotted, "q"]
        &
        \Gamma.B \ar[d, "\pi"]
        \\
        \Delta \ar[r, "f"]
        &
        \Gamma
      \end{tikzcd}
    \]
    where $f^*B$ is reindexing of $B$ along $f$ given by the action of $T$ on morphisms, an arrow $q = q(f,B)$ that completes the diagram to a pullback square; 
  \item such that $q$ is functorial.
  \end{itemize}
  Note that the comprehension structure, together with the operation $q$, forms a functor from the category of elements of $T$ to the underlying category $\C$.
  The family $\pi$ of morphisms then forms a natural transformation from that functor to the forgetful functor $\int T \to \C$.
  
A suitable signature for split type categories looks as follows:

    \[
    \begin{tikzcd}
      &
      &
      &
      \pi \ar[ddl] \ar[dr]
      &
      q \ar[d, shift left] \ar[d, shift right] \ar[ddll] \ar[dr]
      &
      &
      \\
      T \ar[drr] \ar[drr, shift left] \ar[drr, shift right]
      &
      I \ar[dr]
      &
      E \ar[d, shift left] \ar[d, shift right]
      &
      &
      C \ar[dr] \ar[ddll]
      &
      T_A\ar[d, shift left] \ar[d, shift right]\ar[dlll]
      &
      E_T \ar[dl, shift left] \ar[dl, shift right]
      \\
      &
      &
      A \ar[d, shift left] \ar[d, shift right]
      &
      &
      &
      T_O \ar[dlll]
      \\
      &
      &
      O
    \end{tikzcd}    
  \]

  Here, the presheaf $T$ is given by the sorts $(T_O,T_A)$ with the equality $E_T$, as in \cref{eg:presheaves}.
  The sort $C$ represents the comprehension: a witness $w : C_\Gamma(B,\Delta)$ signifies, intuitively, that $\Delta$ is the context $\Gamma$ extended by a variable of type $B$.
  The pair of sorts $(C,q)$ represent the comprehension functor.
  The sort $\pi$ represents the aforementioned natural transformation:
  a witness $t : \pi(w,f)$ with $w$ as above says that $f$ is a ``canonical projection'' from $\Delta$ to $\Gamma$.

  Since all the structure is categorical, an indiscernibility $a \fiso b$ in $O$ is just an isomorphism $a \cong b$ in the underlying category.
  This entails that in a univalent split type category, the underlying category is univalent.

  \defemph{Categories with families}, in the formulation of Fiore \cite{fiore_cwf} and Awodey \cite{awodey_natural} share some structure with split type categories, notably the category $\C$, the presheaf $T$ and the comprehension structure $\pi_B : \Gamma.B \to \Gamma$ for any $\Gamma$ and $B$.\index{category!with families}
  However, in this case the comprehension structure is not assumed \textit{a priori} to be functorial.
  Instead, a category with families has
  \begin{itemize}
  \item a presheaf $Tm$ on $\C$;
  \item a natural transformation $p : Tm \to T$;
  \item  for each object $\Gamma : \C$ and $B : T(\Gamma)$,
    an element $V_A:Tm(\Gamma.B)$, such that $p_{\Gamma.B}(V_A) = T(\pi_B)(B) : T(\Gamma.B)$
     and such that the induced commutative square of presheaves and natural transformations
      \[
        \begin{tikzcd}
          \yon(\Gamma.B)  \ar[r, "V_A"] \ar[d, "\yon(\pi)"]
          &
          Tm \ar[d, "p"]
          \\
          \yon(\Gamma) \ar[r, "B"]
          & T
      \end{tikzcd}
      \]
      is a pullback; here, $\yon$ denotes the Yoneda embedding.
  \end{itemize}

  Expressed as a signature, this yields
  \[
    \begin{tikzcd}
      &
      &
      &
      \pi \ar[ddl] \ar[dr]
      &
      &
      &
      V \ar[dll] \ar[ddr]
      \\
      T \ar[drr] \ar[drr, shift left] \ar[drr, shift right]
      &
      I \ar[dr]
      &
      E \ar[d, shift left] \ar[d, shift right]
      &
      &
      C \ar[dr] \ar[ddll]
      &
      T_A\ar[d, shift left] \ar[d, shift right]\ar[dlll]
      &
      p \ar[ddllll] \ar[dllll] \ar[dl] \ar[dr]
      &
      Tm_A\ar[d, shift left] \ar[d, shift right] \ar[dlllll]
      \\
      &
      &
      A \ar[d, shift left] \ar[d, shift right]
      &
      &
      &
      T_O \ar[dlll]
      &
      &
      Tm_O \ar[dlllll]
      \\
      &
      &
      O
    \end{tikzcd}
  \]
  Here, $(T_O, T_A)$ and $(Tm_O, Tm_A)$ are assumed to be presheaves;
  for readability, we omit the equalities $E \rightrightarrows T_O$ and  $E \rightrightarrows Tm_O$ in the signature above.
  An element $x : V(\Gamma, \Delta, B, w, t)$ with $B : T_O(\Gamma)$, $t : Tm_O(\Delta)$, and $w : C(\Gamma, B, \Delta)$ states that $t$ is the generic variable obtained from the context extension $\Delta = \Gamma.B$.
  We assert, via suitable axioms, that the data thus given yields pullback squares.

  Prima facie, this structure is not categorical;
  in particular, an indiscernibility $a \fiso b$ in $O$ consists of an isomorphism $\phi : a \cong b$ together with transport functions in $C$, $\pi$, and $V$.
  But these sorts are exactly specifying the pullback data, and are hence closed under isomorphism in $O$. This means that transport in these sorts is for free; an indiscernibility $a \fiso b$ is exactly an isomorphism $\phi : a \cong b$.
  
  In a univalent model of the above theory, the data specified by $(C,\pi, V)$ exists uniquely \cite[Lemma~34]{alv1-lmcs}. The type of univalent models of this theory is hence equivalent to the type of univalent models of the theory of ``representable maps of presheaves'', where the representation of $p$ is merely assumed to exist:
  \[
    \begin{tikzcd}
      T \ar[drr] \ar[drr, shift left] \ar[drr, shift right]
      &
      I \ar[dr]
      &
      E \ar[d, shift left] \ar[d, shift right]
      &
      &
      T_A\ar[d, shift left] \ar[d, shift right]\ar[dll]
      &
      p \ar[ddlll] \ar[dlll] \ar[dl] \ar[dr]
      &
      Tm_A\ar[d, shift left] \ar[d, shift right] \ar[dllll]
      \\
      &
      &
      A \ar[d, shift left] \ar[d, shift right]
      &
      &
      T_O \ar[dll]
      &
      &
      Tm_O \ar[dllll]
      \\
      &
      &
      O
    \end{tikzcd}    
  \]
However, as in \cref{eg:pullbacks}, incorporating the comprehension structure in the signature ensures that it is preserved by arbitrary morphisms of models, which otherwise would not be the case.

In \cref{eg:ctx-cat} we discuss the theory of contextual categories, which includes ``non-categorical'' structure, that is, structure that does not transport along equivalence of categories.
\end{example}

\begin{example}[Semicategories]\label{eg:semicats}\index{semicategory}\index{category!semi-}
  In addition to adding more structure to a category, is also interesting to consider what happens if we \emph{remove} some of its structure.
  For instance, a \defemph{semicategory} is like a category, but has no identities, though its composition is still associative.
  Thus an appropriate signature for semicategories is:
  \[
    \begin{tikzcd}
      T \ar[dr] \ar[dr,shift left] \ar[dr,shift right] & E \ar[d,shift left] \ar[d, shift right]\\
      & A  \ar[d,shift left] \ar[d, shift right] \\
      & O.
    \end{tikzcd}
  \]
  As usual, univalence at $A$ makes it a set with equality $E$.
  An indiscernibility $a\foldsiso b$ in $O$ consists of:
  \begin{itemize}
  \item A natural isomorphism of representable presheaves $A(-,a) \cong A(-,b)$ (these notions make perfect sense for semicategories),
  \item A natural isomorphism of representable copresheaves $A(a,-) \cong A(b,-)$ (these notions make perfect sense for semicategories), and
  \item A semigroup isomorphism $A(a,a) \cong A(b,b)$,
  \item Which respect all the additional composition operations on these sets, i.e.,
    \begin{alignat*}{2}
      A(x,a) \times A(a,y) &\to A(x,y) &\qquad
      A(a,a) \times A(a,y) &\to A(a,y) \\
      A(x,a) \times A(a,a) &\to A(x,a) &\qquad
      A(a,x) \times A(x,a) &\to A(a,a)
    \end{alignat*}
    for all $x,y:O$.
  \end{itemize}

  In particular, if $f:A(a,b)$ is a morphism such that pre-composition and post-composition with $f$:
  \[ A(b,y) \to A(a,y) \qquad A(x,a) \to A(x,b) \]
  are isomorphisms for all $x,y:O$, then $f$ induces an indiscernibility $a \foldsiso b$.
  A morphism in a semicategory with this property is called an \emph{isomorphism} in~\cite{tringali:plots}, and \emph{neutral} in~\cite{CapriottiKraus:csst}.
  If $g:A(b,a)$ also has this property, then $f$ and $g$ induce the same indiscernibility if pre- and post-composition with $gf$ (or, equivalently, $fg$) are both the identity.
  However, not all indiscernibilities in a semicategory arise from morphisms: e.g., in a semicategory with \emph{no} morphisms, all objects are indiscernible!\footnote{In particular, the result of~\cite{CapriottiKraus:csst} that a univalent (there called ``complete'') semicategory is a category does not hold for our notion of univalence: the semicategory with one object and no morphisms is univalent.}

  A morphism of semicategories is a semifunctor, i.e., a graph morphism preserving composition.
  It is an equivalence if it is fully faithful and also split essentially surjective up to indiscernibility.
  We leave it to the reader to define a theory for structures such as two semicategories with a semifunctor between them, and so on.
\end{example}

In \cref{sec:graph-egs} we will study other structures that look like categories without composition and/or identity operations.
We will see that in contrast to the fairly well-behaved situation of semicategories, removing composition leads to rather strange notions of indiscernibility.

\begin{example}[A functor between two fixed categories]\label{eg:relative}
  In \cref{eg:anafunctors} we described a theory whose models consist of two categories together with an (ana)func\-tor between them.
  Alternatively, we might fix two categories $\C$ and $\D$ and ask for a signature for ``functors from $\C$ to $\D$''.
  Such a signature has a family of sorts $F_{O,x,y}$ indexed by $\ob{\C}\times \ob{\D}$, and another family of sorts $F_{A,f,g}$ indexed by pairs of arrows $f$ in $\C$ and $g$ in $\D$, with morphisms from $F_{A,f,g}$ to $F_{O,x_1,y_1}$ and $F_{O,x_2,y_2}$ where $f:\C(x_1,x_2)$ and $g:\D(y_1,y_2)$:
  \[
    \begin{tikzcd}[column sep=tiny]
      \dots && F_{A,f,g} \ar[dl] \ar[dr] && \dots \\
      \dots & F_{O,x_1,y_1} & & F_{O,x_2,y_2} & \dots
    \end{tikzcd}
  \]
  The axioms of an anafunctor can be simply restated about structures for this signature, with $F_{O,x,y}$ replacing $F_O(x,y)$ and so on.

  In fact this signature can be obtained from that of \cref{eg:anafunctors} in a straightforward way.
  First we modify the rank function of \cref{eg:anafunctors} so that the sorts $F_O$ and $F_A$ have rank 3 and 4 respectively.
  (We will show in \cref{cor:struc-norank} that this does not change the structures.)
  Now we take a third derivative of this signature; the input data for this consists precisely of the two categories $\C$ and $\D$, and the resulting derivative is precisely the signature described above.

  We can use the same method to obtain signatures for any kind of structure relative to some fixed ambient fragment of that structure.
  (To be precise, the fragment must be specified by a \emph{cosieve} in the diagram structure, \textit{qua} inverse exo-category, to enable a version of the above rank-reshuffling.)
  For instance, given a fixed category $\C$, there is a signature for ``presheaves on $\C$'' or ``monoidal structures on $\C$''.
\end{example}

\begin{remark}
  Note that, in contrast to ``algebraic'' approaches to categorical structure such as strict 2-monads, it is not possible to define a diagram signature for which the \emph{morphisms} of structures are \emph{strict}, \emph{lax}, or \emph{oplax} monoidal functors.  The only morphisms we can represent in this way are those that preserve structure up to coherent isomorphism.

  However, it is perfectly possible to define a diagram theory whose models are pairs of monoidal categories with a lax (or oplax) monoidal functor between them, similar to our example of anafunctors (\cref{eg:anafunctors}).
  See also \cref{rmk:lax-functors}.
\end{remark}

\chapter{Higher categories}
\label{sec:hcat-egs}

Just as theories of structured sets (\cref{sec:set-egs}) involve signatures of height 2, and theories of structured categories (\cref{sec:1cat-egs}) generally involve signatures of height 3 (with a few exceptions), theories of higher categories involve signatures of height $\ge 4$.
We begin with the theory of bicategories; strict 2-categories are somewhat subtler, and will be studied in \cref{eg:strict-2-cat}.

\begin{example}[(Ana)bicategories; {\cite[Section 7]{MFOLDS}}]\label{eg:bicats}\index{bicategory}\index{anabicategory}
  We can represent bicategories with the following signature from~\cite[p.~110]{MFOLDS} (with equality added):
  \[
    \begin{tikzcd}
      T_2 \ar[drr] \ar[drr, shift left] \ar[drr, shift right]
      &
      I_2 \ar[dr] & E \ar[d, shift left] \ar[d, shift right]
      &
      H \ar[d, shift left] \ar[d, shift right] \ar[dl, shift left] \ar[dl, shift right] \ar[dl]
      &
      A \ar[dl, shift left] \ar[dl, shift right] \ar[dl, shift left=2] \ar[dl] \ar[dll]
      &
      &
      L \ar[dlll] \ar[dl] \ar[dllll]
      &
      R \ar[dllll] \ar[dll] \ar[dlllll]
      \\
      &
      &
      C_2 \ar[d, shift left] \ar[d, shift right]
      &
      T_1 \ar[dl, shift left] \ar[dl, shift right] \ar[dl]
      &
      &
      I_1 \ar[dlll]
      \\
      &
      &
      C_1 \ar[d, shift left] \ar[d, shift right]
      \\
      &
      &
      C_0
    \end{tikzcd}
  \]
  Here $C_0,C_1,C_2$ are the sorts of objects, 1-cells, and 2-cells.
  The relations $T_2$, $I_2$, and $E$, with their axioms, make the 1-cells and 2-cells into hom-categories.
  The type $T_1$, which depends on a triangle of elements of $C_1$, represents composition of 1-cells: $t:T_1(f,g,h)$ is a ``reason why'' $g\circ f$ is equal to $h$.
  In general this is not a proposition, so composition is an anafunctor; thus (again following Makkai) we are actually representing ``anabicategories''.
  Similarly, $I_1(f)$ is the type of witnesses that $f$ is an identity 1-cell.
  The relation $A$ specifies the associativity isomorphisms: given $t_{12} : T_1(f_1,f_2,f_{12})$ and $t_{123} : T_1(f_{12},f_3,f_{123})$ and also $t_{23}:T_1(f_2,f_3,f_{23})$ and $t_{123}' : T_1(f_1,f_{23},f_{123}')$, the relation $A$ specifies a 2-cell in $C_2(f_{123},f_{123}')$ that plays the role of the associativity morphism (which we assert to be invertible with an axiom).
  Similarly, $L$ and $R$ specify the left and right unit isomorphisms.
  Finally, the relation $H$ specifies the ``horizontal'' composite of two 2-cells along an object, given witnesses for how to compose their domains and codomains.

  Note that if we drop the sorts $I_1,I_2,L,R$ relating to identities, the resulting inverse category is a truncation of the coface maps in Joyal's category $\Theta_2$~\cite{joyal:disks}.
  The identity sorts are a ``fattening'' of this to incorporate the degeneracies while still remaining an inverse category, as done for the simplex category in~\cite{kock:weakids}.
  We expect that under the simplicial set interpretation, a model of this theory can be rectified to obtain an actual simplicial presheaf on $\Theta_2$, analogously to the discussion for categories and bisimplicial sets in \cref{sec:categories-hott}.
  Under the interpretation of $\Theta_2$-spaces as models for $(\infty,2)$-categories~\cite{rezk:theta,ara:nqcats}, our anabicategories should be identified with the image of 2-categories inside $(\infty,2)$-categories.

  On the other hand, if we drop the bottom sort $C_0$, we obtain precisely the signature for monoidal categories from \cref{eg:cat-struc} (with the unnecessary sort $U_A$ removed and the others renamed) --- as we should expect, since a monoidal category is a one-object bicategory.
  Just as in that example, we assert as axioms that $E$ is a congrence for all the top-rank relations and that the functor $T_1$ is existentially saturated.
  This ensures that univalence at $C_2$ makes it consist of sets with equality $E$, that full saturation holds at $T_1$, and that univalence at $C_1$ reduces to ordinary univalence of each hom-category.
  Finally, a ``two-sided bicategorical Yoneda lemma'' implies that indiscernibilities in $C_0$ are equivalent to internal adjoint equivalences, so univalence at $C_0$ means that these are equivalent to identifications.
  The transport in $C_1$ and $C_2$ of an indiscernibility $a \fiso b$ is given by 1-composition and whiskering with the corresponding internal adjoint equivalence $\phi : a \simeq b$, respectively.

  A morphism of saturated anabicategories corresponds to a (pseudo) functor of bicategories.
  It is an equivalence of models if the functor is a (strong) biequivalence, i.e., such that the maps on hom-types of all dimensions are split essentially surjective.

  A signature for two bicategories and a pseudo-anafunctor between them can be designed in analogy to the signature of anafunctors of \cref{eg:anafunctors}.
  For pseudo-anafunctors, suitable saturation conditions need to be stated on the level of 0-cells (with respect to adjoint equivalences) and 1-cells (with respect to isomorphisms).
  The analysis of the univalence condition on a pseudo-anafunctor, and of morphisms of pseudo-anafunctors, is left as an exercise.
\end{example}

In \cite{DBLP:conf/rta/AhrensFMW19}, the authors define and study univalent bicategories in UF, and also a bicategorical version of the displayed categories of \cref{eg:disp-cat}.
Unlike in the models of our theory of bicategories above, their bicategories are algebraic, in that identities and composition are given as operations.
We expect that their univalent bicategories can be proven equivalent to our univalent anabicategories inside HoTT/UF.

\begin{example}[Opetopic bicategories]\index{bicategory!opetopic}
  \Cref{eg:bicats} is based on the classical definition of bicategory (adapted to use anafunctors).
  Another possibility is to use a ``non-algebraic'' definition of bicategory, such as a semisimplicial or fat-simplicial~\cite{kock:weakids} version of the Street--Duskin~\cite{street:orientals,duskin:bicatnerve} simplicial nerve, or the opetopic nerve~\cite{bd:hda3,Mult1}.

  The opetopic approach is particularly interesting because one of Makkai's original applications of FOLDS~\cite{Mak2004} was a definition of opetopic (or ``multitopic'') $\omega$-category.
  A suitable signature for opetopic bicategories is as follows:
  \[
    \begin{tikzcd}
      T_{0;1} \ar[dr,shift left] \ar[dr,shift right] \ar[drr] &
      T_{0,1;2} \ar[d] \ar[dr,shift left] \ar[dr,shift right] \ar[drr] &
      T_{1;1} \ar[d] \ar[d,shift left] \ar[d,shift right] &
      T_{2;1} \ar[dl] \ar[d,shift left] \ar[d,shift right] &
      T_{1,2;2} \ar[dll] \ar[dl,shift left] \ar[dl,shift right] \ar[d] &
      \dots
      \\
      &C_{0,1} \ar[drr] &
      C_{1,1} \ar[dr,shift left] \ar[dr,shift right] &
      C_{2,1} \ar[d] \ar[d,shift left] \ar[d,shift right] &
      C_{3,1} \ar[dl,shift left=.5] \ar[dl,shift right=.5] \ar[dl,shift left=1.5] \ar[dl,shift right=1.5] &
      \dots\\
      &&&C_1 \ar[d,shift left] \ar[d,shift right] \\
      &&&C_0
    \end{tikzcd}
  \]
  together with a binary equality predicate $E_{n,1}$ and a unary predicate $U_{n,1}$ on each $C_{n,1}$.
  We regard the elements of $C_{n,1}$ as 2-cells whose domain is a composable string of $n$ 1-cells and whose codomain is a single 1-cell; thus $C_{n,1}$ has $n+1$ arrows to $C_1$.
  Similarly, the relation $T_{k_1,\dots,k_n;n}$ represents a multicategorical-style composite of $n$ 2-cells whose domains are of length $k_1,\dots,k_n$ with one 2-cell whose domain is of length $n$; thus it has an arrow to each $C_{k_i}$ and to $C_n$, as well as an arrow to $C_{k_1+\cdots+k_n}$ specifying the composite.
  The predicate $U_{n,1}$ singles out certain 2-cells as \emph{universal}.

  Like \cref{eg:unbiased-monoidal,eg:multicats}, this signature has infinitely many sorts (in this case, at ranks 2 and 3); but since it has height 4, it also has nontrivial associativity relations.
  Thus, we can't use $\Nat$ to index these sorts in a na\"ive way and still get a strict exo-category.
  We can use $\exo{\Nat}$ if we assume that it is cofibrant (hence sharp), or we can encode composites and associativity using type dependency as in~\cite{shulman_univalence_ei}.

  The axioms (when suitably simplified from $\omega$-categories to $2$-categories) say that any composable diagram of 2-cells has a unique composite, that identity 2-cells exist, that 2-cells can be factored uniquely through universal ones, and that every composable string of 1-cells is the domain of a universal 2-cell.
  In addition, we assume an ``existential saturation'' property that universal cells are closed under composition with isomorphisms.
  It follows that the specified universal 2-cells (i.e., those satisfying $U_{n,1}$) are precisely those that satisfy the usual universality \emph{property}, and give identities and composition of 1-cells analogously to how a monoidal category can be characterized as a representable multicategory.

  A detailed comparison of opetopic bicategories with classical bicategories can be found in~\cite{cheng:opetopic-bicats}.
  Note, though, that all existing references we are aware of use only opetopic \emph{sets}, whereas our definition yields a notion of higher category based on opetopic \emph{spaces} (e.g., simplicial sets), analogous to~\cite{rezk:css,rezk:theta}.

  As usual, univalence at top-level ensures that each $T_{k_1,\dots,k_n;n}$ is a mere relation, and at the next level that each $C_{n,1}$ is a set with equality $E_{n,1}$.
  Since $C_{1,1}$ forms the morphisms of a category with objects $C_1$, an indiscernibility at $C_1$ includes the data of a 2-cell isomorphism, which can then be shown to uniquely determine the rest of an indiscernibility by the usual arguments; thus univalence at $C_1$ says that the category of 1-cells is univalent.

  From an indiscernibility $\phi:x\foldsiso y$ in $C_0$ we obtain morphisms $\phi:C_1(x,y)$ and $\phi':C_1(y,x)$ in the usual way, and from universal cells in $C_{2,1}(1_x,1_x;1_x)$ and $C_{2,1}(1_y,1_y;1_y)$ we obtain universal cells in $C_{2,1}(\phi,\phi';1_{x})$ and $C_{2,1}(\phi',\phi;1_{y})$ witnessing $f$ and $g$ as inverse adjoint equivalences.
  (These 2-cells are universal because the predicate $U_{2,1}$ is part of the signature, hence is preserved by indiscernibilities.)
  As usual, the entire indiscernibility can then be recovered uniquely from such an adjoint equivalence; thus univalence at $C_0$ is analogous to \cref{eg:bicats}.
  We expect other non-algebraic definitions to behave similarly.
\end{example}

In contrast to the fact that the vast majority of naturally-occurring 1-categories are univalent (although see \cref{sec:unnat} for some notable exceptions), there is a large class of naturally-occurring bicategories that are not univalent.
It is true that \emph{many} bicategories are univalent, such as the bicategory of univalent categories and functors, and many other related bicategories such as those of univalent monoidal categories and monoidal functors, univalent toposes and geometric morphisms, etc.\footnote{The bicategory of not-necessarily-univalent precategories and functors is not univalent, but this is a different sort of issue.}

However, the bicategory of rings and bimodules is not univalent: the identifications of objects are ring isomorphisms, while the equivalences are Morita equivalences.
For similar reasons, neither is the bicategory of categories (even univalent ones) and profunctors.\index{bicategory!non-univalent examples}
In~\cite{shulman:frbi} it was argued that bicategories of this second sort are more naturally viewed as \emph{double categories}.
Various advantages of this perspective were discussed therein, but a further one is that they do tend to be univalent as double categories.\index{category!double}

\begin{example}[Double (ana-bi)categories]\label{eg:double-cats}\index{category!double}\index{bicategory!double}\index{ana-bicategory}
 A \defemph{double category} is similar to a 2-category or bicategory, but has two families of 1-cells, called \emph{vertical} and \emph{horizontal}, respectively.
 The 2-cells take the shape of fillers for squares of 1-cells (two of each sort).
 For example, rings are the objects of a double category whose two families of 1-cells are ring homomorphisms and bimodules, while categories are the objects of a double category containing both functors and profunctors.

 Curiously, it is quite difficult to define a double category in which composition is weak in both directions.
 The closest approximation in the literature is the \emph{double bicategories} of Verity \cite[Definition~1.4.1]{verity-phd-tac}; in addition to squares, these have vertical and horizontal 2-cells of the usual ``globular'' shape, forming two separate bicategories with the same objects, together with operations by which the squares are acted on by the appropriate kind of globular 2-cells on all four sides.
 These are more general than the intuitive notion of ``doubly weak double category'' in that the globular 2-cells may not coincide with the squares having identity morphisms on two parallel sides, although we are free to assume as an additional axiom that the natural map between these two sets is a bijection.

 A suitable signature for double bicategories hence looks as follows:
   \[
    \begin{tikzcd}[ampersand replacement=\&, row sep = 40, column sep = small]
      W_{H,T} \ar[d] \ar[drrrr,shift right] \ar[drrrr,shift left] \&
      W_{H,B} \ar[dl] \ar[drrr,shift right] \ar[drrr,shift left] \&
      I_H \ar[drr] \&
      T_H \ar[dr] \ar[dr, shift left] \ar[dr, shift right]\&
      E_S  \ar[d,shift right] \ar[d,shift left] \&
      T_V \ar[dl] \ar[dl, shift left] \ar[dl, shift right] \&
      I_V \ar [dll] \&
      W_{V,L} \ar[dr] \ar[dlll,shift right] \ar[dlll,shift left] \&
      W_{V,R} \ar[d] \ar[dllll,shift right] \ar[dllll,shift left]
      \\
      C_{H,2} \ar[d,shift left] \ar[d,shift right] 
      \& \&      
      \& \&
      S \ar[dllll,shift left] \ar[dllll,shift right] \ar[drrrr,shift left] \ar[drrrr,shift right] 
      \& \&
      \& \&
      C_{V,2} \ar[d,shift left] \ar[d,shift right]
      \\
      C_{H,1} \ar[drrrr,shift left] \ar[drrrr,shift right] 
      \& \& \& \&
      \& \&
      \& \&
      C_{V,1} \ar[dllll,shift left] \ar[dllll,shift right]
      \\
      \& \& \& \& C_0
    \end{tikzcd}
  \]
  where we omit the bicategorical structure on $C_{X,2} \rightrightarrows C_{X,1} \rightrightarrows C_0$ for $X = H,V$ (see \cref{eg:bicats}) for readability.
  Intuitively, an element $\lambda : S_{w,x,y,z}(f, g, h, k)$ can be pictured as a filler 
  \[
   \begin{tikzcd}[ampersand replacement = \&]
           z \ar[d,"h"'] \ar[r, "k"] 
           \ar[dr, phantom, "\lambda"]
           \&  
           x \ar[d, "f"]
           \\
           y \ar[r, "g"'] \& w
   \end{tikzcd}
  \]
  and the vertical action $(W_{V,R})_{w,x,y,z,f,g,h,k,f'}(\alpha, \lambda, \lambda')$ attaches a vertical 2-cell $\alpha : f \Rightarrow f'$ on the right of $\lambda$ to yield a filler $\lambda'$ of a square of 1-cells $f',g,h,k$.
  Similarly, we have vertical action on the left ($W_{V,L}$) and horizontal action on the top and bottom.
  As usual, these relations are asserted to be functional.
  Squares can be composed vertically ($T_V$) and horizontally ($T_H$), and we have identities $I_V$ and $I_H$ for these compositions.
  We assert that the equalities (not pictured) on vertical and horizontal 2-cells, as well as the equality $E_S$ on squares $S$, are congruences with respect to these operations.
  
  Univalence at $S$ says that $S$ is pointwise a set with equality given by $E_S$.
  Given two vertical 1-cells $f, f' : C_{V,1}(x,w)$, an indiscernibility between them is given by an isomorphism $\phi : f \cong f'$ in the underlying vertical bicategory together with a transport function for $S$, e.g., $\transfun \phi : S_{w,x,y,z}(f,g,h,k) \to S_{w,x,y,z}(f',g,h,k)$. 
  But coherence with respect to $W_{V,R}$ says that this transport function is given by the action of the 1-isomorphism $\phi$, and similar for the other variables.
  
  Given $a, b : \C_0$, an indiscernibility $a \fiso b$ consists of a pair of a horizontal adjoint equivalence $\phi_H : a \simeq_H b$ and a vertical adjoint equivalence $\phi_V : a \simeq_V b$ together with transport functions for the sort $S$ that are coherent with respect to the top-level sorts.
  In particular, we have a transport function $S_{a,a,a,a}(1,1,1,1) \cong S_{b,a,a,a}(\phi_V,\phi_H,1,1)$; call $\Psi : S_{b,a,a,a}(\phi_V,\phi_H,1,1)$ the image of the identity filler under this isomorphism.
  Analogously, we have a transport function $S_{b,b,b,b}(1,1,1,1) \cong S_{b,b,b,a}(1,1,\phi_V,\phi_H)$; call $\Phi : S_{b,b,b,a}(1,1,\phi_V,\phi_H)$ the image of the identity filler under this isomorphism.
  The coherence laws for the top-sorts then entail that all the other transport functions are fully determined by the choice of $\Psi$ and $\Phi$, and that $\Psi$ and $\Phi$ compose with each other along $\phi_V$ and $\phi_H$ to identities.
  In summary, an indiscernibility in $C_0$ consists of a quadruple $(\phi_H,\phi_V,\Psi, \Phi)$ with these properties, a.k.a.\ a \defemph{companion pair} (see, e.g., \cite[\S1.2]{CTGDC_2004__45_3_193_0}) of adjoint equivalences, which Campbell~\cite{campbell:greg} has called a \defemph{gregarious equivalence}.\index{equivalence!of double categories!gregarious}

  The class of double categories studied in~\cite{shulman:frbi} as replacements for bicategories --- there called \emph{framed bicategories}, but elsewhere known as \emph{fibrant double categories} or \emph{proarrow equipments} --- in particular have the property that every vertical arrow has a horizontal companion.\index{bicategory!framed}\index{proarrow equipment}\index{double category!fibrant}
  Since the companion of an equivalence is always an equivalence, and companions are unique up to unique isomorphism, in a framed bicategory the indiscernibilities are simply the vertical equivalences (which might be simply vertical isomorphisms, if the vertical bicategory is locally discrete, i.e., equivalent to a 1-category).
  Thus, we can assemble rings, ring homomorphisms, and bimodules into a univalent double bicategory,\footnote{We do use here the extra generality of double bicategories over double categories: to ensure that the type of vertical equivalences is the set of ring isomorphisms, we must take the vertical globular 2-cells to be only the identities, rather than all the squares with two horizontal identities in their boundary.  The latter choice would yield instead the double category of one-object  $\mathsf{Ab}$-enriched categories, functors, and profunctors, and the type of equivalences between two one-object (enriched) categories is not equivalent to the set of isomorphisms between their hom-monoids (though one is inhabited if and only if the other is).} and likewise for categories, functors, and profunctors.\index{profunctor}\index{category!enriched}

  A morphism of double bicategories, regarded as models of our theory, is exactly a \emph{horizontal map} as defined in \cite[Definition~1.4.7]{verity-phd-tac} (a.k.a.\ a double pseudofunctor).
  It is an equivalence if it is fully faithful on squares, full on horizontal and vertical 1-cells up to globular isomorphism, and surjective on objects up to gregarious equivalence.\index{equivalence!gregarious}
  These are the weak equivalences in the model structure (on strict double categories) of~\cite{campbell:greg}, and when restricted to framed bicategories they reduce to the equivalences characterized in~\cite[\S7]{shulman:frbi}.
  Note that there is a multitude of model structures, and even a multitude of notions of weak equivalences, on the category of double categories;
  see, for instance, \cite{moser-sarazola-verdugo-2022,fiore-paoli-pronk-2008,moser-sarazola-verdugo-AGT}.
\end{example}

\begin{remark}\label{rmk:lax-functors}\index{functor!lax}
  In \cref{eg:bicats} we noted that morphisms of saturated anabicategories correspond to pseudofunctors, and that pseudo-anafunctors between saturated anabicategories can also be represented as models of a single diagram theory.
  However, the situation with \emph{lax} functors is much subtler, since lax functors between bicategories \emph{do not} preserve equivalences --- indeed, they need not even take \emph{identity} morphisms to equivalences --- nor are they invariant under equivalences of bicategories.
  For instance, a lax functor whose domain is the terminal bicategory sends the identity morphism in its domain to an arbitrary monad in its codomain (which need not be an equivalence), while a lax functor whose domain is \emph{equivalent} to the terminal bicategory instead selects a collection of monads with bimodules between them.

  Thus, it seems difficult, if not impossible, to represent a lax functor between univalent bicategories as a model of a diagram theory.
  However, many lax functors arising in practice actually involve \emph{non-univalent} bicategories.
  As a simple example, any monoidal category can be viewed as a one-object bicategory, which will be non-univalent whenever the monoidal category contains a $\otimes$-invertible object other than the unit (note that this has nothing to do with whether the monoidal category is univalent as a 1-category).
  Any lax monoidal functor then similarly induces a lax functor between these non-univalent one-object bicategories.\index{bicategory!one-object}

  Other examples of lax functors involve non-univalent bicategories that are best regarded as ``many-object monoidal categories'', sometimes called ``horizontally categorified'' rather than ``vertically categorified''.
  For instance, the bicategory of rings and bimodules mentioned above can be generalized to a bicategory $R\text{-Mod}$ of $R$-algebras and bimodules, for any commutative ring $R$.
  This is generally non-univalent, since equivalences therein between algebras are Morita equivalences rather than isomorphisms.
  A ring homomorphism $\phi : R\to S$ then induces a lax functor $S\text{-Mod} \to R\text{-Mod}$.

  However, as noted above and argued in~\cite{shulman:frbi}, such bicategories are often better viewed as double categories.
  Since a \emph{horizontally} lax \emph{double} functor does preserve \emph{vertical} equivalences, and is invariant under equivalence of double categories, there is no difficulty in writing down a signature for such functors, which includes many if not most naturally-occuring lax functors.
  We leave the details to the reader.
\end{remark}

\chapter{Strict categorical structures}
\label{sec:strict-egs}

All the higher-categorical structures considered in \cref{sec:hcat-egs} were ``maximally weak''.
One might guess that our framework can \emph{only} speak about maximally weak higher categories, but in fact this is not the case.
In this \lcnamecref{sec:strict-egs} we discuss a method for representing strict structures, starting with a notion that has no analogue in set-based category theory: the \emph{strict categories} mentioned in \cref{sec:categories-hott}.

\begin{example}[Strict categories]\label{eg:strict-1-cat}\index{category!strict}
  Recall from \cref{sec:categories-hott} that
  a strict category is a precategory whose underlying type of objects is a set.
  A natural way to force the type $O$ in an $\LcatE$-structure to be a set is to just add an equality predicate on it:
  \begin{equation}\label{sig:incorrect-sig-strict-cat}
    \begin{tikzcd}[ampersand replacement=\&]
      T \ar[dr] \ar[dr, shift left] \ar[dr, shift right] \& I \ar[d] \& E_A \ar[dl, shift right] \ar[dl, shift left]
      \\
      \& A \ar[d, shift left] \ar[d, shift right]
      \& E_O \ar[dl, shift right] \ar[dl, shift left]
      \\
      \& O
    \end{tikzcd}
  \end{equation}
  Since $E_O$ has no types that depend on it, univalence will make it a proposition, even though it is not at top rank.
  However, recall that in order for univalence to force equality at a sort (such as $O$) to coincide with a given equality predicate on that sort, the equality needs to be asserted to be a congruence for everything else dependent on that sort.
  In previous examples the equality predicate has been at top rank (like $E_A$), so that all the other sorts dependent on its sort (like $T, I$ dependent on $A$) are mere predicates, and the meaning of ``congruence'' is clear.
  But in the present situation it is less obvious what exactly it means for $E_O$ to be a ``congruence for $A$'' when $A$ is not a mere predicate on $O$.

  A solution is to make the equality of arrows \emph{heterogeneous}, meaning that we do not require that the two morphisms on which $E_A$ depends to have the same source and target, so that there are in total \emph{four} distinct arrows $E_A \to O$ in the signature.\index{equality!heterogeneous}
  We denote heterogeneous equalities by $\hetE$ rather than $E$; thus in this case we have types $(\hetE_A)_{x,y,z,w}(f,g)$ for $f:A(x,y)$ and $g:A(z,w)$.
  We still require $\hetE_A$ to be an equivalence relation, in the appropriate heterogeneous sense, and we assert that it is a congruence for $T$ and $I$ in appropriate ways that respect their dependencies.
  For instance, if $I_x(f)$ and $(\hetE_A)_{x,x,z,z}(f,g)$, then $I_z(g)$, and similarly for $T$.
  This suffices to ensure that the induced homogeneous equality $(E_A)_{x,y}(f,g) \eqdef (\hetE_A)_{x,y,x,y}(f,g)$, for $f,g:A(x,y)$, coincides with indiscernibilities in $A$.

  We will also write $\hetE_O$ for the equality on $O$, even though there is no distinction between homogeneous and heterogeneous equality on a rank-0 sort.
  We assert as axioms that $\hetE_O$ is an equivalence relation, and that if $(\hetE_A)_{x,y,z,w}(f,g)$ then $\hetE_O(x,z)$ and $\hetE_O(y,w)$.
  We can now also assert transport of $A$ along $\hetE_O$, in the sense that given $f:A(x,y)$ and $\hetE_O(x,z)$ and $\hetE_O(y,w)$, there exists a $g:A(z,w)$ such that $(\hetE_A)_{x,y,z,w}(f,g)$.
  This is sensible as a mere existence statement, since such a $g$ is unique up to homogeneous $\hetE_A$ (using the fact that $\hetE_A$ is a heterogeneous equivalence relation).

  Summarily, we write the resulting signature as follows:
    \[
    \begin{tikzcd}[ampersand replacement=\&]
      T \ar[dr] \ar[dr, shift left] \ar[dr, shift right] \& I \ar[d] \& \hetE_A \ar[dl, shift right] \ar[dl, shift left]
      \\
      \& A \ar[d, shift left] \ar[d, shift right]
      \& \hetE_O \ar[dl, shift right] \ar[dl, shift left]
      \\
      \& O
    \end{tikzcd}
  \]
  Since we are only drawing the generating arrows and not their compositions, this picture looks the same as \cref{sig:incorrect-sig-strict-cat}.
  However, it denotes a different category, because the definition of composition is different: as noted above, there are now four composite arrows from $E_A$ to $O$.
  The equality sorts are now denoted slightly differently to signal the different axioms that we impose on them.

  Now suppose univalence holds at all sorts above $O$, and that we have an indiscernibility $\phi:a\fiso b$ in $O$.
  This of course includes a map $\hetE_O(a,a) \to \hetE_O(a,b)$, so that we have $\hetE_O(a,b)$.
  It also includes maps like $\phi_{x\bullet}:A(x,a) \to A(x,b)$ as before, but now these must also respect $\hetE_A$.
  In particular, if $f:A(x,a)$, then since $(\hetE_A)_{x,a,x,a}(f,f)$, we have $(\hetE_A)_{x,a,x,b}(f,\phi_{x,\bullet}(f))$, so that $\phi_{x,\bullet}(f)$ is determined uniquely (up to homogeneous $\hetE_A$) as the $g$ asserted to exist by our transport axiom.

  Similar reasoning applies at other sorts, so any equality $\hetE_O(a,b)$ can be extended \emph{uniquely} to an indiscernibility $\phi:a\fiso b$, i.e., we have $(a\fiso b) \simeq \hetE_O(a,b)$.
  Since the latter is a mere relation, in a univalent structure $O$ must be a set, whose equality relation is $\hetE_O$.
  So the univalent models of this theory are precisely strict categories.
  
  A morphism of structures is precisely a functor; it is an equivalence if the functor is an \emph{isomorphism} of strict categories.
\end{example}

Note also that since strict categories are composed entirely of sets, they can also be considered as the (univalent) models of the essentially algebraic theory of categories, which can be regarded as a height-2 theory in our framework as in \cref{eg:fol}.\index{theory!essentially algebraic}
The value of presenting them as a height-3 theory instead is that it generalizes to ``partially-strict'' structures such as strict 2-categories and displayed categories, as we now explain.

\begin{example}[Strict 2-categories]\label{eg:strict-2-cat}\index{2-category!strict}
  In standard category-theoretic terminology, a strict 2-category \cite[XII.3]{CWM} is an ordinary category equipped with additional 2-cells; thus it is like a bicategory but composition of 1-cells is strictly associative and unital.  This suggests that in a univalent representation, the type of 1-cells should be a set, so that we can compare them for equality; in other words, the hom-categories should be strict categories.  But even in a strict 2-category, one generally does not compare \emph{objects} for equality, only for \emph{isomorphism}; thus a strict 2-category is not a fully strict set-level structure but should have a nontrivial univalence condition on the objects.  (Thus one might more precisely call it a ``locally strict 2-category.'')

  A suitable signature for strict 2-categories looks as follows:
  \[
    \begin{tikzcd}[ampersand replacement=\&]
      L \ar[ddrrr] \ar[drrr, shift left] \ar[drrr, shift right]
      \&
      R \ar[ddrr] \ar[drr, shift left] \ar[drr, shift right]
      \&
      I_C \ar[dr]
      \& 
      V \ar[d] \ar[d, shift left] \ar[d, shift right] 
      \& 
      \hetE_C \ar[dl, shift right] \ar[dl, shift left]
      \\
      T \ar[drrr] \ar[drrr, shift left] \ar[drrr, shift right]
      \& 
      I_A \ar[drr] 
      \&
      \& 
      C \ar[d, shift right] \ar[d, shift left]
      \& E_A \ar[dl, shift right] \ar[dl, shift left]
      \\
      \&
      \&
      \& A \ar[d, shift left] \ar[d, shift right]
      \& 
      \\
      \&
      \&
      \& O
    \end{tikzcd}
  \]
  Here, the sort $C$ denotes the sort of 2-cells, $V$ stands for vertical composition, $L$ and $R$ for left and right whiskering, respectively, and $I_C$ for identity 2-cells.
  For instance, $L_{a,b,c,g,h,k,\ell}(f, \alpha, \beta)$ signifies that $\beta : C_{a,c}(k,\ell)$ is the left whiskering of $f : A(a,b)$ and $\alpha : C_{b,c}(g,h)$.
  Intuitively, $L$ (and $R$) should not only depend on three 2-cells, but also on two triangles $T(f,g,k)$ and $T(f,h,\ell)$, to indicate that the boundary of the left and right whiskering is the composite of the boundaries of the input. However, this condition can be stated as an axiom instead: $L_{a,b,c,g,h,k,\ell}(f,\alpha,\beta)$ implies $T_{a,b,c}(f,g,k)$ and $T_{a,b,c}(f,h,\ell)$.
  The absence of these dependencies enables $T$ to be pointwise a proposition in a univalent structure.
  Similar to \cref{eg:strict-1-cat}, the sort $\hetE_C$ denotes a heterogeneous equality, but it is only ``partially heterogeneous'': e.g. it depends on four elements of $A$, but these four elements must be parallel in pairs, so that it depends on only four elements of $O$ rather than eight.
  
  Vertical composition and left and right whiskering are asserted to be functions.
  We also impose the usual axioms of a strict 2-category.
  The sort $\hetE_C$ is asserted to be a congruence for the other top-level sorts.
  By the same reasoning as in \cref{eg:strict-1-cat}, this entails that the homogeneous equality $(E_C)_{f,g}(\alpha,\alpha') \eqdef (\hetE_C)_{f,g,f,g}(\alpha, \alpha')$ induced by the heterogeneous equality coincides with indiscernibilities $\alpha \fiso \alpha'$.
  
  The equality $E_A$ is asserted to be a congruence with respect to the other sorts of the same rank and above. 
  In particular, transport of $C$ along $E_A$ is asserted as follows: 
  given $\alpha : C(f, g)$ and $E_A(f, f')$ and $E_A(g, g')$, there exists $\beta : C(f', g')$ such that $(\hetE_C)_{f, g, f', g'}(\alpha, \beta)$. Such $\beta$ is unique up to homogeneous equality, using that $\hetE_C$ is a heterogeneous equivalence relation.
  
  An indiscernibility $g \fiso g'$ at $A(x,y)$ comes, in particular, with an equality $(E_A)_{x,y}(g,g')$.
  It furthermore comes with a transport $\phi_{f\bullet} : C_{x,y}(f,g) \to C_{x,y}(f, g')$ respecting $\hetE_C$, in the sense that $(\hetE_C)_{x,y,f,g,f,g'}(\alpha, \phi_{f\bullet}(\alpha))$. Consequently, the map $\phi_{f\bullet}$ coincides, up to homogeneous equality, with the transport of $C$ along $E_A$ asserted as part of $E_A$ being a congruence.
  Conversely, any equality $(E_A)_{x,y}(g,g')$ gives rise, in a unique way, to the data of an indiscernibility $g \fiso g'$, and we obtain that $(E_A)_{x,y}(g, g') \simeq (g \fiso g')$.  In particular, in a univalent strict 2-category, each type $A(x,y)$ of arrows is a set.
  
  Given $a, b: O$, an indiscernibility $a \fiso b$ consists of an isomorphism $\phi : a \cong b$ in the underlying category together with transport functions
  $\phi_{x\bullet f f'} : C_{x,a}(f,f') \to C_{x,b}(\phi_{x\bullet}(f),\phi_{x\bullet}(f')) \converts C_{x,b}(\phi \circ f, \phi \circ f')$ and similar for the other dependency of $C$, and in both dependencies. These transport functions are furthermore compatible with sorts $L$, $R$, $I_C$, $V$, and $\hetE_C$.
  Compatibility of $\phi_{x\bullet f f'}$ with $R$ means that
  \[R_{x,a,a,f, f',f, f, f'}(\alpha,1, \alpha) \to R_{x,a, b, f, f', \phi_{x\bullet}(f), \phi_{x\bullet}(f')}(\alpha, \phi_{x\bullet}(1), \phi_{x\bullet f f'}(\alpha)),\]
  which means that $\phi_{x\bullet f g}(\alpha))$ is the right whiskering of $\alpha$ with $\phi : A(a,b)$.
  Analogously, we obtain that the transport function $\phi_{\bullet x f f'}$ is given exactly by left whiskering, and the transport in both dependencies by applying both left and right whiskering.
  This means that an indiscernibility $a \fiso b$ is simply an isomorphism $a \cong b$ in the category underlying the strict 2-category.

  A strict 2-category is univalent iff its underlying 1-category is univalent, $C$ is pointwise a set with equality given by $E_C$ (the homogeneous fragment of $\hetE_C$), and $L$, $R$, $I_C$, and $V$ are pointwise propositions.
  
  A morphism of structures is exactly a functor between strict 2-categories; it is an equivalence if the functor is an equivalence of 2-categories.
\end{example}

\begin{remark}
  It is worth repeating here the observation from \cref{sec:categories-hott} that nearly all naturally-occurring large categories, when defined in HoTT/UF, are univalent and not strict.
  Thus, the notion of strict category is of limited practical utility when working in HoTT/UF, although \emph{small} categories (such as the domains of diagrams) can often be defined in a strict way.
  For a similar reason, very few naturally-occurring bicategories are strict 2-categories in the sense of \cref{eg:strict-2-cat}: even the bicategory of univalent categories and functors is not a strict 2-category, because its hom-categories are not strict.
  There is, however, a strict 2-category of strict 1-categories.
  Moreover, the technique of heterogeneous equality is also useful for other examples that do occur more frequently in practice, such as the following.
\end{remark}

\begin{example}[Displayed categories]\label{eg:disp-cat}\index{category!displayed}
  Using heterogeneous equality, we can consider displayed categories instead of the semi-displayed categories of \cref{eg:semi-displayed}.
  \[
    \begin{tikzcd}[ampersand replacement = \&] 
        \& \& \& 
        T_D \ar[dr] \ar[dr, shift left] \ar[dr, shift right] 
        \&  
        I_D  \ar[d] 
        \& 
        \hetE_D  \ar[dl, shift right] \ar[dl, shift left] 
        \\
        T \ar[dr] \ar[dr, shift right] \ar[dr, shift left] 
        \& I \ar[d] 
        \& E \ar[dl, shift right] \ar[dl, shift left] 
        \& \& 
        A_D \ar[dlll] \ar[d, shift left] \ar[d, shift right] 
        \\
        \& A \ar[d, shift left] \ar[d, shift right] 
        \& 
        \& 
        \& 
        O_D \ar[dlll]
        \\
        \& 
        O
    \end{tikzcd}
  \]
  Here, the sort $\hetE_D$ is a heterogeneous equality as in \cref{eg:strict-1-cat,eg:strict-2-cat}.
  In the signature above, there are two dependencies of $\hetE_D$ on $A$.
  We assert transport of $A_D$ along $E$: given $g : (A_D)_{a,b}(x,y,f)$ and $E(f,f')$, there is $g' : (A_D)_{a,b}(x,y,f')$ and $(E_D)_{a,b,x,y,f,f'}(g,g')$.
  
  As before, indiscernibilities in $A_D$ correspond to homogeneous equalities $E_D$ induced by $\hetE_D$; hence, in a univalent structure, $A_D$ is a set with equality $E_D$.
  
  Given $f,f' : A(a,b)$, an indiscernibility $f \fiso f'$ comes with a transport function
  $\phi_{\bullet} : (A_D)_{a,b}(x,y,f) \to (A_D)_{a,b}(x,y,f')$ that is compatible with $\hetE_D$---in particular, we obtain $(\hetE_D)_{a,b,x,y,f,f'}(g,\phi_{\bullet}(g))$.
  This means that $\phi_{\bullet}(g)$ is, up to homogeneous equality $E_D$, the same as the postulated transport above.
  Altogether, an indiscernibility $f \fiso f'$ is exactly an equality $E_{a,b}(f, f')$; thus, in a univalent structure, $A$ is a set with equality $E$.
    
  Given $b: O$ and $y_1, y_2 : O_D(b)$, an indiscernibility $y_1 \fiso y_2$ is exactly a displayed isomorphism over the identity on $b$; this is shown analogously to the characterization of indiscernibilities of objects in a precategory (see \cref{sec:univalence-at-o}). In more detail, such an indiscernibility consists, in particular, of an equivalence
  \[\phi_{b,b,y_1,\bullet,1_b} : (A_D)_{b,b}(y_1,y_1, 1_b) \simeq (A_D)_{b,b}(y_1, y_2, 1_b),\] 
  and thus in particular of a displayed morphism \[\phi \eqdef \phi_{b,b,y_1,\bullet,1_b} (\overline{f}_{y_1}): (A_D)_{b,b}(y_1, y_2, 1_b).\]
  The morphism $\phi$ is an isomorphism, and transport in $A_D$ is given by composition with $\phi$ or its inverse, e.g.,
  \[(T_D)_{a,b,b,x,y_1,y_1,f,1,f}(\overline{f},\overline{1}_{y_1}, \overline{f}) \to (T_D)_{a,b,b,x,y_1,y_2,f,1,f}(\overline{f},\phi, \phi_{b,b,y_1,\bullet,1_b}(\overline{f})).\]
    
  Regarding indiscernibilities at $O$, the same reasoning as in \cref{eg:semi-displayed} applies. In particular, for a univalent structure, the underlying category is univalent if and only if the displayed category is an isofibration.
  
\end{example}

\begin{remark}\label{rmk:heteq}
  In~\cite[Appendix C]{MFOLDS}, Makkai describes a general method for starting with a diagram signature that has no equalities at all, and adding heterogeneous equalities (which he calls ``global equalities'') to all non-relational sorts, as well as a family of FOLDS-axioms for these equalities.
  Our signature for strict categories in \cref{eg:strict-1-cat} can be obtained by this method, if we start with the signature for categories with the equality on arrows removed (otherwise it would get duplicated), and our congruence and transport axioms are instances of Makkai's.
  Based on this example, it is natural to conjecture that a structure for any diagram signature with all heterogeneous equalities added that satisfies Makkai's equality axioms is univalent if and only if all its sorts are sets with standard equality.
  One might also hope to generalize Makkai's construction to add equalities only at some sorts (perhaps at a sieve of sorts) and thereby recover our \cref{eg:strict-2-cat,eg:disp-cat}.
\end{remark}

\begin{example}[Contextual categories, a.k.a.\ C-systems]\label{eg:ctx-cat}
  Contextual categories were introduced by Cartmell \cite{Cart86} as a mathematical structure in which to interpret generalized algebraic theories.
  A contextual category comes, in particular, with a ``father'' function $F : \C_0 \to \C_0$ on the objects of the underlying category, and with a length function $L : \C_0 \to \mathbb{N}$.
  Furthermore, it has a distinguished class of ``dependent projections'' $\pi_\Gamma : \C(\Gamma,F(\Gamma))$, with a functorial choice of pullbacks of dependent projections along any morphism.
  The pullback of a dependent projection is a dependent projection again;
  to state this condition, we need to postulate an equality of objects, which we hence postulate in our signature.
  The function $F : \C_0 \to \C_0$ decreases the length by 1, that is, $L(F(\Gamma) = L(\Gamma) -1$, whenever $L(\Gamma) > 0$.
  Furthermore, a contextual category has a distinguished terminal object, the only object whose length is 0.
  
  Thus, a suitable signature for the theory of contextual categories is given as follows:
  \[
    \begin{tikzcd}[ampersand replacement=\&]
      \pi \ar[drr] \ar[dr]
      \&
      T \ar[dr] \ar[dr, shift left] \ar[dr, shift right]
      \&
      I \ar[d]
      \& \hetE_A \ar[dl, shift right] \ar[dl, shift left]
      \& P \ar[dll, shift left]\ar[dll, shift left=2] \ar[dll, shift left = 3]\ar[dll]
      \&
      \\
      \&
      F \ar[dr, shift left] \ar[dr, shift right]
      \& 
      A \ar[d, shift left] \ar[d, shift right]
      \&
      \hetE_O \ar[dl, shift right] \ar[dl, shift left]
      \&
      L_0 \ar[dll]
      \&
      L_1 \ar[dlll]
      \&
      L_2 \ar[dllll]
      \&
      \ldots
      \\
      \&
      \&
      O
    \end{tikzcd}
  \]
  We recognize some components from previous examples:
  the core is given by the signature of a strict category as in \cref{eg:strict-1-cat}, and
  the sort $P$ specifies pullbacks as in \cref{eg:pullbacks}.
  The infinite family of rank-1 sorts can be represented using either $\exo{\Nat}$ (if cofibrant) or $\Nat$, as in \cref{eg:unbiased-monoidal,eg:multicats}.

  The sorts $F$, $\pi$, and $(L_i)_{i : \mathbb{N}}$ implement the contextual structure.
  The length function is implemented here as a sequence of predicates $(L_i)_{i : \mathbb{N}}$, where, in a model, an element $w : L_i(a)$ indicates that the length of $a : O$ is $i$. 
  An element $w : F(a,b)$ with $a,b : O$ indicates that $b$ is the father of $a$.
  An element $p : \pi_{a,b}(w,f)$ with $w : F(a,b)$ and $f : A(a,b)$ indicates that $f$ is the dependent projection associated to $a : O$.
  The equality $\hetE_O$ is assumed to be a congruence for the sorts $F$, $\pi$, and $\L_i$, and similar for $\hetE_A$. 

  Hence, $O$ is a set, and an indiscernibility $a \fiso b$ in a univalent model means that $a = b$.
  In particular, a univalent contextual category does not necessarily have a univalent underlying category.
  Compare this to the ``categorical'' structures for the interpretation of generalized algebraic theories studied in \cref{eg:cwf-cwa}.
  The non-categorical structure of a contextual category , with its effects pointed out here, motivated Voevodsky's renaming of contextual categories to ``C-systems'' \cite{Csystems}.

  A morphism of structures is a functor that, in particular, commutes with length and father \emph{strictly}; it is an equivalence just when it is an isomorphism of categories, similar to \cref{eg:strict-1-cat}.

\end{example}

\chapter{Graphs and Petri nets}
\label{sec:graph-egs}

In \cref{sec:set-egs} we considered signatures of height 2, which are (if univalent) necessarily built only out of sets; while in \cref{sec:1cat-egs,sec:hcat-egs,sec:strict-egs} we considered signatures of greater height for categorical structures.
Generally speaking, the presence of composition and identities in a categorical structure is what reduces the \textit{a priori} rather complicated notion of indiscernibility to a more familiar notion of isomorphism or equivalence; in \cref{eg:semicats} we saw a taste of what happens in the absence of identities.

In this \lcnamecref{sec:graph-egs} we look at a few signatures of height $>2$ for graphs and graph-like structures that entirely lack composition and identities.
The resulting notions of indiscernibility are a little strange, and naturally-occurring examples seem unlikely to be univalent.
However, with the technique of heterogeneous equality introduced in \cref{sec:strict-egs} we can eliminate the strange behavior and force all the types involved to be sets again.

\begin{example}[Directed multigraphs]
\label{eg:dir-multigraph}\index{multigraph}\index{multigraph!directed}
 The univalent models of the signature of \cref{eg:pre-po-sets}\ref{item:poset} (with the equality axioms but not the partial order axioms) are sets equipped with a binary relation.
 These, in turn, are special cases of directed graphs with at most one edge between any two nodes.
 A natural signature for directed \emph{multi}graphs, which may have several edges between nodes, is simply the signature for categories with both composition and identities removed:
   \[
     \begin{tikzcd}
            E \ar[d, shift left] \ar[d, shift right]
         \\
            A\ar[d, shift left] \ar[d, shift right]
         \\
            O
     \end{tikzcd}
   \]
  The sort $E$ is asserted to be an equivalence relation.
  In a univalent structure for this signature, $E$ is pointwise a proposition, and, for any $a,b : O$, the type $A(a,b)$ is a set with equality given by $E_{a,b}$.
  An indiscernibility $a \fiso b$ of objects $a, b: O$ consists of families of bijections $ A(x,a) \cong A(x,b)$ between the sets of edges into $a$ and $b$, respectively, and between the edges out of $a$ and $b$, and between the loops on $a$ and $b$.

  As noted above, if we add an equality relation on $O$ as well, and make the equality of $A$ heterogeneous:\index{equality!heterogeneous}
   \[
     \begin{tikzcd}
            & \hetE_A \ar[dl, shift left] \ar[dl, shift right]
         \\
         A\ar[d, shift left] \ar[d, shift right] &
         \hetE_O \ar[dl,shift left] \ar[dl,shift right]
         \\
            O
     \end{tikzcd}
   \]
   with suitable congruence axioms, then in a univalent structure both $O$ and $A$ will be sets with standard equality.
\end{example}

Recall that in \cref{eg:semicats} we considered semicategories, which are directed multigraphs with an associative composition (but no identities).
We could also consider \defemph{reflexive graphs}\index{graph!reflexive}, which have identities (i.e., specified loops at each vertex) but no composition.
We leave it to the reader to design a suitable diagram theory (with and without equality) for such graphs, and to characterize the indiscernibilities in each sort, as well as the morphisms and equivalences of models of the theory.

\begin{example}[Pre-nets, tensor schemes]
\label{eg:pre-net}\index{net!pre-}\index{tensor scheme}
A \emph{pre-net} \cite[Definition~3.1]{10.1006/inco.2001.3050} has a type of ``places'' and for each pair of natural numbers $m,n$, a type of ``transitions'' dependent on $m+n$ places. 
The same notion is known under the name of ``tensor scheme'' \cite[Definition~1.4]{joyal-street-tensor-calculus-I};
it is the natural underlying data from which to generate a free monoidal or symmetric-monoidal category. 
   \[
     \begin{tikzcd}[ampersand replacement=\&]
            E_{0,0}\ar[d, shift left] \ar[d, shift right]
            \&
            E_{0,1}\ar[d, shift left] \ar[d, shift right]
            \&
            E_{1,0}\ar[d, shift left] \ar[d, shift right]
            \&
            E_{1,1}\ar[d, shift left] \ar[d, shift right]
            \&
            \ldots
            \&
            E_{m,n}\ar[d, shift left] \ar[d, shift right]
            \&
            \ldots
         \\
           T_{0,0}
           \&
           T_{0,1} \ar[drr]
           \& 
           T_{1,0} \ar[dr]
           \& 
           T_{1,1} \ar[d, shift left] \ar[d, shift right]
           \& 
           \ldots 
           \& T_{m,n}\ar[dll, "\times n", shift left] \ar[dll, "\times m"', shift right]
           \&
           \ldots
         \\
            \& \& \& S
     \end{tikzcd}
   \]
   Here, the sort $T_{m,n}$ of transitions has $m+n$ arrows to (a.k.a.\ dependencies on) the sort $S$ of places (a.k.a.\ ``species''), regarded as $m$ inputs and $n$ outputs.
   The infinite families of sorts can be represented using either $\exo{\Nat}$ (if cofibrant) or $\Nat$, as in \cref{eg:unbiased-monoidal,eg:multicats}.
   The discussion is largely the same as in \cref{eg:dir-multigraph};
   an indiscernibility $a \fiso b$ in $S$ consists of families of bijections between the sets of transitions with $a$ and $b$ appearing in one or more of their inputs or outputs, while by adding heterogeneous equalities we can force $S$ to also be a set with standard equality.
   A morphism of pre-nets \cite[Definition~3.2]{10.1006/inco.2001.3050} is a pair of functions on places and transitions that are compatible in a suitable sense; this is exactly a morphism of structures for the above signature.
\end{example}

\begin{example}[Other kinds of Petri nets]\label{eg:petri}\index{net!Petri}
  In a pre-net, the sets $T_{2,1}(x,y;z)$ and $T_{2,1}(y,x;z)$ are unrelated; but in a \emph{symmetric} monoidal category the hom-sets $\hom(x\otimes y,z)$ and $\hom(y\otimes x,z)$ are isomorphic.
  A \emph{Petri net} is a refinement of a pre-net that incorporates some kind of ``symmetry'' like this (though historically they are the earlier notion).
  There are many different inequivalent notions of ``Petri net'', not all of which are amenable to formalization in our framework.
  But from our present perspective, one natural approach to add symmetry to \cref{eg:pre-net} is to replace the indexing set $\Nat$ by the 1-type $\FinSet$, noting that $\Nat$ is both the 0-truncation of $\FinSet$ and the type of \emph{ordered} finite sets.

  That is, whereas in \cref{eg:pre-net} the exotypes of rank-1 and rank-2 sorts are both $\Nat\times\Nat$ (or $\exo{\Nat}\times\exo{\Nat}$), we now take these exotypes to be $\FinSet\times\FinSet$.
  Thus, as in \cref{eg:species,eg:unbiased-sym-monoidal,eg:fat-sym-multicats}, we consider here a diagram signature in which the types $\L(n)$ are not all sets, which we can attempt to draw as follows:
    \[
     \begin{tikzcd}[column sep = small, ampersand replacement=\&]
            E_{[0],[0]}\ar[d, shift left] \ar[d, shift right]
            \&
            E_{[0],[1]}\ar[d, shift left] \ar[d, shift right]
            \&
            E_{[1],[0]}\ar[d, shift left] \ar[d, shift right]
            \&
            E_{[1],[1]}\ar[d, shift left] \ar[d, shift right]
            \&
            E_{[2],[0]}\ar[d, shift left] \ar[d, shift right]
            \&
            \ldots
            \&
            E_{[m],[n]}\ar[d, shift left] \ar[d, shift right]
            \&
            \ldots
         \\
           T_{[0],[0]}
           \&
           T_{[0],[1]} \ar[drrr]
           \& 
           T_{[1],[0]} \ar[drr]
           \& 
           T_{[1],[1]} \ar[dr, shift left] \ar[dr, shift right]
           \& 
           T_{[2],[0]} \ar[d, shift left] \ar[d, shift right] \ar[loop,out=0,in=45,looseness=4,"S_2"']
           \& 
           \ldots 
           \& T_{[m],[n]}\ar[dll, "\times n", shift left] \ar[dll, "\times m"', shift right]
           \ar[loop,out=5,in=40,looseness=4] \ar[loop,out=0,in=45,looseness=4,"S_m\times S_n"']
           \&
           \ldots
         \\
            \& \& \& \& S
     \end{tikzcd}
   \]
   In a univalent structure for this signature, the equality sorts $E_{[m],[n]}$ are pointwise propositions, and the sorts of edges $T_{[m],[n]}$ are pointwise sets with equality given by $E_{[m],[n]}$.
   The indiscernibilities behave just as in \cref{eg:pre-net}.
   
  The structures for this signature are closely related to the \emph{whole-grain Petri nets}\index{net!whole-grain Petri} of Kock \cite[Section~2.1]{kock_petri}, which are diagrams of sets
  \[ S \leftarrow I \rightarrow T \leftarrow O \rightarrow S \]
  in which the functions $I\to T$ and $O\to T$ have finite fibers.
  Thus these functions are jointly classified by a map $T \to \FinSet\times\FinSet$, which we can replace by a type family $T : \FinSet\times\FinSet \to \U$.
  If we also encode the remaining functions $I\to S$ and $O\to S$ by a further dependency of $T$ on some power of $S$, we obtain exactly a structure for the above signature.

  The structures arising from whole-grain Petri nets in this way can be characterized as those for which each $T_{[m],[n]}$ is a set with equality $E_{[m],[n]}$ (that is, the structure is univalent at $E_{[m],[n]}$ and $T_{[m],[n]}$), and in addition $S$ is a set and also
  \[T \converts \sm{X,Y:\FinSet}{s:X+Y \to S} T_{X,Y}(s) \]
  is a set.
  This latter requirement says equivalently that the action of $S_m\times S_n$ on $T_{[m],[n]}$ is free; if we drop it, we obtain a notion studied by~\cite{bgms:nets} under the name of ``$\Sigma$-nets'' (called a ``digraphical species'' in~\cite{kock_petri}).\index{net!$\Sigma$-}\index{species!digraphical}
  Thus, if we add heterogeneous equality and its axioms to this signature, its univalent models are precisely the $\Sigma$-nets.
\end{example}

\chapter{Enhanced categories and higher categories}
\label{sec:restr-indis-egs}

The phrase ``enhanced (higher) category'' was introduced, though not really defined, by~\cite{ls:limlax}.
Here we use it to mean a categorical structure that contains a ``underlying'' ordinary category or higher category, but in which the additional structure on that underlying category is not purely categorical, i.e., not expressed purely in terms of functors and natural transformations.
This frequently has the effect that, in contrast to the categorical structures studied in \cref{sec:1cat-egs,sec:hcat-egs}, the notion of indiscernibility often does \emph{not} coincide with that in the underlying category.
Yet, in most cases this different notion of indiscernibility turns out to have already been recognized in the literature as the ``correct'' notion of ``sameness''.

In this \lcnamecref{sec:restr-indis-egs} we describe some enhanced categorical structures where the extra structure can be expressed in terms of functors, but with additional strict conditions on equality of objects.
In \cref{sec:unnat} we will consider enhanced structures involving truly non-functorial or unnatural operations.

\begin{example}[$\dagger$-categories and $\dagger$-anafunctors]\label{ex:dagger}\index{category!$\dagger$-}
  A $\dagger$-category is a category with coherent isomorphisms $(\_)^\dagger : \hom(x,y) \to \hom(y,x)$.
  Historically this has been proposed as an especially interesting example to consider in structural approaches to category theory, since the correct notion of ``sameness'' for objects of a $\dagger$-category is \emph{not} ordinary isomorphism but rather \emph{unitary} isomorphism\index{isomorphism!unitary} (one satisfying $f^{-1} = f^\dagger$), and similarly ``$\dagger$-structure'' on a category does not transport naturally across equivalence of categories.
  
  In our framework we can deal with this by incorporating the $\dagger$-structure into the signature, represented of course by its graph.
  A signature for $\dagger$-categories is as follows:
  \[
    \begin{tikzcd}
      D \ar[dr, shift left, "o"] \ar[dr, shift right, "i"']
      &
      T \ar[d] \ar[d, shift left] \ar[d, shift right]
      &
      I \ar[dl]
      &
      E \ar[dll, shift left] \ar[dll, shift right]
      \\
      &
      A \ar[d, shift right, "d"'] \ar[d, shift left, "c"]
      \\
      &
      O
    \end{tikzcd}
  \]
  Here we have $co \steq di$ and $do \steq ci$, plus the exo-equalities of \cref{fig:signatures}.
  In addition to the axioms of a category, we require $E$ to be a congruence for $D$, and we require $D$ to be a functional relation that maps compositions to compositions and identities to identities.
  We also write $g = f^\dagger$ for $D(f,g)$.
  
  Given a model of this theory, univalence at $D$ means that $D$ is pointwise a proposition. 
  Since $E$ is a congruence for $D$, univalence at $A$ still entails that $A$ is a set with equality given by $E$.
  Given $a, b : O$, an indiscernibility $a \fiso b$ consists of an isomorphism $\phi : a \cong b$ such that (unfolding the definition of indiscernibility at $D$), for any morphism $f$, we have $\phi \circ f^\dagger = (f \circ \phi^{-1})^\dagger$, $(\phi\circ f)^\dagger = f^\dagger \circ \phi^{-1}$, and an equation about composition on both the left and the right.
  In particular, we have $D_{a,a}(1_a,1_a) \leftrightarrow D_{a,b}(\phi, \phi^{-1})$, where the left-hand side holds by one of the axioms imposed.
  We thus have $\phi^\dagger = \phi^{-1}$, and the other equations follow from this and the compabitibility of $(\_)^\dagger$ with composition.
  An isomorphism $\phi$ such that $\phi^\dagger = \phi^{-1}$ is called \emph{unitary}; thus an indiscernibility $a \fiso b$ is exactly a unitary isomorphism $a \cong b$.

  Consequently, an equivalence of $\dagger$-categories is a $\dagger$-functor that is fully faithful and unitarily-essentially split-surjective.
  This in turn corresponds exactly to an adjoint equivalence of $\dagger$-categories, involving $\dagger$-functors, such that the unit and counit are unitary natural isomorphisms;
  the usual construction (see, e.g., \cite[Lemma~6.6]{AKS13}) applies, using additionally that the constructions back and forth preserve unitarity of the input.

  The signature of a $\dagger$-anafunctor is analogous to that of an anafunctor between categories given in \cref{eg:anafunctors}.\index{anafunctor!$\dagger$-}
  In addition to the axioms given there, we require that the isomorphisms $(\_)^\dagger$ are preserved by the anafunctor, in the sense that for $w_1 : FO(a_1, b_1)$ and $w_2 : FO(a_2,b_2)$ and $f : DA(a_1, a_2)$, we have that $FA_{w_1,w_2}(f)^\dagger = FA_{w_2, w_1}(f^\dagger)$.
  This implies that $F$ preserves unitary isomorphisms.
  Since $1_x$ is a unitary isomorphism, it follows that the canonical isomorphisms $F_{w,w'}(1_x)$ relating different values of $F$ are unitary.
  Therefore, the existential saturation condition must be restricted to unitary isomorphisms: given $w : FO(x,y)$ and a unitary $g : y \cong y'$, there exists a $w' : FO(x,y')$ such that $F_{w, w'}(1_x) = g$.
  
  The rest of the theory is exactly parallel to \cref{eg:anafunctors} but with all isomorphisms being unitary.\index{anafunctor}
  In particular, an indiscernibility $y_1 \fiso y_2$ between objects in the codomain $\dagger$-category $C$ is exactly a unitary isomorphism $y_1 \cong y_2$, and similarly for the domain $D$.
  Furthermore, a morphism of structures is a square of $\dagger$-functors that commutes up to a unitary natural isomorphism.\index{natural isomorphism!unitary}
\end{example}

\begin{example}[$\M$-categories, e.g., homotopical categories]
\label{eg:m-cat}\index{category!$\M$-}\index{category!homotopical}
An $\M$-category (referred to as ``subset-category'' in \cite{power:premonoidal}) is a category enriched over the category whose objects are subset-inclusions and whose morphisms are commutative squares.
In detail, it consists of a type of objects and, for any two objects, \emph{two} sets of morphisms, which we call (following~\cite{ls:limlax}) \emph{tight} and \emph{loose}, and an inclusion of tight into loose morphisms.\index{morphism!tight}\index{morphism!loose}
One class of examples of $\M$-categories are \emph{homotopical categories}, in which the tight morphisms are called ``weak equivalences''; another class of examples is provided by the hereditary membership structures that model a ZF-like membership-based set theory (which can be constructed in UF as in~\cite[\S10.5]{HTT}), in which the loose morphisms are functions and the tight morphisms are actual subset inclusions (not just injections).

$\M$-categories can alternatively be expressed via a unary predicate ``being tight'' on one family of (loose) morphisms, such that the predicate is closed under identity and composition.
We first consider a signature for this alternative formulation:
\[
  \begin{tikzcd}
    M \ar[dr]
    &
    T \ar[d, shift right] \ar[d, shift left] \ar[d]
    &
    I \ar[dl]
    &
    E \ar[dll, shift left] \ar[dll, shift right]
    \\
    &
    A \ar[d, shift left] \ar[d, shift right]
    \\
    &
    O
  \end{tikzcd}
\]
 We assert axioms asserting that $E$ is a congruence for $M$ and the other top-level sorts,
 and that $M$ is closed under identity and composition.
 Univalence at $M$ then means that $M$ is pointwise a proposition.
 Univalence at $A$ still means that $A$ is a set with equality given by $E$, since $E$ is required to be a congruence for $M$.
 Given $a,b : O$, an indiscernibility $a \fiso b$ consists of an isomorphism $\phi : a \cong b$ in the underlying category that is coherent with respect to $M$, i.e., such that $M_{a,y}(f) \leftrightarrow M_{b,y}(\phi_{\bullet y}(f))$ for any $y : O$ and $f : A(a,y)$ and similarly for holes on the right and in both variables.
 These coherence conditions simplify to the condition of $\phi$ being tight:
 on the one hand, setting $y\eqdef a$ and $f \eqdef 1_a$ yields the coherence condition $M_{a,a}(1_a) \leftrightarrow M_{a,b}(\phi)$, and since $M$ contains identities, this means in particular that $\phi$ is required to be tight.
 On the other hand, if $\phi$ is tight, the remaining coherence conditions then follow from $M$ being closed under composition.
 In summary, in a univalent $\M$-category, an indiscernibility $a \fiso b$ is exactly a tight isomorphism.\index{isomorphism!tight}

 A morphism of $\M$-categories is an $\M$-functor, i.e., a functor that preserves tightness.\index{functor!$\M$-}
 An equivalence of $\M$-categories is an $\M$-functor that is fully faithful, reflects tightness, and is split essentially surjective with respect to \emph{tight} isomorphism (i.e., every object of the codomain is tightly isomorphic to the image of some object in the domain).
 
 An alternative theory, closer to the first description, has the following signature:
 \[
   \begin{tikzcd}
     T_T \ar[dr] \ar[dr, shift left] \ar[dr, shift right]
     &
     I_T \ar[d]
     &
     E_T \ar[dl, shift left] \ar[dl, shift right]
     &
     F_A \ar[dll] \ar[drr]
     &
     T_L \ar[dr] \ar[dr, shift left] \ar[dr, shift right]
     &
     I_L \ar[d]
     &
     E_L \ar[dl, shift left] \ar[dl, shift right]
     \\
     &
     A_T \ar[drr, shift left] \ar[drr, shift right]
     &
     &
     &
     &
     A_L \ar[dll, shift left] \ar[dll, shift right]
     \\
     &
     &
     &
     O
    \end{tikzcd}
  \]
  Here $A_T$ represents the tight morphisms and $A_L$ the loose morphisms.
  This is essentially the signature of a functor (see \cref{eg:anafunctors}) whose map on objects is the identity.
  Here, $(F_A)_{x,y}(f,g)$ means that the tight morphism $f : A_T(x,y)$ is mapped to the loose morphism $g : A_L(x,y)$ by $F_A$.
  We also write this as $F_A(f) = g$.
  We impose axioms stating that $F_A$ is a function from $A_T$ to $A_L$, preserves identities and compositions, and is injective (so that the functor is faithful).
  
  Univalence at the top-level sorts, as usual, means exactly that these sorts are pointwise propositions.
  Univalence at $A_T$ and $A_L$ means that these sorts are sets with equality given by $E_T$ and $E_L$, respectively.
  
  Given $a, b : O$, an indiscernibility consists of an isomorphism $\phi_T$ in the tight fragment and an isomorphism $\phi_L$ in the loose fragment that are coherent with respect to $F_A$.
  This means for instance that $(F_A)_{y,a}(f, g) \leftrightarrow (F_A)_{y,b}(\phi_T \circ f, \phi_L \circ g)$, and similar for the other variable, and for both variables simultaneously.
  Since $F_A$ preserves identities, the previous condition in particular entails $F_A(\phi_T) = \phi_L$ (obtained for $y\eqdef a$ and $f,g \eqdef 1_a$), that is, $\phi_L$ is determined by $\phi_T$.
  All the coherences can then be deduced from this equation and the fact that $F_A$ preserves compositions.
  
  In summary, we again obtain that an indiscernibility $a \fiso b$ is exactly a tight isomorphism $a \cong b$.\index{isomorphism!tight}
  
  A morphism of such structures consists of a ``functor'' with an action on both tight and loose morphisms that preserves the inclusion of tight into loose morphisms.
  Such a functor is an equivalence when it is tight-essentially split-surjective and fully faithful on both tight and loose morphisms.

  Given a model of the first theory, we obtain a model of the second theory by defining $A_T$ to consist of those arrows that satisfy $M$.
  Using the univalence axiom, this construction can be shown to be an equivalence between the respective types of models of these theories.
  The first theory is of course simpler, but the second has the advantage that it can be generalized by removing the injectivity axiom on $F$; see for instance the example of Freyd-categories discussed after \cref{eg:premonoidal}.\index{category!Freyd-}
\end{example}

\begin{example}[$\F$-bicategories]\index{category!$\F$-}\index{bicategory!$\F$-}
  An $\F$-category is a 2-categorical version of an $\M$-category: it can be defined as a 2-category equipped with a subclass of its 1-morphisms called ``tight'', or as a 2-functor that is the identity on objects, injective on 1-cells, and locally fully faithful.
  $\F$-categories were introduced in~\cite{ls:limlax} to represent 2-categories of algebras for a 2-monad, where the tight morphisms are strict or pseudo algebra morphisms and the loose morphisms are lax or colax ones.

  The analogous weak notion of \emph{$\F$-bicategory} can be defined as a bicategory equipped with a subclass of its 1-morphisms called ``tight'' that is invariant under isomorphism, or as a pseudofunctor that is the identity on objects and locally fully faithful.
  The pseudoalgebras for a pseudomonad together with their pseudo and lax morphisms form an $\F$-bicategory, and likewise for the pseudo and colax morphisms.
  Another example of an $\F$-bicategory is a \emph{proarrow equipment}~\cite{wood:proarrows-i}.\index{proarrow equipment}

  If we represent an $\F$-bicategory analogously to the second variant of \cref{eg:m-cat}, we obtain the following signature:
   \[
    \begin{tikzcd}[ampersand replacement=\&]
      \& \& F_{T_1} \ar[drr] \ar[dll] \ar[drrr]
                    \ar[dr] \ar[dr,shift left] \ar[dr,shift right]
      \&
      F_{2} \ar[drr] \ar[dll] \ar[d,shift left] \ar[d,shift right]
      \&
      F_{I_1} \ar[drr] \ar[dll] \ar[dr] \ar[dl]
      \\
      T_{1,T} \ar[dr] \ar[dr,shift left] \ar[dr,shift right]  \& 
      C_{2,T} \ar[d,shift left] \ar[d,shift right] \& 
      I_{1,T} \ar[dl] \&
      F_{1} \ar[drr] \ar[dll] \&
      T_{1,L} \ar[dr] \ar[dr,shift left] \ar[dr,shift right] \&
      C_{2,L} \ar[d,shift left] \ar[d,shift right] \&
      I_{1,L} \ar[dl]
      \\
      \&  C_{1,T} \ar[drr,shift left] \ar[drr,shift right] 
         \& \& \& 
      \& C_{1,L} \ar[dll,shift left] \ar[dll,shift right] 
      \\
      \& \& \&  C_0 
    \end{tikzcd}
  \]
  where for readability we omit top-level sorts $A$, $H$, $E$, $L$, $R$, $T_2$, and $I_2$ as in \cref{eg:bicats} on both the tight (subscript $T$) and the loose (subscript $L$) fragment of the signature.
  We furthermore impose axioms asserting that $F_{2} \rightrightarrows F_{1}$ is a family of pointwise (i.e., for any two $x,y : C_0$) saturated anafunctors (cf.~\cref{eg:anafunctors}), and that these anafunctors are fully faithful.
  On $F_{T_1}$ and $F_{I_1}$ we impose the axioms of a family of natural transformations.

  As usual, univalence at $F_{T_1}$, $F_{C_2}$, and $F_{I_1}$ means that these sorts are pointwise propositions.
  Univalence at $F_1$ means that $F_1$ is pointwise a set.
  As per the discussion of \cref{eg:anafunctors} and \cref{eg:nat-trans}, 
  the indiscernibilities at $C_{1}$ (in $T$ and $L$) are exactly the isomorphisms; they are not changed by the presence of the functors $F_2 \rightrightarrows F_1$ and natural transformations $F_{T_1}$ and $F_{I_1}$.
  
  An indiscernibility $a \fiso b$ in $C_0$ consists of 
  \begin{enumerate*}
   \item a tight adjoint equivalence $\phi_T : a \simeq_T b$, i.e., an adjoint equivalence in the tight fragment of the signature;
   \item a loose adjoint equivalence $\phi_L : a \simeq_L b$ in the loose fragment of the signature; and
   \item transport functions for the sorts $F$ corresponding to the family of functors.
  \end{enumerate*}
  For instance, we have equivalences $(F_1)_{x,a}(f,g) \simeq (F_1)_{x,b}(\phi_{x\bullet}(f),\phi_{x\bullet}(g))$.
  Since $(F_1)_{a,a}(1_a, 1_a)$, an indiscernibility in particular yields $w : (F_{C_1})_{a,b}(\phi_T, \phi_L)$, meaning that, by saturation of $F$ in the fiber over $a,b$, the equivalence $\phi_L$ is determined by $\phi_T$.
  The other transport functions for $F_1$ are determined in turn by $w : (F_{C_1})_{a,b}(\phi_T, \phi_L)$, since $F_1$ is suitably compatible with 1-composition.
  Similarly, transport at $F_2$ is determined by compatibility of $F_2$ with action in source and target.
  Summarily, an indiscernibility in $C_0$ is exactly an adjoint equivalence internal to the bicategory spanned by the tight fragment (index $T$) of the signature.

  We can also write down an analogue of the first variant of \cref{eg:m-cat}: on top of the ordinary signature for bicategories we add one more sort $T$ dependent on a single 1-cell, where the intended interpretation of $T_{x,y}(f)$ is ``$f$ is tight''.
  Since there are no sorts dependent on this $T$, it is a family of propositions (even though it is not at top rank).
  We assert as an axiom that in addition to $T$ containing identities and being preserved by composition, it is also invariant under isomorphism: if $T_{x,y}(f)$ and $f\cong g$ then $T_{x,y}(g)$ --- this is an analogue of the saturation of the identity-on-objects anafunctor under the other approach.
  (Note that in contrast to the signature for ``a category with a specified object'', it does make sense for $T$ to be a mere predicate even though it is not at top rank, because ``being tight'' really is just an isomorphism-invariant property, whereas ``being the specified object'' is \emph{structure} that can be transported along an isomorphism, but only in a specified way.)
  This axiom, analogous to asserting that equality relations are congruences for top-level predicates, ensures that the notion of indiscernibility for 1-cells remains unchanged.
  And since identities are tight, indiscernibilities of objects once again reduce to tight adjoint equivalences.

  By combining this example with \cref{eg:strict-2-cat}, we can also obtain a signature for strict $\F$-categories in the original sense of~\cite{ls:limlax}.
\end{example}

\begin{example}[Bicategories with contravariance~\cite{shulman:contravariance}]\label{eg:bicat-contra}\index{bicategory!with contravariance}
  A \defemph{bicategory with contravariance} has a set of objects together with, for any two objects $x$ and $y$, two hom-categories $A^+(x,y)$ and $A^-(x,y)$, with four composition operations that multiply signs, and such that postcomposition with $A^-(x,y)$ is contravariant.
  The primordial example is $\mathsf{Cat}$, where the two hom-categories consist of covariant functors and contravariant functors.
  We can represent this with an adaptation of the signature of \cref{eg:bicats}, whose height-3 truncation is
  \[
    \begin{tikzcd}
      I_1 \ar[dr] & C_2^+ \ar[d,shift left] \ar[d,shift right] &
      T_1^{++} \ar[dl] \ar[dl,shift left] \ar[dl,shift right] &
      T_1^{+-} \ar[dll] \ar[drrr,shift left] \ar[drrr,shift right] &
      T_1^{-+} \ar[dlll] \ar[drr,shift left] \ar[drr,shift right] &
      T_1^{--} \ar[dllll] \ar[dr,shift left] \ar[dr,shift right] &
      C_2^- \ar[d,shift left] \ar[d,shift right]\\
      & C_1^+ \ar[drr,shift left] \ar[drr,shift right] &&&&& C_1^-\ar[dlll,shift left] \ar[dlll,shift right] \\
      &&& C_0
    \end{tikzcd}
  \]
  and whose rank-3 sorts implement equality, all sorts of composition of 2-cells, and the associativity and unit isomorphisms.

  Univalence at ranks $>0$ means that both hom-categories are univalent in the usual sense.
  An indiscernibility at $C_0$ is a \emph{covariant} adjoint equivalence.
  Thus, univalence at $C_0$ means that the underlying bicategory of covariant morphisms is univalent.

  By adding heterogeneous equality as in \cref{eg:strict-2-cat}, we can also represent strict 2-categories with contravariance.\index{equality!heterogeneous}
\end{example}

\chapter[Unnatural and nonfunctorial operations]{Unnatural transformations and nonfunctorial operations}
\label{sec:unnat}

Recall that in \cref{eg:anafunctors,eg:nat-trans,eg:cat-struc} we found that indiscernibilities between objects of a structured category reduced to ordinary indiscernibilities in the underlying category.
However, this conclusion depended crucially on the structure being composed of functors and natural transformations, and it can fail in the presence of \emph{non-functorial} operations on objects or \emph{unnatural} transformations.
Such structures may seem strange, but they do occur from time to time (as the examples below will show), and are also interesting for exploring the limits of our framework.

In general, an indiscernibility in such a structure turns out to be an isomorphism on which the non-functorial operations or unnatural transformations \emph{are} functorial or natural, respectively.
For structures appearing in the literature, this often reduces to a familiar notion in the relevant theory.

\begin{example}[Unnatural transformations]\label{eg:unnatural}\index{unnatural transformation}
  By an \emph{unnatural transformation} between two functors $F,G$ we mean an assignment of a morphism $\lambda_x:F x\to G x$ to each object $x$ of the domain, with no further conditions.\footnote{Thus ``unnatural'' means ``not necessarily natural'', just as a ``noncommutative ring'' means one that is not \emph{necessarily} commutative.}
  However, formulating an ``anafunctorial'' version of this requires a little thought.
  We start with the same signature from \cref{eg:nat-trans} for a natural transformation, with all the same axioms except naturality; but it turns out that we need to replace naturality by something weaker rather than omitting it entirely.

  Suppose that we have an unnatural transformation between ordinary functors $F$ and $G$ with components $\lambda_x:F x\to G x$, as above, and that we make $F$ and $G$ into anafunctors in the standard way with $F_O(x,y) \eqdef (Fx\cong y)$ and similarly for $G$.
  Then given $w_1 : F_O(x,y)$ and $w_2 : G_O(x,z)$, we can define $\Lambda_{x,y,z}(w_1, w_2, \lambda)$ to assert that $\lambda$ is the composite $y \cong F x \xrightarrow{\Lambda_x} G x \cong z$, as we did for a natural transformation.
  The resulting structure does not (of course) satisfy the naturality axiom, but it does satisfy \emph{naturality on identity morphisms}\index{naturality!on identity morphisms}: for any $w_i:FO(x,y_i)$ and $w'_j:GO(x,z_j)$, for $i,j\in \{1,2\}$, and $\lambda_i$ such that $\Lambda_{x,y_i,z_i}(w_i,w_i',\lambda_i)$, the following square commutes:
  \[
    \begin{tikzcd}[column sep=large]
      y_1 \ar[r,"F_{w_1,w_2}(1_x)"] \ar[d,"\lambda_1"'] & y_2 \ar[d,"\lambda_2"] \\
      z_1 \ar[r,"G_{w'_1,w'_2}(1_x)"'] & z_2
    \end{tikzcd}
  \]

  Intuitively, naturality on identity morphisms says that $\lambda_x$ is independent of which ``values'' we choose for $F x$ and $G x$.
  More precisely, suppose given a structure for this signature satisfying all the axioms of \cref{eg:nat-trans} except naturality.
  Then for any function assigning to each $x:O_D$ a pair of objects $y,z:O_C$ with $u:FO(x,y)$ and $v:GO(x,y)$ and a morphism $\lambda : A_C(y,z)$ such that $\Lambda_{x,y,z}(u,v,\lambda)$, we obtain ordinary functors $F'$ and $G'$ from the domain to the codomain and an unnatural transformation $\lambda'$ between them.
  Now if we have \emph{another} such function, we obtain two more ordinary functors $F''$ and $G''$ and an unnatural transformation $\lambda''$ between \emph{them}.
  The anafunctor structure yields natural isomorphisms $F' \cong F''$ and $G' \cong G''$, but naturality on identity morphisms is necessary in order to show that the two unnatural transformations $\lambda'$ and $\lambda''$ are related in the expected way, namely that the evident squares commute:
  \[
    \begin{tikzcd}
      F' x \ar[r,"\cong"] \ar[d,"\lambda'"'] & F'' x \ar[d,"\lambda''"] \\
      G' x \ar[r,"\cong"'] & G'' x.
    \end{tikzcd}
  \]
  Thus, we must assume naturality on identity morphisms to obtain an anafunctorial notion of ``unnatural transformation'' that corresponds to the ordinary such notion between ordinary functors.

  In \cref{eg:nat-trans} we used naturality in four places: to show that indiscernibilities in $FO$, $GO$, $O_C$, and $O_D$ reduce to ordinary ones for anafunctors and categories respectively.
  Naturality on identity morphisms suffices for the first three of these, but not the fourth.
  In the latter case, the extra condition $\Lambda_{x,y,z}(w_1,w_2,\lambda) \to \Lambda_{x',y,z}(\trans \phi {w_1},\trans \phi {w_2},\lambda)$ says precisely that the putative naturality square for $\phi:x\cong x'$ does commute:
  \[
    \begin{tikzcd}[column sep=huge]
      y \ar[r,"F_{w_1,\trans \phi {w_1}}(1_x)"] \ar[d,"\lambda_{w_1,w_2}"'] & y \ar[d,"\lambda_{\trans \phi {w_1},\trans \phi {w_2}}"] \\
      z \ar[r,"G_{w_2,\trans \phi {w_2}}(1_x)"'] & z
    \end{tikzcd}
  \]
  In other words, an indiscernibility $x\fiso x'$ in the domain category of an unnatural transformation $\lambda$ is an isomorphism on which $\lambda$ \emph{is} natural.
  In particular, the domain category is univalent in the ordinary sense if and only if $\lambda$ is natural on all isomorphisms.
\end{example}

Thus, new behavior in the indiscernibilities can only arise from unnatural transformations when the domain category is not univalent.
As we have said, most naturally-occurring categories in univalent foundations are univalent, but there are exceptions.

The most blatant example is that from any category $\D$, perhaps univalent, and any function $F:C \to \ob{\D}$, we can construct a new category $\D_F$ with $\ob{(\D_F)} = C$ and $\D_F(x,y) = \D(F(x),F(y))$.
The isomorphisms in $\D_F$ will be just those of $\D$, but the identifications will be those of $C$, which could be quite different from those of $\ob{\D}$; so $\D_F$ will often fail to be univalent.\footnote{In fact, this construction is universal, in the sense that \emph{every} not-necessarily-univalent category $\C$ can be obtained as $\D_F$ for some univalent category $\D$ and function $F:C\to \ob{\D}$: just let $\D$ be the univalent completion of $\C$.
  However, in practice it is much more common to start with a naturally-occurring univalent $\D$ and apply this construction to obtain a non-univalent $\D_F$.\label{fn:nonunivalent}}\index{category!universal non-univalent}

As a particular case of this example, $C$ could be the type of objects of $\D$ equipped with some structure.
For instance, if $\D$ is a symmetric monoidal category, then $C$ could be the type of monoid objects in $\D$; then $\D_F$ is weakly equivalent to the full subcategory of $\D$ consisting of those objects that can be equipped with a monoid structure, but it is not in general univalent: its isomorphisms are mere isomorphisms in $\D$, but its identifications are \emph{monoid} isomorphisms.\index{category!symmetric monoidal}
The abstract structure of this $\D_F$ is the following.

\begin{example}[Supply in monoidal categories]\index{supply!in monoidal categories}
  A symmetric monoidal category is said to \defemph{supply monoids}~\cite{FongSpivak:supply} if every object is equipped with a specified monoid structure, such that the specified monoid structure of $x\otimes y$ is that induced from those of $x$ and $y$.
  Note that these monoid structures consist of unnatural transformations $U\to x$ and $x\otimes x \to x$.
  Thus, we can obtain a signature for a symmetric monoidal category that supplies monoids by augmenting the signature of symmetric monoidal categories (\cref{eg:cat-struc}) with two predicates for these unnatural transformations:
  \[
    \begin{tikzcd}
      & N \ar[dl] \ar[dr] & & M \ar[dl] \ar[dr]\\
      U_O & & A & & \otimes_O
    \end{tikzcd}
  \]
  As in \cref{eg:unnatural}, the indiscernibilities of objects will be the isomorphisms on which these transformations \emph{are} natural: i.e., the monoid isomorphisms.
  Thus, the example $\D_F$ constructed above, though not univalent as a mere category, is univalent as a symmetric monoidal category that supplies monoids.

  More generally, there is a notion of when a symmetric monoidal category \defemph{supplies $\mathbb{P}$}, for any prop%
  \footnote{Recall that a \emph{prop}\index{prop} is a symmetric strict monoidal category $\mathbb{P}$ where every object is of the form $x^{\otimes n}$ for some generating object $x$, and a $\mathbb{P}$-structure on an object $a$ of some other symmetric monoidal category $\D$ is a symmetric monoidal functor $\mathbb{P}\to \D$ sending $x$ to $a$.  See, for instance, \cite[\S 24]{props}.}
  $\mathbb{P}$: namely, every object is equipped with a $\mathbb{P}$-structure, compatibly with the tensor product.
  (For example, when $\mathbb{P}$ is the prop for special commutative Frobenius algebras\index{Frobenius algebra}, a symmetric monoidal category that supplies $\mathbb{P}$ is called a \defemph{hypergraph category}~\cite{FongSpivak:hypergraph}.)
  Once $\mathbb{P}$ is fixed, we can write a signature for symmetric monoidal categories that supply $\mathbb{P}$, in which the indiscernibilities of objects will be the \defemph{supply isomorphisms}, i.e., the isomorphisms that commute with the $\mathbb{P}$-structures.
  In particular, given any symmetric monoidal category $\D$, if we let $C$ be the type of $\mathbb{P}$-algebras in $\D$ and $f:C\to \ob{\D}$ the forgetful map, then $\D_F$ will be univalent as a symmetric monoidal category that supplies $\mathbb{P}$.
\end{example}

Note that examples of the enhanced categorical structures considered in \cref{sec:restr-indis-egs} are also often obtained by this method.
For instance, the standard $\dagger$-category of Hilbert spaces is $\D_F$, where $\D$ is the univalent category of vector spaces, $C$ is the type of Hilbert spaces, and $F:C\to\ob{\D}$ is the forgetful function.

Another common example of a non-univalent category is a Kleisli category.\index{category!Kleisli}
In set-based category theory the Kleisli category $\C_T$ of a monad $T$ on a category $\C$ can be defined as either:
\begin{enumerate}
\item The category whose objects are those of $\C$ and with $\C_T(x,y)\eqdef \C(x,T y)$.
\item The full subcategory of the Eilenberg--Moore category $\C^T$ on the objects of the form $T x$ for some $x\in \C$.\index{category!Eilenberg-Moore}
\end{enumerate}
In Univalent Foundations, the former yields a precategory that is not univalent, while the latter yields a univalent category (at least if $\C$ is univalent) that is the univalent completion of the former.\footnote{As long as ``for some'' is interpreted with a propositional truncation.  Otherwise, it yields the same non-univalent result as the former definition.}
Indeed, the former definition is an instance of the construction $\D_F$ described above, where $F:\C\to\C^T$ is the free algebra functor.

This non-univalent Kleisli category (but not the univalent one!)\ can be equipped with several kinds of unnatural or nonfunctorial structure, motivated by the theory of programming languages in which the monad $T$ represents ``impure effects'' that can be added to a pure functional programming language.

\begin{example}[Thunk-force categories]\index{category!thunk-force}
 \label{eg:thunk-force}
  A \emph{thunk-force category} or \emph{abstract Kleisli category}~\cite{fuhrmann:abstract-kleisli} is a category $\D$ equipped with
  \begin{itemize}
  \item A functor $L:\D\to\D$,
  \item A natural transformation $\varepsilon : L \to 1_\D$, and
  \item An unnatural transformation $\vartheta : 1_\D \to L$,
  \end{itemize}
  such that $(L,\vartheta L, \varepsilon)$ is a comonad (so that in particular $\vartheta L : L \to L L$ \emph{is} a natural transformation) and each $\vartheta_x : x \to L x$ equips $x$ with the structure of an $L$-coalgebra.\index{comonad}\index{coalgebra}
  If $\D$ is the non-univalent Kleisli category $\C_T$ for a monad $T$, with corresponding adjunction $F : \C \rightleftarrows \D : U$ such that $F$ is the identity on objects, we can give it this structure where $(L,\vartheta L, \varepsilon) = (F U, F \eta U, \varepsilon)$ is the comonad induced by the adjunction and $\vartheta_x : \C_T(x,F U x) = \C(x,TTx)$ is the composite $\eta_{Tx} \circ \eta_x$.

  A signature for thunk-force categories is
  \[
    \begin{tikzcd}
      T \ar[dr] \ar[dr, shift left] \ar[dr, shift right]
      &
      I \ar[d] & E \ar[dl, shift left] \ar[dl, shift right]
      &
      L_A \ar[d, shift left] \ar[d] \ar[dll, shift left] \ar[dll, shift right]
      &
      H \ar[dl] \ar[dlll]
      &
      N \ar[dll] \ar[dllll]
      \\
      &
      A \ar[d, shift left] \ar[d, shift right]
      &
      &
      L_O \ar[dll, shift left] \ar[dll, shift right]
      \\
      &
      O
    \end{tikzcd}
  \]
  where $N$ and $H$ represent $\varepsilon$ and $\vartheta$ respectively.
  The axioms are straightforward to formulate, and as in \cref{eg:unnatural} we find that the indiscernibilities in $O$ are the isomorphisms on which $\vartheta$ is natural.
  In general, morphisms (not necessarily isomorphisms) on which $\vartheta$ is natural (that is, morphisms that are $L$-coalgebra maps) are called \emph{thunkable}.
  As shown in~\cite{fuhrmann:abstract-kleisli} they form a (non-full, but wide) subcategory that can be equipped with a monad whose Kleisli category is the given thunk-force category.
  (Indeed, they are the full subcategory of the Eilenberg-Moore category of the comonad $(L,\vartheta L, \varepsilon)$ on the objects $(x,\vartheta_x)$.)
  In this sense a thunk-force category is precisely ``what is left of a Kleisli category when we forget the underlying category''.

  In a non-univalent Kleisli category $\C_T$, the functor $F:\C\to\C_T$ lands inside the thunkable morphisms.
  Thus, if $\C$ is a univalent category, then $\C_T$ is univalent as a thunk-force category just when every thunkable isomorphism in $\C_T$ is the $F$-image of a unique isomorphism in $\C$.
  This is the case for any monad such that
  \[
    \begin{tikzcd}
      x \ar[r,"\eta"] & T x \ar[r,shift left,"T \eta"] \ar[r,shift right,"\eta T"'] & T T x
    \end{tikzcd}
  \]
  is an equalizer diagram, which happens frequently but not always.
  For instance, the trivial monad on $\Set$ defined by $T x = 1$ admits a thunkable isomorphism $0 \cong 1$ in $\Set_T$, but there is no isomorphism $0\cong 1$ in $\Set$.

  The opposite of a thunk-force structure is called a \emph{runnable monad} (thus a thunk-force structure could also be called a ``corunnable comonad''), and the duals of thunkable morphisms are called \emph{linear}.\index{morphism!linear}\index{monad!runnable}
\end{example}

Thunk-force categories are used to model call-by-value programming languages, while runnable monads are used for call-by-name languages.\index{programming language}\index{call-by-value}\index{call-by-name}
Since real-world programming languages allow functions to take more than one argument, these structures generally need to be enhanced with some kind of product; but in the presence of computational effects this is something weaker than a monoidal structure.

\begin{example}[Premonoidal categories]\label{eg:premonoidal}\index{category!premonoidal}
  A \emph{premonoidal category}~\cite{pr:premonoidal,power:premonoidal} is like a monoidal category, but the tensor product operation is only required to be functorial in each variable separately, rather than jointly.
  That is, for objects $x,y$ we have a tensor product object $x\otimes y$, and for any $f:x\to x'$ we have $f\otimes y : x\otimes y \to x'\otimes y$ and for $g:y\to y'$ we have $x\otimes g : x\otimes y \to x\otimes y'$, but there is no ``$f\otimes g : x\otimes y \to x'\otimes y'$'', and the square
  \[
    \begin{tikzcd}
      x\otimes y \ar[r,"x\otimes g"] \ar[d,"f\otimes y"'] & x\otimes y' \ar[d,"f\otimes y'"] \\
      x'\otimes y \ar[r,"x'\otimes g"'] & x'\otimes y'
    \end{tikzcd}
  \]
  need not commute.
  If for some $f$ this square does commute for all $g$, and a dual condition holds with $f$ on the right, we say that $f$ is \emph{central}.
  The associativity and unit isomorphisms in a premonoidal category are additionally asserted to be central; note that naturality of the associator has to be formulated as three different axioms relative to morphisms in the three possible places.

  One origin of premonoidal categories is bistrong monads on monoidal categories.
  Recall that a \emph{bistrong monad}\index{monad!bistrong} is a monad on a monoidal category equipped with strengths (see, for instance, \cite{monads-kock}) for both the tensor product and the reversed tensor product $a \otimes^{\mathrm{rev}} b \eqdef b \otimes a$.
  If $T$ is a bistrong monad\index{monad!bistrong} on a monoidal category $\C$, then its Kleisli category\index{category!Kleisli} $\C_T$ is premonoidal: its tensor product is that of $\C$, while for $f\otimes y$ is the composite $x\otimes y \xrightarrow{f \otimes y} T x' \otimes y \to T(x'\otimes y)$ with the strength, and dually for $x\otimes g$.

  We can obtain a signature for premonoidal categories by splitting the sort $\otimes_A$ of a monoidal category in two, for the two functors $x\otimes -$ and $-\otimes y$:
  \[
    \begin{tikzcd}[column sep=small]
      T \ar[drr] \ar[drr, shift left] \ar[drr, shift right]
      &
      I \ar[dr]
      &
      E \ar[d, shift left] \ar[d, shift right]
      &
      \otimes_{A,L} \ar[dr, shift left] \ar[dr, shift right] \ar[dl, shift left] \ar[dl, shift right]
      &
      \otimes_{A,R} \ar[d, shift left] \ar[d, shift right] \ar[dll, shift left] \ar[dll, shift right]
      &
      \otimes_3 \ar[dl, shift left] \ar[dl, shift right] \ar[dl, shift left = 2] \ar[dl] \ar[dlll]
      &
      U_A \ar[d, shift right] \ar[d, shift left] \ar[dllll]
      &
      \otimes_l \ar[dlll] \ar[dl] \ar[dlllll]
      &
      \otimes_r \ar[dllll] \ar[dll] \ar[dllllll]
      \\
      &
      &
      A \ar[d, shift left] \ar[d, shift right]
      &
      &
      \otimes_O \ar[dll, shift left] \ar[dll, shift right] \ar[dll]
      &
      &
      U_O \ar[dllll]
      \\
      &
      &
      O
    \end{tikzcd}
  \]
  That is, for $w:\otimes_O(x,y,z)$ and $w':\otimes_O(x',y,z')$ with $f:A(x,x')$ and $h:A(z,z')$, the relation $\otimes_{A,L}(w,w',f,h)$ says that $f\otimes y=h$ relative to $w$ and $w'$, and similarly for $\otimes_{A,R}$.
  We assert the usual axioms of a premonoidal category, including unique existence of an $h$ as in the previous sentence, which we denote $f\otimes^L_{w,w'} y$; similarly we have $x\otimes^R_{w,w'} g$ for $g:A(y,y')$.
  We define a morphism $f:A(x_1,x_2)$ to be \emph{central}\index{morphism!central} if for any $g:A(y_1,y_2)$ and \emph{any} $w_{ij}:\otimes_O(x_i,y_j,z_{ij})$ (for $i,j\in \{1,2\}$) the following square commutes:
  \[
    \begin{tikzcd}[column sep=huge]
      z_{11} \ar[r,"x_1\otimes^R_{w_{11},w_{12}} g"] \ar[d,"f\otimes^L_{w_{11},w_{21}} y_1"'] & z_{12} \ar[d,"f\otimes^L_{w_{12},w_{22}} y_2"] \\
      z_{21} \ar[r,"x_2\otimes^R_{w_{21},w_{22}} g"'] & z_{22},
    \end{tikzcd}
  \]
  as well as a dual property on the other side.
  However, the naturality of the isomorphisms between any two values of an anafunctor means that it suffices if this holds for \emph{some} $w_{ij}$.
  Recall also that the axioms of a premonoidal category include centrality of the associator and unit isomorphisms.

  We also assert that for any $w:\otimes_O(x,y,z)$ and $w':\otimes_O(x,y,z')$, we have $1_x \otimes^L_{w,w'} y = x \otimes^R _{w,w'} 1_y$.
  In other words, if we have two values of $x\otimes y$, the canonical isomorphisms between them obtained from the two anafunctors $x\otimes -$ and $-\otimes y$ coincide, giving a morphism that we denote $1_x \otimes_{w,w'}  1_y$.
  This is an ``anafunctorial'' version of the standard condition that the two functors are ``equal on objects'', and is necessary for similar reasons to the ``naturality on identity morphisms'' axiom from \cref{eg:unnatural}.
  In particular, it is necessary to prove that \emph{identity morphisms are central}: for any $x:O$ and $g:A(y_1,y_2)$ with $w_j:\otimes_O(x,y_j,z_j)$ the following squares are equal:
  \[
    \begin{tikzcd}[column sep=huge]
      z_{1} \ar[r,"x\otimes^R_{w_{1},w_{2}} g"] \ar[d,"1_x \otimes^L_{w_{1},w_{1}} y_1"'] & z_{2} \ar[d,"1_x \otimes^L_{w_{2},w_{2}} y_2"] \\
      z_{1} \ar[r,"x\otimes^R_{w_{1},w_{2}} g"'] & z_{2}
    \end{tikzcd}
    \qquad
    \begin{tikzcd}[column sep=huge]
      z_{1} \ar[r,"x\otimes^R_{w_{1},w_{2}} g"] \ar[d,"x \otimes^R_{w_{1},w_{1}} 1_{y_1}"'] & z_{2} \ar[d,"x \otimes^R_{w_{2},w_{2}} 1_{y_2}"] \\
      z_{1} \ar[r,"x\otimes^R_{w_{1},w_{2}} g"'] & z_{2}
    \end{tikzcd}
  \]
  and the right-hand square commutes by functoriality of $\otimes^R$.

  We can also show that if $g:A(y_1,y_2)$ is central, then so is any $x\otimes_{w_1,w_2}^R g$.
  For if we have $h:A(u_1,u_2)$ with appropriate witnesses of the tensor product, we can form the following diagram:
  \[
    \begin{tikzcd}[row sep=huge, column sep=huge]
      {} \ar[rrr,"(x\otimes y_1) \otimes^R h"] \ar[ddd,"(x\otimes^R g) \otimes^L u_1"'] \ar[dr] & & &
      {} \ar[ddd,"(x\otimes^R g) \otimes^L u_2"] \ar[dl] \\
      & {} \ar[r,"x\otimes ^R (y_1\otimes^R h)"] \ar[d,"x\otimes^R (g \otimes^L u_1)" description] &
      {} \ar[d,"x\otimes^R (g\otimes^L u_2)" description] \\
      & {} \ar[r,"x\otimes^R (y_2\otimes^R h)"'] & {} \\
      {} \ar[ur] \ar[rrr,"(x\otimes y_2) \otimes^R h"'] & & & {} \ar[ul]
    \end{tikzcd}
  \]
  Here the inner square commutes by centrality of $g$ and functoriality of $x\otimes^R -$, while the diagonal arrows are components of the associativity isomorphism and the trapezoids commute by naturality.
  Therefore, the outer square commutes.
  Together with a similar argument on the other side, this implies that $x\otimes_{w_1,w_2}^R g$ is central when $g$ is.
  Similarly, $f\otimes^L y$ is central as soon as $f$ is.

  In particular, it follows that the isomorphism $1_x \otimes_{w,w'} 1_y$ between any two values of $x\otimes y$ is central.
  Therefore, the existential saturation condition must be similarly restricted: it asserts that given $w:\otimes_O(x,y,z)$ and a \emph{central} $h:z\cong z'$, there exists a $w':\otimes_O(x,y,z')$ such that $1_x \otimes_{w,w'} 1_y = h$.

  As usual, $E$ is required to be a congruence for all rank-2 relations, so that univalence at $A$ means it is a set with $E$ as equality, and univalence at $U_O$ means it is a saturated ana-object.
  Now consider $w_1,w_2:\otimes_O(x,y,z)$; the indiscernibility type $w_1\fiso w_2$ is the proposition that $w_1$ and $w_2$ act the same on all arrows on both sides (the dependency of the natural transformations is automatically transportable by naturality, as in \cref{eg:nat-trans}).
  In other words, it says that $g \otimes^L_{w_1,w'} y = g\otimes^L_{w_2,w'} y$ for any $w':\otimes_O(x',y,z')$ and $g:A(x,x')$, and similarly on the other side.
  As for ordinary anafunctors, by functoriality this is equivalent to its special case $1_x \otimes_{w_1,w_2} 1_y = 1_z$.
  Thus, univalence at $\otimes_O$ means that if $1_x \otimes_{w_1,w_2} 1_y = 1_z$ then $w_1=w_2$, hence that the $w'$ asserted to exist in the existential saturation axiom is unique.

  Finally, an indiscernibility $x_1 \fiso x_2$ in $O$ consists of an isomorphism $\phi:x_1\cong x_2$ together with equivalences such as $\phi_{\bullet y} : \otimes_O(x_1,y,z) \simeq \otimes_O(x_2,y,z)$ and so on for the other holes, which respect all the rank-2 relations.
  Respect for $\otimes_{A,L}$ implies in particular that for $w:\otimes_O(x_1,y,z)$ we have $\phi \otimes^L_{w,\phi_{\bullet y}(w)} y = 1_{x_2} \otimes^L_{\phi_{\bullet y}(w),\phi_{\bullet y}(w)} 1_y = 1_{z}$.
  Now respect for $\otimes_{A,R}$ implies that for any $w_1:\otimes_O(x_1,y_1,z_1)$ and $w_2:\otimes_O(x_1,y_2,z_2)$ with $g:A(y_1,y_2)$ we have \[\otimes_{A,R}(w_1,w_2,g,h) \leftrightarrow \otimes_{A,R}(\phi_{\bullet y_1}(w_1),\phi_{\bullet y_2}(w_2),g,h),\] or equivalently $x_1 \otimes^L_{w_1,w_2} g = x_2 \otimes^R_{\phi_{\bullet y_1}(w_1),\phi_{\bullet y_2}(w_2)} g$.
  But since $\phi\otimes^L_{w_1,\phi_{\bullet y_1}(w_1)} y_1 = 1_{z_1}$ and $\phi\otimes^L_{w_2,\phi_{\bullet y_2}(w_2)} y_2 = 1_{z_2}$, this implies that the following square commutes, since its vertical arrows are identities and its horizontal arrows are equal:
  \[
    \begin{tikzcd}[row sep=large,column sep=huge]
      {} \ar[d,"\phi\otimes^L_{w_1,\phi_{\bullet y_1}(w_1)} y_1"'] \ar[r,"x_1 \otimes^L_{w_1,w_2} g"] &
      {} \ar[d,"\phi\otimes^L_{w_2,\phi_{\bullet y_2}(w_2)} y_2"] \\
      {} \ar[r,"x_2 \otimes^R_{\phi_{\bullet y_1}(w_1),\phi_{\bullet y_2}(w_2)} g"'] & {}.
    \end{tikzcd}
  \]
  Together with a similar argument on the other side, this implies that $\phi:x_1\cong x_2$ is necessarily central.

  From here the usual sort of arguments imply that the rest of the structure of an indiscernibility (such as the equivalences $\phi_{\bullet y}$ used above) is uniquely determined by saturation applied to $\phi$.
  This is perhaps least obvious in the case of the equivalences $\phi_{\bullet\bullet} : \otimes_O(x_1,x_1,z) \simeq \otimes_O(x_2,x_2,z)$, since $\otimes$ is not jointly functorial in its arguments.
  But once we have shown that $\phi_{x_1\bullet}$ and $\phi_{\bullet x_2}$ are uniquely determined, respect for $\otimes_{A,R}$ tells us that for any $w:\otimes_O(x_1,x_1,z)$ we have $\otimes_{A,R}(w,\phi_{x_1\bullet}(w),\phi,h) \leftrightarrow \otimes_{A,R}(\phi_{\bullet\bullet}(w),\phi_{\bullet x_2}(\phi_{x_1\bullet}(w)),1_{x_2},h)$, which uniquely determines $\phi_{\bullet\bullet}(w)$ by saturation.

  Thus $x_1 \fiso x_2$ is equivalent to the type of central isomorphisms $x_1\cong x_2$, and so in a univalent premonoidal category $x_1=x_2$ is also equivalent to this type.
  In particular, since as we noted above the values of the tensor product are also determined uniquely up to unique central isomorphism, the type of such values is contractible, so we obtain an actual function $\otimes : O\to O \to O$ as we would hope.

  Similarly to the situation for thunk-force categories, a Kleisli category $\C_T$ is univalent as a premonoidal category just when every central isomorphism in $\C_T$ is the image of a unique isomorphism in $\C$.
  This fails, for instance, when $T$ is a commutative monad, in which case every morphism is central but not every isomorphism of free algebras is in the image of the free functor (e.g., the nontrivial automorphism of the free abelian group on one generator).

  The situation for \emph{morphisms} of premonoidal categories is rather subtle, even classically.
  In~\cite{pr:premonoidal}, a \emph{premonoidal functor} is defined to be a functor that preserves centrality of morphisms and preserves the tensor product and unit object up to coherent central natural isomorphisms.
  However, there are also examples that one might like to call ``premonoidal functors'' but that do not preserve centrality of morphisms or even isomorphisms.
  For instance, any morphism of bistrong monads $T_1 \to T_2$ on a monoidal category $\C$ induces a functor $\C_{T_1} \to \C_{T_2}$ that preserves the tensor product \emph{strictly}, but need not preserve centrality of isomorphisms; a counterexample can be found in~\cite[Section 5.2]{sl:univprop-impure}.
  On the other hand, simply removing the preservation of centrality from the definition of premonoidal functor yields a notion that is not closed under composition.
  (We thank Paul Blain Levy for pointing out these subtleties.)

  Our morphisms of structures are, of course, always closed under composition.
  Between \emph{univalent} models of our theory of premonoidal categories, the structure morphisms are precisely the premonoidal functors of~\cite{pr:premonoidal}.
  But to understand the morphisms between non-univalent models, we have to pay more careful attention to how the operation $\otimes : O \to O\to O$ is made into the ``ana-function'' $\otimes_O$.

  The most obvious choice is to define a witness $w: \otimes_O(x,y,z)$ to be a \emph{central} isomorphism $x\otimes y \cong z$.
  This ensures that the existential saturation condition holds, and if the underlying $\LcatE$-structure is a $\Tcat$-precategory, the resulting structure is univalent at all sorts of rank $>0$; but it will not generally be univalent at $O$.
  The morphisms between structures of this kind are the functors of precategories that are premonoidal in the sense of~\cite{pr:premonoidal}.
  Thus, this approach includes the non-univalent Kleisli categories $\C_T$, but does not include all functors of the kind mentioned above.

  On the other hand, we could define a witness $w: \otimes_O(x,y,z)$ to be an \emph{identification} $x\otimes y = z$.
  If the underlying precategory is a strict category, then these are ``strict premonoidal functors'', but in general they need not be very strict; e.g., if $\C$ is univalent, then the identifications of objects in $\C_T$ are the isomorphisms in $\C$.
  Now there can be morphisms between non-univalent models of this kind that do not preserve centrality; e.g., every functor $\C_{T_1} \to \C_{T_2}$ induced by a morphism of bistrong monads does induce a morphism between models of this kind.
  However, these structures do not in general satisfy the existential saturation axiom.

  Note that this is a ``real-world'' example of the situation observed in \cref{eg:indis-notpres} that morphisms of structures need not preserve indiscernibility.
  In particular, if there is a ``univalent completion'' operation for premonoidal categories (with existential saturation omitted), then there will be morphisms between non-univalent structures that do not extend to their univalent completions.
\end{example}

Thus the Kleisli category of a bistrong monad is both a thunk-force category and a premonoidal category, and the two structures are not unrelated.
For instance, every thunkable morphism is also central; in the cartesian case this is~\cite[Proposition 2.20]{fuhrmann:abstract-kleisli}, while the general case can be found at~\cite{levy:thunk-cent}.
This makes the Kleisli category of a bistrong monad on a cartesian monoidal category into what is called a \emph{precartesian abstract Kleisli category}\index{category!precartesian abstract Kleisli} in~\cite{fuhrmann:abstract-kleisli}: a thunk-force category that is also premonoidal in which every thunkable morphism is central and the monoidal structure restricts to a cartesian monoidal structure on the thunkable morphisms.
We leave it to the reader to write down a signature for such things and check that its indiscernibilities of objects are the thunkable isomorphisms.

Note that although every morphism in the original category yields a thunkable (hence central) morphism in the Kleisli category, this operation may not be faithful.
If we remember the actual morphisms in the original category as extra data, we obtain a \emph{Freyd-category}~\cite{pt:freyd-cats}: a category $V$ with finite products, a symmetric premonoidal category $C$ with the same objects as $V$, and an identity-on-objects strict symmetric premonoidal functor $J:V\to C$ that lands in the center of $C$.\index{category!Freyd-}
We can write down a signature for Freyd-categories by combining \cref{eg:m-cat} (for the identity-on-objects functor, with faithfulness omitted) with \cref{eg:premonoidal} (for the premonoidal structure on $C$), and an enhancement of \cref{eg:cat-w-binprod-functor} (for the finite products on $V$).
Unsurprisingly, the indiscernibilities in a Freyd-category are just isomorphisms in $V$.

Levy \cite{DBLP:conf/popl/Levy17} develops a notion similar to our indiscernibilities, there called ``contextual isomorphism'', for the study of isomorphism of types in some simply-typed $\lambda$-calculi with effects.\index{isomorphism!contextual}
Given types $A,B$, a contextual isomorphism $A \cong B$ consists, very roughly, of a family of bijections
\[\theta_{\Gamma \vdash C} : Q((\Gamma \vdash C)[A]) \cong Q((\Gamma \vdash C)[B])\]
of (equivalence classes of) well-formed $\lambda$-terms, respectively, for each judgment $\Gamma \vdash C$ with a type-hole, filled with $A$ and $B$, respectively. 
Levy analyzes a particular $\lambda$-calculus called ``call-by-push-value'' with two kinds of types, value types and computation types, and consequently with two different kinds of judgments $\vdash^v$ (value judgment) and $\vdash^c$ (computation judgment), and with denotational (categorical) semantics in something akin to a Freyd-category (cf.\ \cite[\S5.1]{DBLP:conf/popl/Levy17}).
However, the contextual isomorphisms are defined, in \cite[\S5.2]{DBLP:conf/popl/Levy17}, via quantification over the  computation judgments only.  Thus, the end result is more akin to our \cref{eg:thunk-force}, and indeed
Levy finds that these ``partial'' contextual isomorphisms are precisely the thunkable isomorphisms \cite[\S7.2]{DBLP:conf/popl/Levy17}.

\begin{example}[Duploids]\index{duploid}
 \label{eg:duploid}
  A \emph{duploid}~\cite{munchmaccagnoni:thesis} is a structure that combines call-by-value structure (such as in a thunk-force category) and call-by-name structure (such as in its dual, a runnable monad) in one.
  It starts with a \emph{pre-duploid}\index{duploid!pre-}, which is almost like a category equipped with a map to the chaotic category on two objects $\{+,-\}$, except that the associativity law $(h \circ g) \circ f = h\circ (g\circ f)$ need only hold if either the codomain of $f$ (i.e., the domain of $g$) lies over $-$ (``is negative'') or the codomain of $g$ (i.e., the domain of $h$) lies over $+$ (``is positive'').
  A signature for pre-duploids is as follows:
  \[
    \begin{tikzcd}[column sep=.45em]
      I_P \ar[dr] &
      T_{PPP} \ar[d] \ar[d,shift left] \ar[d,shift right] &
      T_{PPN} \ar[dl] \ar[dr,shift left] \ar[dr,shift right] &
      T_{PNP} \ar[dll] \ar[d] \ar[drrr] &
      T_{NPP} \ar[dlll] \ar[drr,shift left] \ar[drr,shift right] &
      T_{PNN} \ar[dll,shift left] \ar[dll,shift right] \ar[drrr] &
      T_{NPN} \ar[dlll] \ar[d] \ar[drr] &
      T_{NNP} \ar[dl,shift left] \ar[dl,shift right] \ar[dr] &
      T_{NNN}  \ar[d] \ar[d,shift left] \ar[d,shift right] &
      I_N \ar[dl] \\
      & A_{PP} \ar[d,shift left] \ar[d,shift right] &&
      A_{PN} \ar[dll] \ar[drrrrr] &&&
      A_{NP} \ar[dlllll] \ar[drr] &&
      A_{NN} \ar[d,shift left] \ar[d,shift right] \\
      & O_P &&&&&&& O_N
    \end{tikzcd}
  \]
  plus equality congruences on all four sorts $A_{\bullet\bullet}$ that we have omitted to write, which thus coincide with the indiscernibilities on those sorts.

  Since the positive objects form a category in their own right, an indiscernibility $p_1\fiso p_2$ between $p_1,p_2:O_P$ consists in particular of an isomorphism $\phi:p_1\cong p_2$ in that category, together with equivalences $\phi_{\bullet n} : A_{PN}(p_1,n) \simeq A_{PN}(p_2,n)$ and $\phi_{n\bullet} : A_{NP}(n,p_1) \simeq A_{NP}(n,p_2)$ respecting composition of all sorts.
  The usual arguments imply that $\phi_{\bullet n}$ and $\phi_{n\bullet}$ are given by composition with $\phi$ or its inverse, so it remains to consider respect for composition.
  This includes, for instance, $T_{PNN}(g\circ \phi,h,k\circ \phi) \leftrightarrow T_{PNN}(g,h,k)$, which is to say that $h\circ (g\circ \phi) = (h\circ g) \circ \phi$ for all $g:A_{PN}(p_2,n_1)$ and $h:A_{NN}(n_1,n_2)$; and similarly for $T_{PNP}$.
  That is, the associativity law that isn't generally asserted in a pre-duploid does hold when $\phi$ is the first morphism.
  In the context of a pre-duploid, this is taken as the definition of when a morphism is \emph{thunkable}.\index{morphism!thunkable}
  The remaining conditions are automatic, so the indiscernibilities between positive objects are precisely the thunkable isomorphisms.
  Dually, the indiscernibilities between negative objects are precisely the \emph{linear} isomorphisms: those for which the missing associativities hold when they are the last morphism in the triple composite.

  A duploid is a pre-duploid together with ``parity shift'' functions $\Uparrow$ taking positive objects to negative ones and $\Downarrow$ taking negative objects to positive ones, together with unnatural families of linear isomorphisms $\mathsf{force}: \mathord{\Uparrow} p \cong p$, for positive $p$, and thunkable isomorphisms $\mathsf{wrap}:n \cong \mathord{\Downarrow} n$, for negative $n$.
  However, it turns out that $\Uparrow$ and $\Downarrow$ can in fact be made into functors, and $\mathsf{force}$ and $\mathsf{wrap}$ natural.
  Thus, this additional structure does not change the notions of indiscernibility or univalence.
\end{example}

\begin{example}[Factorization systems]\index{factorization system}
  A \emph{factorization system} on a category $\C$ consists of two classes of morphisms $L$ and $R$ satisfying certain axioms.
  For a \emph{weak} factorization system\index{factorization system!weak}, these axioms are that every morphism of $\C$ factors as an $L$-map followed by an $R$-map, that $L$ and $R$ are closed under retracts, and that any commutative square
  \begin{equation}
    \begin{tikzcd}
      x \ar[d,"\ell"'] \ar[r,"g"] & u \ar[d,"r"] \\
      y \ar[r,"h"'] \ar[ur,dotted,"s"] & v,
    \end{tikzcd}\label{eq:lift}
  \end{equation}
  with $\ell\in L$ and $r\in R$, has a diagonal filler $s$ as shown.
  For an \emph{orthogonal} or \emph{unique} factorization system, one requires that such diagonal fillers are unique, or equivalently that factorizations are unique up to unique isomorphism.\index{factorization system!orthogonal}\index{factorization system!unique}

  One natural signature for a factorization system (of either sort) is a slight generalization of \cref{eg:m-cat}, with two predicates $L$ and $R$ instead of just the one $M$.
  \[
    \begin{tikzcd}
      L \ar[drr]
      &
      R \ar[dr]
      &
      T \ar[d] \ar[d, shift left] \ar[d, shift right]
      &
      I \ar[dl]
      &
      E \ar[dll, shift left] \ar[dll, shift right]
      \\
      &
      &
      A \ar[d, shift left] \ar[d, shift right]
      \\
      &
      &
      O
    \end{tikzcd}
  \]
  Building on \cref{eg:m-cat} (and the fact that in a weak factorization system, both $L$ and $R$ contain all isomorphisms), an indiscernibility between two objects $a$ and $b$ of a category with a weak factorization system will then be simply an isomorphism between $a$ and $b$.
  Similarly, a morphism between such structures is just a functor that preserves $L$ and $R$, and an equivalence of structures is an equivalence of the underlying categories that preserves and reflects $L$ and $R$.

  A weak factorization system is \emph{functorial} if there is a specified functor $\C^{\mathbf{2}} \to \C^{\mathbf{3}}$ factoring each morphism as an $L$-map followed by an $R$-map.\index{factorization system!functorial weak}
  (Here $\C^{\mathbf{2}}$ is the category whose objects are morphisms in $\C$ and whose morphisms are commutative squares, and similarly $\C^{\mathbf{3}}$ is the category whose objects are composable pairs of morphisms in $\C$.)
  An orthogonal factorization system is automatically and essentially-uniquely functorial, but a weak factorization system may not be.
  We can write down a signature for a functorial weak factorization system as follows:
  \[
    \begin{tikzcd}[column sep=small]
      &&&&&& F_A \ar[d,shift left] \ar[d,shift right] \ar[ddll] \ar[ddll,shift left] \ar[ddll,shift right] \\
      L \ar[drrrr] & R \ar[drrr] & I \ar[drr] & T \ar[dr,shift left] \ar[dr] \ar[dr,shift right] & E \ar[d,shift left] \ar[d,shift right] &
      & F_O \ar[dll,shift left] \ar[dll] \ar[dll,shift right] \\
      &&&& A \ar[d,shift left] \ar[d,shift right] \\
      &&&& O
    \end{tikzcd}
  \]
  Here for $f:A(x,y)$, the elements of $(F_O)_{x,y,z}(f,\ell,r)$ are witnesses that the functorial factorization of $f$ is $(\ell,r)$, where $\ell:A(x,z)$ and $r:A(z,y)$.
  We assert as an axiom that $E(f,r\circ \ell)$, where the composition equations in the signature ensure that this is well-typed.
  Similarly, for $w:F_O(f,\ell,r)$ and $w':F_O(f',\ell',r')$, $F_A(w,w',g,h,k)$ asserts that, assuming $E(f'\circ g,h\circ f)$, the image of this commutative square under the functorial factorization is $k$.
  Functoriality is straightforward to ensure with axioms; note we only assert that there is a $k$ satisfying $F_A(w,w',g,h,k)$ under the assumption that $E(f'\circ g,h\circ f)$.
  Since this $k$ is unique, we denote it by $F_{w,w'}(f,f',g,h)$.
  We also assert existential saturation: if $w:(F_O)_{x,y,z}(f,\ell,r)$ and $k:z\cong z'$, there exists $w':(F_O)_{x,y,z'}(f,k\circ \ell,r\circ k^{-1})$ such that $F_{w,w'}(f,f,1_x,1_y) = k$.
  
  As usual, univalence at $F_A$ makes it a proposition, and univalence at $F_O$ makes it a set such that the $w'$ in existential saturation is unique.
  We may worry about whether univalence at $A$ would disrupt its equality and even its h-level, since $A$ now has two ranks dependent on it.
  However, the functoriality of $F$ on identity squares implies that $E$ is also a ``congruence'' for $F_O$ in an appropriate sense: if $E(f,f')$ and we have $w:(F_O)_{x,y,z}(f,\ell,r)$ and $w':(F_O)_{x,y,z'}(f',\ell',r')$, then we have $F_{w,w'}(f,f',1_x,1_y):z\cong z'$, and by saturation and functoriality there is a uniquely determined $w':(F_O)_{x,y,z}(f',\ell,r)$.
  Thus, univalence at $A$ again simply ensures that it is a set with equality $E$, and similarly univalence at $O$ makes the underlying category univalent in the ordinary sense.
  A morphism of such structures is a functor that preserves both $L$ and $R$ as well as the functorial factorizations, up to coherent isomorphism.
  
  Things get more interesting if we consider weak factorization systems with a \emph{specified} but \emph{non-functorial} factorization.
  (Most weak factorization systems arising in practice are functorial, but some such as~\cite{isaksen:pross} are not; whereas recent work on constructive homotopy theory such as~\cite{henry:weak-modelcats} has found at least specified factorizations to be indispensable.)
  As we did for unnatural transformations, it is natural to assert at least \emph{functoriality on identities}, with a signature such as
  \[
    \begin{tikzcd}[column sep=small]
      &&&&&& F_A \ar[d,shift left] \ar[d,shift right] \ar[ddll] \\
      L \ar[drrrr] & R \ar[drrr] & I \ar[drr] & T \ar[dr,shift left] \ar[dr] \ar[dr,shift right] & E \ar[d,shift left] \ar[d,shift right] &
      & F_O \ar[dll,shift left] \ar[dll] \ar[dll,shift right] \\
      &&&& A \ar[d,shift left] \ar[d,shift right] \\
      &&&& O
    \end{tikzcd}
  \]
  in which $F_A(w,w',k)$, for $w:(F_O)_{x,y,z}(f,\ell,r)$ and $w':(F_O)_{x,y,z'}(f',\ell',r')$, asserts that assuming $E(f,f')$ then $k:z\cong z'$ is the image of that equality under the factorization.
  This suffices for the above analysis of saturation and univalence at all sorts except $O$.
  But at $O$, an indiscernibility $a\fiso b$ will now be an isomorphism $\phi:a\cong b$ together with ``all possible functorial actions of the factorization on $\phi$''.
  If we assumed that the factorization were functorial on all \emph{isomorphisms}, then this would reduce to simply an ordinary isomorphism in the underlying category; but in general this need not be the case.
  Hence, in particular, univalence of a ``category with weak factorization system'' in this sense is different from univalence of its underlying category.

  As in the case of thunk-force and premonoidal categories, to see nontrivial examples of this we need a construction that produces non-univalent categories.
  A naturally-occurring example in this case is the category of \emph{pro-objects} in a category $\C$.\index{pro-object}
  As with Kleisli categories, this has two natural definitions, one of which is naturally univalent and the other of which is not:
  \begin{enumerate}
  \item The objects of $\pro\C$ are functors $X:I\to \C$, where $I$ is a small cofiltered category.
    Its hom-sets are
    \[ \pro\C(X,Y) = \lim_{i,j} \C(X_i,Y_j). \]
  \item $\pro\C$ is the full subcategory of the presheaf category of $[\C,\Set]^{\mathsf{op}}$ on the objects that are cofiltered limits of representables.
  \end{enumerate}
  As before, the latter is always univalent, and if $\C$ is univalent then the latter is the Rezk completion\index{Rezk completion} of the former.\footnote{In view of \cref{fn:nonunivalent}, it is unsurprising that the former can be obtained as $\D_F$, where $\D = [\C,\Set]^{\mathsf{op}}$ and $F:(\sum_I (I\to \C))\to\ob{\D}$ takes a functor $X:I\to \C$ from a small cofiltered category, composes it with the Yoneda embedding, and then takes its limit.}
  The former is not univalent; an identification $X=Y$ therein is an isomorphism $I\cong J$ making a triangle that commutes up to isomorphism.
  This is called a \emph{level isomorphism}; more generally a \emph{level map} $X\to Y$ is an isomorphism $I\cong J$ with a natural transformation inhabiting the triangle.\index{isomorphism!of pro-objects!level}\index{morphism!of pro-objects!level}

  For example, in~\cite{isaksen:pross} it is shown that when $\C$ is the category of simplicial sets, $\pro\C$ supports a Quillen model structure, which includes two weak factorization systems.
  The construction of the factorizations of $f:X\to Y$ proceeds by first constructing a level map $f':X'\to Y'$ and non-level isomorphisms $X\cong X'$ and $Y\cong Y'$ such that the composite $X\cong X' \xrightarrow{f'} Y'\cong Y$ is $f$, and then factoring $f'$ levelwise.
  This can all be done in a specified way, but not (at least not obviously) in a way that respects non-level isomorphisms of pro-objects.
  Thus, this gives an example of a structure for the above signature in which the underlying category is not univalent.
  (We have not analyzed whether this structure is univalent at $O$, i.e., whether the level isomorphisms coincide with the indiscernibilities.)

  We can also regard the lifts in a weak factorization as structure.
  If we also relegate the factorization back to a property (for simplicity), this yields the following the signature.
  \[
    \begin{tikzcd}
      L \ar[drrr]
      &
      R \ar[drr]
      &
      S \ar[dr] \ar[dr, shift left] \ar[dr, shift left = 2] \ar[dr, shift right] \ar[dr, shift right = 2]
      &
      T \ar[d] \ar[d, shift left] \ar[d, shift right]
      &
      I \ar[dl]
      &
      E \ar[dll, shift left] \ar[dll, shift right]
      \\
      &
      &
      &
      A \ar[d, shift left] \ar[d, shift right]
      \\
      &
      &
      &
      O
 \end{tikzcd}
\]
  Here $S(\ell,g,h,r,s)$ says that $s$ is the chosen lift in the square~\eqref{eq:lift}.
  Now an indiscernibility between two objects $a,b:O$ consists of an isomorphism $\phi: a \cong b$ with the additional requirements that
  \begin{align*}
    S(\ell,y,x,r,s) &\leftrightarrow S(\ell \circ \phi,y,x \circ \phi,r,s)\\
    S(\ell,y,x,r,s) &\leftrightarrow S(\phi^{-1} \circ \ell ,y \circ \phi,x ,r,s \circ \phi)\\
    S(\ell,y,x,r,s) &\leftrightarrow S( \ell ,y , \phi \circ x ,r \circ \phi^{-1},  \phi \circ s )\\
    S(\ell,y,x,r,s) &\leftrightarrow S(\ell ,\phi \circ y ,x ,\phi \circ r,s)
  \end{align*}
  whenever these compositions exist.

  These requirements hold for any isomorphism if $S$ is compositional in the sense that
  \begin{align*}
    S(\ell_2, y, t, r, s) \times S(\ell_1,y\circ \ell_2, x, r, t) &\to S(\ell_2 \circ \ell_1 ,y, x, r, s)\qquad\text{and}\\
    S(\ell, y, r_1 \circ s, r_2, t) \times S(\ell, t, x, r_1, s) &\to S(\ell, t, x, r_2 \circ r_1, s).
  \end{align*}
  Orthogonal factorization systems are compositional (since lifts are unique), but for a weak factorization system it is rarely possible to choose a compositional lifting function.
  Instead, when regarding lifts as structure it is probably better to consider \emph{algebraic} weak factorization systems\index{factorization system!algebraic weak} (see, e.g.,~\cite{gt:nwfs,garner:soa,riehl:nwfs-model}), in which the predicates $L$ and $R$ are made into structure as well.
  We leave it to the reader to write down a signature for a category with an algebraic weak factorization system.
\end{example}

\part{Theory of functorial structures}\label{sec:ho}

In this \lcnamecref{sec:ho} we will give precise definitions and proofs of the general theorems we claimed in \cref{sec:general-theory}.
In addition, rather than working with diagram signatures, we will introduce, in \cref{sec:abstr-sign-transl}, a more general notion that we call a \emph{functorial signature},
equipped with a suitable notion of \emph{structure} for such signatures.
We call such structures ``functorial structures'' in this introduction, to distinguish them from structures for diagram signatures.

To link the two definitions of signature, in \cref{thm:translation} we construct a translation from the diagram signatures of \cref{sec:general-theory} to functorial signatures.
The diagram structures for a diagram signature are equivalent to the functorial structures for its induced functorial signature, so diagram signatures and structures are subsumed by their functorial counterparts.

Given this generalization, we then proceed to generalize the definitions and results from \cref{sec:general-theory}, leading us to a fully general statement and proof of our univalence principle.
We also discuss some examples of functorial theories.

Specifically, in \cref{sec:structures}, we introduce and study a notion of \emph{levelwise equivalence} for functorial structures, generalizing the levelwise equivalences for diagram structures from \cref{def:lvleqv-folds}.

In \cref{sec:FOLDS-iso-uni} we define indiscernibility and univalence for functorial structures,
thus generalizing the respective definitions of~\cref{def:indisc-in-diag-structure} and \cref{def:univalence-cond-on-diag-structures}.

In \cref{sec:hsip} we define equivalence of functorial structures, defined for diagram structures in \cref{def:eqv-folds}.
We then prove our main result in \cref{thm:hsip2}, which is a generalization to functorial signatures of the univalence principle stated for diagram signatures as \cref{thm:hsip-folds}.

Finally, since all the examples in \Cref{sec:examples} were diagram signatures, in \cref{sec:egs-higherorder} we discuss some examples of functorial signatures whose structures cannot, to our knowledge, be specified by a diagram signature.
Such examples include ``higher-order'' structures which involve quantification over subsets of a carrier set, such as topological spaces and suplattices.

\chapter{Functorial signatures}
\label{sec:abstr-sign-transl}

In this \lcnamecref{sec:abstr-sign-transl} we introduce another notion of signature for mathematical structures, called ``functorial signatures''.

Functorial signatures seem to be more general than diagram signatures;
some examples of functorial signatures whose structures cannot, to our knowledge, be defined by diagram signatures are presented in \cref{sec:egs-higherorder}.
However, our main reason for generalizing from diagram signatures to functorial ones is that the latter are easier to reason about in the abstract because of their inductive nature.
In particular, using functorial signatures simplifies the proof of our main result, \cref{thm:hsip2}.

Diagram signatures are nevertheless a very convenient way of specifying examples.
For this reason, we provide a translation from diagram signatures to functorial signatures, making our abstract results applicable to a wide range of examples, including all those presented in \cref{sec:examples}.

To motivate the notion of functorial signature, we start in \cref{sec:funct-deriv} by further analyzing the notion of derivative of a diagram signature.
In particular, we will prove our claim in \cref{sec:deriv-folds} that an exo-functor $\L\to\Ustrict$ is uniquely determined, up to isomorphism, by an $M:\L(0) \to \Ustrict$ and an exo-functor $\derivcat{\L}{M} \to \Ustrict$.
This requires investigating the functoriality of derivatives.

In \cref{sec:abstract-signatures} we define functorial signatures and their structures.
We do not only define the (exo)types of these things, but, at the same time, morphisms of such things and further categorical structure required, such as pullback of structures along morphisms of signatures.
From the analysis of diagram signatures in \cref{sec:funct-deriv}, we immediately obtain a translation from diagram signatures to functorial signatures, in \cref{thm:translation}.

The notions of axiom and theory for functorial signatures, defined in \cref{sec:ho-axioms}, are essentially copied over from the corresponding notions for diagram signatures given in \cref{sec:axioms}.

\section{Functoriality of derivatives}
\label{sec:funct-deriv}

Morphisms of diagram signatures will be exo-functors that preserve ranks strict\-ly and also ``preserve the dependency structure'', in the following sense.
Note that if $F:\L\to\M$ preserves ranks, then it induces a function $\fanoutfun{F}{K}{m} : \fanout{K}{m} \to  \fanout{FK}{m} $ for every $K: \L(n)$ and $m<n$. 
We sometimes write $F$ instead of $\fanoutfun{F}{K}{m}$ when no confusion can arise.

\begin{definition}
  Let $F:\L\to \M$ be a rank-preserving exo-functor between inverse exo-categories.
  It is a \defemph{discrete opfibration}\index{opfibration!discrete} when all the functions \[\fanoutfun{F}{K}{m} : \fanout{K}{m} \to  \fanout{FK}{m} \] are isomorphisms.
  Let $\hom_{\IC(p)}(\L,\M)$ denote the exotype of such discrete opfibrations.
\end{definition}

\begin{proposition}\label{prop:material-sigs-cat}
  The exotype $\IC(p)$ and hom-exotypes $\hom_{\IC(p)}(\L,\M)$ form an exo-category.
\end{proposition}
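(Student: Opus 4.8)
The statement asserts that discrete opfibrations between diagram signatures of a fixed height $p$ form an exo-category, with composition and identities inherited from the underlying exo-categories (or, more precisely, from the ambient exo-category of inverse exo-categories and rank-preserving exo-functors). Since an exo-category is specified by the data of \Cref{def:exo-category}, the proof amounts to checking: (1) that $\hom_{\IC(p)}(\L,\M)$ is well-defined, i.e., a discrete opfibration really is an exotype-level notion; (2) that identities are discrete opfibrations; (3) that the composite of two discrete opfibrations is a discrete opfibration; and (4) that associativity and unitality hold up to exo-equality. Points (2) and (4) are essentially immediate, so the content is in (1) and (3).

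First I would unwind the definition: a discrete opfibration $F : \L \to \M$ is a rank-preserving exo-functor such that for every $K : \L(n)$ and every $m < n$, the induced map $\fanoutfun{F}{K}{m} : \fanout{K}{m} \to \fanout{FK}{m}$ is an exo-isomorphism. Being a rank-preserving exo-functor is plainly an exotype-level condition (it is built from the exotypes of objects and arrows and from exo-equality of composites and identities), and ``$\fanoutfun{F}{K}{m}$ is an exo-isomorphism'' is likewise an exotype, namely a $\prd{}{}$ over $n$, $K$, and $m$ of the exotype of exo-isomorphisms $\fanout{K}{m} \cong \fanout{FK}{m}$; this settles (1). For identities, $\fanoutfun{1_{\L}}{K}{m}$ is the identity of $\fanout{K}{m}$ by inspection of the definition of $\fanoutfun{F}{K}{m}$ (it sends $(L,f)$ to $(FL, Ff)$, and $F = 1$ makes this the identity), which is an exo-isomorphism; this gives (2).

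The main step is (3): closure under composition. Given discrete opfibrations $F : \L \to \M$ and $G : \M \to \N$, I would first note that $G \circ F$ is again a rank-preserving exo-functor, since both $F$ and $G$ preserve rank and exo-functors compose. Then I would verify the key identity $\fanoutfun{(G\circ F)}{K}{m} \steq \fanoutfun{G}{FK}{m} \circ \fanoutfun{F}{K}{m}$ for every $K : \L(n)$ and $m < n$. This follows by chasing the definition of $\fanoutfun{-}{-}{-}$: both sides send $(L,f) : \fanout{K}{m}$ to $(GFL, GFf) : \fanout{GFK}{m}$, and the exo-functoriality of $G$ (preservation of composition up to exo-equality) makes the two descriptions exo-equal. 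Once this is in hand, $\fanoutfun{(G\circ F)}{K}{m}$ is a composite of two exo-isomorphisms, hence an exo-isomorphism, so $G \circ F$ is a discrete opfibration. Finally, associativity and unitality of composition (point (4)) are inherited directly from the corresponding laws for exo-functors between inverse exo-categories, which hold up to exo-equality; the discrete-opfibration condition, being a mere exotype, carries no additional coherence to check.

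The part I expect to require the most care is not any single computation but rather making the bookkeeping of the maps $\fanoutfun{F}{K}{m}$ precise enough that the composition identity $\fanoutfun{(G\circ F)}{K}{m} \steq \fanoutfun{G}{FK}{m} \circ \fanoutfun{F}{K}{m}$ is manifestly an exo-equality and not merely an identification; here the two-level setup is essential, since everything in sight (objects and arrows of signatures, the fanout exotypes, the projections defining $\fanoutfun{F}{K}{m}$) lives at the exo-level and the relevant equalities are strict. Modulo this, the proof is a routine verification and I would present it compactly.
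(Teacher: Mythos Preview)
Your proposal is correct and follows essentially the same approach as the paper: the key step is that $\fanoutfun{(G\circ F)}{K}{m}$ factors as the composite $\fanoutfun{G}{FK}{m} \circ \fanoutfun{F}{K}{m}$ of two exo-isomorphisms, while identities, associativity, and unitality are inherited from exo-functors. The paper's proof is terser (omitting your points (1) and the explicit bookkeeping about exo-equality), but the content is the same.
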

\begin{proof}
  Given $F:\hom_{\IC(p)}(\L,\M)$ and $G:\hom_{\IC(p)}(\M,\N)$, their composite is a discrete opfibration since for every $n: \exo{\Nat}_{< p}$ and $K: \L(n)$, \[G \circ F: \fanout{K}{n} \to  \fanout{GFK}{n}\] is the composition of the isomorphisms $F : \fanout{K}{n} \to  \fanout{FK}{n}$ and $G : \fanout{FK}{n} \to  \fanout{GFK}{n}$.
  Similarly, for any $\L \in \IC(p)$, the identity exo-functor $1_\L: \hom_{\IC(p)}(\L,\L)$ is a discrete opfibration.
  This composition is clearly associative and unital.
\end{proof}

\begin{proposition}\label{prop:dfib-matching}
  If $F:\L\to\M$ is a discrete opfibration, then for any exo-functor $M:\M \to\Ustrict$ and $K\in\L(m)$, we have an isomorphism
  \[ \match_K (M\circ F) \cong \match_{FK} M .\]
  In particular, if $M$ is Reedy fibrant, so is $M\circ F$.
\end{proposition}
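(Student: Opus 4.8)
The plan is to unwind the definition of the matching object from \cref{def:matching} on both sides and exhibit a bijection directly. Recall that $\match_K(M \circ F)$ is the sub-exotype of $\prd{m<n}{(L,f):\fanout{K}{m}} (M \circ F)L$ consisting of compatible families, and $\match_{FK} M$ is the sub-exotype of $\prd{m<n}{(L',f'):\fanout{FK}{m}} ML'$ consisting of compatible families. Since $F$ is a discrete opfibration, each map $\fanoutfun{F}{K}{m} : \fanout{K}{m} \to \fanout{FK}{m}$ is an isomorphism; write $(L',f') \mapsto (L,f)$ for its inverse, noting that $FL \steq L'$ and $F f \steq f'$ (here I am using that $F$ preserves ranks strictly, so the rank indices match up). First I would use precomposition with these isomorphisms to get an isomorphism between the two ambient product exotypes $\prd{m<n}{(L,f):\fanout{K}{m}} ML$ and $\prd{m<n}{(L',f'):\fanout{FK}{m}} M(FL)$, the latter being definitionally $\prd{m<n}{(L',f'):\fanout{FK}{m}} (M\circ F)(\pi_1(F^{-1}(L',f')))$; but since $F \pi_1(F^{-1}(L',f')) \steq L'$, this is just $\prd{m<n}{(L',f'):\fanout{FK}{m}} ML'$ up to exo-equality.

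Next I would check that this ambient isomorphism restricts to an isomorphism of the compatibility sub-exotypes. The compatibility condition on the left involves, for $m_2 < m_1 < n$, data $(L_1,f_1):\fanout{K}{m_1}$, $(L_2,f_2):\fanout{K}{m_2}$, and $g:\hom_\L(L_1,L_2)$ with $f_2 \steq g \circ f_1$, demanding $Mg(d(m_1,L_1,f_1)) \steq d(m_2,L_2,f_2)$. Applying $F$, we get $(FL_1,Ff_1):\fanout{FK}{m_1}$, $(FL_2,Ff_2):\fanout{FK}{m_2}$, and $Fg:\hom_\M(FL_1,FL_2)$ with $Ff_2 \steq Fg \circ Ff_1$ (using functoriality of $F$ up to exo-equality), and the condition $M(Fg)(d'(m_1,FL_1,Ff_1)) \steq d'(m_2,FL_2,Ff_2)$ is exactly the transported condition. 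The one point requiring a little care: to see that the ambient isomorphism preserves compatibility in \emph{both} directions, I need that every triple $(L_1',L_2',g')$ witnessing compatibility on the $\match_{FK} M$ side arises as $(FL_1,FL_2,Fg)$ for a suitable triple on the $\match_K M$ side — that is, the morphism $g'$ lifts. This is where the discrete opfibration hypothesis does real work: given $(L_1,f_1) \in \fanout{K}{m_1}$ mapping to $(FL_1, f_1')$, and $g':\hom_\M(FL_1, L_2')$, the composite $g' \circ f_1' : \hom_\M(FK, L_2')$ is an element of $\fanout{FK}{m_2}$, hence by the opfibration property pulls back uniquely to some $(L_2, f_2) \in \fanout{K}{m_2}$ with $F f_2 \steq g' \circ f_1'$; but I still need to produce the morphism $g : \hom_\L(L_1, L_2)$ lifting $g'$ itself, not merely the fanout element $g' \circ f_1'$. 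I would derive this from the fact that $\fanoutfun{F}{L_1}{m_2} : \fanout{L_1}{m_2} \to \fanout{FL_1}{m_2}$ is an isomorphism (applying the discrete opfibration condition now at $L_1$ rather than at $K$): the pair $(L_2', g')$ lives in $\fanout{FL_1}{m_2}$, so it has a unique preimage $(L_2'', g) \in \fanout{L_1}{m_2}$, and one checks $L_2'' \steq L_2$ by uniqueness of fanout lifts over $K$.

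I expect this last lifting argument — reconciling the lift of $g'$ obtained at $L_1$ with the lift of $g' \circ f_1'$ obtained at $K$, and checking everything is strictly compatible — to be the main obstacle, though it is more bookkeeping than genuine difficulty. The final sentence, ``in particular, if $M$ is Reedy fibrant, so is $M \circ F$'', is then immediate: Reedy fibrancy of $M$ means $ML' \to \match_{L'} M$ is a fibration for all $L'$; for any $K \in \L(n)$ we have $(M \circ F)K \steq M(FK)$ and, by the isomorphism just established, $\match_K(M \circ F) \cong \match_{FK} M$ compatibly with the canonical maps, so $(M\circ F)K \to \match_K(M\circ F)$ is (isomorphic to) the fibration $M(FK) \to \match_{FK} M$, and a map isomorphic to a fibration is a fibration.
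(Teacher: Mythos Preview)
Your proposal is correct and follows essentially the same approach as the paper, which simply observes that the matching object is defined in terms of fanout exotypes (which are isomorphic under a discrete opfibration) and that the compatibility morphism $g$ is also uniquely determined by its image $Fg$ via the fanout isomorphism at $L_1$. You have carefully unpacked precisely the bookkeeping that the paper leaves implicit, including the reconciliation of the lift of $g'$ at $L_1$ with the lift of $g'\circ f_1'$ at $K$.
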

\begin{proof}
  \cref{def:matching} is phrased mostly in terms of fanout exotypes, and the discrete opfibration condition also ensures that the morphism $g$ in $\L$ is also uniquely determined by its image $F(g)$ in $\M$.
  Thus, the definitions of both sides can be identified.
\end{proof}

In the following definition, for present purposes it would suffice to fix the signature $\L$ and let the structure $M$ vary; but for later use we allow the signature to vary as well.

\begin{definition}
  \label{def:derivation_functorial_action}\index{derivative!functoriality of}
  Let $\L$ and $\M$ be inverse exo-categories of height $p>0$, let $H: \L \to \M$ be a discrete opfibration, let $L: \L(0) \to \Ustrict$ and $M: \M(0) \to \Ustrict$, and let $h:\prd{K:\L(0)} L K \to M H K$.
  We define a functor $\derivcat{H}{h} : \derivcat{\L}{L} \to \derivcat{\M}{M}$  as follows.
  \begin{itemize}
  \item Consider an $n: \exo{\Nat}_{< p-1}$ and a $(K, \alpha): \derivcat{\L}{L}(n)$, so that in particular $\alpha : \prd{F:\fanout{0}{K}}L(\pi_1 F)$. We define $\derivcat{H}{h}(K, \alpha) : \derivcat{\M}{M}(n)$ to be $(H(K), \beta)$, where for $F: \fanout{HK}{0}$ we define
  \begin{align*}
     \beta(F) &\eqdef  h_{\pi_1 (H^{-1}F)} \big(\alpha(H^{-1}F)\big) : M{H} (\pi_1 H^{-1}F) \steq M(\pi_1 F)
  \end{align*}
  \item 
  Consider a morphism $(f,\phi):\hom_{\derivcat{\L}{L}}( (K_1,\alpha_1) , (K_2, \alpha_2))$. We can then define $\derivcat{H}{h}(f, \phi): \hom_{\derivcat{\M}{M}}( (HK_1,\beta_1) , (HK_2, \beta_2))$ to be $(Hf,\psi)$, where we define $\psi$ as follows: given $F : \fanout{HK_2}{0}$, we must check that
  \[ h_{\pi_1 (H^{-1}(F \circ Hf)}(\alpha_1(H^{-1}(F \circ Hf))) \steq h_{\pi_1 (H^{-1}F)} (\alpha_2(H^{-1}F)).\]
  But $H^{-1}( F \circ Hf) \steq H^{-1}( F ) \circ f $ (since applying the isomorphism $H$ produces $F \circ Hf$ on both sides) and $\alpha_1(H^{-1}( F ) \circ f) \steq \alpha_2(H^{-1}( F ))$ by $\phi$.
    \end{itemize}
\end{definition}

\begin{lemma}[Functoriality of derivatives]\label{lem:der_fun}
We have that 
\begin{enumerate}
\item \label{item:folds-deriv-id}
  for any inverse exo-category $\L$ of height $p>0$, and $L:\L(0) \to \Ustrict$,
\[\derivcat{(1_\L)}{\lambda x. 1_{Lx}} \steq 1_{\derivcat{\L}{L}}. \]
\item \label{item:folds-deriv-comp}
  for inverse exo-categories $\L,\M,\N$ of height $p>0$, discrete opfibrations $H: \L\to \M$ and $I:\M \to \N$, and families $L: {\L}(0) \to \Ustrict$, $M: {\M}(0) \to \Ustrict$, and $N: {\N}(0) \to \Ustrict$, plus $h:\prd{x:{\L}(0)} L x \to M {H} x$ and $i:\prd{x:{\M}(0)} M x \to N {I} x$,
\[\derivcat{I}{i} \circ \derivcat{H}{h} \steq \derivcat{(I \circ H)}{i \circ h}\] 
where 
$(i \circ h) (x,\ell) \define i({H}x, h(x,\ell))$.
\end{enumerate}
\end{lemma}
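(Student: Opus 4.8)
The plan is to unfold \Cref{def:derivation_functorial_action} in both cases and verify that the two sides agree, object-by-object and morphism-by-morphism, as functors between the explicitly-described inverse exo-categories $\derivcat{\L}{L}$ (and $\derivcat{\M}{M}$, $\derivcat{\N}{N}$). Since all the relevant equalities are exo-equalities, and exo-equality is a congruence satisfying reflection (axiom A5), it suffices to check that the underlying data of the two functors are exo-equal; there are no coherence conditions to worry about separately. For \Cref{item:folds-deriv-id}, I would take $H \define 1_\L$ (so $H^{-1}$ is the identity) and $h \define \lambda x. 1_{Lx}$. On objects: given $(K,\alpha) : \derivcat{\L}{L}(n)$, the functor $\derivcat{(1_\L)}{\lambda x.1_{Lx}}$ sends it to $(K,\beta)$ where $\beta(F) \steq (1_{L(\pi_1 F)})(\alpha(F)) \steq \alpha(F)$; so $\beta \steq \alpha$ and the object part is the identity. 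On morphisms: given $(f,\phi)$, it is sent to $(1_\L f, \psi) \steq (f,\psi)$, and unwinding $\psi$ in the same way shows $\psi \steq \phi$ (the second component is again obtained by applying identity functions, hence unchanged). This gives the desired exo-equality $\derivcat{(1_\L)}{\lambda x.1_{Lx}} \steq 1_{\derivcat{\L}{L}}$.

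For \Cref{item:folds-deriv-comp}, I would again compare object and morphism parts. On objects: starting from $(K,\alpha) : \derivcat{\L}{L}(n)$, the composite $\derivcat{I}{i}\circ\derivcat{H}{h}$ first produces $(HK,\beta)$ with $\beta(G) \steq h_{\pi_1(H^{-1}G)}(\alpha(H^{-1}G))$ for $G : \fanout{HK}{0}$, and then produces $(I(HK),\gamma)$ with $\gamma(F) \steq i_{\pi_1(I^{-1}F)}(\beta(I^{-1}F))$ for $F : \fanout{I(HK)}{0}$. Substituting, $\gamma(F) \steq i_{\pi_1(I^{-1}F)}\big(h_{\pi_1(H^{-1}(I^{-1}F))}(\alpha(H^{-1}(I^{-1}F)))\big)$. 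On the other side, $\derivcat{(I\circ H)}{i\circ h}$ sends $(K,\alpha)$ to $((I\circ H)K, \gamma')$ with $\gamma'(F) \steq (i\circ h)_{\pi_1((I\circ H)^{-1}F)}(\alpha((I\circ H)^{-1}F))$, where by definition $(i\circ h)(x,\ell) \define i(Hx, h(x,\ell))$. The key observation is that $(I\circ H)^{-1} \steq H^{-1}\circ I^{-1}$ on fanout exotypes: this holds because $I\circ H$ restricts to the composite of the fanout isomorphisms $\fanoutfun{H}{K}{0}$ and $\fanoutfun{I}{HK}{0}$ (by the proof of \Cref{prop:material-sigs-cat}), and the inverse of a composite of isomorphisms is the composite of the inverses in the reverse order. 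Plugging this in and using that $\pi_1$ commutes appropriately with the projections shows $\gamma'(F) \steq \gamma(F)$, so the object parts agree. The morphism parts are checked analogously: both send $(f,\phi)$ to a morphism whose first component is $(I\circ H)f \steq I(Hf)$ and whose second component, after substitution, involves the same nested application of $i$ and $h$ to the $\alpha_i$'s, hence agree by the same $(I\circ H)^{-1} \steq H^{-1}\circ I^{-1}$ identity.

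The main obstacle, such as it is, is bookkeeping: keeping straight the distinction between a discrete opfibration $H$ (acting on objects and morphisms of the signatures) and its induced fanout isomorphisms $\fanoutfun{H}{K}{m}$ (which is what $H^{-1}$ really refers to in \Cref{def:derivation_functorial_action}), and checking that $H^{-1}$ and $I^{-1}$ compose to $(I\circ H)^{-1}$ \emph{as fanout isomorphisms}. Once this identity is in hand — and it follows immediately from the functoriality of the assignment $F \mapsto \fanoutfun{F}{K}{m}$ established in \Cref{prop:material-sigs-cat} together with the contravariance of inversion — everything else is a routine substitution of definitions, with all equalities being strict (exo-)equalities that hold on the nose by the generators of exo-equality for $\Pi$-types and $\Sigma$-types. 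No subtleties about fibrancy or identification types arise, since $\derivcat{H}{h}$ is defined purely combinatorially at the level of exotypes and exo-functors.
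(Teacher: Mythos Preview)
Your proposal is correct and follows essentially the same approach as the paper: unfold \Cref{def:derivation_functorial_action} and verify componentwise, using the fanout identity $(I\circ H)^{-1}\steq H^{-1}\circ I^{-1}$ for the composition case. The one refinement in the paper's proof is that it does not bother to unwind the second components of morphisms at all: since these are elements of exotypes of the form $\prd{F}(\ldots\steq\ldots)$, any two such elements are exo-equal by UIP and function extensionality for exo-equality, so once the object parts agree the morphism second components are automatic. This replaces your ``checked analogously'' step with a one-line observation.
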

\begin{proof}
Note that the desired exo-equalities are obvious on the first components of objects and morphisms in ${\derivcat{\L}{L}}$. The desired exo-equalities on the second components of morphisms follow from UIP and function extensionality. Thus, we check the exo-equalities just on the second components of objects.

To check the exo-equality of \cref{item:folds-deriv-id} on objects, observe that
\begin{align*}
\pi_2 \derivcat{(1_\L)}{1_{L}}(K, \alpha) & \steq  \lambda F.  \lambda x. 1_{Lx} (\pi_1 (1_\L^{-1} F), \alpha(1_\L^{-1} F) \\
& \steq \alpha.
\end{align*}

To check the exo-equality of \cref{item:folds-deriv-comp} on objects, calculate that
\begin{align*}
\left( \pi_2 \derivcat{(I \circ H)}{i \circ h}(K, \alpha) \right) (F)
& \steq   ih(\pi_1 ((IH)^{-1}F), \alpha((IH)^{-1}F))
  \\ & \steq  i(\bottom{H} \pi_1 ((IH)^{-1}F) , h(\pi_1 ((IH)^{-1}F), \alpha((IH)^{-1}F)))
\\ 
& \steq i(\pi_1 (I^{-1}F), h(\pi_1 (H^{-1}I^{-1}F), \alpha(H^{-1}I^{-1}F)))
\\
& \steq \left( \pi_2 (\derivcat{I}{i} \circ \derivcat{H}{h})(K, \alpha) \right) (F).\qedhere
\end{align*}
\end{proof}

We also need to know that this functor lands in discrete opfibrations.

\begin{definition}
  For an inverse exo-category $\L$ of height $p>0$, let $\L_{>0}$ be its full subcategory on the objects of rank $>0$.
  This is an inverse exo-category of height $p-1$, where the rank of all objects is reduced by one from $\L$; thus $\L_{>0}(n) \define \L(n+1)$.
\end{definition}

\begin{lemma}\label{lem:Uisfib}
  For $\L$ an inverse exo-category of height $>0$ and $M:\L(0)\to\Ustrict$, the evident forgetful functor $U: \derivcat{\L}{M}\to \L_{>0}$ is a discrete opfibration.
\end{lemma}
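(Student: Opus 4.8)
The plan is to unwind the definition of discrete opfibration for $U:\derivcat{\L}{M}\to\L_{>0}$ and reduce it to the already-established structure of the fanout exotypes of the derivative, which were computed in the course of proving \cref{prop:deriv-good}. Recall that the forgetful functor $U$ sends an object $(K,\alpha):\derivcat{\L}{M}(n)$ to $K:\L_{>0}(n)\steq\L(n+1)$, and a morphism $(f,\phi)$ to $f$. To show $U$ is a discrete opfibration I must show that for every $(K,\alpha):\derivcat{\L}{M}(n)$ and every $m<n$, the induced map
\[ U : \fanout{K,\alpha}{m} \longrightarrow \fanout{K}{m} \]
is an isomorphism, where on the right $\fanout{K}{m}$ is the fanout of $K$ regarded as an object of $\L_{>0}$, i.e.\ $\fanout{K}{m}$ computed in $\L_{>0}$, which is $\sm{L:\L(m+1)}\hom_\L(K,L)$.

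First I would observe that, by definition, $\fanout{K,\alpha}{m} \steq \sm{(L,\beta):\derivcat{\L}{M}(m)}\hom_{\derivcat{\L}{M}}((K,\alpha),(L,\beta))$, and that the map $U$ on this exotype is exactly the first-projection-type operation $(L,\beta,f,\phi)\mapsto (L,f)$. Now I would invoke the chain of isomorphisms already carried out in the proof of \cref{prop:deriv-good}: there it is shown that
\[ \fanout{K,\alpha}{m} \;\strictiso\; \sm{G:\fanout{K}{m+1}}\Big(\sm{\beta:\prd{\fanout{L}{0}}M(L)}\prd{(N,g):\fanout{L}{0}}\alpha(N,gf)\steq\beta(N,g)\Big) \;\strictiso\; \fanout{K}{m+1}, \]
the last step because the inner $\Sigma$-type is (exo-)contractible — there is a unique $\beta$ making the equations hold, namely $\beta(N,g)\define\alpha(N,gf)$. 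The only thing to check is that, tracing through these isomorphisms, the composite agrees with the map $U$ above, up to the identification $\fanout{K}{m+1}$ (fanout in $\L$) $\steq\fanout{K}{m}$ (fanout in $\L_{>0}$), which holds by definition since $\L_{>0}(m)\steq\L(m+1)$ and $\hom_{\L_{>0}}(K,L)\steq\hom_\L(K,L)$ for objects $K,L$ of rank $>0$. This is a routine bookkeeping check: the first isomorphism rearranges the pair $((L,\beta),(f,\phi))$ into $((L,f),(\beta,\phi))$, i.e.\ into a pair whose first component is precisely $U(L,\beta,f,\phi)$, and the second isomorphism discards the contractible second component; so the composite is $U$ followed by the identity.

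I expect the only mild obstacle to be the care needed in identifying $\fanout{K}{m}$ as computed in $\derivcat{\L}{M}$ versus in $\L_{>0}$ and checking that the isomorphism produced by the \cref{prop:deriv-good} computation is literally (on the nose, up to exo-equality, using UIP and function extensionality as in \cref{lem:der_fun}) the action of $U$ on fanouts, rather than merely some isomorphism between the same exotypes. Once that identification is pinned down, the result is immediate. An alternative, slightly more self-contained route would be to directly exhibit the inverse to $U:\fanout{K,\alpha}{m}\to\fanout{K}{m}$: given $(L,f):\fanout{K}{m}$ in $\L_{>0}$, i.e.\ $L:\L(m+1)$ and $f:\hom_\L(K,L)$, set $\beta(N,g)\define\alpha(N,gf)$ for $(N,g):\fanout{L}{0}$ and $\phi\define\lambda(N,g).\srefl$; then $(L,\beta):\derivcat{\L}{M}(m)$, $(f,\phi):\hom_{\derivcat{\L}{M}}((K,\alpha),(L,\beta))$, and one checks by UIP and function extensionality that this is a two-sided inverse to $U$ on fanouts. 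Either way the proof is short; I would present the second, explicit version for clarity.
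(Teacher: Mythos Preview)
Your proposal is correct, and the explicit second route you say you would present is exactly the paper's proof: it defines $U^{-1}(L,f)\define((L,\beta),(f,\gamma))$ with $\beta(F)\define\alpha(F\circ f)$ and $\gamma$ given by $\srefl$, then checks the two round-trips using function extensionality and UIP. Your first route via the \cref{prop:deriv-good} computation is a valid alternative presentation of the same argument.
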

\begin{proof}
  Consider a $(K,\alpha):\derivcat{\L}{M}$ where we have $K: \L(n+1)$ as well as a term $\alpha: \prd{F: \fanout{K}{0}} M(\pi_1 F)$, and a term $(L,f): \fanout{K}{m}$ composed of $L: \L(m)$ and $f: \hom(K,L)$.
  To show that the map on fanouts induced by $U$ is an isomorphism, we define a putative inverse $U^{-1}$ by letting $U^{-1} (L,f)$ be $((L,\beta),(f,\gamma))$, where we define
  $\beta: \prd{F: \fanout{L}{0}} M(\pi_1 F)$
  by $\beta(F) \define \alpha(F \circ f)$, and obtain
  $\gamma: \prd{F: \fanout{L}{0}} \alpha(F \circ f) \steq \beta (F)$
  by construction.

Clearly, $U U^{-1} \steq 1_{\fanout{K}{m}}$. To show $U^{-1} U \steq 1_{\fanout{(K,\alpha)}{m}}$, consider a $((L,\beta),(f,\gamma)): \fanout{(K,\alpha)}{m}$. 
We get that
\[ U^{-1} U((L,\beta),(f,\gamma)) \steq ((L,\beta'),(f,\gamma'))\] 
and
\[\gamma^{-1} * \gamma': \prd{F: \fanout{L}{0}}  \beta (F) \steq \beta'(F)  .
\]
By function extensionality, $\beta \steq \beta'$; and by UIP and function extensionality, $\gamma \steq \gamma'$.
\end{proof}

\begin{lemma}\label{lem:fibleftcan}
  For exo-functors $F:\L\to \M$ and $G:\M\to \N$ such that $G \circ F$ and $G$ are discrete opfibrations, also $F$ is a discrete opfibration.
\end{lemma}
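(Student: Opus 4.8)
The statement is a standard ``two-out-of-three''-style cancellation property for discrete opfibrations between inverse exo-categories, and the proof is a direct unwinding of the definition of discrete opfibration in terms of fanout exotypes. The plan is to show that each map $\fanoutfun{F}{K}{m} : \fanout{K}{m} \to \fanout{FK}{m}$ is an isomorphism, given that $\fanoutfun{(G\circ F)}{K}{m}$ and $\fanoutfun{G}{FK}{m}$ are isomorphisms. The key observation is the factorization $\fanoutfun{(G\circ F)}{K}{m} \steq \fanoutfun{G}{FK}{m} \circ \fanoutfun{F}{K}{m}$, which holds because applying $G$ after $F$ to a pair $(L,f):\fanout{K}{m}$ produces $(GFL, G(Ff))$, exactly what one gets by first forming $(FL,Ff):\fanout{FK}{m}$ and then applying $\fanoutfun{G}{FK}{m}$.

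First I would record this factorization carefully from \cref{def:derivation_functorial_action}'s ambient setup (or rather from the notation introduced just before it): for a rank-preserving exo-functor $H$ and $K:\L(n)$, the map $\fanoutfun{H}{K}{m}$ sends $(L,f)$ to $(HL, Hf)$, so composition of functors corresponds to composition of these fanout maps, and the identity functor induces the identity fanout map. Then, with $\psi \steq \fanoutfun{G}{FK}{m}$, $\chi \steq \fanoutfun{F}{K}{m}$, and $\psi\circ\chi \steq \fanoutfun{(G\circ F)}{K}{m}$ an isomorphism (by hypothesis) and $\psi$ an isomorphism (by hypothesis), standard algebra gives $\chi \steq \psi^{-1}\circ(\psi\circ\chi)$, a composite of isomorphisms, hence an isomorphism. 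Since $K$, $n$, and $m<n$ were arbitrary, $F$ is a discrete opfibration.

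The only genuine subtlety is bookkeeping about exo-equality: the factorization of fanout maps holds only up to exo-equality (because functoriality of exo-functors is witnessed by exo-equalities), so I should invoke UIP to see that the chosen witnesses don't matter, exactly as is done throughout \cref{lem:der_fun,lem:Uisfib}. I expect this to be a short proof with no real obstacle; the main thing to get right is stating the composition law for $\fanoutfun{(-)}{K}{m}$ precisely enough that the cancellation argument goes through, and making sure the isomorphisms are isomorphisms of exotypes in the sense of $\strictiso$ rather than mere bijections. If a cleaner route is desired, one can also phrase it as: $\IC(p)$ embeds into an exo-category where ``discrete opfibration'' is ``componentwise iso'', and componentwise-iso maps satisfy two-out-of-three; but the direct computation above is shortest.
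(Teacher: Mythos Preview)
Your proposal is correct and matches the paper's proof essentially verbatim: both use the factorization $\fanoutfun{(G\circ F)}{K}{m} \steq \fanoutfun{G}{FK}{m} \circ \fanoutfun{F}{K}{m}$ (the paper displays this as a commutative triangle) and then two-out-of-three for exo-isomorphisms. The paper is terser, omitting the UIP bookkeeping you mention and simply calling the triangle ``strictly commutative,'' but the argument is the same.
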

\begin{proof}
  This is a standard lemma in category theory, but we reproduce the proof.
  Consider a
 $K:\L(n)$. The following strictly commutative diagram shows that $F$ is an isomorphism on fanouts, and thus a discrete opfibration.
 \[   
 \begin{tikzcd}[ampersand replacement = \&,baseline=(bottom.base)]
   \fanout{K}{m} \ar[r, "F"] \ar[dr, "G \circ F"', "\cong"]
   \&  
   \fanout{FK}{m} \ar[d, "G", "\cong"']
   \\
   \&
   |[alias=bottom]|
   \fanout{GFK}{m} 
  \end{tikzcd}\qedhere
 \]
\end{proof}

\begin{proposition}\label{prop:deriv-opfib}
The functor $\derivcat{H}{h}: \derivcat{\L}{L} \to \derivcat{\M}{M}$ from \cref{def:derivation_functorial_action} is a discrete opfibration.
\end{proposition}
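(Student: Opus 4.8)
The statement asserts that $\derivcat{H}{h}$ is again a discrete opfibration, i.e.\ that it induces isomorphisms on all fanout exotypes. The most economical route is to \emph{avoid} a direct computation with fanouts of $\derivcat{\L}{L}$ and $\derivcat{\M}{M}$, and instead deduce the result from the two lemmas immediately preceding it together with the functoriality of derivatives. The key observation is that derivation, applied to the forgetful maps, produces a commuting triangle of functors whose legs we already control.

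First I would set up the relevant square. Write $U_\L: \derivcat{\L}{L} \to \L_{>0}$ and $U_\M: \derivcat{\M}{M} \to \M_{>0}$ for the forgetful functors of \Cref{lem:Uisfib}, and let $H_{>0}: \L_{>0} \to \M_{>0}$ be the restriction of $H$ (which is again a discrete opfibration, since $H$ preserves ranks and the fanout isomorphisms of $H$ restrict to those of $H_{>0}$). The plan is to check that the square
\[
\begin{tikzcd}[ampersand replacement=\&]
  \derivcat{\L}{L} \ar[r,"\derivcat{H}{h}"] \ar[d,"U_\L"'] \& \derivcat{\M}{M} \ar[d,"U_\M"] \\
  \L_{>0} \ar[r,"H_{>0}"'] \& \M_{>0}
\end{tikzcd}
\]
commutes strictly (up to exo-equality). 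On objects this is immediate from \Cref{def:derivation_functorial_action}: $U_\M(\derivcat{H}{h}(K,\alpha)) \steq U_\M(HK,\beta) \steq HK \steq H_{>0}(U_\L(K,\alpha))$; on morphisms, $U_\M(\derivcat{H}{h}(f,\phi)) \steq Hf$, which matches $H_{>0}(U_\L(f,\phi)) \steq Hf$. So $U_\M \circ \derivcat{H}{h} \steq H_{>0} \circ U_\L$ as exo-functors.

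Now I invoke the cancellation lemma. By \Cref{lem:Uisfib}, $U_\M$ is a discrete opfibration. By the same lemma (applied to $\L$) $U_\L$ is a discrete opfibration, and $H_{>0}$ is a discrete opfibration as noted above, so by \Cref{prop:material-sigs-cat} (composition of discrete opfibrations) the composite $H_{>0} \circ U_\L$ is a discrete opfibration. Hence $U_\M \circ \derivcat{H}{h} \steq H_{>0} \circ U_\L$ is a discrete opfibration. Since $U_\M$ is also a discrete opfibration, \Cref{lem:fibleftcan} (with $F \define \derivcat{H}{h}$, $G \define U_\M$) yields that $\derivcat{H}{h}$ is a discrete opfibration, which is the claim.

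The only genuine point of care — and the step I expect to be the mild obstacle — is verifying that the square commutes \emph{strictly} rather than merely up to natural isomorphism; a discrete-opfibration cancellation argument needs honest exo-equality of functors, not equivalence. This reduces to an inspection of \Cref{def:derivation_functorial_action}, and, as sketched above, both the object part and the first-component-of-morphism part are literal exo-equalities by construction; the second-component-of-morphism part is a proof of a proposition (an exo-equality of elements of an exotype), hence automatically unique by UIP, so it cannot obstruct commutativity. One should also confirm that $H_{>0}$ really is a discrete opfibration of height $p-1$: for $K:\L(n+1)$ the fanout $\fanout{K}{m}$ computed in $\L_{>0}$ coincides with the one computed in $\L$ (both range over sorts of the appropriate lower rank with morphisms out of $K$), and $H$'s fanout isomorphism is exactly the required map, so this is automatic. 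With these routine checks in place the proof is complete.
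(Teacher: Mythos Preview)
Your proof is correct and follows essentially the same approach as the paper: set up the commuting square with $U_\L$, $U_\M$, $H_{>0}$, and $\derivcat{H}{h}$, then apply \Cref{lem:Uisfib} and \Cref{lem:fibleftcan}. Your additional remarks on strict commutativity and on $H_{>0}$ being a discrete opfibration are spelled out more explicitly than in the paper but are exactly the checks one needs.
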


\begin{proof}
The following square commutes.
\[
  \begin{tikzcd}
    \derivcat{\L}{L} \ar[r,"\derivcat{H}{h}"] \ar[d,"U"'] & \derivcat{\M}{M} \ar[d,"U"] \\
    \L_{>0} \ar[r,"{H_{>0}}"'] & \M_{>0}
  \end{tikzcd}
\]
Note that since $H$ is a discrete opfibration, so is ${H_{>0}}$.
Since both instances of $U$ are also discrete opfibrations (\cref{lem:Uisfib}), we find (using \cref{lem:fibleftcan}) that $\derivcat{H}{h}$ is a discrete opfibration.
\end{proof}

\begin{proposition}\label{prop:deriv-func}
  Let $\L$ be an inverse exo-category of height $>0$.
  Then we have an exo-functor $\derivcat{\L}{}$ from $\L(0) \to \Ustrict$ (with exo-category structure inherited from $\Ustrict$) to the exo-category of inverse exo-categories and discrete opfibrations over $\L_{>0}$.
\end{proposition}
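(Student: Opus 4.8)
The plan is to assemble the functor $\derivcat{\L}{}$ out of the pieces already constructed above, checking only that they cohere into a genuine functor. On objects, $\derivcat{\L}{}$ sends $L:\L(0)\to\Ustrict$ to the inverse exo-category $\derivcat{\L}{L}$ (\Cref{app:defn:derivcat}), which comes equipped with the forgetful discrete opfibration $U:\derivcat{\L}{L}\to\L_{>0}$ of \Cref{lem:Uisfib}; this exhibits it as an object of the slice category of inverse exo-categories and discrete opfibrations over $\L_{>0}$. On morphisms, given $h:\prd{x:\L(0)}Lx\to L'x$ (i.e.\ a morphism $L\to L'$ in $\L(0)\to\Ustrict$), I would apply \Cref{def:derivation_functorial_action} in the special case $\L=\M$ and $H=1_\L$ (so that the discrete opfibration condition on $H$ is trivial), obtaining $\derivcat{(1_\L)}{h}:\derivcat{\L}{L}\to\derivcat{\L}{L'}$.

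The two things to verify are: (1) this morphism is a discrete opfibration over $\L_{>0}$, and (2) the assignment respects identities and composition. For (1), \Cref{prop:deriv-opfib} already gives that $\derivcat{(1_\L)}{h}$ is a discrete opfibration; and the square in the proof of \Cref{prop:deriv-opfib} (with $H=1_\L$, so $H_{>0}=1_{\L_{>0}}$) shows precisely that $\derivcat{(1_\L)}{h}$ commutes with the two copies of $U$, hence is a morphism in the slice category over $\L_{>0}$. For (2), I would invoke \Cref{lem:der_fun}: part~\ref{item:folds-deriv-id} gives $\derivcat{(1_\L)}{1_L}\steq 1_{\derivcat{\L}{L}}$, so identities are preserved; and part~\ref{item:folds-deriv-comp}, specialized to $\L=\M=\N$ and $H=I=1_\L$, gives $\derivcat{(1_\L)}{h'}\circ\derivcat{(1_\L)}{h}\steq\derivcat{(1_\L)}{h'\circ h}$ where $(h'\circ h)(x,\ell)\steq h'(x,h(x,\ell))$ is exactly composition in $\L(0)\to\Ustrict$ (componentwise composition in $\Ustrict$). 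Thus composition is preserved on the nose, as an exo-equality.

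I do not anticipate a serious obstacle here: every ingredient is already in place, and the proof is essentially bookkeeping, checking that the functorial action of derivation from \Cref{def:derivation_functorial_action,lem:der_fun} restricts correctly to the slice over $\L_{>0}$. The one point requiring mild care is confirming that the relevant identities from \Cref{lem:der_fun} hold \emph{as exo-equalities} and not merely up to isomorphism---but the statements of \Cref{lem:der_fun} are already phrased with $\steq$, using UIP and function extensionality for the second components, so this is immediate. If desired one could be slightly more pedantic about whether $\derivcat{\L}{}$ should be regarded as landing in the slice category or merely in the category of inverse exo-categories equipped with a discrete opfibration to $\L_{>0}$; since the forgetful maps $U$ are canonical and the naturality square commutes strictly, either reading works and the two are the same.
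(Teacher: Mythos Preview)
Your proposal is correct and follows exactly the paper's approach: take $H=1_\L$ in \Cref{def:derivation_functorial_action}, and invoke \Cref{lem:Uisfib,prop:deriv-opfib} to see that everything lands in discrete opfibrations over $\L_{>0}$. You have simply spelled out in more detail the functoriality check via \Cref{lem:der_fun}, which the paper leaves implicit in its one-line proof.
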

\begin{proof}
  Take $H$ to be the identity in \cref{def:derivation_functorial_action}.
  \cref{lem:Uisfib,prop:deriv-opfib} show that this functor lands in discrete opfibrations over $\L_{>0}$.
\end{proof}

\begin{remark}\label{rem:two-functorialities}
  Note that \cref{def:derivation_functorial_action,prop:deriv-opfib} wrap up two kinds of functoriality in one statement:
  \begin{enumerate}
  \item For a fixed diagram signature $\L$, the derivative $\derivcat{\L}{}$ is functorial on maps of $\L(0)$-indexed families of exotypes.
    This is the functor $\derivcat{\L}{}$ discussed in \cref{prop:deriv-func}, and the one referred to when we wrote $\derivcat{\L}{\copair{1_{\bottom{M}}}{\hat{a}}}$ towards the beginning of \cref{sec:indisc-folds}.
  \item A morphism (i.e., discrete opfibration) $F : \L \to \M$ between diagram signatures induces a map from the derivative $\derivcat{\M}{M}$ of $\M$ at some family $M : \M(0) \to \U$ to the derivative $\derivcat{\L}{F^*M}$ of $\L$ at the induced family $F^*M : \L(0) \to \U$.
  \end{enumerate}
  The definition of functorial signature, given in \cref{def:abstract_signature}, distinguishes more explicitly between these two functorialities.
\end{remark}

Now recall from ordinary category theory that the category of discrete opfibrations over a given category $\C$ is equivalent to the functor category $[\C,\Set]$.
By the same argument in exo-category theory, the codomain of the functor $\derivcat{\L}{}$ from \cref{prop:deriv-func} is equivalent to the exo-functor exo-category $[\L_{>0}, \Ustrict]$.

\begin{theorem}\label{prop:deriv-struc}
  For an inverse exo-category $\L$ of height $p>0$, the exo-functor exo-category $\func{\L}{\Ustrict}$ is equivalent (as an exo-category) to the \emph{Artin gluing}\index{gluing!Artin} $\mathsf{Gl}(\derivcat{\L}{})$ of the functor $\derivcat{\L}{}$ from \cref{prop:deriv-func}.
  The latter is equivalently the following exo-category:
  \begin{itemize}
  \item Its objects are pairs $(\bottom{M},\derivdia{M})$, where $\bottom{M} : \L(0) \to \Ustrict$ and $\derivdia{M}$ is a functor $\derivcat{\L}{\bottom{M}} \to \Ustrict$.
  \item A morphism $(\bottom{M},\derivdia{M}) \to (\bottom{N},\derivdia{N})$ consists of a family of functions $\bottom\alpha : \prd{K:\L(0)} \bottom{M}(K) \to \bottom{N}(K)$ and a natural transformation
   \[ 
    \begin{tikzcd}[column sep = huge,row sep=small]
      \derivcat{\L}{\bottom{M}} \ar[rd, "\derivdia{M}"] \ar[dd, "\_ \circ \bottom{\alpha}"', ""'{name=F}]
      \\
      &
      |[alias=S]| \Ustrict
      \\
      \derivcat{\L}{\bottom{N}} \ar[ru, "\derivdia{N}"']
      \arrow[from = F, to = S, phantom, "\Downarrow \alpha'" description]
    \end{tikzcd}
   \]
  \end{itemize}  
\end{theorem}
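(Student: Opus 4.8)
The plan is to establish the equivalence of exo-categories $\func{\L}{\Ustrict} \simeq \mathsf{Gl}(\derivcat{\L}{})$ by exhibiting an explicit pair of pseudo-inverse exo-functors and then invoking the standard equivalence between discrete opfibrations over $\L_{>0}$ and the functor exo-category $[\L_{>0},\Ustrict]$ to translate the Artin gluing into the more concrete description in terms of pairs $(\bottom M,\derivdia M)$. First I would define the exo-functor $\Phi : \func{\L}{\Ustrict} \to \mathsf{Gl}(\derivcat{\L}{})$ sending an exo-functor $M:\L\to\Ustrict$ to the pair whose first component $\bottom M$ is the restriction $M|_{\L(0)}$ and whose second component is the composite $\derivcat{\L}{\bottom M}\xrightarrow{U}\L_{>0}\xrightarrow{M|_{\L_{>0}}}\Ustrict$, where $U$ is the discrete opfibration of \cref{lem:Uisfib}. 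On morphisms $\alpha:M\Rightarrow N$, the first component is the restriction of $\alpha$ to rank-$0$ sorts, and the second component is obtained by whiskering $\alpha|_{\L_{>0}}$ with $U$, after checking that this is a natural transformation over the comparison map $\derivcat{H}{h}$ (with $H$ the identity) using \cref{prop:deriv-opfib} and the functoriality in \cref{lem:der_fun}.

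Next I would construct the quasi-inverse $\Psi$. Given $(\bottom M,\derivdia M)$, we must produce an exo-functor $\L\to\Ustrict$. On rank-$0$ sorts we use $\bottom M$. For $K:\L(n+1)$, observe that $K$ determines an object of $\derivcat{\L}{\bottom M}(n)$ only after a choice of $\alpha:\prd{F:\fanout K 0}\bottom M(\pi_1 F)$; the correct definition is to \emph{not} make such a choice but rather to define $\Psi(M)(K)$ as the limit (or, more precisely, the total exotype) over all such $\alpha$ of $\derivdia M(K,\alpha)$, fibred over the matching object $\match_K M$ --- i.e.\ $\Psi(M)(K) \define \sm{\alpha:\match_K(\bottom M \text{-part})} \derivdia M(K,\alpha)$, using that $\match_K M$ is built from the fanout exotypes exactly as in \cref{def:matching}. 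The functorial action on a morphism $f:K\to L$ in $\L$ is induced by precomposition $F\mapsto F\circ f$ on fanouts together with the functorial action of $\derivdia M$ on the morphism $(f,\gamma)$ supplied by \cref{lem:Uisfib}. One then checks $\Phi\Psi\cong 1$ and $\Psi\Phi\cong 1$: the former because restricting $\Psi(M)$ to rank $0$ recovers $\bottom M$ and the derived part recovers $\derivdia M$ up to the canonical iso coming from $U\circ(\text{section})$; the latter because an exo-functor $M$ is reconstructed from its rank-$0$ part and the glued data, using that $M(K)\to\match_K M$ need not be anything special here (we are \emph{not} assuming Reedy fibrancy --- that comes later, in \cref{thm:reedy-struc}, by cutting down to the fibrant classifier).

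The abstract half --- identifying $\mathsf{Gl}(\derivcat{\L}{})$ with the stated concrete exo-category --- follows from two facts recalled just before the statement: the exo-category of discrete opfibrations over $\L_{>0}$ is equivalent to $[\L_{>0},\Ustrict]$, and $\derivcat{\L}{}$ (\cref{prop:deriv-func}) is a functor into this, so its Artin gluing unwinds to pairs of an object $\bottom M:\L(0)\to\Ustrict$ together with an object of $[\derivcat\L{\bottom M},\Ustrict]$, with morphisms a map $\bottom\alpha$ of the base data plus a natural transformation over the induced reindexing functor $\_\circ\bottom\alpha$; this reindexing functor is exactly $\derivcat{(1_\L)}{\bottom\alpha}$ of \cref{def:derivation_functorial_action}, whose discrete-opfibration-ness is \cref{prop:deriv-opfib}. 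The main obstacle I anticipate is bookkeeping: verifying that the comparison natural transformations in the definition of a morphism of $\mathsf{Gl}(\derivcat{\L}{})$ agree on the nose (up to the exo-equalities of \cref{lem:der_fun}, using UIP and function extensionality as there) with the data obtained by restricting and whiskering an exo-natural transformation $\alpha:M\Rightarrow N$; the fanout-index manipulations $H^{-1}(F\circ Hf)\steq H^{-1}(F)\circ f$ are exactly the kind of identity one must chase carefully, but they are all instances of the computations already carried out in \cref{def:derivation_functorial_action,lem:der_fun}, so no genuinely new idea is required.
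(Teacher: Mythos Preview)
Your definition of $\Phi$ contains a genuine error. You set the second component of $\Phi(M)$ to be the composite $\derivcat{\L}{\bottom M}\xrightarrow{U}\L_{>0}\xrightarrow{M|_{\L_{>0}}}\Ustrict$, which sends $(K,\alpha)\mapsto MK$ independently of $\alpha$. This cannot yield an equivalence: with your $\Psi$, one computes
\[
\Psi\Phi(M)(K)\;\steq\;\sm{\alpha}\,MK\;\cong\;\Big(\prd{F:\fanout{K}{0}} M(\pi_1 F)\Big)\times MK,
\]
which is not $MK$ (e.g.\ for the graph signature $A\rightrightarrows O$ you would get $MO\times MO\times MA$ instead of $MA$). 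The correct second component sends $(K,\alpha)$ to the \emph{strict fiber} of the map $MK\to\prd{F:\fanout{K}{0}} M(\pi_1 F)$, $x\mapsto \lambda F.\,M(\pi_2 F)(x)$, over $\alpha$; this is precisely what \cref{lem:matching-fibers} unwinds. With that correction your $\Phi$ and $\Psi$ are indeed mutually inverse up to isomorphism, and the rest of your bookkeeping goes through.

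For comparison, the paper avoids the explicit fiber construction altogether. It observes that $\L$ is the \emph{collage} of the profunctor $P$ from $\L(0)$ to $\L_{>0}$ given by the rank-0 hom-sets, and invokes the universal property of collages: $[\L,\Ustrict]$ is equivalent to triples $(\bottom{M},G,\tau)$ with $G:\L_{>0}\to\Ustrict$ and $\tau:G\Rightarrow\{P\text{-weighted limit of }\bottom M\}$; the weighted limit is then identified with the diagram corresponding to the discrete opfibration $\derivcat{\L}{\bottom M}\to\L_{>0}$. The pair $(G,\tau)$ is equivalently a functor $\derivcat{\L}{\bottom M}\to\Ustrict$ by the standard slice-of-presheaves argument. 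Your hands-on approach (once repaired) and the paper's profunctor/collage argument establish the same equivalence; the paper's route is shorter because it outsources the fiber manipulation to the universal property, while yours makes the data explicit, which is useful later when one actually computes with $\derivdia{M}$ (as in \cref{lem:matching-fibers}). Your treatment of the ``abstract half'' (unwinding the Artin gluing via discrete opfibrations $\simeq[\L_{>0},\Ustrict]$) matches the paper's.
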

\begin{proof}
  By definition, the Artin gluing is the comma exo-category of the identity functor of $[\L_{>0}, \Ustrict]$ over $\derivcat{\L}{}$.
  Thus, its objects are triples consisting of $\bottom{M} : \L(0) \to \Ustrict$, a functor $\derivdia{M} : \L_{>0} \to \Ustrict$, and a natural transformation from $\derivdia{M}$ to the $\L_{>0}$-diagram corresponding to $\derivcat{\L}{\bottom{M}}$.
  But the latter two data are equivalent to a functor $\derivcat{\L}{\bottom{M}} \to \Ustrict$.
  We leave it to the reader to similarly identify the morphisms.

  Now let $P$ be the profunctor from $\L(0)$ (considered as a discrete exo-category) to $\L_{>0}$ that is defined by the hom-functors of $\L$ with rank-0 codomain.
  Then $\L$ is the \emph{collage} of $P$, i.e., it is the disjoint union of $\L(0)$ and $\L_{>0}$ with extra morphisms from the latter to the former supplied by $P$.
  The collage of a profunctor has a universal property (see, e.g.,~\cite{street:cauchy-enr,wood:proarrows-ii}), which in this case says that $\func{\L}{\Ustrict}$ is equivalent to the category of triples consisting of a functor $\bottom{M} : \L(0) \to \Ustrict$, a functor $\derivdia{M} : \L_{>0} \to \Ustrict$, and a natural transformation from $P$ to the induced profunctor $\Ustrict(\derivdia{M},\bottom{M})$.
  The latter is equivalently a natural transformation from $\derivdia{M}$ to the $P$-weighted limit of $\bottom{M}$.
  (This is an instance of~\cite[Theorem 4.5]{shulman:reedy}.)
  But the latter weighted limit is precisely the $\L_{>0}$-diagram corresponding to $\derivcat{\L}{\bottom{M}}$.
\end{proof}

We can also make this functorial:

\begin{proposition}\label{prop:deriv-struc-fun}
  For $F:\hom_{\IC(p)}(\L,\M)$, the following square commutes up to natural exo-isomorphism:
  \[
    \begin{tikzcd}
      \func{\M}{\Ustrict} \ar[r,"\simeq"] \ar[d] \ar[dr,phantom,"\cong"] & \mathsf{Gl}(\derivcat{\M}{}) \ar[d] \\
      \func{\L}{\Ustrict} \ar[r,"\simeq"'] & \mathsf{Gl}(\derivcat{\L}{})
    \end{tikzcd}
  \]
  Here the horizontal arrows are the equivalences of exo-categories of \cref{prop:deriv-struc}, and the right-hand arrow sends $(\bottom{M},\derivdia{M})$ to $(\bottom{M}\circ F(0), \derivdia{M} \circ \derivcat{F}{})$.
  \qed
\end{proposition}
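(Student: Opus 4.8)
The plan is to reduce this naturality statement to the already-established equivalence of \cref{prop:deriv-struc} together with the functoriality of derivatives from \cref{lem:der_fun}. First I would unwind both composite functors $\func{\M}{\Ustrict} \to \mathsf{Gl}(\derivcat{\L}{})$ on objects. Going right-then-down: an object $(\bottom{M},\derivdia{M})$ of $\mathsf{Gl}(\derivcat{\M}{})$ maps to $(\bottom{M}\circ F(0),\; \derivdia{M}\circ \derivcat{F}{})$, where $\derivcat{F}{}$ abbreviates $\derivcat{F}{\lambda x.1}$ (the discrete opfibration $\derivcat{\L}{\bottom{M}\circ F(0)} \to \derivcat{\M}{\bottom{M}}$ from \cref{def:derivation_functorial_action} applied with $H \eqdef F$, $h \eqdef \lambda x.1$). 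Going down-then-right: an exo-functor $M : \M \to \Ustrict$ maps under the left vertical arrow to $M\circ F$, which then maps under the bottom equivalence to the pair $\big((M\circ F)(0),\; \derivdia{(M\circ F)}\big)$, where the second component is the functor $\derivcat{\L}{(M\circ F)(0)} \to \Ustrict$ extracted from $M\circ F$. The first components agree strictly since $(M\circ F)(0) \steq \bottom{M}\circ F(0)$ by definition of how the bottom equivalence extracts its first component. The content is therefore the comparison of second components: I must exhibit a natural isomorphism between $\derivdia{M}\circ \derivcat{F}{}$ and $\derivdia{(M\circ F)}$ as functors $\derivcat{\L}{\bottom{M}\circ F(0)} \to \Ustrict$.

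Next I would identify $\derivdia{(M\circ F)}$ concretely. By the construction underlying \cref{prop:deriv-struc} (or equivalently the unfolding of \cref{prop:deriv-good} and \cref{def:matching}), the functor extracted from an exo-functor $N : \L \to \Ustrict$ sends a sort $(K,\alpha) : \derivcat{\L}{N(0)}(n)$ to $\match'_{(K,\alpha)}(N)$, the relevant fiber of the matching object, which by \cref{prop:dfib-matching} and the computations in \cref{prop:deriv-good} reduces to $N(K)$ together with the data recording its dependencies of positive rank. The key observation is then that the derivative functor $\derivcat{F}{}$ is, on objects, $(K,\alpha)\mapsto (FK,\beta)$ with $\beta$ determined by $\alpha$ via $F^{-1}$ on fanouts (as in \cref{def:derivation_functorial_action}); hence $\big(\derivdia{M}\circ\derivcat{F}{}\big)(K,\alpha) \steq \derivdia{M}(FK,\beta)$, which unfolds to (a fiber of) $M(FK)$-data, and $M(FK) \steq (M\circ F)(K)$ strictly. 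So I expect the comparison to actually be a strict exo-equality on objects, with the naturality squares commuting by the discrete-opfibration property of $F$ (which makes the $F^{-1}$ reindexing in $\derivcat{F}{}$ match the reindexing built into the matching objects). The cleanest way to organize this is: invoke \cref{prop:dfib-matching} to get $\match_K(M\circ F) \cong \match_{FK}M$ for each $K$, check this isomorphism is natural in $(K,\alpha)$ when restricted to fibers over $\L_{>0}$, and observe that this is exactly the required natural isomorphism of second components.

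For the morphism components, I would note that a morphism of $\mathsf{Gl}$ consists of a family $\bottom\alpha$ together with a natural transformation $\alpha'$ between the two composites $\derivdia{(-)}$; both the right vertical functor and the bottom equivalence act on these in ways whose compatibility follows from \cref{lem:der_fun}\cref{item:folds-deriv-comp} (functoriality of derivatives under composition of discrete opfibrations) applied to $F$ and the discrete opfibrations induced by $\bottom\alpha$. This is the step I would write out most carefully but expect to be routine bookkeeping once the object-level comparison is pinned down. The main obstacle I anticipate is purely notational: keeping straight the three layers of reindexing (the profunctor/collage description of $\L$, the fanout-level action $F^{-1}$, and the matching-object fibers) so that the claimed square genuinely commutes up to \emph{natural} isomorphism rather than merely objectwise. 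I would manage this by working throughout with the collage/weighted-limit presentation used in the proof of \cref{prop:deriv-struc}, where the relevant isomorphisms are instances of the cited naturality results of Street and Shulman on collages of profunctors, so that naturality in $F$ is inherited formally from naturality of those universal properties.
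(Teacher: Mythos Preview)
The paper does not actually give a proof of this proposition: it is stated with a bare \qed, treating it as a routine functoriality check parallel to \cref{prop:deriv-struc}. Your proposal is a correct and reasonable fleshing-out of that omitted verification. You have identified the right decomposition (first components agree strictly; the content is the comparison $\derivdia{M}\circ \derivcat{F}{} \cong \derivdia{(M\circ F)}$), and your plan to obtain this via \cref{prop:dfib-matching} together with the explicit fiber description from the proof of \cref{lem:matching-fibers} is exactly right: since $F$ induces an isomorphism $\fanout{K}{0}\cong \fanout{FK}{0}$, the two relevant fibers of $M(FK)$ coincide. The appeal to \cref{lem:der_fun} for the morphism-level naturality is also appropriate. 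In short, your proposal is correct and is precisely the kind of bookkeeping the paper is gesturing at with its \qed; there is nothing further to compare.
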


Next we compare Reedy fibrancy of diagrams, using the following fundamental lemma.
Here we start to assume that our inverse exo-categories are actually diagram signatures, although we will not need the full strength of \cref{def:material-sig}: for this result it would suffice to assume that the exotypes $\L(n)$ are cofibrant rather than sharp.

\begin{lemma}\label{lem:matching-fibers}
  Let $p>0$ and $\L: \IC(p)$, and let $M:\L\to \Ustrict$ be an exo-functor corresponding to $(\bottom{M}, \derivdia{M})$ under \cref{prop:deriv-struc}.
  Then for any $K:\L(n+1)$, the fibers of the map $MK \to \match_K M$ are precisely the fibers of all the maps $\derivdia{M}(K,\alpha) \to \match_{(K,\alpha)}\derivdia{M}$ as $\alpha$ varies.
\end{lemma}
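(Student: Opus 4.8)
The plan is to unfold both sides of the claimed identification of fiber families directly from the definitions. Recall from \cref{prop:deriv-struc} that an exo-functor $M:\L\to\Ustrict$ corresponds to $(\bottom{M},\derivdia{M})$ where $\derivdia{M}:\derivcat{\L}{\bottom{M}}\to\Ustrict$, and that for $K:\L(n+1)$ the objects of $\derivcat{\L}{\bottom{M}}$ over $K$ are exactly the pairs $(K,\alpha)$ with $\alpha:\prd{F:\fanout{K}{0}} \bottom{M}(\pi_1 F)$; moreover $MK$, as a type family over such $\alpha$'s, is precisely $\derivdia{M}(K,\alpha)$ (this is how the equivalence of \cref{prop:deriv-struc} is set up: the data of $M$ at the sort $K$ is split into its dependence on rank-0 sorts, recorded by $\alpha$, and the ``residual'' type $\derivdia{M}(K,\alpha)$). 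So a point of the total space $MK$ is the same as a pair $(\alpha, x)$ with $x:\derivdia{M}(K,\alpha)$.

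First I would compute the matching object $\match_K M$ in these terms. By \cref{def:matching}, $\match_K M$ is a sub-exotype of $\prd{m<n+1}{(L,f):\fanout{K}{m}} ML$. I would separate the $m=0$ contribution from the $m>0$ contribution: the $m=0$ part of a compatible family $d$ is exactly a function $\fanout{K}{0}\to\bottom{M}(\pi_1(-))$, i.e.\ an $\alpha$ as above, while the compatibility conditions linking rank-0 data to higher-rank data, together with the $m>0$ part, reassemble — via the collage/gluing description in \cref{prop:deriv-struc} and the computation of fanouts of $\derivcat{\L}{\bottom{M}}$ carried out in the proof of \cref{prop:deriv-good} (where $\fanout{K,\alpha}{m}\strictiso\fanout{K}{m+1}$) — into precisely an element of $\match_{(K,\alpha)}\derivdia{M}$ together with the choice of $\alpha$. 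Thus I expect a natural isomorphism $\match_K M\cong\sm{\alpha}\match_{(K,\alpha)}\derivdia{M}$, compatible with the obvious isomorphism $MK\cong\sm{\alpha}\derivdia{M}(K,\alpha)$, and such that the canonical map $MK\to\match_K M$ corresponds fiberwise over $\alpha$ to the map $\derivdia{M}(K,\alpha)\to\match_{(K,\alpha)}\derivdia{M}$ (both being induced by acting with the morphisms out of $K$, resp.\ out of $(K,\alpha)$, and noting that in $\derivcat{\L}{\bottom{M}}$ the morphisms out of $(K,\alpha)$ correspond via $U$ to the rank-$>0$ morphisms out of $K$ by \cref{lem:Uisfib}). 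Once this square of isomorphisms commutes, the statement about fibers is immediate: the fiber of $MK\to\match_K M$ over a point lying over $\alpha$ is the fiber of $\derivdia{M}(K,\alpha)\to\match_{(K,\alpha)}\derivdia{M}$ over the corresponding point, and as $\alpha$ ranges over all rank-0 data we sweep out exactly all fibers of all the maps $\derivdia{M}(K,\alpha)\to\match_{(K,\alpha)}\derivdia{M}$.

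The main obstacle will be the careful bookkeeping in identifying $\match_K M$ with $\sm{\alpha}\match_{(K,\alpha)}\derivdia{M}$: one must check that the compatibility constraints in \cref{def:matching} at rank $0$ versus higher rank interact correctly, and that the reindexing $\fanout{K,\alpha}{m}\strictiso\fanout{K}{m+1}$ from \cref{prop:deriv-good} is compatible with the functorial actions of $M$ and $\derivdia{M}$ on these fanout exotypes. This is essentially a diagram chase in the collage/gluing picture, using that $H=1_\L$ (so the $H^{-1}$'s in \cref{def:derivation_functorial_action} disappear) and that $U$ reflects and preserves the relevant structure; I do not expect any genuinely new input beyond \cref{prop:deriv-struc,prop:deriv-good,lem:Uisfib} and the definitions, but the indices must be tracked with care. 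I would present the argument by first stating and proving the natural isomorphism of the commuting square above, and then deducing the fiber statement in one line.
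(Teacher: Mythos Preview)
Your proposal is correct and follows essentially the same route as the paper: both arguments split $\match_K M$ into its rank-$0$ part (yielding $\alpha$) and its rank-$>0$ part, then use the fanout isomorphism $\fanout{K,\alpha}{m}\strictiso\fanout{K}{m+1}$ from \cref{prop:deriv-good} to identify the latter with $\match_{(K,\alpha)}\derivdia{M}$. The paper phrases this as a factorization of $MK\to\match_K M$ through the projection to the rank-$0$ data rather than as an explicit commuting square over $\tsm{\alpha}$, but the content is the same.
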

\begin{proof}
  If we unwind the construction of \cref{prop:deriv-struc}, we see that the functor $\derivdia{M} : \derivcat{\L}{\bottom{M}} \to \Ustrict$ sends a sort $(K,\alpha) \in \derivcat{\L}{\bottom{M}}(n)$ to the strict fiber over $\alpha$ of the map
  \begin{equation}
    MK \to \tprd{(L,f): \fanout{K}{0}} ML\label{eq:deriv-fiber}
  \end{equation}
  that sends $x:MK$ to $\lambda (L,f). Mf(x)$.
  Now recall that $\match_K M$ is a sub-exotype (cf.~\cref{sec:identity}) of $\prd{m<n+1}{(L,f):\fanout{K}{m}} ML$.
  Thus,~\eqref{eq:deriv-fiber} factors through the matching map $MK \to \match_K M$:
  \[
    \begin{tikzcd}[column sep =small]
      MK \ar[rr] \ar[dr] && \match_K M \ar[dl] \\
      &  \tprd{(L,f): \fanout{K}{0}} ML
    \end{tikzcd}
  \]
  The fiber of $\match_K M$ over $\alpha$ is a sub-exotype of $\prd{m<n}{(L,f):\fanout{K}{m+1}} ML$, which as in \cref{prop:deriv-good} is isomorphic to $\prd{m<n}{(L,f):\fanout{K,\alpha}{m}} \derivdia{M}L$.
  If we unravel the conditions defining this subtype, we find that it is isomorphic to $\match_{(K,\alpha)} \derivdia{M}$.
\end{proof}

\begin{proposition}\label{thm:deriv-reedy}
  Let $p>0$ and $\L: \IC(p)$, and let $M:\L\to \Ustrict$ be an exo-functor corresponding to $(\bottom{M}, \derivdia{M})$ under \cref{prop:deriv-struc}.
  Then $M$ is Reedy fibrant if and only if
  \begin{enumerate}
  \item $\bottom{M}$ is pointwise fibrant, i.e., it is a function $\L(0) \to \U$; and\label{item:dr1}
  \item $\derivdia{M}$ is a Reedy fibrant diagram on $\derivcat{\L}{\bottom{M}}$.\label{item:dr2}
  \end{enumerate}
\end{proposition}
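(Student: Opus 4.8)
The plan is to reduce the claim about Reedy fibrancy of $M$ to the fiberwise characterization provided by \cref{lem:matching-fibers}, handling the rank-$0$ sorts and the positive-rank sorts separately. First I would recall that $M$ is Reedy fibrant if and only if, for \emph{every} sort $K$ of $\L$, the matching map $MK \to \match_K M$ is a fibration; and that a map is a fibration precisely when all of its exo-fibers are fibrant (the definition from \cref{sec:cofibrancy}). So the whole statement becomes: ``all exo-fibers of all matching maps of $M$ are fibrant'' $\Longleftrightarrow$ \ref{item:dr1} and \ref{item:dr2}. I would then split the collection of sorts of $\L$ according to rank.

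For the rank-$0$ sorts $K:\L(0)$, there are no sorts of lower rank, so $\match_K M \cong \onetype$ (as in the \lcnamecref{eg:rfib-gph} following \cref{def:matching}), and hence the matching map $MK\to\match_K M$ is a fibration if and only if $MK$ itself is fibrant. Thus Reedy fibrancy of $M$ restricted to rank-$0$ sorts is exactly condition~\ref{item:dr1}, namely that $\bottom M : \L(0)\to\Ustrict$ factors through $\U$. For the sorts of positive rank, I would write each such sort as $K:\L(n+1)$ and invoke \cref{lem:matching-fibers}: the fibers of $MK\to\match_K M$ are exactly the fibers of the maps $\derivdia M(K,\alpha)\to\match_{(K,\alpha)}\derivdia M$ as $\alpha$ ranges over $\prd{F:\fanout{K}{0}}\bottom M(\pi_1 F)$. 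Since the sorts of $\derivcat{\L}{\bottom M}$ are, by \cref{app:defn:derivcat}, precisely the pairs $(K,\alpha)$ with $K:\L(n+1)$, quantifying over all positive-rank $K$ of $\L$ and all $\alpha$ is the same as quantifying over all sorts of $\derivcat{\L}{\bottom M}$. Therefore ``all fibers of all matching maps of $M$ at positive-rank sorts are fibrant'' is equivalent to ``all fibers of all matching maps of $\derivdia M$ are fibrant'', i.e., to condition~\ref{item:dr2} that $\derivdia M$ is Reedy fibrant on $\derivcat{\L}{\bottom M}$.

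Combining the two cases: $M$ is Reedy fibrant iff the matching maps at rank-$0$ sorts are fibrations (equivalently \ref{item:dr1}) \emph{and} the matching maps at positive-rank sorts are fibrations (equivalently \ref{item:dr2}), which is the desired biconditional. One small bookkeeping point to make explicit is that \cref{lem:matching-fibers} requires $\L:\IC(p)$ and the identification of $M$ with $(\bottom M,\derivdia M)$ via \cref{prop:deriv-struc}, both of which are hypotheses here; and that \cref{prop:deriv-good} guarantees $\derivcat{\L}{\bottom M}$ is again a diagram signature (using that $\bottom M$ is pointwise fibrant), so that it even makes sense to speak of Reedy fibrant diagrams on it --- this is why condition~\ref{item:dr1} is genuinely needed before \ref{item:dr2} can be stated, and in the forward direction it is supplied by the rank-$0$ analysis.

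The main obstacle is not conceptual but a matter of care in the bookkeeping of \cref{lem:matching-fibers}: one must check that ``the fibers of $MK\to\match_K M$'' genuinely decompose, over the choices of $\alpha$, into ``the fibers of $\derivdia M(K,\alpha)\to\match_{(K,\alpha)}\derivdia M$'' in a way compatible with the notion of fibration (i.e. that fibrancy of the former for all $K$ is \emph{equivalent}, not merely implied by or implying, fibrancy of the latter for all $(K,\alpha)$). Since \cref{lem:matching-fibers} states an equality of the two collections of fibers, this equivalence is immediate once the indexing is matched up correctly, so the proof is essentially a careful unwinding rather than a hard argument.
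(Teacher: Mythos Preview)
Your proposal is correct and follows essentially the same approach as the paper: split the sorts by rank, observe that the rank-$0$ matching objects are trivial so Reedy fibrancy there is exactly condition~\ref{item:dr1}, and for positive-rank sorts invoke \cref{lem:matching-fibers} to identify the fibers of $MK\to\match_K M$ with those of $\derivdia{M}(K,\alpha)\to\match_{(K,\alpha)}\derivdia{M}$, yielding condition~\ref{item:dr2}. Your additional remarks about the bookkeeping (in particular that \ref{item:dr1} is needed to make sense of \ref{item:dr2} via \cref{prop:deriv-good}) are accurate and more explicit than the paper's terse version.
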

\begin{proof}
  Since the matching object at a rank-0 sort is always $\onetype$, condition~\ref{item:dr1} is equivalent to Reedy fibrancy of $M$ at rank-0 sorts.
  Thus, it suffices to show that for any $K:\L(n+1)$, the map $MK \to \match_K M$ is a fibration if and only if for all $\alpha:\prd{F: \fanout{K}{0}}\bottom{M}(\pi_1 F)$ the map $\derivdia{M}(K,\alpha) \to \match_{(K,\alpha)} \derivdia{M}$ is a fibration.
  But this follows from \cref{lem:matching-fibers}.
\end{proof}

By combining \cref{prop:deriv-struc,thm:deriv-reedy} we will be able to prove our claim that the exo-category of Reedy fibrant exo-diagrams on any diagram signature is equivalent, as an exo-category, to one whose exotype of objects is fibrant.
However, to describe and work with the latter type more easily, we introduce the notion of \emph{functorial signature}.

\section{Functorial signatures and structures}
\label{sec:abstract-signatures}

\cref{prop:deriv-struc,thm:deriv-reedy} show that the ``essential content'' of a diagram signature $\L$ consists of the type $\L(0)$ and the derived diagram signatures $\derivcat{\L}{M}$ for all $M:\L(0)\to\U$.
The notion of ``functorial signature'' simply takes an analogous decomposition as an \emph{inductive definition}.

\begin{definition}
\label{def:abstract_signature}\index{signature!functorial}
We define a family of exo-categories $\Sig(n)$ of \defemph{(functorial) signatures of height $n$} by induction.\nomenclature[Sig]{$\Sig(n)$}{exo-category of functorial signatures of height $n$}

Let $\Sig(0)$ be the trivial exo-category on $\onetype$.

An object $\L$ of $\Sig(n+1)$ consists of
\begin{enumerate}
\item a sharp exotype $\bottom{\L} : \Ustrict$;\nomenclature[L]{$\bottom{\L}$}{rank-0 part of functorial signature $\L$}
\item an exo-functor $\derivcat{\L}{} : \func{\bottom{\L}}{\U} \to \Sig(n)$, where $\func{\bottom{\L}}{\U}$ is the exo-functor exo-category from the discrete exo-category $\bottom{\L}$ to the canonical exo-category $\U$.\nomenclature[L]{$\derivcat{\L}{}$}{derivation exo-functor of functorial signature $\L$}
\end{enumerate}

\noindent
Arguments of $\derivcat{\L}{}$ will be written as subscripts, as in $\derivcat{\L}{M}$.\nomenclature[L]{$\derivcat{\L}{M}$}{value of derivation exofunctor $\derivcat{\L}{}$ of a functorial signature $\L$ at $M$}

For $\L, \M: \Sig(n+1)$, an element $\alpha$ of $\hom_{\Sig(n+1)}(\L, \M)$ consists of the following:
 \begin{enumerate}
 \item a function $\bottom{\alpha} : \bottom{\L} \to \bottom{\M}$
 \item an exo-natural transformation $\derivcat{\alpha}{}$ as in the diagram

   \[ 
    \begin{tikzcd}[ampersand replacement = \&, column sep = huge]
      \func{\bottom{\M}}{\U} \ar[rd, "\derivcat{\M}{}"] \ar[dd, "\_ \circ \bottom{\alpha}"', ""'{name=F}]
      \\
      \&
      |[alias=S]| \Sig(n)
      \\
      \func{\bottom{\L}}{\U} \ar[ru, "\derivcat{\L}{}"']
      \arrow[from = F, to = S, phantom, "\Uparrow \derivdia{\alpha}" description]
    \end{tikzcd}
   \]

 \end{enumerate}
  Arguments of $\derivdia{\alpha}$ will also be written as subscripts, as in $\derivcat{\alpha}{M}$.
 
 Composition and identities are given by function composition and identity at $\bottomlevel$, and inductively for the derivative.
 Similarly, the categorical laws are easily proved by induction. 
\end{definition}

The decomposition of a diagram signature into $\L(0)$ and its derivatives yields a functorial signature, and in fact this is an exo-functor.

\begin{theorem}\label{thm:translation}
For each $p: \exo{\Nat}$, define an exo-functor $E_p: \IC(p) \to \Sig(p)$ by induction on $p$ as follows.\nomenclature[E]{$E_p$}{functor from diagram signatures of height $p$ to functorial signatures of height $p$}

Since $\Sig(0)$ is the trivial category on $\onetype$, there is a unique exo-functor $\IC(0) \to \Sig(0)$ (which is actually an equivalence).

For $p > 0$, we assume given an exo-functor $E_{p-1} : \IC(p-1) \to \Sig(p-1)$. Let $\L: \IC(p)$.
We define  $E_p(\L)$ as follows:
\begin{enumerate}
\item The sharp exotype $\bottom{\L} \define \L(0) : \U$.
\item The functor $E_{p-1}  \derivcat{\L}{-}: (\bottom{\L} \to \U) \to \Sig(p-1)$ defined by composing the inductively given $E_{p-1} : \IC(p-1) \to \Sig(p-1)$ with the functor into $\IC(p-1)$ given on objects by \cref{app:defn:derivcat} and \cref{prop:deriv-good}
and on morphisms by 
 \cref{def:derivation_functorial_action} and \cref{prop:deriv-opfib}.
\end{enumerate}
For $\L,\M: \IC(p)$ and $F: \hom(\L,\M)$, let $E_p (F)$ consist of:
\begin{enumerate}
\item The function $\bottom{F} : \bottom{\L} \to \bottom{\M}$.
\item The natural transformation with underlying function 
\[E_{p-1}  \derivcat{F}{\lambda x. 1_{-  x}}: \prd{M : \bottom{\M} \to \U} \hom(\derivcat{\L}{M \circ \bottom{F}},\derivcat{\M}{M}) 
\]
defined in \cref{def:derivation_functorial_action} and \cref{prop:deriv-opfib}.
\end{enumerate}
\end{theorem}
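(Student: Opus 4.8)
The statement to be proved ("Theorem~\ref{thm:translation}") asserts that the assignment $E_p$ just defined is in fact a well-defined exo-functor $\IC(p)\to\Sig(p)$. So the proof is really a verification that all the pieces fit together and are functorial, proceeding by induction on $p$. Here is the plan.

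\medskip

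\noindent\textbf{Setup and base case.} First I would fix the induction on $p:\exo\Nat$. For $p\steq 0$ there is nothing to check: $\Sig(0)$ is the trivial exo-category on $\onetype$, so the unique exo-functor from $\IC(0)$ is (trivially) an exo-functor, and it is moreover an equivalence since $\IC(0)$ is also trivial (only the empty signature has height $0$). For the inductive step, I would assume $E_{p-1}:\IC(p-1)\to\Sig(p-1)$ has been shown to be an exo-functor, and verify that the data listed in the theorem statement assembles into an object of $\Sig(p)$ for each $\L:\IC(p)$, into a morphism of $\Sig(p)$ for each $F:\hom_{\IC(p)}(\L,\M)$, and that these assignments preserve identities and composition.

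\medskip

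\noindent\textbf{Well-definedness on objects.} Given $\L:\IC(p)$, the theorem puts $\bottom\L\define\L(0)$; this is sharp by the first clause of \cref{def:material-sig}, so it is a legitimate first component of an object of $\Sig(p)$. For the second component I need $E_{p-1}\circ\derivcat\L{-}$ to be an exo-functor $\func{\bottom\L}{\U}\to\Sig(p-1)$. Here $\derivcat\L{-}:\func{\bottom\L}{\U}\to\IC(p-1)$ is an exo-functor: on objects it lands in $\IC(p-1)$ by \cref{prop:deriv-good}, on morphisms it is the functorial action of \cref{def:derivation_functorial_action} (with $H$ the identity), which lands in discrete opfibrations by \cref{prop:deriv-opfib}, and it respects identities and composition by \cref{lem:der_fun} — indeed this is exactly the content of \cref{prop:deriv-func}. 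Composing with the inductively-given exo-functor $E_{p-1}$ gives the required exo-functor into $\Sig(p-1)$.

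\medskip

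\noindent\textbf{Well-definedness on morphisms, and functoriality.} Given $F:\hom_{\IC(p)}(\L,\M)$, the first component of $E_p(F)$ is $\bottom F:\bottom\L\to\bottom\M$, and the second must be an exo-natural transformation filling the triangle in \cref{def:abstract_signature}, i.e.\ from $E_{p-1}\circ\derivcat\L{-}\circ(-\circ\bottom F)$ to $E_{p-1}\circ\derivcat\M{-}$. I would obtain this by whiskering $E_{p-1}$ onto the natural transformation whose component at $M:\func{\bottom\M}{\U}$ is the discrete opfibration $\derivcat{F}{\lambda x.1_{M\circ\bottom F x\,x}}:\derivcat\L{M\circ\bottom F}\to\derivcat\M{M}$ of \cref{def:derivation_functorial_action}; that this is a discrete opfibration is \cref{prop:deriv-opfib}, and its naturality in $M$ (the square required of an exo-natural transformation) is a direct calculation from the formulas in \cref{def:derivation_functorial_action}. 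That $E_p$ preserves identities and composition then reduces, on $\bottom{}$ it is trivial, and on the derivative component it follows from \cref{lem:der_fun}: clause~\ref{item:folds-deriv-id} gives $\derivcat{1_\L}{\lambda x.1_{-x}}\steq 1$, and clause~\ref{item:folds-deriv-comp} gives compatibility with composition (taking the relevant $L,M,N,h,i$ to be the families $-\circ\bottom F$ etc.), after which applying the functor $E_{p-1}$ preserves these exo-equalities. The categorical laws in $\Sig(p)$ being formulated componentwise (function composition at $\bottomlevel$, inductively on derivatives), the result follows.

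\medskip

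\noindent\textbf{Main obstacle.} I expect the one genuinely fiddly point is bookkeeping around the second components of \emph{morphisms} in the derived categories — the ``$\phi$''/``$\gamma$'' witness data in \cref{app:defn:derivcat} and \cref{def:derivation_functorial_action}. Checking that the whiskered natural transformation is well-typed and natural, and that composition is respected, requires chasing exo-equalities of these witnesses through $H^{-1}(F\circ Hf)\steq H^{-1}(F)\circ f$ and similar identities; but, as in the proof of \cref{lem:der_fun}, all such second-component equalities are ultimately handled by UIP together with function extensionality for exo-equality, so no real difficulty arises once the first-component identities are in place. The rest is routine induction.
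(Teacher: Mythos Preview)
Your proposal is correct and follows essentially the same approach as the paper: the paper's proof is even terser, checking only preservation of identities and composition (your ``functoriality'' paragraph) via \cref{lem:der_fun} and functoriality of $E_{p-1}$, while treating well-definedness on objects and morphisms as already handled by the cited \cref{prop:deriv-good}, \cref{def:derivation_functorial_action}, and \cref{prop:deriv-opfib}. Your more explicit treatment of well-definedness and of the naturality of the derivative transformation in $M$ is a reasonable expansion of what the paper leaves implicit.
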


\begin{proof}
We check that $E_p$ is functorial. 

For any $\L : \IC(p)$, we have the following.
\begin{align*} 
\pi_1 E_p(1_\mathcal L) &\steq 1_{\bottom{\L}} \\
& \steq \pi_1(1_{E_p \mathcal L}) \\
\pi_2 E_p(1_\mathcal L) &\steq E_{p-1} \circ \derivcat{1_\L}{\lambda x. 1_{-  x}}\\
& \steq 1_{E_{p-1} \derivcat{1_\L}{ \lambda x. 1_{-  x}}} \\
& \steq \pi_2(1_{E_p \mathcal L}) 
\end{align*}

For any $\M, \N, \mathcal P: \IC(p)$, $F: \hom(\M, \N)$, $G: \hom(\N,\mathcal P)$, we have the following.
\begin{align*} 
\pi_1 E_p(G \circ F) &\steq \bottom{(G \circ F)} \\
& \steq \bottom{G} \circ \bottom{F} \\
& \steq \pi_1 (E_p G \circ E_p F) \\
\left(  \pi_2 E_p(G \circ F) \right)(M) &\steq E_{p-1} \circ \derivcat{(G \circ F)}{\lambda x. 1_{M  x}}\\
&\steq E_{p-1}  (\derivcat{G}{\lambda x. 1_{M  x}} \circ \derivcat{F}{\lambda x. 1_{M  x}})\\
& \steq (E_{p-1}  \derivcat{G}{\lambda x. 1_{M  x}}) \circ (E_{p-1}  \derivcat{F}{\lambda x. 1_{M  x}}) \\
& \steq \pi_2 E_{p} G(M) \circ \pi_2 E_{p} F(M). \qedhere
\end{align*}
\end{proof}

Intuitively, this translation can be thought of as mapping into the exo-category \emph{coinductively}\index{coinduction} defined by a derivative functor, with the result landing inside the inductive part (our functorial signatures) because our diagram signatures have finite height.
(We have not investigated the possibility of signatures of infinite height.)
It can also be thought of as a sort of ``Taylor expansion'' of a diagram signature, consisting of all its iterated derivatives, with the functorial signatures playing the role of formal power series (although, again, since our signatures all have finite height, our ``power series'' are actually just polynomials).

The notion of functorial signature is perfectly adapted to define \emph{$\L$-structures} inductively.

\begin{definition}\label{def:functorial-structure}\index{structure!functorial}
Let $\L$ be a functorial signature; we define the type $\Struc{\L}$ of \defemph{$\L$-structures} by induction on $n$.\nomenclature[Str]{$\Struc{\L}$}{type of structures of functorial signature $\L$}

If $\L: \Sig(0)$, we define $\Struc{\L} \define \onetype$.

If $\L:\Sig(n+1)$, we define
\begin{equation}
  \Struc{\L} \define  \sm{\bottom{M} : {\bottom{\L}} \to \U } \Struc{\derivcat{\L}{\bottom{M}}}.\label{eq:func-struc}
\end{equation}
We write the two components of $M:\Struc{\L}$ as $(\bottom{M},\derivdia{M})$.
\end{definition}

Note that when $\L$ arises from a diagram signature as in \cref{thm:translation}, this definition reduces to \cref{def:folds-struc}.

We expect to have a whole exo-category of $\L$-structures.
This is true, but requires a bit more work.
The idea is that when $\L:\Sig(n+1)$, the exo-category $\Struc{\L}$ should be the Grothendieck construction of the composite functor
\[
  \begin{tikzcd}
    (\bottom{\L} \to \U)^{\mathsf{op}} \ar[r,"(\derivcat{\L}{})^{\mathsf{op}}"] &
    \Sig(n)^{\mathsf{op}} \ar[r,"\Struc{-}"] &
    \mathsf{Cat}.
  \end{tikzcd}
\]
Note that such a Grothendieck construction would indeed have \cref{eq:func-struc} as its type of objects.
Here $\bottom{\L}\to \U$ inherits its exo-category structure pointwise from $\U$; and $\Sig(n)$ and $\derivcat{\L}{}$ were defined to be a category and a functor, respectively, in \cref{def:abstract_signature}; but we have yet to make $\Struc{-}$ into a functor.
This means defining the ``pullback'' of an $\M$-structure along a morphism $\alpha : \L \to \M$ of signatures.

\begin{definition}\index{pullback!of functorial structures}
For any $\alpha: \hom_{\Sig(n)}(\L, \M)$, we define the \textbf{pullback} $\alpha^*: \Struc{\M} \to \Struc{\L}$ inductively as follows. 

If $n \eqdef 0$, then let $\alpha^*: \Struc{\M} \to \Struc{\L}$ be the identity. 

If $n > 0$, consider $M: \Struc{\M}$. We let $\bottom{ (\alpha^* M)}$ be $\bottom{M} \circ \bottom{\alpha}$. By induction, the morphism 
\[\derivcat{\alpha}{\bottom{M}}:  \hom_{\Sig(n-1)}(\derivcat{\L}{\bottom{M} \circ \bottom{\alpha}} , \derivcat{\M}{\bottom{M}})\]
produces a $(\derivcat{\alpha}{\bottom{M}})^*: \Struc{\derivcat{\M}{\bottom{M}}} \to \Struc{\derivcat{\L}{\bottom{M} \circ \bottom{\alpha}}}$, so we set $\derivdia{(\alpha^* M)} \define (\derivcat{\alpha}{\bottom{M}})^* \derivdia{M}$.
\end{definition}

Pullback is functorial: pullback along a composition of signature morphisms is the composition of pullbacks, and pullback along an identity morphism is the identity.
We can now define morphisms of structures, using the usual definition of morphisms in a Grothendieck construction.

\begin{definition}\index{morphism!of functorial structures}
Consider $\L: \Sig(n)$ and $M,N : \Struc{\L}$; we define the (fibrant) type $\hom_{\Struc{\L}}(M, N)$ of \defemph{morphisms of $\L$-structures} by induction on $n:\exo{\Nat}$.

When $n \eqdef 0$, we let $\hom_{\Struc{\L}}(M, N)\define \onetype$.

When $n > 0$, a morphism $f : \hom_{\Struc{\L}}(M,N)$ consists of
 \begin{enumerate}
 \item $\bottom{f}: \prd{K:\bottom{\L}} \bottom{M}(K) \to \bottom{N}(K)$
 \item $\derivdia{f}: \hom_{\Struc{\derivcat{\L}{\bottom{M}}}}(\derivdia{M},( \derivcat{\L}{\bottom{f}})^*\derivdia{N})$.
 \end{enumerate}
\end{definition}

Composition of structure morphisms requires pullback of structure morphisms along signature morphisms:

\begin{definition}\index{pullback!of morphisms of structures}
    Let $\L$ and $\M$ be signatures of height $n$, and $\alpha : \hom_{\Sig(n)} (\L,\M)$.
    Let $M, N : \Struc{\M}$ and $f : \hom_{\Struc{\M}}(M,N)$.
    We define the \defemph{pullback of $f$ along $\alpha$}, denoted $\alpha^*f : \hom_{\Struc{\L}}(\alpha^*M, \alpha^*N)$, by induction on $n$.

    If $n \eqdef 0$, then $\alpha^*f$ is the unique morphism in  $\hom_{\Struc{\L}}(\alpha^*M, \alpha^*N)$.

    If $n > 0$, then we define $\alpha^*f$ as follows:
    \begin{enumerate}
    \item $\bottom{(\alpha^*f)} : \prd{K : \bottom{\L}}\bottom{(\alpha^*M)}(K) \to \bottom{(\alpha^*N)}(K)$ is given by
      \[
        \bottom{(\alpha^*f)}(K) \eqdef \bottom{f}(\bottom{\alpha}(K)) : \bottom{M}(\bottom{\alpha}(K)) \to \bottom{N}(\bottom{\alpha}(K))
      \]
    \item
      We have $\derivdia{f} : \hom_{\Struc{\derivcat{\M}{\bottom{M}}}}(\derivdia{M}, (\derivcat{\M}{\bottom{f}})^* \derivdia{N})$.
      
      By the induction hypothesis, we can pull back $\derivdia{f}$ along the signature morphism $\derivcat{\alpha}{\bottom{M}} : \hom_{\Sig(n-1)}(\derivcat{\L}{M \circ \bottom{\alpha}}, \derivcat{\M}{M})$, yielding
      \[
        (\derivcat{\alpha}{\bottom{M}})^*(\derivdia{f}) : \hom_{\Struc{\derivcat{\L}{M \circ \bottom{\alpha}}}}((\derivcat{\alpha}{\bottom{M}})^* \derivdia{M} , (\derivcat{\alpha}{\bottom{M}})^* (\derivcat{\M}{\bottom{f}})^* \derivdia{N}).
      \]

      By naturality of $\derivdia{\alpha}$, the following square commutes up to exo-equality,

      \[
        \begin{tikzcd}
          \derivcat{\L}{\bottom{M}\circ \bottom{\alpha}} \ar[r, "\derivcat{\alpha}{\bottom{M}}"] \ar[d, "\derivcat{\L}{\bottom{f} \circ \bottom{\alpha}}"']
          &
          \derivcat{\M}{\bottom{M}} \ar[d, "\derivcat{\M}{\bottom{f}}"]
          \\
          \derivcat{\L}{\bottom{N}\circ \bottom{\alpha}} \ar[r, "\derivcat{\alpha}{\bottom{N}}"]
          &
          \derivcat{\M}{\bottom{N}}
        \end{tikzcd}
      \]
      and hence we define
      \[
        \derivdia{(\alpha^*f)} \eqdef (\derivcat{\alpha}{\bottom{M}})^*(\derivdia{f}) : \hom_{\Struc{\derivcat{\L}{\bottom{M}\circ \bottom{\alpha}}}}(\derivdia{(\alpha^*M)}, (\derivcat{\L}{\bottom{(\alpha^*f)}})^*\derivdia{(\alpha^*N)}).
      \] 
    \end{enumerate}
\end{definition}

\begin{definition}\index{identity!structure morphisms}
  Let $\L$ be a signature of height $n$, and $M$ be an $\L$-structure.
  We define the \defemph{identity structure morphism} of $M$ by induction on $n$.

  For $n \eqdef 0$, the unique morphism $M \to M$ is the identity on $M$.

  For $n > 1$, we define the identity $1_M$ to be given by
  \begin{enumerate}
  \item $\bottom{1_M}$ is given by $\bottom{1_M}(K) \eqdef 1_{\bottom{M}(K)}$
  \item $\derivdia{1_M} : \hom_{\Struc{\derivcat{\L}{\bottom{M}}}}(\derivdia{M}, (\derivcat{\L}{\bottom{(1_M)}})^*\derivdia{M}) \steq \hom_{\Struc{\derivcat{\L}{\bottom{M}}}}(\derivdia{M}, \derivdia{M})$ is given by the induction hypothesis, using that $(\derivcat{\L}{1})^*M \steq 1^*M \steq M$.
    
  \end{enumerate}
\end{definition}

\begin{definition}\index{composition!of structure morphisms}
  Let $\L$ be a signature of height $n$, and let $M,N,O : \Struc{\L}$, with $f : \hom_{\Struc{\L}}(M,N)$ and $g : \hom_{\Struc{\L}}(N,O)$.
  We define the \defemph{composite of structure morphisms} $g\circ f$ by induction on $n:\exo{\Nat}$.
  
  If $n \eqdef 0$, then $g \circ f : \hom_{\Struc{\L}}(M,O)$ is the unique morphism from $M$ to $O$.

  If $n > 0$, we define $g \circ f$ as follows:
  \begin{enumerate}
  \item $\bottom{(g \circ f)} (K) \eqdef \bottom{g}(K) \circ \bottom{f}(K)$
  \item We have \[\derivdia{f} : \hom_{\Struc{\derivcat{\L}{\bottom{M}}}} (\derivdia{M}, (\derivcat{\L}{\bottom{f}})^*(\derivdia{N}))\] and \[\derivdia{g} : \hom_{\Struc{\derivcat{\L}{\bottom{N}}}} (\derivdia{N}, (\derivcat{\L}{\bottom{g}})^*(\derivdia{O})).\]
    Then we have
    \[
      (\derivcat{\L}{\bottom{f}})^*\derivdia{g} : \hom_{\Struc{\derivcat{\L}{\bottom{M}}}} ((\derivcat{\L}{\bottom{f}})^*\derivdia{N}, (\derivcat{\L}{\bottom{f}})^*(\derivcat{\L}{\bottom{g}})^*(\derivdia{O}))
    \]
    and, using that $(\derivcat{\L}{\bottom{f}})^*(\derivcat{\L}{\bottom{g}})^*(\derivdia{O}) \steq (\derivcat{\L}{\bottom{(g \circ f)}})^*(\derivdia{O})$, we define (using the induction hypothesis)
      \[
        \derivdia{(g \circ f)} \eqdef (\derivcat{\L}{\bottom{f}})^*\derivdia{g} \circ \derivdia{f} .
      \]
    \end{enumerate}
\end{definition}

Similarly, we can prove by induction that pullback preserves composition and identities, and then that composition is associative and unital.
Thus, for any functorial signature $\L$, we have an exo-category $\Struc{\L}$ with a fibrant type of objects and fibrant hom-types, and for any signature morphism $\alpha:\L\to\M$ we have an exo-functor $\Struc{\M} \to \Struc{\L}$.
With a little more work, we can show that $\Struc{-}$ is a contravariant exo-functor from $\Sig(n)$ to the exo-category $\mathsf{Cat}$ of exo-categories, providing the inductive step in its definition as a Grothendieck construction.
We can now finally prove the theorem we have been leading up to.

\begin{theorem}\label{thm:reedy-struc}
  For any diagram signature $\L:\IC(p)$, the exo-category $\rfunc{\L}{\Ustrict}$ is equivalent, as an exo-category, to $\Struc{E_p(\L)}$.
  Moreover, for any discrete opfibration $F:\hom_{\IC(p)}(\L,\M)$, these equivalences commute, up to natural exo-isomorphism, with pullback and precomposition:
  \[
    \begin{tikzcd}
      \rfunc{\M}{\Ustrict} \ar[r,"\simeq"] \ar[d,"(-\circ F)"'] \ar[dr,phantom,"\cong"] &
      \Struc{E_p(\M)} \ar[d,"(E_p(F))^*"] \\
      \rfunc{\L}{\Ustrict} \ar[r,"\simeq"'] &
      \Struc{E_p(\L)}      
    \end{tikzcd}
    \]
\end{theorem}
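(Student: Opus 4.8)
The proof goes by induction on the height $p$, proving simultaneously the stated equivalence and the naturality square for discrete opfibrations. The base case $p\converts 0$ is immediate: the unique object of $\IC(0)$ is the empty signature, $\rfunc{\L}{\Ustrict}$ is the terminal exo-category (its unique object being the empty diagram), and $\Struc{E_0(\L)}\converts\onetype$ by definition, so both sides are trivial and the square degenerates.

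For the inductive step, fix $\L:\IC(p)$ with $p>0$. First I would combine \cref{prop:deriv-struc} and \cref{thm:deriv-reedy}: the former identifies $\func{\L}{\Ustrict}$ with the Artin gluing $\mathsf{Gl}(\derivcat{\L}{})$, whose objects are pairs $(\bottom{M},\derivdia{M})$ with $\bottom{M}:\L(0)\to\Ustrict$ and $\derivdia{M}$ a diagram on $\derivcat{\L}{\bottom{M}}$, while the latter says such a pair is Reedy fibrant exactly when $\bottom{M}$ is pointwise fibrant (a map $\L(0)\to\U$) and $\derivdia{M}$ is Reedy fibrant. Hence $\rfunc{\L}{\Ustrict}$ is equivalent, as an exo-category, to the full sub-exo-category $\mathcal{G}$ of $\mathsf{Gl}(\derivcat{\L}{})$ on the pairs $(\bottom{M},\derivdia{M})$ with $\bottom{M}:\L(0)\to\U$ and $\derivdia{M}\in\rfunc{\derivcat{\L}{\bottom{M}}}{\Ustrict}$ (here $\derivcat{\L}{\bottom{M}}$ is a diagram signature of height $p-1$ by \cref{prop:deriv-good}). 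On the other side, unfolding the definition of $\Struc{-}$ from \cref{def:abstract_signature} together with \cref{thm:translation} — which gives $\bottom{E_p(\L)}\converts\L(0)$ and $\derivcat{E_p(\L)}{\bottom{M}}\converts E_{p-1}(\derivcat{\L}{\bottom{M}})$ — exhibits $\Struc{E_p(\L)}$ as the exo-category of pairs $(\bottom{M},\derivdia{M})$ with $\bottom{M}:\L(0)\to\U$ and $\derivdia{M}\in\Struc{E_{p-1}(\derivcat{\L}{\bottom{M}})}$, morphisms being given inductively as in the definition of morphisms of $\L$-structures.

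Now apply the induction hypothesis to the diagram signature $\derivcat{\L}{\bottom{M}}:\IC(p-1)$: for each $\bottom{M}:\L(0)\to\U$ it yields $\rfunc{\derivcat{\L}{\bottom{M}}}{\Ustrict}\simeq\Struc{E_{p-1}(\derivcat{\L}{\bottom{M}})}\converts\Struc{\derivcat{E_p(\L)}{\bottom{M}}}$. To assemble these into an equivalence $\mathcal{G}\simeq\Struc{E_p(\L)}$ I would check that morphisms correspond. A morphism $(\bottom{M},\derivdia{M})\to(\bottom{N},\derivdia{N})$ in $\mathcal{G}$ consists, by \cref{prop:deriv-struc}, of $\bottom{\alpha}:\prd{K:\L(0)}\bottom{M}(K)\to\bottom{N}(K)$ and a natural transformation $\derivdia{M}\Rightarrow\derivdia{N}\circ(\_\circ\bottom{\alpha})$, where $(\_\circ\bottom{\alpha}):\derivcat{\L}{\bottom{M}}\to\derivcat{\L}{\bottom{N}}$ is the induced discrete opfibration (cf.\ \cref{prop:deriv-func,prop:deriv-opfib}). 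Since $\derivdia{N}\circ(\_\circ\bottom{\alpha})$ is the pullback of the Reedy fibrant diagram $\derivdia{N}$ along this discrete opfibration (again Reedy fibrant by \cref{prop:dfib-matching}), the compatibility part of the induction hypothesis, together with \cref{prop:deriv-struc-fun} and the functoriality clauses of \cref{thm:translation}, identifies such natural transformations with structure morphisms $\derivdia{M}\to(\derivcat{E_p(\L)}{\bottom{\alpha}})^*\derivdia{N}$ — precisely the data of a morphism of $E_p(\L)$-structures. This gives the equivalence of exo-categories.

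Finally, for the naturality square with a discrete opfibration $F:\hom_{\IC(p)}(\L,\M)$: both $\rfunc{-}{\Ustrict}$ (via \cref{prop:dfib-matching}) and $\Struc{-}$ (the contravariant exo-functor noted just before the theorem) send $F$ to pullback/precomposition functors, so the claim is a coherence statement, which I would verify by pasting. \cref{prop:deriv-struc-fun} gives the square relating $\func{-}{\Ustrict}$, the two gluings, and precomposition along $F$; restricting to Reedy fibrant objects (preserved by precomposition) gives this square at the level of the $\mathcal{G}$'s; and \cref{thm:translation}'s description of $E_p(F)$ (namely $\bottom{E_p(F)}\converts\bottom{F}$, with $\derivcat{E_p(F)}{}$ given by the derivative construction) lets us paste it with the induction-hypothesis square for the discrete opfibration $\derivcat{F}{-}$, yielding the square for $E_p(F)$. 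The main obstacle throughout is exactly this bookkeeping: keeping straight the two distinct notions of pullback in play — pullback of Reedy fibrant diagrams along discrete opfibrations, and pullback of structures along signature morphisms — and confirming that the equivalences match them coherently, so that all the naturality squares can be pasted. The object-level content is essentially immediate from the cited results; all the real work lies in threading these naturality statements through the induction.
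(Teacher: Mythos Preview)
Your proposal is correct and follows essentially the same approach as the paper: mutual induction on $p$, with the inductive step decomposing $\rfunc{\L}{\Ustrict}$ via \cref{prop:deriv-struc} and \cref{thm:deriv-reedy}, applying the inductive hypotheses to the derivatives, and handling the naturality square through \cref{prop:deriv-struc-fun}. You spell out more of the bookkeeping on morphism-matching and the pasting of naturality squares than the paper does, but the argument is the same.
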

\begin{proof}
  We prove the two statements by mutual induction on $p$.
  When $p\define 0$ they are trivial.

  When $p>0$, by \cref{prop:deriv-struc,thm:deriv-reedy}, $\rfunc{\L}{\Ustrict}$ is equivalent to the exo-category of pairs $(\bottom{M},\derivdia{M})$ where $\bottom{M}:\L(0)\to \Ustrict$ and $\derivdia{M} : \rfunc{\derivcat{\L}{\bottom{M}}}{\Ustrict}$, where a morphism $(\bottom{M},\derivdia{M})\to (\bottom{M},\derivdia{M})$ consists of a family of functions $\bottom{f} : \prd{K:\L(0)} \bottom{M}(K) \to \bottom{N}(K)$ and a natural transformation $\derivdia{M} \to \derivdia{N} \circ \derivcat{\L}{\bottom{f}}$.
  By the inductive hypotheses, this is equivalent to the exo-category of pairs $(\bottom{M},\derivdia{M})$ where instead $\derivdia{M} : \Struc{E_p ({\derivcat{\L}{\bottom{M}}})}$, and where the natural transformation is replaced by a morphism of structures $\derivdia{M} \to (E_p(\derivcat{\L}{\bottom{f}}))^*(\derivdia{N})$.
  But since $E_p$ commutes with derivation by definition, the latter is precisely the exo-category $\Struc{\L}$.

  The proof of the commutation statement is analogous, using \cref{prop:deriv-struc-fun} in place of \cref{prop:deriv-struc}.
\end{proof}

\section{Axioms and theories for functorial signatures}
\label{sec:ho-axioms}

Our definitions of axiom and theory from \cref{sec:axioms} can be copied essentially verbatim for functorial signatures.

\begin{definition}
 Let $\L$ be a functorial signature. An \defemph{$\L$-axiom}\index{axiom!for functorial structures} is a function
 $\Struc{\L} \to \PropU$.
  A \defemph{functorial theory}\index{theory!functorial} is a pair $(\L,T)$ of a functorial signature $\L$ and a family $T$ of $\L$-axioms indexed by a cofibrant exotype.  A \defemph{model of a theory $(\L,T)$}\index{model!of a functorial theory} then consists of a $\L$-structure $M$ together with a proof $t(M)$ for each axiom $t$ of $T$.
  A \defemph{morphism of models}\index{morphism!of models of a functorial theory} is a morphism of the underlying structures.
\end{definition}

Of course, any diagram theory gives rise to a functorial theory, including all the examples from \cref{sec:examples}.
In \cref{sec:egs-higherorder} we will discuss some examples of functorial theories not arising in this way.

\chapter{Levelwise equivalences of structures}\label{sec:structures}

In \cref{prop:uni1-folds} we asserted that all structures for a diagram signature satisfy a tautological ``levelwise'' form of univalence, saying that identifications of structures are equivalent to levelwise equivalences.
We now prove an analogous statement for all functorial signatures, in \cref{prop:uni1}.

For a functorial signature coming from a diagram signature $\L$, we also link the levelwise equivalences between $\L$-structures $M$ and $N$ to morphisms between $M$ and $N$ considered as Reedy-fibrant diagrams, in \cref{thm:lvle-folds}.

\begin{definition}
\label{def:iso-of-structures}\index{equivalence!of functorial structures!levelwise}
 Let $\L: \Sig(n)$ and $M,N : \Struc{\L}$; we define when a morphism $f: \hom_{\Struc{\L}}(M,N)$ is a \defemph{levelwise $\L$-equivalence} by induction on $n$.

If $n\eqdef 0$, every $f: \hom_{\Struc{\L}}(M,N)$ is a levelwise $\L$-equivalence. That is, we define
$\isIso_{\L}(f) \define \onetype$.

If $n > 0$, then $f: \hom_{\Struc{\L}}(M,N)$ is a levelwise $\L$-equivalence when
 \begin{enumerate}
 \item $\bottom{f}(K)$ is an equivalence of types for all $K: \bottom{\L}$, and
 \item $\derivdia{f}$ is a levelwise $\derivcat{\L}{\bottom{M}}$-equivalence.
 \end{enumerate}
 That is, we define
 \[ \isIso_\L(f) \define \left(\prd{(K: \bottom{\L})} \isEquiv(\bottom{f}(K))\right) \times \isIso_{\derivcat{\L}{\bottom{M}}} (\derivdia{f}). \]

 We denote the type of levelwise $\L$-equivalences between two $\L$-structures $M,N$ by $M \leqv[\L]  N$, or simply $M \leqv N$.
 That is,
 \[ (M \leqv N) \eqdef \sm{f:\hom_{\Struc{\L}}(M,N)} \isIso_\L(f). \]\nomenclature[\bequiv2]{$M \leqv N$}{type of levelwise equivalences between functorial structures $M$ and $N$}
\end{definition}

\begin{lemma}\label{thm:exoiso-lvleqv}
  If $f:M\to N$ is an isomorphism in the exo-category $\Struc{\L}$, then it is a levelwise $\L$-equivelence.
\end{lemma}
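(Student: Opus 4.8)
The statement follows by induction on the height $n$ of the signature $\L$, mirroring the inductive structure of \Cref{def:iso-of-structures}. The base case $n \eqdef 0$ is trivial, since $\isIso_\L(f) \define \onetype$ is inhabited for every morphism. For the inductive step, suppose $f : M \to N$ is an isomorphism in $\Struc{\L}$, with two-sided inverse $g : N \to M$. I must produce the two components of $\isIso_\L(f)$: first that each $\bottom{f}(K)$ is an equivalence of types, and second that $\derivdia{f}$ is a levelwise $\derivcat{\L}{\bottom{M}}$-equivalence.

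For the first component, I would unfold the definition of composition and identities of structure morphisms at height $n > 0$. From $g \circ f = 1_M$ and $f \circ g = 1_N$, the bottom-level equations $\bottom{(g\circ f)}(K) \steq \bottom{g}(K) \circ \bottom{f}(K)$ and $\bottom{1_M}(K) \steq 1_{\bottom{M}(K)}$ give that $\bottom{f}(K)$ has $\bottom{g}(K)$ as a two-sided inverse, for each $K : \bottom{\L}$. A function of types with a two-sided inverse is an equivalence, so $\isEquiv(\bottom{f}(K))$ holds pointwise.

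For the second component, the subtlety is that $\derivdia{f}$ and $\derivdia{g}$ live in hom-types over \emph{different} signatures: $\derivdia{f} : \hom_{\Struc{\derivcat{\L}{\bottom{M}}}}(\derivdia{M}, (\derivcat{\L}{\bottom{f}})^*\derivdia{N})$ and $\derivdia{g} : \hom_{\Struc{\derivcat{\L}{\bottom{N}}}}(\derivdia{N}, (\derivcat{\L}{\bottom{g}})^*\derivdia{M})$. To compose them inside a single hom-type I would pull $\derivdia{g}$ back along $\derivcat{\L}{\bottom{f}}$, obtaining $(\derivcat{\L}{\bottom{f}})^*\derivdia{g} : \hom_{\Struc{\derivcat{\L}{\bottom{M}}}}((\derivcat{\L}{\bottom{f}})^*\derivdia{N}, (\derivcat{\L}{\bottom{f}})^*(\derivcat{\L}{\bottom{g}})^*\derivdia{M})$, and then use functoriality of pullback together with $\bottom{g}\circ\bottom{f} \steq \bottom{1_M}$ to identify $(\derivcat{\L}{\bottom{f}})^*(\derivcat{\L}{\bottom{g}})^*\derivdia{M}$ with $\derivdia{M}$ up to exo-equality. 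By the definition of composition of structure morphisms, $\derivdia{(g\circ f)} \eqdef (\derivcat{\L}{\bottom{f}})^*\derivdia{g} \circ \derivdia{f}$, and since $g \circ f = 1_M$ this composite equals $\derivdia{1_M}$; by the definition of identities, $\derivdia{1_M}$ is (transported along the relevant exo-equality to) the identity of $\derivdia{M}$. The symmetric computation with $f \circ g = 1_N$ shows that $\derivdia{f}$ is an isomorphism in $\Struc{\derivcat{\L}{\bottom{M}}}$: its inverse is $(\derivcat{\L}{\bottom{f}})^*\derivdia{g}$, possibly after coercion along exo-equalities. Applying the induction hypothesis to the height-$(n-1)$ signature $\derivcat{\L}{\bottom{M}}$ then yields that $\derivdia{f}$ is a levelwise equivalence, completing the proof.

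\textbf{Main obstacle.} The routine part is the pointwise equivalence claim; the delicate bookkeeping is in the second component, where the pullback of $\derivdia{g}$ and the functoriality/identity laws of pullback must be used to coerce morphisms across exo-equalities of signatures and structures so that ``$\derivdia{f}$ is an isomorphism in $\Struc{\derivcat{\L}{\bottom{M}}}$'' is even a well-typed statement. This is where I would need to be careful, but it amounts to applying the already-established functoriality of pullback (pullback along a composite is the composite of pullbacks, pullback along an identity is the identity) together with UIP for exo-equality; no genuinely new idea is required.
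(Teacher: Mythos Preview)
Your proposal is correct and follows essentially the same approach as the paper: induction on height, with the two components being that $\bottom{f}$ is a pointwise exo-isomorphism (hence a pointwise equivalence) and that $\derivdia{f}$ is an isomorphism in $\Struc{\derivcat{\L}{\bottom{M}}}$, to which the inductive hypothesis applies. The paper's proof is just a two-sentence sketch of exactly this; you have correctly spelled out the bookkeeping (pullback of $\derivdia{g}$, functoriality of pullback, coercion across exo-equalities) that the paper leaves implicit. One minor point: since $\Struc{\L}$ is an exo-category, the inverse $g$ satisfies $g\circ f \steq 1_M$ and $f\circ g \steq 1_N$ up to exo-equality rather than identification, which is what makes the coercions work strictly.
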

\begin{proof}
  If $f$ is an isomorphism, then $\bottom{f}$ is pointwise an isomorphism of exotypes (and hence an equivalence of types) and $\derivdia{f}$ is an isomorphism in $\Struc{\derivcat{\L}{\bottom{M}}}$.
  The result follows by induction.
\end{proof}

\begin{lemma}\label{thm:lvle-folds}
  If $\L$ is a diagram signature, then a morphism of $\L$-structures is a levelwise equivalence in the sense of \cref{def:iso-of-structures} if and only if its corresponding morphism of Reedy fibrant diagrams is a levelwise equivalence in the sense of \cref{def:lvleqv-folds}.
\end{lemma}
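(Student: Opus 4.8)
The plan is to prove this by induction on the height $p$ of the diagram signature $\L$, tracking carefully how the equivalence of exo-categories $\rfunc{\L}{\Ustrict} \simeq \Struc{E_p(\L)}$ from \cref{thm:reedy-struc} interacts with the two notions of levelwise equivalence. The base case $p \define 0$ is trivial, since there are no sorts and both conditions are vacuous (every morphism is a levelwise equivalence on both sides).

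For the inductive step, suppose $p > 0$. By \cref{prop:deriv-struc} and \cref{thm:deriv-reedy}, a Reedy fibrant diagram $M$ on $\L$ decomposes as $(\bottom{M}, \derivdia{M})$ with $\bottom{M}:\L(0)\to\U$ pointwise fibrant and $\derivdia{M}$ a Reedy fibrant diagram on $\derivcat{\L}{\bottom{M}}$. A morphism $f:M\to N$ of Reedy fibrant diagrams correspondingly decomposes, under \cref{thm:reedy-struc}, into the pair $(\bottom{f}, \derivdia{f})$ that appears in the definition of morphism of structures. So I would argue the two sides of the biconditional agree componentwise. First I would show that the square in \cref{def:lvleqv-folds} at a rank-0 sort $K$ is a homotopy pullback if and only if $\bottom{f}(K)$ is an equivalence: this is because $\match_K M \cong \onetype$ for rank-0 sorts (\cref{eg:rfib-gph}'s preceding example), so the square degenerates and ``homotopy pullback'' becomes ``$f_K$ is an equivalence'', i.e. condition (1) of \cref{def:iso-of-structures}. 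Second, for a sort $K:\L(n+1)$ of positive rank, \cref{lem:matching-fibers} identifies the fibers of $MK\to\match_K M$ with the fibers of the maps $\derivdia{M}(K,\alpha)\to\match_{(K,\alpha)}\derivdia{M}$ as $\alpha$ varies; the same lemma applied to $N$ and naturality of the whole construction shows that the induced map of fibers for the square at $K$ is (under this identification) exactly the induced map of fibers for the squares at the sorts $(K,\alpha)$ of $\derivcat{\L}{\bottom{M}}$ determined by $\derivdia{f}$. Hence ``$f$ is a homotopy pullback at every sort of positive rank'' is equivalent to ``$\derivdia{f}$ is a levelwise equivalence of Reedy fibrant diagrams on $\derivcat{\L}{\bottom{M}}$'', which by the inductive hypothesis (applied to the diagram signature $\derivcat{\L}{\bottom{M}}$, which is a diagram signature of height $p-1$ by \cref{prop:deriv-good}) is equivalent to $\derivdia{f}$ being a levelwise $\derivcat{\L}{\bottom{M}}$-equivalence in the sense of \cref{def:iso-of-structures} --- i.e. condition (2). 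Combining the rank-0 and positive-rank analyses gives the claim.

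The main obstacle I anticipate is bookkeeping rather than conceptual: making precise that the equivalence of exo-categories in \cref{thm:reedy-struc} really does send the morphism $f$ to the pair $(\bottom{f},\derivdia{f})$ in a way compatible with the identification of fibers in \cref{lem:matching-fibers}, so that ``homotopy pullback at $K$'' transports to ``$\derivdia{f}$ is a homotopy pullback at each $(K,\alpha)$'' on the nose (up to equivalence). This requires unwinding the construction in the proof of \cref{prop:deriv-struc} --- in particular the fact that $\derivdia{M}(K,\alpha)$ is the strict fiber over $\alpha$ of the map $MK\to\prod_{(L,f):\fanout{K}{0}}ML$ --- and checking that the matching maps factor compatibly. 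Since \cref{lem:matching-fibers} already packages most of this, and since \cref{thm:deriv-reedy} already records the fibration-wise correspondence, the remaining work is to observe that an equivalence between two fibrations over the same base, when both are presented via their fibers, is detected fiberwise; this is standard. I would also need the trivial observation that the notion of homotopy pullback in \cref{def:lvleqv-folds} is stated fiberwise to begin with, so no further rectification is needed.
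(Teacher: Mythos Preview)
Your proposal is correct and follows essentially the same route as the paper: induction on height, with rank-0 sorts giving condition~(1) directly (since $\match_K M \cong \onetype$) and positive-rank sorts handled via \cref{lem:matching-fibers} together with the inductive hypothesis. The only ingredient the paper makes explicit that you leave as ``bookkeeping'' is \cref{prop:dfib-matching}, which is precisely what identifies $\match_{(K,\alpha)}\big((\derivcat{\L}{\bottom{f}})^*\derivdia{N}\big)$ with $\match_{(K,f\alpha)}\derivdia{N}$ and so makes your ``naturality of the whole construction'' claim go through.
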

\begin{proof}
  By induction, it suffices to prove that when $\L$ has height $p>0$, a morphism $f:M\to N$ is a levelwise equivalence in the sense of \cref{def:lvleqv-folds} if and only if $f_K$ is an equivalence for all $K:\L(0)$ and $\derivdia{f}$ is also a levelwise equivalence in the sense of \cref{def:lvleqv-folds}.
  The former is exactly what \cref{def:lvleqv-folds} says for rank-0 sorts.
  For the latter, we note that by \cref{prop:dfib-matching}, the square
  \[
    \begin{tikzcd}
      \derivdia{M}(K,\alpha) \ar[d] \ar[r] & ((\derivcat{\L}{\bottom{f}})^*\derivdia{N})(K,\alpha) \ar[d] \\
      \match_{(K,\alpha)} \derivdia{M} \ar[r] & \match_{(K,\alpha)} ((\derivcat{\L}{\bottom{f}})^*\derivdia{N})
    \end{tikzcd}
  \]
  is isomorphic to
  \[
    \begin{tikzcd}
      \derivdia{M}{(K,\alpha)} \ar[d] \ar[r] & \derivdia{N} (K,f \alpha) \ar[d] \\
      \match_{(K,\alpha)} \derivdia{M} \ar[r] & \match_{(K,f\alpha)} \derivdia{N}.
    \end{tikzcd}
  \]
  Thus, by \cref{lem:matching-fibers}, the maps on fibers of all these squares, for all sorts $(K,\alpha)$ of $\derivcat{\L}{\bottom{\M}}$, are precisely the maps on fibers of the analogous squares for $f$ at all sorts $K$ of positive rank in $\L$.
\end{proof}

\begin{remark}
Just as equivalences of types can be characterized in several equivalent ways (see~\cite[Chapter 4]{HTT}), we could characterize levelwise equivalences in different ways in terms of the exo-category $\Struc{\L}$.

For example, given a morphism $f: \hom_{\Struc{\L}}(M,N)$, we could define a type $\isAdjEq_\L(f)$ of ways to make $f$ a half-adjoint equivalence:\index{equivalence!of functorial structures!half-adjoint} each term would consist of a morphism $g: \hom_{\Struc{\L}}(N,M)$, an identification $\eta: g \circ f = 1_N$, and an identification $\epsilon : f\circ g = 1_M$ such that $f \circ \eta = \epsilon \circ f$. Note that though $\Struc{\L}$ is an exo-category and thus does not have any intrinsic notion of half-adjoint equivalence, we are taking advantage of the fact that its $\hom$-types are actually fibrant types in order to define this notion.

We could similarly define a type $\isBiInv_\L(f)$ of ways to make $f$ a bi-invertible morphism.\index{equivalence!of functorial structures!bi-invertible} Then we could show that these types --- $\isIso_{\L}(f)$, $\isAdjEq_\L(f)$, and $\isBiInv_\L(f)$ --- are equivalent.

Though $\isAdjEq_\L(f)$ and $\isBiInv_\L(f)$ would be in some sense more explicit characterizations of levelwise equivalence, we stick with $\isIso_{\L}(f)$ for its simplicity.
\end{remark}

\begin{lemma}\label{lem:isisoprop}
For any morphism $f : M \to N$ between two $\L$-structures, the exotype $\isIso_\L(f)$ is a fibrant proposition. \qed
\end{lemma}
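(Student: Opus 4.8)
The statement to prove is \cref{lem:isisoprop}: for any morphism $f:M\to N$ of $\L$-structures, the exotype $\isIso_\L(f)$ is a fibrant proposition. The natural approach is induction on the height $n$ of the signature $\L$, following exactly the recursive structure of \cref{def:iso-of-structures}. In the base case $n\steq 0$ we have $\isIso_\L(f)\steq \onetype$ by definition, which is a fibrant proposition (indeed contractible). This is immediate.

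For the inductive step, suppose $\L:\Sig(n+1)$ and $f:\hom_{\Struc{\L}}(M,N)$. By \cref{def:iso-of-structures},
\[ \isIso_\L(f) \steq \Bigl(\tprd{K:\bottom{\L}} \isEquiv(\bottom{f}(K))\Bigr) \times \isIso_{\derivcat{\L}{\bottom{M}}}(\derivdia{f}). \]
I would handle the two factors separately. For the first factor, recall from \cref{sec:hlevel} that $\isEquiv(g)$ is a (fibrant) proposition for any function $g$ between fibrant types; since each $\bottom{f}(K):\bottom{M}(K)\to\bottom{N}(K)$ is such a function, each $\isEquiv(\bottom{f}(K))$ is a fibrant proposition. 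The indexing exotype $\bottom{\L}$ is sharp (part of the data of a functorial signature, \cref{def:abstract_signature}), hence cofibrant, so by the closure property of cofibrant exotypes recalled in \cref{sec:cofibrancy} the dependent product $\prd{K:\bottom{\L}}\isEquiv(\bottom{f}(K))$ is fibrant, and moreover a proposition since each fiber is a proposition (a dependent product of propositions over a cofibrant base is a proposition, again by the cofibrancy closure property). For the second factor, the inductive hypothesis applied to the signature $\derivcat{\L}{\bottom{M}}:\Sig(n)$ and the morphism $\derivdia{f}$ tells us that $\isIso_{\derivcat{\L}{\bottom{M}}}(\derivdia{f})$ is a fibrant proposition. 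Finally, a binary product of two fibrant propositions is a fibrant proposition (products of fibrant types are fibrant; a product of propositions is a proposition), which closes the induction.

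I expect no serious obstacle here; the only point requiring a little care is making sure the ``cofibrant base'' closure properties from \cref{sec:cofibrancy} are invoked correctly — specifically that both fibrancy and the propositional h-level of $\prd{K:\bottom{\L}} Y(K)$ follow from cofibrancy of $\bottom{\L}$ together with $Y$ being pointwise a fibrant proposition. (The definition of cofibrant exotype in \cref{sec:cofibrancy} gives fibrancy of the product when each $Y(b)$ is fibrant, and contractibility of the product when each $Y(b)$ is contractible; propositionality follows by applying the contractibility clause to the path-spaces, or equivalently since ``being a proposition'' is equivalent to ``all path-spaces are contractible''.) Everything else is a routine unfolding of \cref{def:iso-of-structures} and standard closure facts about fibrancy, propositions, products, and dependent products over cofibrant bases.
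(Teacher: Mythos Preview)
Your proposal is correct and is precisely the argument the paper leaves implicit (the lemma is stated with an immediate \qed\ and no written proof). The induction on height following \cref{def:iso-of-structures}, together with the standard facts that $\isEquiv(-)$ is a fibrant proposition, that $\bottom{\L}$ is sharp hence cofibrant, and that products and cofibrant-indexed dependent products preserve fibrancy and propositionality, is exactly what is intended.
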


\begin{lemma}\label{lem:identity_is_lvle}
  For any signature $\L$ and $\L$-structure $M$, the identity morphism on $M$ is a levelwise equivalence.
\end{lemma}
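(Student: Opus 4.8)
The plan is to proceed by induction on the height $n$ of the signature $\L$, following the recursive structure of \Cref{def:iso-of-structures}. The base case $n \steq 0$ is immediate: by definition $\isIso_\L(f) \steq \onetype$ for \emph{every} morphism $f$, so in particular the identity on $M$ is a levelwise $\L$-equivalence.

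For the inductive step, suppose $n > 0$ and the result holds for all signatures of height $n-1$. Let $M : \Struc{\L}$, so that $M \steq (\bottom{M}, \derivdia{M})$ with $\bottom{M} : \bottom{\L} \to \U$ and $\derivdia{M} : \Struc{\derivcat{\L}{\bottom{M}}}$. Recall that the identity morphism $1_M$ has $\bottom{(1_M)}(K) \steq 1_{\bottom{M}(K)}$ for each $K : \bottom{\L}$, and $\derivdia{(1_M)}$ is, by definition, the identity morphism on $\derivdia{M}$ in $\Struc{\derivcat{\L}{\bottom{M}}}$ (using the exo-equality $(\derivcat{\L}{\bottom{(1_M)}})^* \derivdia{M} \steq \derivdia{M}$). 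To show $\isIso_\L(1_M)$, by the definition of $\isIso_\L$ it suffices to provide the two components: first, for each $K : \bottom{\L}$, the map $\bottom{(1_M)}(K) \steq 1_{\bottom{M}(K)}$ is the identity function on a fibrant type, which is an equivalence; and second, that $\derivdia{(1_M)}$ is a levelwise $\derivcat{\L}{\bottom{M}}$-equivalence. Since $\derivcat{\L}{\bottom{M}}$ has height $n-1$ and $\derivdia{(1_M)}$ is the identity morphism on $\derivdia{M}$, the induction hypothesis applies and gives us exactly this. Pairing the two components yields the required element of $\isIso_\L(1_M)$, completing the induction.

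I do not expect any genuine obstacle here; the statement is essentially a bookkeeping verification that the inductive definitions of ``identity morphism'' and ``levelwise equivalence'' are compatible. The only mild subtlety is the exo-equality $(\derivcat{\L}{1})^* M \steq 1^* M \steq M$ that is used (and already invoked) in the definition of identity structure morphisms, which ensures that $\derivdia{(1_M)}$ really does typecheck as a morphism $\derivdia{M} \to \derivdia{M}$ rather than $\derivdia{M} \to (\derivcat{\L}{\bottom{(1_M)}})^* \derivdia{M}$; but this is already handled in \Cref{sec:abstract-signatures}, so no new work is needed. One could alternatively derive this lemma as a corollary of the preceding observation that any isomorphism in the exo-category $\Struc{\L}$ is a levelwise equivalence, since the identity is trivially such an isomorphism; this shorter route avoids even writing out the induction, though it is perhaps less self-contained.
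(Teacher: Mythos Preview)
Your proof is correct and follows essentially the same approach as the paper's: induction on the height $n$, with the base case trivial and the inductive step using that $\bottom{(1_M)}(K)$ is the identity function (hence an equivalence) and that $\derivdia{(1_M)}$ is the identity on $\derivdia{M}$ (hence a levelwise equivalence by the induction hypothesis). Your version is more detailed, and your remark about the alternative route via the preceding lemma on exo-isomorphisms is a valid shortcut.
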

\begin{proof}
  By induction on $n$: for $n \eqdef 0$, any morphism is a levelwise equivalence.
  For $n > 0$, we have that $\bottom{1_M}(K) = 1_{MK}$, which is an equivalence of types.
  The identity on $\derivdia{M}$ is a levelwise equivalence by induction hypothesis.
\end{proof}

\begin{definition}
  Let $\L$ be a signature, and $M$ be an $\L$-structure.
  We define, by \pathinduction, the function
  \begin{align*}
    \idtolvle : (M = N) &\to (M \leqv[\L] N)
    \\
                \refl_M &\mapsto 1_M
  \end{align*}\nomenclature[idtolvl]{$\idtolvle$}{function from identifications to levelwise equivalences of functorial structures}%
  Here we use that the identity morphism is a levelwise equivalence per \cref{lem:identity_is_lvle}.
\end{definition}

\begin{proposition}\label{prop:uni1}
 For structures $M,N$ of a signature $\L$, the canonical map 
 \[ \idtolvle_{M,N}: (M = N) \to (M \leqv N) \]
 is an equivalence of types.
\end{proposition}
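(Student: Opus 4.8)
The proof will proceed by induction on the height $n$ of the signature $\L$, mirroring the inductive structure of both $\Sig(n)$ and $\Struc{\L}$. The base case $n \converts 0$ is immediate: $\Struc{\L} \converts \onetype$ is contractible, $M \leqv N \converts \onetype$ is contractible, and $\idtolvle$ is automatically an equivalence between contractible types. For the inductive step, suppose the result holds for all signatures of height $n$, and let $\L : \Sig(n+1)$ with $M, N : \Struc{\L}$. Recall that $M \converts (\bottom{M}, \derivdia{M})$ with $\bottom{M} : \bottom{\L}\to\U$ and $\derivdia{M} : \Struc{\derivcat{\L}{\bottom{M}}}$, and similarly for $N$.

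The strategy is to decompose both sides of $\idtolvle_{M,N}$ along this $\Sigma$-type structure and match the pieces. First I would characterize the identification type $(M = N)$ using the standard equivalence for identifications in a $\Sigma$-type: $(M = N) \simeq \sm{p : \bottom{M} = \bottom{N}} \trans{p}{\derivdia{M}} = \derivdia{N}$. By function extensionality and the univalence axiom, $(\bottom{M} = \bottom{N}) \simeq \prd{K:\bottom{\L}} (\bottom{M}(K) \simeq \bottom{N}(K))$; call this equivalence $e_0$. This handles the ``bottom'' component and explains the first clause of \cref{def:iso-of-structures}. For the ``derived'' component, the subtlety is that $\derivdia{M}$ and $\derivdia{N}$ live in \emph{different} types, $\Struc{\derivcat{\L}{\bottom{M}}}$ and $\Struc{\derivcat{\L}{\bottom{N}}}$ respectively; an identification $p : \bottom{M} = \bottom{N}$ induces, via functoriality of $\derivcat{\L}{-}$ and of $\Struc{-}$, a comparison between these types, and transporting $\derivdia{M}$ along $p$ should agree (up to the coherences built into the definitions) with pulling back $\derivdia{N}$ along the signature morphism $\derivcat{\L}{\bottom{f}}$ determined by the equivalence $\bottom{f} = e_0(p)$. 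Concretely, I would show that under $e_0$, the type $\trans{p}{\derivdia{M}} = \derivdia{N}$ is equivalent to $\derivdia{M} = (\derivcat{\L}{\bottom{f}})^* \derivdia{N}$, and then invoke the inductive hypothesis applied to the signature $\derivcat{\L}{\bottom{M}}$ and structures $\derivdia{M}$ and $(\derivcat{\L}{\bottom{f}})^*\derivdia{N}$: this gives $(\derivdia{M} = (\derivcat{\L}{\bottom{f}})^*\derivdia{N}) \simeq (\derivdia{M} \leqv \derivdia{N})$, which is exactly the second clause of \cref{def:iso-of-structures}.

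Assembling these, one obtains $(M = N) \simeq \sm{\bottom{f} : \prd{K}\bottom{M}(K)\simeq\bottom{N}(K)} (\derivdia{M} \leqv (\derivcat{\L}{\bottom{f}})^*\derivdia{N}) \simeq (M \leqv N)$, where the last step just repackages the definition of $\leqv$. The final—and essential—point is that this composite equivalence is \emph{identical} to the map $\idtolvle_{M,N}$, not merely equivalent to it. By \pathinduction it suffices to check that both send $\refl_M$ to the identity levelwise equivalence $1_M$; this follows because each equivalence in the chain was chosen to send reflexivity to reflexivity (for $e_0$ this is the normalization condition in the specification of $\simeq$ via \eqref{eq:univalence-axiom}; for the inductive hypothesis it is part of the statement; for the $\Sigma$-type decomposition it is the standard normalization), together with the fact that $\idtolvle$ is \emph{defined} by sending $\refl_M$ to $1_M$, and $1_M$ decomposes as $(\lambda K.\,1_{\bottom{M}(K)}, 1_{\derivdia{M}})$ by definition of identity morphisms.

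\textbf{Main obstacle.} I expect the technical heart of the argument to be the transport/pullback compatibility lemma: establishing that transporting $\derivdia{M}$ along an identification $p : \bottom{M} = \bottom{N}$ coincides with $(\derivcat{\L}{\bottom{f}})^*\derivdia{N}$ where $\bottom{f} = \idtoeqv(p)$. This requires unwinding the functorial actions of $\derivcat{\L}{-}$ (\cref{def:derivation_functorial_action}, which at the diagram level was \cref{def:abstract_signature}'s derivative functor) and of $\Struc{-}$, and checking that the coherences line up—in particular that $\derivcat{\L}{-}$ applied to $\idtoeqv(p)$, viewed as a morphism in $\func{\bottom{\L}}{\U}$, agrees with the transport action induced by $p$. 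One clean way to organize this is to prove the slightly more general statement that for any signature morphism $\alpha$ arising from a pointwise equivalence, pullback $\alpha^*$ agrees with the transport map on structures, and that these are compatible with the $\idtolvle$ maps; this can itself be proven by induction on height in parallel with the main claim, so the whole proof becomes a single mutual induction. The univalence axiom enters exactly (and only) at the bottom level, through $e_0$; all higher levels are handled by the inductive hypothesis, which is why, as the remark after \cref{prop:uni1-folds} notes, univalence is recovered as the height-one instance.
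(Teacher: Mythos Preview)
Your proposal is correct and follows essentially the same approach as the paper: induction on height, $\Sigma$-type decomposition of $(M=N)$, univalence on the bottom component, a transport/pullback compatibility lemma (proved by \pathinduction, checking both sides at $\refl$), and the inductive hypothesis on the derived structures. The paper phrases the second component as $\derivdia{M} = \trans{p^{-1}}{\derivdia{N}}$ rather than $\trans{p}{\derivdia{M}} = \derivdia{N}$, and dispatches the transport/pullback compatibility in a single \pathinduction step rather than as a separate mutual induction, but these are cosmetic differences.
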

\begin{proof}
  When $n \eqdef 0$, $ \idtolvle:\onetype \to \onetype$, hence is an equivalence.

Let $\ua : (\bottom{M} \simeq \bottom{N}) \to (\bottom{M}  = \bottom{N} )$ be given by the univalence axiom. 
First we show that $\trans{\ua(e)^{-1}}{\derivdia{N}} = (\derivcat{\L}{e})^*\derivdia{N}$ for any $e: \bottom{M} \simeq \bottom{N}$, where $\transfun{\ua(e)^{-1}}$ denotes transport along $\ua(e)^{-1}$.
Now the square in the diagram

\[
  \begin{tikzcd}[ampersand replacement = \&]
    (\bottom{M} \simeq \bottom{N}) \ar[r, "\ua"]
    \& 
    (\bottom{M} = \bottom{N}) \ar[d, "(-)^{-1}"] \ar[r, "\ua^{-1}"]
    \& 
    (\bottom{M} \simeq \bottom{N}) \ar[r, "\derivcat{\L}{-}"]
    \&
    \hom_{\Sig(n)}(\derivcat{\L}{\bottom{M}} , \derivcat{\L}{\bottom{N}}) \ar[d, "(-)^*"]
    \\
    \& 
    (\bottom{N} = \bottom{M}) \ar[rr, "(-)_*"] 
    \& 
    \& 
    \Struc{\derivcat{\L}{\bottom{N}}} \to \Struc{\derivcat{\L}{\bottom{M}}}
  \end{tikzcd}
\]
commutes (up to $=$) since both functions $(\bottom{M} = \bottom{N}) \to \Struc{\derivcat{\L}{\bottom{N}}} \to \Struc{\derivcat{\L}{\bottom{M}}} $ send $\refl_{\bottom{M}} $ to $ 1_{\Struc{\derivcat{\L}{M}}}$ (by exo-functoriality of the pullback). Precomposing these with $\ua$, we find that $(\derivcat{\L}{e})^*\derivdia{N} = \trans{\ua(e)^{-1}}{\derivdia{N}}$.
Now we have that
 \begin{align*}
 (M = N) & = \sm{p:\bottom{M} = \bottom{N}} \derivdia{M} = \trans{p^{-1}}{\derivdia{N}} \\
 &= \sm{e:\bottom{M} \simeq \bottom{N}} \derivdia{M} = \trans{\ua(e)^{-1}}{\derivdia{N}} \\
 &= \sm{e:\bottom{M} \simeq \bottom{N}} \derivdia{M} = (\derivcat{\L}{e})^*\derivdia{N} \\
 &= \sm{e:\bottom{M} \simeq \bottom{N}} \derivdia{M} \leqv (\derivcat{\L}{e})^*\derivdia{N} \\
 &\equiv ( M \leqv N )
 \end{align*}
 where the second identification is the univalence axiom and the fourth is our inductive hypothesis. This equivalence, from left to right, is $\idtolvle_{M,N}$.
 \end{proof}

To end this \lcnamecref{sec:structures}, we observe that we can now deduce that the type of structures for a diagram signature is independent of the rank function.

\begin{corollary}\label{cor:struc-norank}
  Let $\L:\IC(p)$ be an inverse exo-category, and let $\M:\IC(q)$ be the same exo-category but made into an inverse exo-category with a different rank function.
  Then the types $\Struc{E_p(\L)}$ and $\Struc{E_q(\M)}$ are equivalent.
\end{corollary}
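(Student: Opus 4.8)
The plan is to reduce the statement about functorial structures back to the statement about Reedy fibrant diagrams, where the rank function plays no role at all. Concretely, by \Cref{thm:reedy-struc} we have equivalences of exo-categories $\rfunc{\L}{\Ustrict} \simeq \Struc{E_p(\L)}$ and $\rfunc{\M}{\Ustrict} \simeq \Struc{E_q(\M)}$, and in particular equivalences of their (fibrant) types of objects. So it suffices to show that the type of objects of $\rfunc{\L}{\Ustrict}$ agrees with that of $\rfunc{\M}{\Ustrict}$. But $\L$ and $\M$ are \emph{literally the same exo-category} — only the rank functor differs — and the notion of Reedy fibrant diagram (\Cref{def:matching}) is phrased entirely in terms of the fanout exotypes $\fanout{K}{m}$, the hom-exotypes, and the maps $Mf$; the rank appears only as a bookkeeping device controlling which index $m$ ranges below which $n$. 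So first I would check that the matching objects $\match_K M$ and the Reedy fibrancy condition are insensitive to the choice of rank function: given any exo-functor $M$ on the underlying exo-category, being Reedy fibrant with respect to $\mathsf{rk}_\L$ is equivalent to being Reedy fibrant with respect to $\mathsf{rk}_\M$.

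The key observation here is that in an inverse exo-category, the fanout $\fanout{K}{m}$ is the exotype of all morphisms out of $K$ whose target has rank $m$; but since the rank functor reflects identities, any nonidentity morphism strictly decreases rank, so the \emph{set of all morphisms out of $K$} (to objects other than $K$) is the same regardless of rank assignment — only its \emph{stratification} by target rank changes. The matching object $\match_K M$, as defined in \Cref{def:matching}, is the sub-exotype of a product indexed by all pairs $(L,f)$ with $f : \hom_\L(K,L)$ and $L$ of strictly smaller rank than $K$, cut out by compatibility conditions that only mention composition of morphisms in $\L$, not ranks. Since ``$L$ has strictly smaller rank than $K$'' is equivalent, in an inverse exo-category, to ``there is a nonidentity morphism $K \to L$'' (up to the data already present), reorganizing the product by a different rank function gives an isomorphic exotype $\match_K M$, and the induced map $MK \to \match_K M$ is the same map up to this isomorphism. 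Hence $M$ is Reedy fibrant for $\mathsf{rk}_\L$ iff it is for $\mathsf{rk}_\M$, and the exo-categories $\rfunc{\L}{\Ustrict}$ and $\rfunc{\M}{\Ustrict}$ have the same objects and hom-exotypes. Composing with the two instances of \Cref{thm:reedy-struc} then yields $\Struc{E_p(\L)} \simeq \rfunc{\L}{\Ustrict} \simeq \rfunc{\M}{\Ustrict} \simeq \Struc{E_q(\M)}$, as desired.

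I expect the main obstacle to be the bookkeeping in the first step: making precise the claim that the matching object is rank-independent. One has to be slightly careful because \Cref{def:matching} literally quantifies ``$m < n$'' and ``$m_2 < m_1 < n$'', so a clean argument should probably go through an intermediate reformulation of the matching object that does not mention ranks — e.g.\ as the limit of $M$ restricted to the full subcategory of the slice $K/\L$ on the nonidentity morphisms — and then observe that this reformulation manifestly does not see the rank functor, while also matching \Cref{def:matching} for each choice of rank. Since the excerpt already notes (in the footnote to \Cref{lem:matching-fibers} and around \Cref{def:matching}) that the definition is ``phrased mostly in terms of fanout exotypes,'' this reformulation is morally already available; the remaining work is just to confirm it is an isomorphism compatible with the fibration structure. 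Everything after that is a direct appeal to \Cref{thm:reedy-struc}.
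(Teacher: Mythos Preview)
Your overall strategy matches the paper's: observe that Reedy fibrancy is rank-independent, so $\rfunc{\L}{\Ustrict}$ and $\rfunc{\M}{\Ustrict}$ coincide, and then invoke \cref{thm:reedy-struc} on each side. But there is a genuine gap in the final step. You write that the equivalences of exo-categories from \cref{thm:reedy-struc} give ``in particular equivalences of their (fibrant) types of objects,'' and later you simply compose exo-category equivalences to conclude a type equivalence $\Struc{E_p(\L)} \simeq \Struc{E_q(\M)}$. This inference is not valid in 2LTT: an equivalence of exo-categories gives functors $F,G$ with natural \emph{exo-isomorphisms} $GF \cong 1$ and $FG \cong 1$, i.e., for each object $M$ an isomorphism $GF(M) \cong M$ in the exo-category. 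That is not the same as an identification $GF(M) = M$ in the type of objects, so $F$ and $G$ need not be inverse equivalences of types. (Also note that $\rfunc{\L}{\Ustrict}$ does not itself have a fibrant type of objects, so the phrase ``equivalence of their (fibrant) types of objects'' is already suspect there.)

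The paper closes this gap by invoking \cref{prop:uni1}: the natural exo-isomorphisms witnessing the composite equivalence $\Struc{E_p(\L)} \simeq \Struc{E_q(\M)}$ are in particular levelwise equivalences, and \cref{prop:uni1} converts these to identifications. This is what upgrades the exo-category equivalence to a genuine equivalence of the underlying fibrant types. Your argument needs exactly this additional ingredient; without it the conclusion does not follow.
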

\begin{proof}
  The notion of Reedy fibrancy is independent of the rank function, so the exo-categories $\rfunc{\L}{\Ustrict}$ and $\rfunc{\M}{\Ustrict}$ are equivalent.
  By \cref{thm:reedy-struc}, they are also equivalent to the exo-categories $\Struc{E_p(\L)}$ and $\Struc{E_q(\M)}$ respectively, so these two exo-categories are equivalent.
  However, by \cref{thm:exoiso-lvleqv,prop:uni1}, the natural exo-isomorphisms witnessing this equivalence of exo-categories yield identifications, so that the underlying types $\Struc{E_p(\L)}$ and $\Struc{E_q(\M)}$ are also equivalent.
\end{proof}

\chapter{Indiscernibility and univalence}\label{sec:FOLDS-iso-uni}

In this \lcnamecref{sec:FOLDS-iso-uni} we make the definitions of indiscernibility (\cref{def:indisc-in-diag-structure}) and univalence (\cref{def:univalence-cond-on-diag-structures}) from \cref{sec:indisc-folds} completely precise, in the generality of functorial signatures.
The corresponding definitions for functorial signatures are given in \cref{def:iso-within-a-structure} and \cref{def:functorial-univalence}, respectively.

We furthermore generalize the statements of \cref{thm:hlevel-folds,thm:hlevel1-folds} to functorial signatures, and prove them as \cref{thm:hlevel,thm:hlevel1}.
These results give an upper bound for the homotopy level of the types within a univalent $\L$-structure, and for the type of univalent $\L$-structures, respectively, in terms of the height of $\L$.

We begin with two auxiliary definitions which, for diagram signatures, were only sketched in \cref{sec:indisc-folds}.

\begin{definition}\label{def:ind}
  Let $L$ be a sharp exotype, $K:L$, $M: L \to \U$, and $a: M(K)$. We define the \defemph{indicator function of $K$}\index{function!indicator} to be
  \[ [K] \eqdef  \lambda x. \left(  x = K \right) : L \to \U\]
  and we define the
  function $\hat a: \prd{x: L} [K](x) \to M(x)$ by applying \cref{lem:sharp-id} to $a : M (K)$, so that $\hat a(K,\refl_K) = a$.
  When there is no risk of confusion, we write $\hat a$ as simply $a$.
  
Below we consider the pointwise disjoint union $M+[K]$ in $L \to \U$, the canonical injection $\iota_M: \prd{x: L} M(x) \to (M+[K])(x)$, and the induced function $\copair{1_M}{\hat{a}}:  \prd{x: L}  (M+[K])(x)\to M(x) $.
\end{definition}

\begin{definition}\label{def:partial-struc}
Consider $\L: \Sig(n+1)$, $K:\bottom{\L}$, $M: \Struc{\L}$, $a: \bottom{M}(K)$.
Define 
\[\partial_a M  \eqdef  (\derivcat{\L}{\copair{1_{\bottom{M}}}{\hat{a}}})^*\derivdia{M} : \Struc{\derivcat{\L}{\bottom{M} + [K]}}.\]
\end{definition}
 
This (along with the corresponding definition for diagram signatures in \cref{sec:indisc-folds}) is the first place we use our assumption that $\bottom{\L}$ is pointwise \emph{sharp}, rather than just \emph{cofibrant}.

Now we can define the type of indiscernibilities between objects within an $\L$-structure: 
 
\begin{definition}
\label{def:iso-within-a-structure} 
Consider $\L : \Sig(n+1)$, $K:\bottom{\L}$, $M: \Struc{\L}$, $a,b: \bottom{M}(  K)$. 
We define the type of \defemph{indiscernibilities from $a$ to $b$}\index{indiscernibility!in a functorial structure} to be
\[ 
(a \fiso b) \eqdef \sm{p \colon  \partial_a M \: = \: \partial_b M} \epsilon_a^{-1} \concat (\derivcat{\L}{\iota_{\bottom{M}}})^*p \concat \epsilon_b =_{\derivdia{M} = \derivdia{M}} \refl_{\derivdia{M}}, 
\]\nomenclature[\bbindisc2]{$a \fiso b$}{type of indiscernibilitites between $a$ and $b$ in a functorial structure}%
where $\epsilon_{x}$ is the concatenated identification
\begin{align*}
(\derivcat{\L}{\iota_{\bottom{M}}})^* \partial_x M  &\converts (\derivcat{\L}{\iota_{\bottom{M}}})^* (\derivcat{\L}{\langle 1_{\bottom{M}}, x \rangle})^*  \derivdia{M}\\ &= (\derivcat{\L}{  \langle 1_{\bottom{M}}, x \rangle \circ \iota_{\bottom{M}}})^* \derivdia{M}=  (\derivcat{\L}{  1_{\bottom{M}}})^* \derivdia{M} = \derivdia{M}.
\end{align*}
\end{definition}

\begin{remark}
  Using identification instead of levelwise equivalence of structures in \cref{def:iso-within-a-structure} is justified by \cref{prop:uni1}.
\end{remark}

\begin{lemma}\label{lem:iso-equivalent}
 The type of indiscernibilities $a \fiso b$ of \cref{def:iso-within-a-structure} is equivalent to the type
\[ 
\pushQED{\qed} 
 \sm{p \colon  \partial_a M \: = \: \partial_b M}  (\derivcat{\L}{\iota_{\bottom{M}}})^*p = \epsilon_a \concat \epsilon_b^{-1}.
 \qedhere
\popQED
\]
\end{lemma}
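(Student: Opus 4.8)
\textbf{Proof proposal for \Cref{lem:iso-equivalent}.}

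The plan is to show that, for a fixed $p \colon \partial_a M = \partial_b M$, the second component of the $\Sigma$-type in \Cref{def:iso-within-a-structure} is equivalent (in fact, by path algebra, logically equivalent via a chain of equivalences of identity types) to the second component in the claimed formula, and then to conclude by the fact that $\Sigma$-types respect fiberwise equivalences. Concretely, write $q \eqdef (\derivcat{\L}{\iota_{\bottom{M}}})^*p$; this is an identification $(\derivcat{\L}{\iota_{\bottom{M}}})^*\partial_a M = (\derivcat{\L}{\iota_{\bottom{M}}})^*\partial_b M$ in $\Struc{\derivcat{\L}{\bottom{M}}}$. The condition in \Cref{def:iso-within-a-structure} is $\epsilon_a^{-1} \concat q \concat \epsilon_b = \refl_{\derivdia{M}}$, where (unwinding the exo-equalities in the definition of $\epsilon_x$) $\epsilon_a$ is an identification from $(\derivcat{\L}{\iota_{\bottom{M}}})^*\partial_a M$ to $\derivdia{M}$ and likewise $\epsilon_b$ from $(\derivcat{\L}{\iota_{\bottom{M}}})^*\partial_b M$ to $\derivdia{M}$. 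Here I am using that the domain and codomain of $\epsilon_a^{-1} \concat q \concat \epsilon_b$ are both $\derivdia{M}$, so the equation with $\refl_{\derivdia{M}}$ typechecks.

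First I would recall the standard groupoid-laws lemma from HoTT: for paths $u : x = y$, $v : x = z$, $w : z = y$ in any type, the type $u^{-1} \concat v \concat w = \refl_y$ is equivalent to the type $v = u \concat w^{-1}$. (This is proved by \pathinduction, e.g.\ on $u$ and $w$, reducing both sides to $v = \refl$.) Applying this with $u \eqdef \epsilon_a$, $v \eqdef q$, $w \eqdef \epsilon_b$ — noting $x \eqdef (\derivcat{\L}{\iota_{\bottom{M}}})^*\partial_a M$, $z \eqdef (\derivcat{\L}{\iota_{\bottom{M}}})^*\partial_b M$, $y \eqdef \derivdia{M}$ — we obtain that $\epsilon_a^{-1} \concat q \concat \epsilon_b = \refl_{\derivdia{M}}$ is equivalent to $q = \epsilon_a \concat \epsilon_b^{-1}$, which is exactly the second component in the target type after substituting back $q = (\derivcat{\L}{\iota_{\bottom{M}}})^*p$. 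Then $\Sigma$ over $p$ of a fiberwise equivalence is an equivalence (a basic closure property of equivalences), giving the desired equivalence of the two $\Sigma$-types.

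I do not expect any serious obstacle here; the only points requiring a little care are bookkeeping. One is making sure the endpoints of $\epsilon_a$, $\epsilon_b$, and $q$ match up as stated, which requires unfolding the chain of exo-equalities $\converts$ and $=$ in the definition of $\epsilon_x$ (the functoriality of pullback, i.e.\ $(\derivcat{\L}{g})^*(\derivcat{\L}{h})^* = (\derivcat{\L}{h \circ g})^*$ and $(\derivcat{\L}{1})^* = \mathsf{id}$, together with the exo-equality $\langle 1_{\bottom{M}}, x\rangle \circ \iota_{\bottom{M}} \steq 1_{\bottom{M}}$); since exo-equalities can be silently coerced to identifications, this is routine. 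The other is simply citing the right form of the path-algebra lemma; if the precise statement $u^{-1}\concat v \concat w = \refl \;\simeq\; v = u \concat w^{-1}$ is not already available, it is a one-line \pathinduction. Thus the proof is essentially a direct unwinding plus one application of a standard groupoid identity and the $\Sigma$-closure of equivalences.
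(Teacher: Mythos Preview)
Your proposal is correct and matches the paper's approach: the paper simply marks this lemma with a \qed and gives no proof, treating it as an immediate consequence of the groupoid laws for identification types, which is exactly the path-algebra manipulation you spell out. Your fiberwise equivalence $u^{-1}\concat v\concat w = \refl \;\simeq\; v = u\concat w^{-1}$ together with closure of $\Sigma$-types under fiberwise equivalences is precisely the intended (omitted) argument.
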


We now define \emph{univalence of $\L$-structures}. 
For this, we first need to define the canonical map from identifications to indiscernibilities.

\begin{definition}
\label{def:id-iso}
For $\L: \Sig(n+1)$, $K:\bottom{\L}$, $M: \Struc{\L}$, and $m : \bottom{M}(K)$, we define the \defemph{identity indiscernibility} $1 : m \fiso m$ as follows.\index{indiscernibility!identity}
Let $M: \Struc{\L}$.
For any $a: \bottom{M}(K)$, we have $\refl_{\partial_a M}: \partial_a M = \partial_a M$. Then
\begin{align*} \MoveEqLeft \epsilon_a^{-1} \concat (\derivcat{\L}{\iota_M})^*(\refl_{\partial_a M}) \concat \epsilon_a  
  \\
 &\steq \epsilon_a^{-1} \concat \refl_{(\derivcat{\L}{\iota_M})^*\partial_a M} \concat \epsilon_a  \\
 &=  \refl_{\derivdia{M}} \enspace ,
\end{align*}
where the second identification uses the groupoidal properties of types.
This gives the desired indiscernibility.
\end{definition}

\begin{definition}
Consider $\L: \Sig(n+1)$, $K:\bottom{\L}$, $M: \Struc{\L}$. 
For any $a,b: \bottom{M}(K)$, let $\idtoindisc_{a,b}: ( a = b) \to (a \fiso b) $ be the function which sends $\refl_a$ to the identity indiscernibility exhibited in \cref{def:id-iso}. 

We say that $M$ is \defemph{univalent at $K$}\index{structure!functorial!univalent} if for all $a,b : \bottom{M}(K)$, the map
\[\idtoindisc_{a,b} : (a = b) \to (a \fiso b)\]
is an equivalence.
\end{definition} 

\begin{definition}\label{def:functorial-univalence}
We define by induction what it means for a structure of a signature $\L:\Sig(n)$ to be \defemph{univalent}.

When $n \eqdef 0$, every structure $M: \Struc{\L}$ is univalent. 

When $n>0$, a structure $M: \Struc{\L}$ is univalent if $M$ is univalent at all $K:\bottom{\L}$ and $\derivdia{M}$ is univalent.

We denote by $\uStruc{\L}$ the type of univalent structures of $\L$.

Given a functorial theory $\T \steq (\L,T)$, a $\T$-model is \defemph{univalent}\index{model!of a functorial theory!univalent} if its underlying $\L$-structure is univalent.
\end{definition} 

\begin{lemma}
  Given a functorial signature $\L$,
  \begin{enumerate}
  \item for any $\L$-structure $M$, the exotype ``$M$ is univalent'' is a fibrant proposition,
  \item the exotype $\uStruc{\L}$ is fibrant, and
  \item  for $M,N:\uStruc{\L}$ we have
    \[
      \pushQED{\qed}
      (M =_{\uStruc{\L}} N) \simeq (M =_{\Struc{\L}} N) . \qedhere
      \popQED
    \]
  \end{enumerate}
\end{lemma}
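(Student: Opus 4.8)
The plan is to prove the three statements simultaneously by induction on the height $n$ of the signature $\L$, since they are naturally intertwined: univalence of a structure is defined in terms of univalence of its derivative, and the identity-type computation in part (3) will recurse through the derivative as well. For the base case $n \equiv 0$, everything is trivial: $\Struc{\L} \equiv \onetype$, every structure is univalent by definition, so ``$M$ is univalent'' is (exo-equal to) $\onetype$, hence a fibrant proposition; $\uStruc{\L}$ is then $\onetype$ (or a type equivalent to it), hence fibrant; and both identity types in (3) are contractible, hence equivalent. For the inductive step, I would unfold the definitions: $M:\Struc{\L}$ is a pair $(\bottom{M},\derivdia{M})$ with $\bottom{M}:\bottom{\L}\to\U$ and $\derivdia{M}:\Struc{\derivcat{\L}{\bottom{M}}}$, and ``$M$ is univalent'' unfolds to $\bigl(\prd{K:\bottom{\L}} (\text{$M$ is univalent at $K$})\bigr) \times (\text{$\derivdia{M}$ is univalent})$.

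For part (1), I would argue that ``$M$ is univalent at $K$'' --- namely that $\idtoindisc_{a,b}:(a=b)\to(a\fiso b)$ is an equivalence for all $a,b:\bottom{M}(K)$ --- is a proposition, because $\isEquiv$ of any map is a proposition (as recalled in \Cref{sec:hlevel}) and $\prd{a,b:\bottom{M}(K)}$ of a proposition is a proposition. Since $\bottom{\L}$ is sharp, hence cofibrant, $\prd{K:\bottom{\L}}$ of a fibrant proposition is again a fibrant proposition (by the closure properties of cofibrant exotypes recalled in \cref{sec:cofibrancy}, or directly by the definition of cofibrancy). The second factor, ``$\derivdia{M}$ is univalent'', is a fibrant proposition by the induction hypothesis applied to the signature $\derivcat{\L}{\bottom{M}}$ of height $n-1$. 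A product of two fibrant propositions is a fibrant proposition, giving (1). For part (2), $\uStruc{\L} \equiv \sm{M:\Struc{\L}} (\text{$M$ is univalent})$; since $\Struc{\L}$ is a fibrant type (as noted after \cref{def:folds-struc}) and the predicate is pointwise a fibrant proposition by (1), the total space is fibrant --- this is the standard fact that a $\Sigma$-type of a fibrant family over a fibrant base is fibrant.

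For part (3), since ``being univalent'' is a proposition by (1), the projection $\uStruc{\L}\to\Struc{\L}$ is an embedding, so for $M,N:\uStruc{\L}$ the induced map on identity types $(M=_{\uStruc{\L}}N)\to(M=_{\Struc{\L}}N)$ is an equivalence; this is precisely the general fact that a $\Sigma$-type over a propositional family has identity types equivalent to those of the base (cf.\ the characterizations of identity types in \Cref{sec:hlevel}). The main obstacle, such as it is, lies in the bookkeeping of the inductive step for part (1): one must be careful that the quantifier $\prd{K:\bottom{\L}}$ preserves fibrant-propositionhood, which relies on $\bottom{\L}$ being cofibrant (a consequence of sharpness, part of the definition of functorial signature), and that the recursive appeal to the induction hypothesis is applied to $\derivcat{\L}{\bottom{M}}$ \emph{at the specific} $\bottom{M}$, which is legitimate since $\derivcat{\L}{}$ lands in $\Sig(n)$ by \Cref{def:abstract_signature}. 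Once these closure properties are in hand, (2) and (3) are essentially immediate consequences of standard homotopy-type-theoretic facts about $\Sigma$-types over propositions, so the whole argument is short.
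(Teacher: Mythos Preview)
Your proposal is correct. The paper actually omits the proof entirely (the \texttt{qed} symbol is placed at the end of the statement itself), treating the lemma as routine; your argument spells out exactly the expected reasoning. One small remark: parts (2) and (3) do not themselves require induction---once (1) is established for all signatures, (2) and (3) follow immediately from the standard facts about $\Sigma$-types over propositional families that you cite---so the induction is really only needed for (1), though packaging all three together does no harm.
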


Our first general observations about univalent structures give truncation bounds for their sorts and for the type of such structures.

\begin{theorem}\label{thm:hlevel}
Let $n>0$, $\L: \Sig(n)$, $M: \uStruc{\L}$, $K: \bottom{\L}$. Then $\bottom{M}(K)$ is an \nminustwo-type.
\end{theorem}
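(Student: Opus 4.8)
The plan is to induct on the height $n+1$ of the signature $\L$, with the statement to be proven simultaneously with a companion fact about the derivative structure. The base case is $n \steq 0$: here $\bottom{M}(K)$ is a type, and we must show it is a $(-1)$-type, i.e., a proposition. But $\L$ has height $1$, so $\L$ is determined by $\bottom{\L}$ alone, and $\derivcat{\L}{\bottom{M}+[K]}$ is the empty signature of height $0$. Hence for any $a,b:\bottom{M}(K)$, both $\partial_a M$ and $\partial_b M$ lie in $\Struc{(\text{height-}0\text{ signature})} \steq \onetype$, so $\partial_a M = \partial_b M$ is contractible; tracing through \cref{def:iso-within-a-structure} (using \cref{lem:iso-equivalent}), the type $a\fiso b$ is then also contractible. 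Since $M$ is univalent at $K$, the map $\idtoindisc_{a,b}:(a=b)\to(a\fiso b)$ is an equivalence, so $a=b$ is contractible for all $a,b$, which is exactly the statement that $\bottom{M}(K)$ is a proposition. This matches the intuition recorded in \cref{eg:ht1-univalence}.

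For the inductive step, suppose the result holds for signatures of height $n$ (so that sorts of univalent structures of such signatures are $(n-2)$-types), and let $\L:\Sig(n+1)$ with $M:\uStruc{\L}$ and $K:\bottom{\L}$. Fix $a,b:\bottom{M}(K)$; since $M$ is univalent at $K$, it suffices to show $a\fiso b$ is an $(n-2)$-type, for then $a=b$ is an $(n-2)$-type and hence $\bottom{M}(K)$ is an $(n-1)$-type. By \cref{lem:iso-equivalent}, $a\fiso b$ is equivalent to a $\Sigma$-type whose first component is $\partial_a M = \partial_b M$ in $\Struc{\derivcat{\L}{\bottom{M}+[K]}}$, with a propositional second component (a path-equation between elements of $\derivdia{M}=\derivdia{M}$, which is a path type in a path type and hence could be of higher level --- this needs care, see below). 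The key input is \cref{prop:uni1} applied to the signature $\derivcat{\L}{\bottom{M}+[K]}$, which has height $n$: it gives $(\partial_a M = \partial_b M) \simeq (\partial_a M \leqv \partial_b M)$, a levelwise equivalence. By unfolding \cref{def:iso-of-structures}, a levelwise equivalence of structures of a height-$n$ signature is built from componentwise equivalences $\bottom{f}(K') : P(K') \simeq Q(K')$, and by the inductive hypothesis applied to the univalent structures $\partial_a M$ and $\partial_b M$ (whose univalence follows from univalence of $M$ --- pullback along the signature morphisms preserves univalence, which should be checked or is already implicit), each $P(K')$ is an $(n-2)$-type, so each equivalence type is also an $(n-2)$-type, and the whole levelwise-equivalence type is an $(n-2)$-type.

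The main obstacle I anticipate is twofold. First, I must verify that $\partial_a M$ (and $\partial_b M$) is a \emph{univalent} structure of $\derivcat{\L}{\bottom{M}+[K]}$, given that $M$ is univalent; this amounts to showing that the pullback functor $(\derivcat{\L}{\copair{1_{\bottom{M}}}{\hat a}})^*$ preserves univalence. This should be provable by a separate induction on height (pullback along a signature morphism whose $\bottom{\alpha}$ is surjective --- here $\copair{1_{\bottom M}}{\hat a}$ is pointwise surjective --- does not introduce new indiscernibilities among sorts in a problematic way), but the bookkeeping with the definition of univalence through iterated derivatives is delicate. Second, and more seriously, I must handle the second (propositional) component of the $\Sigma$-type in \cref{lem:iso-equivalent}: it is an identification $(\derivcat{\L}{\iota_{\bottom{M}}})^*p = \epsilon_a \concat \epsilon_b^{-1}$ between two elements of $\derivdia{M} = \derivdia{M}$, a type which a priori is an $(n-1)$-type rather than an $(n-2)$-type, so naively the $\Sigma$-type is only an $(n-1)$-type. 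The resolution is that the restriction map $p \mapsto (\derivcat{\L}{\iota_{\bottom M}})^* p$ from $\partial_a M = \partial_b M$ to $\derivdia{M}=\derivdia{M}$ is, under the levelwise-equivalence identification, the map that forgets the new ``joker'' components and keeps only the components over the original sorts of $\bottom{M}$; this map is split, and fixing its value to be $\epsilon_a\concat\epsilon_b^{-1}$ picks out precisely the levelwise equivalences $\partial_a M \leqv \partial_b M$ that restrict to the \emph{identity} on $\derivdia{M}$. The fiber of this forgetful map over a fixed element is built only from equivalence-types of the ``new'' sorts --- those depending on the joker --- and by the inductive hypothesis these are again $(n-2)$-types, so the whole fiber, hence $a\fiso b$, is an $(n-2)$-type. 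I would organize the argument so that this fiber analysis is done once at the level of levelwise equivalences, where the $\Sigma$/fiber decomposition is transparent, rather than fighting with the raw path-equation.
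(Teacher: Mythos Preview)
Your base case is correct, and your overall instinct to bound the $h$-level of $a\fiso b$ via the $h$-levels of its $\Sigma$-components is the right one. However, the inductive step contains a genuine gap that you yourself flag: you need $\partial_a M$ and $\partial_b M$ to be univalent in order to invoke the inductive hypothesis on their bottom sorts, and this amounts to showing that pullback along a signature morphism preserves univalence. The paper does \emph{not} prove this, and more importantly, its proof is engineered precisely to avoid needing it. Even if pullback-preserves-univalence turns out to be true, establishing it would be an independent lemma of comparable difficulty to the theorem itself, since univalence is defined through all iterated derivatives and indiscernibilities can behave badly under pullback (cf.\ \cref{eg:indis-notpres}).

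The paper's resolution is to strengthen the inductive hypothesis. Instead of proving $P(n)$ (the theorem) alone, it proves $P(n)$ simultaneously with an auxiliary statement
\[
Q(n) \;\define\; \prd{\M,\N:\Sig(n)}\prd{\alpha: \hom(\M,\N)}\prd{N: \uStruc{\N}} \istype{(n-2)}\big(\alpha^* N = \alpha^* N\big).
\]
The point of $Q$ is that it bounds the $h$-level of the loop space of a \emph{pullback} $\alpha^* N$ of a univalent structure, without requiring $\alpha^* N$ itself to be univalent. Since both $\partial_a M$ and $\derivdia{M}$ are such pullbacks (of the univalent $\derivdia{M}$), $Q(n)$ directly controls both summands of $a\fiso b$: the first component $\partial_a M = \partial_b M$ (using that a path type has the same truncation level as the corresponding loop space), and the second component, which is a path inside $\derivdia{M}=\derivdia{M}$. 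The proof of $Q(n+1)$ in turn unfolds $\alpha^* N = \alpha^* N$ as a $\Sigma$ over $\bottom{N}\circ\bottom{\alpha} = \bottom{N}\circ\bottom{\alpha}$ (bounded by $P(n)$ applied to $N$) and a transported identity of derived structures (bounded by $Q(n)$). Thus $P$ and $Q$ feed each other.

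This also renders your fiber analysis of the second $\Sigma$-component unnecessary: with $Q$ in hand, both components are bounded uniformly, and no splitting into ``joker'' versus ``original'' sorts is required.
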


\begin{theorem}\label{thm:hlevel1}
Let $\L: \Sig(n)$. The type of univalent $\L$-struc\-tures is an \nminusone-type.
\end{theorem}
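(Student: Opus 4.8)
The plan is to prove \Cref{thm:hlevel1} by induction on the height $n$, running it in parallel (or at least closely coupled) with \Cref{thm:hlevel}, exactly as the two results are stated together. For the base case $n \steq 0$, we have $\Sig(0)$ trivial on $\onetype$, so $\Struc{\L} \steq \onetype$ and $\uStruc{\L} \steq \onetype$, which is contractible, hence a $(-1)$-type as required (recalling that $n-1$ here means $-1$). For the inductive step, suppose $\L : \Sig(n+1)$. By definition $\uStruc{\L}$ is the subtype of $\Struc{\L} \steq \sm{\bottom{M} : \bottom{\L} \to \U} \Struc{\derivcat{\L}{\bottom{M}}}$ consisting of those $(\bottom{M}, \derivdia{M})$ such that $\bottom{M}$ is univalent at every $K : \bottom{\L}$ \emph{and} $\derivdia{M}$ is a univalent structure of $\derivcat{\L}{\bottom{M}}$. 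So I would first reorganize $\uStruc{\L}$ as
\[
  \uStruc{\L} \;\simeq\; \sm{\bottom{M} : \bottom{\L} \to \U}{\bottom{M}\text{ univalent at each }K} \uStruc{\derivcat{\L}{\bottom{M}}},
\]
using that univalence of $M$ decomposes as the conjunction of the bottom-level univalence condition and univalence of $\derivdia{M}$, and that the latter condition is the statement ``$\derivdia{M} \in \uStruc{\derivcat{\L}{\bottom{M}}}$''.

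The strategy is then the standard one for bounding the truncation level of a $\Sigma$-type: if the base is an $m$-type and every fiber is an $m$-type, the total space is an $m$-type. Here I want $\uStruc{\L}$ to be an $n$-type. The fibers, by the inductive hypothesis applied to $\derivcat{\L}{\bottom{M}} : \Sig(n)$, are the types $\uStruc{\derivcat{\L}{\bottom{M}}}$, which are $(n-1)$-types, hence \emph{a fortiori} $n$-types. The base is the type $B \define \sm{\bottom{M} : \bottom{\L}\to\U}{\prd{K:\bottom{\L}} \text{(univalence of }\bottom{M}\text{ at }K)}$. Since ``$\bottom{M}$ is univalent at $K$'' is a proposition (it asserts that certain $\idtoindisc$ maps are equivalences, and $\isEquiv$ is a proposition), this second component contributes nothing to the truncation level, so $B$ is an $n$-type iff $\bottom{\L}\to\U$ is. The key input here is \Cref{thm:hlevel}, which tells us that in a univalent structure each $\bottom{M}(K)$ is an $(n-1)$-type; combined with the fact that the $(n-1)$-types form an $n$-type (i.e., $\sm{X:\U}\istype{(n-1)}(X)$ is an $n$-type, by univalence), this shows that the subtype of $\bottom{\L}\to\U$ cut out by ``$\bottom{M}$ is pointwise an $(n-1)$-type'' is an $n$-type, and then one restricts further along the univalence-at-$K$ conditions without increasing the level. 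Function extensionality gives $(\bottom{M} = \bottom{N}) \simeq \prd{K}(\bottom{M}(K) = \bottom{N}(K))$, and since each $\bottom{M}(K)$ is an $(n-1)$-type each identity type is an $(n-2)$-type, so $\bottom{M}=\bottom{N}$ is an $(n-1)$-type, confirming $B$ is an $n$-type among its ``univalent'' part.

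Assembling: $\uStruc{\L}$ is a $\Sigma$-type whose base $B$ is an $n$-type and whose fibers $\uStruc{\derivcat{\L}{\bottom{M}}}$ are $(n-1)$-types, hence $n$-types; therefore $\uStruc{\L}$ is an $n$-type, i.e., an $((n+1)-1)$-type, completing the induction. I would also record the small lemma that the constituent conditions (univalence at $K$, univalence of $\derivdia{M}$) are propositions, and that $(M =_{\uStruc{\L}} N) \simeq (M =_{\Struc{\L}} N)$, both of which are stated in the excerpt just before the theorems and make the reorganization of $\uStruc{\L}$ above legitimate.

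The main obstacle I anticipate is handling the interdependence of \Cref{thm:hlevel} and \Cref{thm:hlevel1} cleanly: \Cref{thm:hlevel1} at height $n+1$ wants to use \Cref{thm:hlevel} at height $n+1$ to bound the base, while \Cref{thm:hlevel} at height $n+1$ presumably wants to use univalence of $\derivdia{M}$ and hence facts about structures of height $n$. So the induction must be set up as a simultaneous induction on height proving both statements, with \Cref{thm:hlevel} for $\L$ of height $n+1$ proved using the inductive data for height $n$ (in particular, that $\derivcat{\L}{\bottom{M}}$-structures behave well), and \Cref{thm:hlevel1} for height $n+1$ then invoking \Cref{thm:hlevel} for height $n+1$ as just established. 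The second delicate point is the passage from ``each $\bottom{M}(K)$ is an $(n-1)$-type'' to a truncation bound on the \emph{type of such families}: this needs the univalence axiom (to know $\sm{X:\U}\istype{(n-1)}(X)$ is an $n$-type) together with function extensionality, and one must be careful that the extra univalence-at-$K$ predicate really is a proposition and really does land us inside the subtype of pointwise-$(n-1)$-types — this last inclusion is precisely the content of \Cref{thm:hlevel} and is why the two theorems are genuinely entangled.
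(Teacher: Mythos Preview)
Your overall architecture is right---the two theorems must be proved by a joint induction on height---but the pair $(\text{\Cref{thm:hlevel}},\text{\Cref{thm:hlevel1}})$ as stated is not a strong enough inductive hypothesis, and the paper's proof differs from yours precisely in how it strengthens the second one.

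First, a small error: the reorganization
\[
  \uStruc{\L} \;\simeq\; \sm{\bottom{M} : \bottom{\L} \to \U}{\bottom{M}\text{ univalent at each }K}\; \uStruc{\derivcat{\L}{\bottom{M}}}
\]
does not typecheck, because ``$M$ is univalent at $K$'' is defined via $\idtoindisc$, which involves $a\fiso b$, which involves $\partial_a M$, which involves $\derivdia{M}$. So the condition depends on the fiber variable, not just on $\bottom{M}$. This is easily repaired by placing it in the fiber, but it is a symptom of the real issue.

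The real gap is in the step you leave vague: ``\Cref{thm:hlevel} for $\L$ of height $n+1$ proved using the inductive data for height $n$''. To prove \Cref{thm:hlevel} at height $n+1$ you must show that $a\fiso b$ is an $(n-1)$-type, and $a\fiso b$ is a $\Sigma$-type over $\partial_a M = \partial_b M$. Now $\partial_a M$ and $\partial_b M$ live in $\Struc{\derivcat{\L}{\bottom{M}+[K]}}$, a height-$n$ signature, and they are \emph{pullbacks} of the univalent structure $\derivdia{M}$ along the signature morphisms $\derivcat{\L}{\copair{1}{a}}$ and $\derivcat{\L}{\copair{1}{b}}$. But pullback does not preserve univalence, so in general $\partial_a M \notin \uStruc{\derivcat{\L}{\bottom{M}+[K]}}$. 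Your height-$n$ hypothesis ``$\uStruc{\cdot}$ is an $(n-1)$-type'' therefore gives no bound on $\partial_a M = \partial_b M$, and the induction does not close.

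The paper's fix is to replace \Cref{thm:hlevel1} in the induction by the stronger statement
\[
  Q(n)\;:\;\prd{\M,\N:\Sig(n)}{\alpha:\hom(\M,\N)}{N:\uStruc{\N}}\; \istype{(n-2)}\bigl(\alpha^* N = \alpha^* N\bigr),
\]
i.e., a truncation bound on loop spaces of \emph{pullbacks} of univalent structures along arbitrary signature morphisms. This is exactly what is needed to handle $\partial_a M$; taking $\alpha$ to be the identity recovers \Cref{thm:hlevel1}. The inductive step for $Q(n+1)$ decomposes $\alpha^* N = \alpha^* N$ as a $\Sigma$-type whose fiber is again an identity type of a pullback of the univalent $\derivdia{N}$, so $Q(n)$ applies; and the base is an identity type of functions landing in the $(n-1)$-types $\bottom{N}(K)$, controlled by $P(n)$ (the statement of \Cref{thm:hlevel}). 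So the working mutual induction is $P(n)\wedge Q(n)$, not $P(n)\wedge\text{\Cref{thm:hlevel1}}(n)$.
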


\begin{proof}[Proof of \cref{thm:hlevel,thm:hlevel1}]
Define the following exotypes.
\begin{align*}
P(n) &\define \prd{\L: \Sig(n+1)}{M: \uStruc{\L}}{K: \bottom{\L}} \istype{\nminusone}(\bottom{M}(K)) \\
Q(n) &\define \prd{\big(\substack{\M,\N:\Sig(n) \\ \alpha: \hom(\M  ,\N)}\big)}\prd{(N: \uStruc{\N})} \istype{(n-2)} (\alpha^* N = \alpha^* N)
\end{align*}
The exotype $P(n)$ is the statement of \cref{thm:hlevel}, and the exotype $Q(n)$ implies the statement of \cref{thm:hlevel1} by \cite[Thm.~7.2.7]{HTT}.
We prove $P(n)$ and $Q(n)$ simultaneously.

For $P(n)$, we need to show that $a = _{\bottom{M} K} b$ is an $(n-2)$-type for all $\L: \Sig(n+1), M: \uStruc{\L}, K: \bottom{\L}, a,b: {\bottom{M} K}$. But since $M$ is univalent, this type is equivalent to
\[ (a \fiso b) \equiv \sum_{e: \partial_a M = \partial_b M} \epsilon_a^{-1} \concat (\derivcat{\L}{\iota_M})^*p \concat \epsilon_b =_{\derivdia{M} = \derivdia{M}} \refl_{\derivdia{M}}.\] Thus, it will suffice to show that $\partial_a M = \partial_b M$ and $\epsilon_a^{-1} \concat (\derivcat{\L}{\iota_M})^*p \concat \epsilon_b =_{\derivdia{M} = \derivdia{M}} \refl_{\derivdia{M}}$ are $(n-2)$-types.

To show $P(0)$ and $Q(0)$ consider $\L: \Sig(1)$, $M: \uStruc{\L}$, $K: \bottom{\L}$, $a,b: {\bottom{M} K}$, $\M,\N:\Sig(0)$, $\alpha: \hom(\M  ,\N)$, $N: \uStruc{\N}$.
We have that $\derivdia{M}, \partial_a M,\partial_b M, \alpha^* N: \onetype$ so the types $\partial_a M = \partial_b M$, $\epsilon_a^{-1} \concat (\derivcat{\L}{\iota_M})^*p \concat \epsilon_b =_{\derivdia{M} = \derivdia{M}} \refl_{\derivdia{M}}$, and $ \alpha^* N = \alpha^* N$  are contractible. Thus, $P(0)$ and $Q(0)$ hold.

Suppose that $P(n)$ and $Q(n)$ hold. We first show $Q(n+1)$. Consider $\M,\N:\Sig(n+1), \alpha: \hom(\M  ,\N), N: \uStruc{\N}$. We have that 
\begin{align*} (\alpha^* N=\alpha^* N) 
&\simeq \Sigma_{e: \bottom{ (\alpha^* N)} = \bottom{ (\alpha^* N)}} (\alpha^*N) ' =  e_*(\alpha^* N)' \\
&\equiv \Sigma_{e: (\bottom{N} \circ \bottom{\alpha}) = (\bottom{N} \circ \bottom{\alpha})} (\derivcat{\alpha}{\bottom{N}})^* \derivdia{N} =  e_*(\derivcat{\alpha}{\bottom{N}})^* \derivdia{N}.
\end{align*}
Our inductive hypothesis $Q(n)$ ensures that $(\derivcat{\alpha}{\bottom{N}})^* \derivdia{N} = (\derivcat{\alpha}{\bottom{N}})^* \derivdia{N}$ is an $(n-2)$-type, and hence $(\derivcat{\alpha}{\bottom{N}})^* \derivdia{N} =  e_*(\derivcat{\alpha}{\bottom{N}})^* \derivdia{N}$  is an $(n-1)$-type by \cite[Thm.~7.2.7]{HTT}. 
It remains to show that $(\bottom{N} \circ \bottom{\alpha}) = (\bottom{N} \circ \bottom{\alpha})$ is an $(n-1)$-type. 
Note that $N$ is a univalent structure of an $(n+1)$-signature, and our inductive hypothesis $P(n)$ then implies that for all $K: \bottom{\N}$, the type $ \bottom{N}(K)$ is an $(n-1)$-type.
Then since $(\bottom{N} \circ \bottom{\alpha})$ is a function which takes values in $(n-1)$-types, we can conclude that $(\bottom{N} \circ \bottom{\alpha}) = (\bottom{N} \circ \bottom{\alpha})$ is an $(n-1)$-type \cite[Thm.~7.1.9]{HTT}. Thus, $Q(n+1)$ holds.

To show that $P(n+1)$ holds, consider $\L: \Sig(n+2), M: \uStruc{\L}, K: \bottom{\L}, a,b: {\bottom{M} K}$.
By \cite[Thm.~7.2.7]{HTT}, $Q(n+1)$ implies that $\partial_a M = \partial_b M$ and $\epsilon_a^{-1} \concat (\derivcat{\L}{\iota_M})^*p \concat \epsilon_b =_{\derivdia{M} = \derivdia{M}} \refl_{\derivdia{M}}$ are $(n-2)$-types. Therefore, $P(n+1)$ holds.
\end{proof}

\chapter{Equivalence of structures and the univalence principle}
\label{sec:hsip}

In this \lcnamecref{sec:hsip}, we define general notions of equivalence for structures, and prove our univalence principles.

Specifically, we start by defining ``split-surjective (weak) equivalences'' of $\L$-structures in \cref{def:vss}.
These are maps that are split-surjective up to the type of identifications.
Our first univalence principle, stated in \cref{thm:hsip}, characterizes the identification type $M = N$ of two $\L$-structures as the type of split-surjective equivalences from $M$ to $N$, whenever the source structure $M$ is univalent.

We also consider maps that are only \emph{essentially} split-surjective, that is, split-surjective up to indiscernibility, in \cref{def:eqv};
these are what we call simply ``equivalences''.
When the target structure $N$ is univalent, equivalences coincide with split-surjective equivalences; this is shown in \cref{thm:vss-to-eqv}.

Our ``main'' univalence principle, stated in \cref{thm:hsip2}, combines these two results, yielding a characterization of the type of identifications $M = N$, whenever both $M$ and $N$ are univalent, as the type of equivalences $M \simeq N$.

\begin{definition}\label{def:vss}
  Suppose $f:\hom_{\Struc{\L}}(M,N)$, where $M,N: \Struc{\L}$ and $\L: \Sig(n)$.
  We define what it means for $f$ to be a \defemph{split-surjective equivalence} by induction on $n$.
  
  If $n\eqdef 0$, then $f$ is always a split-surjective equivalence.\index{equivalence!of functorial structures!split-surjective}

  For $n>0$, $f$ is a split-surjective equivalence if
  \begin{enumerate}
  \item $\bottom{f}(K)$ is a split surjection for every $K:\bottom{\L}$, and
  \item $\derivdia{f}$ is a split-surjective equivalence.
  \end{enumerate}
  \defemph{Surjective weak equivalences} are defined similarly, but only requiring each $\bottom{f}(K)$ to be surjective.\index{equivalence!of functorial structures!surjective weak}
\end{definition}

As noted in \cref{sec:hsip-folds}, we are currently unable to prove our desired general result with surjective weak equivalences, so for the present we restrict to the split-surjective equivalences.
We write $\VSS(f)$ for the type ``$f$ is a split-surjective equivalence'', which in the inductive case is
\[ \VSS(f) \define \left(
\prd{K: \bottom{\L}}{y:\bottom{N}(K)} \sm{x:\bottom{M}(K)} (\bottom{f}(K)(x) = y) 
  \right) \times \VSS(\derivdia{f}),\]  
and $(M \strucequiv N) \eqdef \sm{f : \hom_{\Struc{\L}}(M,N)}\VSS(f)$ for the type of split-surjective equivalences.\nomenclature[\cequiv2]{$M \strucequiv N$}{type of split-surjective equivalences between functorial structures $M$ and $N$}

\begin{lemma}
  If $\L$ is a diagram signature, then a morphism of $\L$-structures is a split-surjective equivalence (resp.\ surjective weak equivalence) in the sense of \cref{def:vss} if and only if its corresponding morphism of Reedy fibrant diagrams is a split-surjective equivalence (resp.\ surjective weak equivalence) in the sense of \cref{def:vss-folds}.
\end{lemma}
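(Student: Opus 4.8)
The statement to prove asserts that, for a diagram signature $\L$, the notions of split-surjective equivalence (resp.\ surjective weak equivalence) from \cref{def:vss} and from \cref{def:vss-folds} agree under the equivalence $\rfunc{\L}{\Ustrict} \simeq \Struc{E_p(\L)}$ established in \cref{thm:reedy-struc}. The natural approach is induction on the height $p$ of $\L$, exactly parallel to the proof of \cref{thm:lvle-folds}, since the two definitions differ only in that ``each induced map of fibers is an equivalence'' is replaced by ``each induced map of fibers is (split-)surjective''. The base case $p \eqdef 0$ is trivial, as both conditions hold vacuously.

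For the inductive step, I would fix a morphism $f:M\to N$ of Reedy fibrant diagrams on $\L$ of height $p>0$, and show it is a split-surjective equivalence in the sense of \cref{def:vss-folds} if and only if (i) $f_K$ is split-surjective for all rank-0 sorts $K:\L(0)$, and (ii) the corresponding morphism $\derivdia{f}$ of Reedy fibrant diagrams on $\derivcat{\L}{\bottom{M}}$ is a split-surjective equivalence in the sense of \cref{def:vss-folds}; then the result follows from the inductive hypothesis together with \cref{thm:reedy-struc}. For condition (i): since $\match_K M \cong \onetype$ for rank-0 sorts $K$ (the matching object is terminal when there are no sorts of lower rank), the induced map on fibers of the square \eqref{eq:reedy-sq} at $K$ is precisely $f_K$ itself, so split-surjectivity of that fiber map is exactly split-surjectivity of $f_K$. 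For condition (ii): I would reuse the diagram chase from the proof of \cref{thm:lvle-folds}, observing via \cref{prop:dfib-matching} and \cref{lem:matching-fibers} that the squares
\[
  \begin{tikzcd}
    \derivdia{M}(K,\alpha) \ar[d] \ar[r] & ((\derivcat{\L}{\bottom{f}})^*\derivdia{N})(K,\alpha) \ar[d] \\
    \match_{(K,\alpha)} \derivdia{M} \ar[r] & \match_{(K,\alpha)} ((\derivcat{\L}{\bottom{f}})^*\derivdia{N})
  \end{tikzcd}
\]
for sorts $(K,\alpha)$ of $\derivcat{\L}{\bottom{M}}$ are isomorphic to the squares \eqref{eq:reedy-sq} for $f$ at the positive-rank sorts $K$ of $\L$, and hence their induced maps on fibers coincide. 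Thus the fiber maps are (split-)surjective at all positive-rank sorts of $\L$ iff they are (split-)surjective at all sorts of $\derivcat{\L}{\bottom{M}}$. The argument for surjective weak equivalences is identical, replacing ``split-surjective'' by ``surjective'' throughout (noting that surjectivity, being valued in propositions via propositional truncation, behaves the same way under the isomorphisms of squares).

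The main subtlety — though not really an obstacle, since all the machinery is already in place — is bookkeeping: making sure that the maps on fibers are compared \emph{compatibly} with the equivalence of exo-categories $\rfunc{\L}{\Ustrict} \simeq \Struc{E_p(\L)}$, so that ``the corresponding morphism of Reedy fibrant diagrams'' in the statement genuinely matches up $\derivdia{f}$ on the structure side with the derived morphism on the diagram side. This is handled by the commutation-with-pullback clause of \cref{thm:reedy-struc} together with the inductive structure of \cref{def:vss}, exactly as in \cref{thm:lvle-folds}; I would simply remark that the proof is ``mutatis mutandis'' the same as that of \cref{thm:lvle-folds}, with ``equivalence'' weakened to ``(split-)surjection'' in the relevant clauses, rather than spelling out the identical diagram chase a second time.
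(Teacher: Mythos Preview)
Your proposal is correct and takes essentially the same approach as the paper, which simply says ``Just like \cref{thm:lvle-folds}, using (split-)surjective maps in place of equivalences.'' You have spelled out in more detail exactly the inductive argument the paper intends, invoking the same key ingredients (\cref{prop:dfib-matching}, \cref{lem:matching-fibers}, and the rank-0 matching objects being terminal).
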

\begin{proof}
  Just like \cref{thm:lvle-folds}, using (split-)surjective maps in place of equivalences.
\end{proof}

\begin{definition}\label{def:isotovss}
  Let $f : \hom_{\Struc{\L}}(M, N)$; we define
  \[U_f: \isIso(f) \to \VSS(f)\]
  by induction on $n$.
  If $n \eqdef 0$, $U_f$ is the identity function on $\onetype$.
  For $n>0$, we use that any equivalence of types is a split surjection, and the inductive hypothesis.
  Let $\lvletovss_{M,N} \eqdef (1,U) : (M \leqv N) \to (M \strucequiv N)$.
\end{definition}

\begin{definition}
  For $\L: \Sig(n)$ and $M,N: \Struc{\L}$ we define 
  \[\idtovss \eqdef \lvletovss \circ \idtolvle : (M = N) \to (M \strucequiv N).\]\nomenclature[idtosse]{$\idtovss$}{function from identifications to split-surjective equivalences of functorial structures}
\end{definition}

Our first univalence principle states that if $M$ is univalent, then $\idtovss_{M,N}$ is an equivalence.
It uses the following lemma.

 \begin{lemma} \label{lem:injwrtiso}
Let $\L: \Sig(n+1)$, $M, N : \Struc{\L}$, $\bottom{f}: \bottom{M} \to \bottom{N}$, and $e: \derivdia{M} = (\derivcat{\L}{\bottom{f}})^* \derivdia{N}$. Then for $x,y: \bottom{M}(K)$, an indiscernibility $\bottom{f}x \fiso \bottom{f} y$ produces an indiscernibility $x \fiso y$.
\end{lemma}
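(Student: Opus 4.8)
The goal is to transport an indiscernibility along a morphism $\bottom{f}$ of the underlying base-level type families, given that the rest of the structure on $M$ is (identified with) the pullback of the rest of the structure on $N$. The plan is to unravel \Cref{def:iso-within-a-structure} on both sides and show that the defining data of an indiscernibility $\bottom{f}x \fiso \bottom{f}y$ can be pulled back along an appropriately derived signature morphism, using the naturality of $\derivdia{\_}$ together with the functoriality of derivatives and of pullback established in \cref{lem:der_fun,prop:deriv-opfib} and in \cref{sec:abstract-signatures}.

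First I would set up the relevant derivatives. The morphism $\bottom{f}:\bottom{M}\to\bottom{N}$, together with the identificaton $e$, lets us relate $\partial_x M$ with $\partial_{\bottom{f}x} N$: concretely, $\partial_x M \converts (\derivcat{\L}{\copair{1_{\bottom{M}}}{\hat x}})^*\derivdia{M}$, and substituting $e$ and using that $\copair{1_{\bottom{N}}}{\widehat{\bottom{f}x}} \circ (\bottom{f}+1_{[K]}) \steq \bottom{f}\circ \copair{1_{\bottom{M}}}{\hat x}$ (both sides send the joker to $\bottom{f}x$ and everything else via $\bottom{f}$, so they are exo-equal since $\bottom{\L}$ is sharp and by the computation rule for $\hat{\_}$), one gets a canonical identification $\partial_x M \;=\; (\derivcat{\L}{\bottom{f}+1_{[K]}})^*\bigl(\partial_{\bottom{f}x}N\bigr)$, and similarly for $y$. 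Pulling back the witnessing identification $p : \partial_{\bottom{f}x}N = \partial_{\bottom{f}y}N$ along the signature morphism $\derivcat{(\L)}{\bottom{f}+1_{[K]}}$ (using that pullback along a fixed signature morphism is an exo-functor, hence acts on identifications of structures via \Cref{prop:uni1}) then produces an identification $\partial_x M = \partial_y M$. The second component of the indiscernibility — the condition on restriction along $\iota$ — is handled by the same naturality square: restricting $(\derivcat{\L}{\bottom{f}+1_{[K]}})^*p$ along $\iota_{\bottom{M}}$ agrees, up to the coherence identifications $\epsilon_x,\epsilon_y$, with pulling back $(\derivcat{\L}{\iota_{\bottom{N}}})^*p$ along $\derivcat{\L}{\bottom{f}}$, because $\iota_{\bottom{N}}\circ\bottom{f} \steq (\bottom{f}+1_{[K]})\circ\iota_{\bottom{M}}$; and the hypothesis that $(\derivcat{\L}{\iota_{\bottom{N}}})^*p$ equals $\epsilon_{\bottom{f}x}\concat\epsilon_{\bottom{f}y}^{-1}$ (the reformulation from \cref{lem:iso-equivalent}) then propagates, after chasing the naturality of the $\epsilon$'s under $e$, to the statement that $(\derivcat{\L}{\iota_{\bottom{M}}})^*$ of the new identification equals $\epsilon_x\concat\epsilon_y^{-1}$. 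Applying \cref{lem:iso-equivalent} again then yields the desired $x\fiso y$.

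The main obstacle I expect is the bookkeeping of the coherence identifications: carefully matching up $\epsilon_x$ with the pullback of $\epsilon_{\bottom{f}x}$ along the derived morphism, and verifying that all the exo-equalities of composites of signature morphisms ($\iota_{\bottom{N}}\circ\bottom{f}$, $\copair{1}{\_}\circ(\bottom{f}+1)$, etc.) really do hold on the nose — this uses sharpness of $\bottom{\L}$ and the explicit definition of $\hat{\_}$ from \cref{lem:sharp-id}, where the relevant ``computation rule'' $\hat a(K,\refl_K)=a$ holds only up to identification, not exo-equality, so some care is needed to see that the construction still goes through up to $=$. Everything else is a routine induction-free diagram chase once the naturality of $\derivdia{\_}$ and the functoriality statements of \cref{sec:abstr-sign-transl} are in place; note in particular that no new use of the univalence axiom is required beyond what is already packaged into \Cref{prop:uni1}, which licenses treating pullback as acting on identifications of structures.
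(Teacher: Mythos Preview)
Your proposal is correct and follows essentially the same route as the paper: both proofs hinge on the commuting squares $\copair{1_{\bottom{N}}}{\widehat{\bottom{f}x}}\circ(\bottom{f}+1)=\bottom{f}\circ\copair{1_{\bottom{M}}}{\hat x}$ and $(\bottom{f}+1)\circ\iota_{\bottom{M}}\steq\iota_{\bottom{N}}\circ\bottom{f}$, apply $\Struc{\derivcat{\L}{-}}$ to obtain identifications $\beta_x:(\derivcat{\L}{\bottom{f}+1})^*\partial_{\bottom{f}x}N=\partial_x M$, build the first component of $x\fiso y$ as $\beta_x^{-1}\concat(\derivcat{\L}{\bottom{f}+1})^*p\concat\beta_y$, and then chase the $\epsilon$'s through the resulting naturality diagram for the second component. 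The paper begins with path induction on $e$ (so that $\derivdia{M}\converts(\derivcat{\L}{\bottom{f}})^*\derivdia{N}$ strictly), and works out the second-component coherence via an explicit pasting diagram using the 2-dimensional commutativity of the original square; your caveat that the right-hand square commutes only up to $=$ (not $\steq$) because of the computation rule for $\hat{\_}$ is exactly right and matches the paper's treatment.
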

\begin{proof}
By \pathinduction on $e$, we may assume $\derivdia{M} \converts (\derivcat{\L}{\bottom{f}})^* \derivdia{N}$.

Consider the following diagram whose cells commute up to $\steq$ or $=$, as pictured.
\begin{equation}
\begin{tikzcd}[column sep=huge]
\bottom{M} \ar[dr,phantom,"\steq" description]  \ar[rr,phantom,"\steq", bend left=15]  \ar[d,"\bottom{f}"] \ar[r,"\iota_{\bottom{M}}"] \ar[rr, bend left=20, shift left=1,"1_{\bottom{M}}",near start] & \bottom{M} + [K] \ar[d,"\bottom{f}+1"] \ar[r,
"{\copair 1 x}"] & \bottom{M} \ar[d, "\bottom{f}"]\\
\bottom{N}  \ar[rr,phantom,"\steq", bend right=15] \ar[r,"{\iota_{\bottom{N}}}"] \ar[rr, bend right=20, shift right=1, "1_{\bottom{N}}"', near start]
 & \bottom{N} + [K] \ar[ur,phantom,"=" description]  \ar[r,"\copair{1}{\bottom{f}(K)x}"] & \bottom{N} 
\end{tikzcd} \label{diag:lemma}
\end{equation}
This diagram commutes 2-dimensionally, which is to say that the ``pasting'' of all four displayed identities is exo-equal to the strict
equality $\bottom{f}\circ 1_{\bottom{M}} \steq 1_{\bottom{N}} \circ \bottom{f}$.
Applying the composite exo-functor $\Struc{\derivcat{\L}{-}}$, we obtain:
\begin{equation}
\begin{tikzcd}[column sep=large]
\Struc{\derivcat{\L}{\bottom{M}}} \ar[dr,phantom,"\steq \scriptstyle(\alpha)" description]  \ar[rr,phantom,"\steq\scriptstyle(\epsilon_x)", bend left=15]  \ar[d,<-,"(\derivcat{\L}{\bottom{f}})^*"] \ar[r,<-,"(\derivcat{\L}{\iota_{\bottom{M}}})^*"] \ar[rr,<-, bend left=20, shift left=1,"1_{\Struc{\derivcat{\L}{\bottom{M}}}}",near start] & \Struc{\derivcat{\L}{\bottom{M} + [K]}} \ar[d,<-,"{(\derivcat{\L}{\bottom{f}+1})^*}"] \ar[r,<-,"(\derivcat{\L}{{\copair 1 x}})^*"] & \Struc{\derivcat{\L}{\bottom{M}}} \ar[d,<-, "(\derivcat{\L}{\bottom{f}})^*"]\\
\Struc{\derivcat{\L}{\bottom{N}}}  \ar[rr,phantom,"\steq\scriptstyle (\epsilon_{\bottom{f}x})", bend right=15] \ar[r,<-,"(\derivcat{\L}{\iota_{\bottom{N}}})^*"'] \ar[rr, <-,bend right=20, shift right=1, "1_{\Struc{\derivcat{\L}{\bottom{N}}}}"', near start]
 & \Struc{\derivcat{\L}{\bottom{N} + [K]}} \ar[ur,phantom,"\mathrlap{=\scriptstyle(\beta_x)}" description]  \ar[r,<-,"(\derivcat{\L}{\copair{1}{\bottom{f}x}})^*"'] & \Struc{\derivcat{\L}{\bottom{N}}}
\end{tikzcd} \label{diag:lemma2}
\end{equation}
which commutes in the same way.
Moreover, the upper and lower exo-equalities in this diagram are $\epsilon_x$ and $\epsilon_{\bottom{f}x}$ respectively; we call the others $\alpha$ and $\beta_x$. 

We have an analogous diagram for $y$, in which the left-hand square $\alpha$ is the same.

Then since it is the case that $ \partial_{\bottom{f}(K) x} N \converts (\derivcat{\L}{\copair{1}{\bottom{f}(K)x}})^* \derivdia{N}$,  $\derivdia{M} \converts (\derivcat{\L}{\bottom{f}})^* \derivdia{N}$, and also $\partial_{x} M \converts (\derivcat{\L}{\copair1x })^* \derivdia{M}$, we have an identification
\begin{equation*}
\beta_x N : (\derivcat{\L}{\bottom{f} +1 })^*\partial_{\bottom{f}(K) x} N = \partial_x M. 
\end{equation*}
The same can be shown for $y$.

Consider an indiscernibility $\bottom{f}x \fiso \bottom{f} y$ which consists, by \cref{lem:iso-equivalent}, of (1) an identification $i : \partial_{\bottom{f}x} N = \partial_{\bottom{f}y} N$ and (2) an identification 
$ j $ between $(\derivcat{\L}{\iota_{\bottom{N}}})^*i$ and the concatenation
\[
  \begin{tikzcd}
    (\derivcat{\L}{\iota_{\bottom{N}}})^*(\derivcat{\L}{\copair{1}{\bottom{f}x}})^*\derivdia{N}
    \ar[r,equals,"{\epsilon_{\bottom{f}x}}"] &
    \derivdia{N}
    \ar[r,equals,"{\epsilon_{\bottom{f}y}^{-1}}"] &
    (\derivcat{\L}{\iota_{\bottom{N}}})^*(\derivcat{\L}{\copair{1}{\bottom{f}y}})^*\derivdia{N}
  \end{tikzcd}
\]
(which is an exo-equality, though $i$ is not).

 We need to construct an indiscernibility $x \fiso y$ which consists of (1) an identification $k : \partial_{x} M = \partial_{y} M$ and (2) an identification $(\derivcat{\L}{\iota_{\bottom{M}}})^*k  = \epsilon_{ x}^{} \concat \epsilon_{ y}^{-1}$.

The first component, $k$, of our desired indiscernibility $x\fiso y$ is the following concatenation of labeled identifications:
\[
  \begin{tikzcd}[row sep=tiny,column sep=large]
    (\derivcat{\L}{\copair{1}{x}})^* (\derivcat{\L}{\bottom{f}})^* \derivdia{N}
    \ar[r,equals,"{\beta_x}"] &
    (\derivcat{\L}{\bottom{f}+1})^* (\derivcat{\L}{\copair{1}{\bottom{f}x}})^* \derivdia{N}\\
    \phantom{(\derivcat{\L}{\copair{1}{x}})^* (\derivcat{\L}{\bottom{f}})^* \derivdia{N}}
    \ar[r,equals,"{(\derivcat{\L}{\bottom{f}+1})^*i}"] &
    (\derivcat{\L}{\bottom{f}+1})^* (\derivcat{\L}{\copair{1}{\bottom{f}y}})^* \derivdia{N}\\
    \phantom{(\derivcat{\L}{\copair{1}{x}})^* (\derivcat{\L}{\bottom{f}})^* \derivdia{N}}
    \ar[r,equals,"{\beta_y^{-1}}"] &  (\derivcat{\L}{\copair{1}{y}})^* (\derivcat{\L}{\bottom{f}})^* \derivdia{N}
  \end{tikzcd}
\]

Now we need $ (\derivcat{\L}{\iota_{\bottom{M}}})^*k  =  \epsilon_{ x}^{} \concat \epsilon_{ y}^{-1}$. 
Consider the commutative diagram in \cref{diag:biglemma} (on page~\labelcpageref{diag:biglemma}) where straight lines denote exo-equalities, squiggly lines denote identifications, and double (squiggly) lines denote identifications between identifications. 
The 2-dimensional identification labeled $\nu$ arises from naturality, while those labeled $\sigma$ arise from the 2-di\-men\-sio\-nal commutativity of Diagram~\eqref{diag:lemma2}.
The concatenation of the three top horizontal identifications in \cref{diag:biglemma} is $ (\derivcat{\L}{\iota_{\bottom{M}}})^*k$.
Thus, \cref{diag:biglemma} exhibits an identification of this with $\epsilon_{x}^{} \concat \epsilon_{y}^{-1}$.
\end{proof}
 
\begin{figure}

  \centering
  \begin{sideways}
  
\begin{tikzcd}[column sep=large]
(\derivcat{\L}{\iota_{\bottom{M}}})^* \partial_x M  
\ar[ddrr,bend right,"\epsilon_x^{}"',no head,""{name=E}]
\ar[r,"(\derivcat{\L}{\iota_{\bottom{M}}})^*\beta_x ",squiggly, no head] 
\ar[r,phantom,""{above,name=A},shift left=4] 
\ar[r,phantom,""{below, name=B},shift right=4]
& (\derivcat{\L}{\iota_{\bottom{M}}})^* (\derivcat{\L}{\bottom{f} +1 })^* \partial_{\bottom{f}x} N
\ar[rr,"(\derivcat{\L}{\iota_{\bottom{M}}})^*(\derivcat{\L}{\bottom{f} +1 })^*i ",""{below,name=C},squiggly, no head]
\ar[d, no head,"\alpha "]
&& (\derivcat{\L}{\iota_{\bottom{M}}})^*(\derivcat{\L}{\bottom{f} +1 })^*\partial_{\bottom{f}y} N
\ar[r,"(\derivcat{\L}{\iota_{\bottom{M}}})^*(\beta_y )^{-1}",squiggly, no head]
\ar[r,phantom,""{above,name=A},shift left=4] 
\ar[r,phantom,""{below, name=B},shift right=4]
 \ar[d, no head,"\alpha "]
& (\derivcat{\L}{\iota_{\bottom{M}}})^* \partial_y M 
\\
&|[alias=X]|  (\derivcat{\L}{{\bottom{f}}})^*(\derivcat{\L}{\iota_{\bottom{N}}})^* \partial_{\bottom{f}x} N 
\ar[rr, "(\derivcat{\L}{{\bottom{f}}})^*(\derivcat{\L}{\iota_{\bottom{N}}})^*i"{name=D},""{below,name=DD}, squiggly, no head]
\ar[from=C, to=D,squiggly,shift left=0.5,no head,"\nu"]
\ar[from=C, to=D,squiggly,shift right=0.5, no head]
\ar[rd, "(\derivcat{\L}{{\bottom{f}}})^*( \epsilon_{\bottom{f}x}^{}  )"{description}, no head, bend right=10]
\ar[from=DD, to=Z,squiggly,shift right=0.5,no head]
\ar[from=DD, to=Z,squiggly,shift left=0.5, no head,"(\derivcat{\L}{{\bottom{f}}})^* j"]
&& |[alias=Y]|  (\derivcat{\L}{{\bottom{f}}})^*(\derivcat{\L}{\iota_{\bottom{N}}})^*\partial_{\bottom{f}y} N 
\\
&& |[alias=Z]| M \ar[uurr,bend right,"\epsilon_y^{-1}"',no head,""{name=F}]
\ar[ur, "(\derivcat{\L}{{\bottom{f}}})^*( \epsilon_{\bottom{f}y}^{-1}  )"{description}, no head, bend right=10]
\ar[from=E, to=X, equal,"\sigma"']
\ar[from=F, to=Y, equal,"\sigma"]
\end{tikzcd} 
  \end{sideways}
\caption{Diagram for proof of \cref{lem:injwrtiso}}

\label{diag:biglemma}

\end{figure}

\begin{theorem}[\defemph{Univalence principle, split-surjective case}]
\label{thm:hsip}\index{univalence principle!for functorial structures!split-surjective case}
Consider $\L: \Sig(n)$ and $M,N: \Struc{\L}$ such that $M$ is univalent.
The morphism $\idtovss: (M = N) \to (M \strucequiv N)$ is an equivalence.
\end{theorem}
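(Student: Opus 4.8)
The plan is to proceed by induction on the height $n$ of the signature $\L$, following the same pattern used for \Cref{prop:uni1} but now keeping track of the extra structure needed to handle the non-invertible (split-surjective) direction. When $n\eqdef 0$ both $(M=N)$ and $(M\twoheadrightarrow N)$ are contractible, so $\idtovss$ is trivially an equivalence. For the inductive step, I would first decompose both sides of the purported equivalence along the $\bottomlevel$/derivative splitting of structures. On the left, \Cref{prop:uni1}-style reasoning (using the univalence axiom and $\ua$) rewrites $(M=N)$ as $\sm{e:\bottom M\simeq \bottom N} \big(\derivdia M = (\derivcat{\L}{e})^*\derivdia N\big)$. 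On the right, unfolding $\VSS$ gives $(M\twoheadrightarrow N)\steq \sm{\bottom f: \prd{K:\bottom\L} \bottom M(K)\to \bottom N(K)} \Big(\prd{K}{y:\bottom N(K)}\sm{x}\bottom f(K)(x)=y\Big)\times \VSS(\derivdia f)$ where $\derivdia f$ lives over the pullback $(\derivcat{\L}{\bottom f})^*\derivdia N$. So the task reduces to producing, fiberwise over the ``bottom'' data, an equivalence between the two ``derivative'' pieces, and to showing that the bottom piece of the left-hand side (an honest equivalence $\bottom M(K)\simeq \bottom N(K)$) matches the bottom piece of the right-hand side (a split surjection $\bottom f(K)$).

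The crux is the last point: a split surjection $\bottom f(K):\bottom M(K)\to \bottom N(K)$ is \emph{not} in general an equivalence, so the bottom pieces do not literally agree --- this is exactly where univalence of $M$ is needed and where \Cref{lem:injwrtiso} enters. The strategy is to show that, given a split-surjective equivalence $f:M\twoheadrightarrow N$ with $M$ univalent, each $\bottom f(K)$ is automatically injective, hence an equivalence. To see injectivity, suppose $\bottom f(K)(x)=\bottom f(K)(y)$; this yields (after transporting the reflexivity indiscernibility) an indiscernibility $\bottom f(K)x \fiso \bottom f(K)y$ in $N$, and then \Cref{lem:injwrtiso}, applied with the identification $e:\derivdia M = (\derivcat{\L}{\bottom f})^*\derivdia N$ coming from $\derivdia f$ being a levelwise equivalence via $U_{\derivdia f}$ --- wait, more precisely one first needs $\derivdia f$ to be a levelwise equivalence. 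This is where the induction must be set up carefully: I would strengthen the inductive hypothesis to record that when $M$ (hence $\derivdia M$, relative to $\bottom M$) is univalent, a split-surjective equivalence of derivatives is in fact a levelwise equivalence, so that \Cref{lem:injwrtiso} is applicable. With that, \Cref{lem:injwrtiso} converts $\bottom f(K)x\fiso \bottom f(K)y$ into an indiscernibility $x\fiso y$, and univalence of $M$ at $K$ turns this into an identification $x=y$; combined with the split section this shows $\bottom f(K)$ is an equivalence.

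Once each $\bottom f(K)$ is known to be an equivalence, the split surjection data on the bottom is propositionally equivalent to the data of an equivalence $\bottom M(K)\simeq \bottom N(K)$ (a split surjection into the data of an equivalence is a contractible choice once we know it \emph{is} an equivalence, using that $\isEquiv$ is a proposition), so the bottom pieces of the two sides match up to equivalence. For the derivative pieces, the inductive hypothesis gives $(\derivdia M = (\derivcat{\L}{e})^*\derivdia N)\simeq (\derivdia M \twoheadrightarrow (\derivcat{\L}{e})^*\derivdia N)$ --- here again using that $\derivdia M$ is univalent --- and one checks that under these identifications the left-to-right composite is precisely $\idtovss$, by verifying it sends $\refl$ to the pair $(1,U)=\lvletovss$ applied to the identity, i.e.\ chasing through \Cref{def:isotovss} and \Cref{def:id-iso}. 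Assembling the $\Sigma$-type equivalences gives the result.

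\textbf{Main obstacle.} I expect the principal difficulty to be the mutual/strengthened induction needed so that \Cref{lem:injwrtiso} can be invoked: one must simultaneously prove (a) the stated equivalence and (b) that over a univalent base a split-surjective equivalence of derivatives is a levelwise equivalence. Untangling the dependency --- that injectivity of $\bottom f(K)$ relies on $\derivdia f$ being levelwise, which itself is the inductive content --- and organizing the bookkeeping of the many iterated pullbacks $(\derivcat{\L}{-})^*$ and the coherence 2-cells (as in \Cref{diag:biglemma}) is where the real work lies; the individual homotopy-theoretic manipulations are routine but the global inductive structure must be arranged with care.
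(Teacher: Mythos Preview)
Your proposal is correct and uses the same key ingredients as the paper: induct on height, use the inductive hypothesis to show $\derivdia f$ is a levelwise equivalence (hence obtain the identification $\derivdia M = (\derivcat{\L}{\bottom f})^*\derivdia N$ needed for \Cref{lem:injwrtiso}), then apply \Cref{lem:injwrtiso} together with univalence of $M$ at $K$ to upgrade each split surjection $\bottom f(K)$ to an equivalence. The paper's organization is slightly cleaner than yours and dissolves your ``main obstacle'': rather than decomposing both $(M=N)$ and $(M\twoheadrightarrow N)$ and matching pieces, the paper takes as its inductive statement that $U_f:\isIso_\L(f)\to\VSS(f)$ is an equivalence for every $f$, constructing an explicit inverse $F_f:\VSS(f)\to\isIso_\L(f)$ (your argument that the section $s(K)$ becomes a two-sided inverse) and using that $\isIso_\L(f)$ is a proposition (\Cref{lem:isisoprop}) for one round-trip. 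This is exactly the ``strengthened'' hypothesis you wanted, but phrased so that no strengthening is needed; the theorem then follows from the factorization $\idtovss=\lvletovss\circ\idtolvle$ and \Cref{prop:uni1}. Your direct $\Sigma$-type matching also works, but requires more bookkeeping to verify that the fiberwise equivalences assemble to the specific map $\idtovss$.
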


\begin{proof}

It suffices to show that each $U_f$ of \cref{def:isotovss} is an equivalence. 
We proceed by induction on $n$.
When $n \eqdef 0$, each $U_f$ is a endofunction on $\onetype$, and so is an equivalence.

When $n > 0$,
we first construct a map $F_f: \VSS(f) \to  \isIso(f)$.
Consider an element of $\VSS(f)$: a right inverse $s(K)$ of $\bottom{f}(K)$ for each $K : \bottom{\L}$, and $s' : \VSS(\derivdia{f})$.
Since $\derivdia{M}$ is univalent, the inductive hypothesis for $s'$ implies $\derivdia{f}$ is a levelwise equivalence; thus it remains to show each $\bottom{f}(K)$ is an equivalence.

Since $s(K)$ is a right inverse of $\bottom{f}(K)$, it remains to show that we have $s(K)\bottom{f}(K) m = m$ for any $m: \bottom{M}(K)$. We have $\bottom{f}(K)s(K)\bottom{f}(K) m = \bottom{f}(K)m$ and thus $\bottom{f}(K)s(K)\bottom{f}(K) m \fiso \bottom{f}(K)m$.
We have already shown that $\derivdia{f}$ is a levelwise equivalence $\derivdia{M} \leqv ({\bottom{f}})^* \derivdia{N}$, so by \cref{prop:uni1}, we get $\derivdia{M} = (\derivcat{\L}{\bottom{f}})^* \derivdia{N}$.
Thus, by \cref{lem:injwrtiso}, we have $s(K)\bottom{f}(K) m \fiso m$; and since $M$ is univalent this yields $s(K)\bottom{f}(K) m = m$.

Thus, given our $(\lambda K.s(K), s'):\VSS(f)$, we have constructed an element of $\isIso(f)$; this defines $F_f: \VSS(f) \to  \isIso(f)$.
Since $\isIso(f)$ is a proposition (by \cref{lem:isisoprop}), $F_f U_f = 1$.
Moreover, we constructed $F_f$ and $U_f$ such that $U_f F_f = 1$.\footnote{Since we showed that $\bottom{f}(K)$ was an equivalence by making $s(K)$ a homotopy inverse of it, and $U_f$ remembers not just the inverse map but one of the homotopies, we technically have to use here the fact that a homotopy inverse of a function $g$ can be enhanced to an element of $\isEquiv(g)$ while changing at most one of the constituent homotopies.}
Hence, $U_f : \isIso(f) \to \VSS(f)$ is an equivalence.

Thus, the function $\lvletovss_{M,N} : (M \leqv[\L] N) \to (M \strucequiv N  )$ is also an equivalence. Using \cref{prop:uni1}, we find then that $\idtovss: (M = N) \to (M \strucequiv N)$ is an equivalence.
\end{proof}

We now move on to consider equivalences that are only \emph{essentially} surjective.
Here we have to be careful in the inductive step, because when considering $f:\hom_{\Struc{\L}}(M,N)$ we want all the indiscernibilities to lie in $N$ and its derivatives directly, \emph{not} in their pullbacks to derivatives at $M$.
This forces us to define a somewhat more general notion.

For $a,b:\bottom{M}(K)$, we write $a\fiso^M_K b$ instead of $a\fiso b$ if needed to eliminate ambiguity.

\begin{definition}\label{def:eqv}
  Consider $\L,\M:\Sig(n)$ as well as $\alpha:\hom_{\Sig(n)}(\L,\M)$, let $M: \Struc{\L}$ and $N:\Struc{\M}$, and let $f:\hom_{\Struc{\L}}(M,\alpha^* N)$.
  We define what it means for $f$ to be an \defemph{equivalence relative to $\alpha$} by induction on $n$.

  If $n\eqdef 0$, then $f$ is always an equivalence relative to $\alpha$.\index{equivalence!of functorial structures!relative}

  For $n>0$, $f$ is an equivalence relative to $\alpha$ if
  \begin{enumerate}
  \item For all $K:\bottom{\L}$ and $y:\bottom{N}(\bottom{\alpha}(K))$, we have a specified $x:\bottom{M}(K)$ and indiscernibility $\bottom{f}(x) \fiso^N_{\bottom{\alpha}(K)} y$.
  \item The morphism
    \begin{align*}
      \derivdia{f}: &\hom_{\Struc{\derivcat{\L}{\bottom{M}}}}(\derivdia{M},( \derivcat{\L}{\bottom{f}})^* \derivdia{(\alpha^* N)})
      \\
      \converts &\hom_{\Struc{\derivcat{\L}{\bottom{M}}}}(\derivdia{M},( \derivcat{\L}{\bottom{f}})^* (\derivcat{\alpha}{\bottom{N}})^* \derivdia{N})
    \end{align*}
   is an equivalence relative to the composite
    \[ \derivcat{\L}{\bottom{M}} \xrightarrow{\derivcat{\L}{\bottom{f}}} \derivcat{\L}{\bottom{N}\circ \bottom{\alpha}} \xrightarrow{\derivcat{\alpha}{\bottom{N}}} \derivcat{\M}{\bottom{N}}. \]
  \end{enumerate}
  \defemph{Relative weak equivalences}\index{equivalence!of functorial structures!relative weak} are defined similarly, but requiring only 
  \[ \Big\Vert \sm{x:\bottom{M}(K)} (\bottom{f}(x) \fiso^N_{\bottom{\alpha}(K)} y) \Big\Vert \] for each $K,y$.

  An unadorned \defemph{equivalence} means an equivalence relative to $\alpha\eqdef 1_{\L}$.
  We write $\streqv_\alpha(f)$ for the type ``$f$ is an equivalence relative to $\alpha$'', which in the inductive case means
  \begin{align*}
     & \streqv_\alpha(f) \eqdef \\
     & \left(\prd{K:\bottom{\L}}{y:\bottom{N}(\bottom{\alpha}(K))} \sm{x:\bottom{M}(K)} \left(\bottom{f}(K)(x)  \fiso^N_{\bottom{\alpha}(K)} y\right)\right) \times \streqv_{\derivcat{\alpha}{} \circ \bottom{f}} (\derivdia{f})
   \end{align*}
  and $(M\simeq N) \eqdef \sm{f:\hom_{\Struc{\L}}(M,N)} \streqv_1(f)$ for the type of equivalences.\nomenclature[\equiv2]{$M\simeq N$}{type of equivalences between functorial structures $M$ and $N$}
\end{definition}

\begin{remark}
  Importantly, $\bottom{f}(x) \fiso^N_{\bottom{\alpha}(K)} y$ is distinct from $\bottom{f}(x) \fiso^{\alpha^*N}_{K} y$, even though $\bottom{(\alpha^* N)}(K) \converts \bottom{N}(\bottom{\alpha}(K))$ by definition.

  For instance, consider the diagram signatures of \cref{eg:prop,eg:set} for propositions and sets, respectively.
  We have a morphism $\alpha$ of signatures from the former to the latter.
  \[
    \begin{tikzcd}
      1
      &
      &
      E \ar[d, shift left] \ar[d, shift right]
      \\
      0
      &
      P \ar[r, mapsto, "\alpha"]
      &
      X
    \end{tikzcd}
  \]
  Given a structure $N$ for sets, and $x,y : NX$, then $x \fiso^N_{\bottom{\alpha}(P)} y$ is the same as $E(x,y)$, whereas $x \fiso^{\alpha^*N}_{P} y$ is $\onetype$.
\end{remark}

\begin{lemma}
  If $\L$ is a diagram signature, then a morphism of $\L$-structures is an equivalence (resp.\ weak equivalence) in the sense of \cref{def:eqv} if and only if its corresponding morphism of Reedy fibrant diagrams is an equivalence (resp.\ weak equivalence) in the sense of \cref{def:eqv-folds}.
\end{lemma}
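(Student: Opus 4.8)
The plan is to prove this by induction on the height $n$ of the diagram signature $\L$, following exactly the pattern established in the proofs of \cref{thm:lvle-folds} and the split-surjective analogue earlier in this \lcnamecref{sec:structures}. The base case $n\eqdef 0$ is trivial, since all morphisms on both sides are (weak) equivalences by fiat. For the inductive step, recall that \cref{def:eqv} unfolds a relative equivalence $f:M\to\alpha^*N$ into two pieces: a condition at rank-$0$ sorts $K:\bottom{\L}$ asserting a specified (resp.\ mere) lift up to indiscernibility in $N$ itself, and a recursive condition on $\derivdia{f}$ relative to the composite $\derivcat{\alpha}{\bottom{N}}\circ\derivcat{\L}{\bottom{f}}$. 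Meanwhile \cref{def:eqv-folds} unfolds the Reedy-diagram notion via the squares~\eqref{eq:reedy-sq}, requiring the map on each fiber $f_{K,z}:MK_z\to NK_{\match_K f(z)}$ to satisfy $\prd{y}\sm{x} f_{K,z}(x)\foldsiso y$ (resp.\ the truncated version). So the inductive step reduces to checking that these two sets of conditions match up rank by rank.

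First I would handle the rank-$0$ sorts. Here $\match_K M\cong\onetype$ for $K:\L(0)$, so the fiber condition in \cref{def:eqv-folds} at rank-$0$ sorts is exactly $\prd{y:NK}\sm{x:MK} f_K(x)\foldsiso y$, which is precisely condition~(1) of \cref{def:eqv} (using \cref{notation:no-bottom} to identify $MK$ with $\bottom{M}K$). Crucially, the indiscernibilities appearing in both formulations are the \emph{same} type: \cref{def:indisc-in-diag-structure,def:iso-within-a-structure} are shown to agree in the examples of \cref{sec:indisc-univ-folds}, and (more to the point) \cref{def:eqv}'s insistence on $\fiso^N$ rather than $\fiso^{\alpha^*N}$ matches the fact that the indiscernibility in \cref{def:eqv-folds} is taken in $N$ directly. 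Then I would treat the positive-rank sorts recursively: by \cref{prop:dfib-matching} the matching-object squares for $\derivdia{f}$ relative to a composite of discrete opfibrations are isomorphic to the corresponding squares for $f$ at positive-rank sorts (just as in the proof of \cref{thm:lvle-folds}), and by \cref{lem:matching-fibers} the fibers of the former are exactly the fibers of the latter. Applying the induction hypothesis to $\derivdia{f}$ relative to $\derivcat{\alpha}{\bottom{N}}\circ\derivcat{\L}{\bottom{f}}$ then completes the equivalence of conditions at all positive-rank sorts. The weak-equivalence case is identical, replacing the $\Sigma$-types by their propositional truncations throughout; truncation commutes with all the identifications of types involved, so nothing new arises.

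The main obstacle I anticipate is the bookkeeping around the \emph{relativization} in the inductive hypothesis. Unlike \cref{def:iso-of-structures} (levelwise equivalence) or \cref{def:vss} (split-surjective equivalence), which recurse on $\derivdia{f}$ as an honest morphism of structures over a fixed signature, \cref{def:eqv} recurses on $\derivdia{f}$ relative to a \emph{varying} composite of signature morphisms, precisely so that the indiscernibilities stay in $N$. One must check that when $\L$ is a diagram signature and $\alpha$ is a discrete opfibration, the derived signature morphisms $\derivcat{\alpha}{\bottom{N}}$ are again discrete opfibrations (this is \cref{prop:deriv-opfib}) and that $\derivcat{\L}{\bottom{N}\circ\bottom{\alpha}}$ and the relevant derivatives are again diagram signatures (\cref{prop:deriv-good}), so that the induction hypothesis actually applies; and one must verify that the correspondence between the functorial and diagram-theoretic notions of indiscernibility, set up via $\partial_a M$, is compatible with pullback along these composites. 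None of this is deep, but it requires care to state the inductive hypothesis in exactly the right relativized form — likely proving a slightly more general statement "for all $\L,\M:\IC(p)$ and discrete opfibrations $F:\L\to\M$, a morphism $f:M\to F^*N$ of Reedy fibrant diagrams is an equivalence relative to $E_p(F)$ iff \ldots" — so that the recursion goes through cleanly. Once the statement is phrased this way, the proof is a routine unwinding following the template of \cref{thm:lvle-folds}.
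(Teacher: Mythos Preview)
Your proposal is correct and matches the paper's approach exactly: the paper's proof simply says ``This is mostly just like \cref{thm:lvle-folds}, but for the induction we need a relative version of \cref{def:eqv-folds}. We leave the details to the reader.'' You have correctly identified precisely that relativization issue and even sketched the appropriate strengthened inductive hypothesis (a statement quantified over discrete opfibrations $F:\L\to\M$), which is exactly the ``relative version of \cref{def:eqv-folds}'' the paper alludes to.
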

\begin{proof}
  This is mostly just like \cref{thm:lvle-folds}, but for the induction we need a relative version of \cref{def:eqv-folds}.
  We leave the details to the reader.
\end{proof}

\begin{lemma}\label{thm:vss-to-eqv}
  For $f:\hom_{\Struc{\L}}(M,\alpha^* N)$, we have a map
  \[\VSS(f) \to \streqv_\alpha(f),\]
  which is an equivalence if $N$ is univalent.
\end{lemma}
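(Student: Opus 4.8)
The plan is to prove \Cref{thm:vss-to-eqv} by induction on $n$, the height of $\L$, in close parallel with the structure of \Cref{def:vss} and \Cref{def:eqv}. The base case $n \eqdef 0$ is trivial, since both $\VSS(f)$ and $\streqv_\alpha(f)$ are $\onetype$ and the unique map between them is an equivalence. For $n > 0$, I will construct the map componentwise: given an element of $\VSS(f)$, consisting of a family of split surjections $s(K) : \prd{y:\bottom{N}(\bottom{\alpha}(K))} \sm{x:\bottom{M}(K)} \bottom{f}(K)(x) = y$ (recall $\bottom{(\alpha^*N)}(K) \converts \bottom{N}(\bottom{\alpha}(K))$) together with a proof that $\derivdia{f}$ is a split-surjective equivalence, I produce an element of $\streqv_\alpha(f)$ as follows. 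For the first component, I compose $s(K)$ with the map $(\bottom{f}(K)(x) = y) \to (\bottom{f}(K)(x) \fiso^N_{\bottom{\alpha}(K)} y)$ given by $\idtoindisc$ for the structure $N$; this turns each strict-surjectivity witness into the required indiscernibility datum. For the second component, I observe that $\derivdia{f}$, viewed as a morphism into $(\derivcat{\L}{\bottom{f}})^*(\derivcat{\alpha}{\bottom{N}})^*\derivdia{N}$, is a split-surjective equivalence, so the inductive hypothesis applied to the composite signature morphism $\derivcat{\alpha}{\bottom{N}} \circ \derivcat{\L}{\bottom{f}}$ and the structure $\derivdia{N}$ yields that $\derivdia{f}$ is an equivalence relative to that composite — which is exactly what the second clause of \Cref{def:eqv} demands. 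This defines the map $\VSS(f) \to \streqv_\alpha(f)$.

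To show this map is an equivalence when $N$ is univalent, I will exhibit an inverse and check the round-trips, again by induction. The key point is that when $N$ is univalent, $\idtoindisc_{x,y} : (\bottom{f}(K)(x) = y) \to (\bottom{f}(K)(x) \fiso^N_{\bottom{\alpha}(K)} y)$ is itself an equivalence at every $K$ and $y$ (this is the definition of univalence at $\bottom{\alpha}(K)$ for $N$), so composing a split surjection with it produces a split surjection, and conversely composing with its inverse recovers one. Thus the first-component map is a fiberwise equivalence over $K$ and $y$, hence an equivalence on the $\Pi$-$\Sigma$ types involved. For the second component, univalence of $N$ implies univalence of $\derivdia{N}$, which is precisely the hypothesis needed to invoke the inductive claim that $\VSS(\derivdia{f}) \to \streqv_{\derivcat{\alpha}{\bottom{N}}\circ\derivcat{\L}{\bottom{f}}}(\derivdia{f})$ is an equivalence. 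Since $\VSS(f)$ and $\streqv_\alpha(f)$ are both products of their first and second components (the first being a $\Pi$ over $K$ of a $\Pi$ over $y$ of a $\Sigma$, the second being the corresponding statement about $\derivdia{f}$), the full map is a product of two equivalences, hence an equivalence.

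The main obstacle I anticipate is bookkeeping the relative structure correctly: the inductive step requires applying the hypothesis not to $1_{\L}$ but to the composite signature morphism $\derivcat{\alpha}{\bottom{N}} \circ \derivcat{\L}{\bottom{f}}$, and one must verify that $\derivdia{f}$, which \textit{a priori} lands in $(\derivcat{\L}{\bottom{f}})^*\derivdia{(\alpha^*N)}$, indeed coincides (by the definitional equality $\derivdia{(\alpha^*N)} \converts (\derivcat{\alpha}{\bottom{N}})^*\derivdia{N}$ and functoriality of pullback) with a morphism into $(\derivcat{\L}{\bottom{f}})^*(\derivcat{\alpha}{\bottom{N}})^*\derivdia{N} \converts (\derivcat{\alpha}{\bottom{N}}\circ\derivcat{\L}{\bottom{f}})^*\derivdia{N}$, so that the relative-equivalence hypothesis is genuinely applicable to it. This is exactly the content already built into clause (2) of \Cref{def:eqv}, so once the definitions are unwound carefully the argument is essentially formal; the subtlety is purely in matching up the indexing, which is why \Cref{def:eqv} was set up in this relative generality in the first place. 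The distinction between $\bottom{f}(x) \fiso^N_{\bottom{\alpha}(K)} y$ and $\bottom{f}(x) \fiso^{\alpha^*N}_{K} y$ flagged after \Cref{def:eqv} is precisely what makes the $\idtoindisc$ in the first component have to be taken with respect to $N$ rather than $\alpha^*N$; I will be careful to use the former throughout, and univalence of $N$ (not of $\alpha^*M$) is what makes it an equivalence.
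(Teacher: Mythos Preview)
Your proposal is correct and follows essentially the same approach as the paper: induction on $n$, with the inductive step handling the first component via $\idtoindisc$ (which is an equivalence by univalence of $N$) and the second component via the inductive hypothesis applied to $\derivdia{N}$ along the composite signature morphism $\derivcat{\alpha}{\bottom{N}}\circ\derivcat{\L}{\bottom{f}}$. Your discussion of the bookkeeping obstacle, and why the relative formulation is needed (since $\alpha^*N$ need not be univalent even when $N$ is), matches the paper's parenthetical remark exactly.
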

\begin{proof}
  By induction on $n$.
  When $n\eqdef 0$, both are $\onetype$.
  For $n>0$, the desired map consists of the inductively defined $\VSS(\derivdia{f}) \to \streqv_{\derivcat{\alpha}{\bottom{N}}\circ \derivcat{\L}{\bottom{f}}}(\derivdia{f})$ together with a morphism
  \begin{multline*}
  \left(\prd{K: \bottom{\L}}{y:\bottom{N}(\bottom{\alpha}(K))} \sm{x:\bottom{M}(K)} \left(\bottom{f}(K)(x) =_{\bottom{N}(\bottom{\alpha}(K))} y\right)\right)\\
    \to
    \left(\prd{K:\bottom{\L}}{y:\bottom{N}(\bottom{\alpha}(K))}\sm{x:\bottom{M}(K)} \left(\bottom{f}(K)(x) \fiso^N_{\bottom{\alpha}(K)} y\right)\right)
  \end{multline*}
  that is simply induced by $\idtoindisc_{\bottom{f}(K)(x),y}$.
  The latter is an equivalence when $N$ is univalent by definition, as is the inductively defined map since $\derivdia{N}$ is univalent.
  (This last step would fail if we worked only with absolute equivalences, since $\alpha^*N$ can fail to be univalent even if $N$ is so.)
\end{proof}

\begin{theorem}[\defemph{Univalence principle}]\label{thm:hsip2}\index{univalence principle!for functorial structures}
  Consider $\L: \Sig(n)$ and $M,N: \Struc{\L}$ such that $M$ and $N$ are both univalent.
  The canonical morphism
  \[\idtoeqv: (M = N) \to (M \simeq N)\]
  is an equivalence.\nomenclature[idtoeqv]{$\idtoeqv$}{function from identifications to equivalences of functorial structures}
\end{theorem}
\begin{proof}
  Combine \cref{thm:hsip,thm:vss-to-eqv}.
\end{proof}

As in the diagram case, this implies:

\begin{corollary}
  Any $\L$-axiom $t$ is invariant under equivalence of univalent $\L$-structures:
  given univalent $\L$-structures $M$, $N$ and an equivalence $M \simeq N$, then $t(M) \leftrightarrow t(N)$.
\end{corollary}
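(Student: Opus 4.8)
The statement to prove is the corollary: any $\L$-axiom $t$ is invariant under equivalence of univalent $\L$-structures, i.e.\ given univalent $M,N$ and an equivalence $M\simeq N$, we have $t(M)\leftrightarrow t(N)$.

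The plan is to derive this as an immediate consequence of \Cref{thm:hsip2}. First I would observe that an $\L$-axiom $t$ is, by definition (\Cref{def:diag-axiom} in the functorial setting of \Cref{sec:ho-axioms}), a function $t : \Struc{\L} \to \PropU$. Given univalent structures $M$ and $N$ together with an equivalence $e : M \simeq N$, \Cref{thm:hsip2} tells us that the canonical map $\idtoeqv : (M = N) \to (M \simeq N)$ is an equivalence of types; in particular it is surjective, so there merely exists an identification $p : M = N$. Since the goal $t(M) \leftrightarrow t(N)$ is a proposition (both $t(M)$ and $t(N)$ are elements of $\PropU$, hence the type of logical equivalences between them is a proposition), it suffices to produce such an identification, so we may strip the truncation and assume $p : M = N$ is given.

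Next, I would transport along $p$. Applying the function $t$ to $p$ yields an identification $\apfunc{t}(p) : t(M) = t(N)$ in $\PropU$; composing with the first projection $\PropU \to \U$ gives an identification of the underlying types of $t(M)$ and $t(N)$, and by univalence (or simply by $\transport$ along $p$) this yields functions $t(M) \to t(N)$ and $t(N) \to t(M)$ in both directions. This establishes $t(M) \leftrightarrow t(N)$, completing the argument.

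There is essentially no obstacle here: the entire content is packaged into \Cref{thm:hsip2}, and the corollary is a one-line deduction using that propositions have at most one element and that identifications can be transported. The only point requiring a small amount of care is the reduction step — noting that since the conclusion is a proposition we may pass from the mere existence of an identification (which is what surjectivity of $\idtoeqv$ gives) to an actual identification; but this is routine in HoTT/UF, using the universal property of propositional truncation. (In fact one does not even need surjectivity of $\idtoeqv$ as a map: being an equivalence, it has a section, so we obtain $p \eqdef \idtoeqv^{-1}(e)$ directly, without any truncation manipulation at all.)
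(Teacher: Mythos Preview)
Your proof is correct and matches the paper's approach: the paper states this as an immediate corollary of \Cref{thm:hsip2} (resp.\ \Cref{thm:hsip-folds} in the diagram case) with no further argument, and your derivation via $\idtoeqv^{-1}$ followed by transport is exactly the intended one-line deduction.
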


One might also hope for a univalence principle for \emph{weak} equivalences, i.e., an analogue of~\cite[Lemma 6.8]{AKS13}.\index{univalence principle!for weak equivalences}
A natural way to try to prove this would be by enhancing \cref{lem:injwrtiso} to say that some induced map ``$f : (x\fiso y)\to (f x \fiso f y)$'' is an equivalence, so that a weak equivalence between univalent structures would be an embedding and hence an equivalence.
Unfortunately, as we have seen in \cref{eg:indis-notpres,eg:premonoidal}, an arbitrary morphism between structures does not induce any such map on types of indiscernibilities, even when it is an identity on derived structures as in \cref{lem:injwrtiso}.

\chapter{Examples of functorial structures}
\label{sec:egs-higherorder}

Functorial signatures are significantly more general than diagram signatures.
As we saw in \cref{eg:fol}, exofinite height-2 diagram signatures are essentially the same as signatures for multi-sorted first-order logic.
However, height-2 functorial signatures can represent any signature in multi-sorted \emph{higher}-order logic.
Before describing such a representation in general, we give two classes of examples to illustrate the idea.

Throughout this chapter, we assume the \emph{propositional resizing axiom}\index{axiom!propositional resizing} mentioned in \cref{sec:logic}.
Since the type $\PropU$ is then independent of the universe $\U$, up to equivalence, we write it as simply $\Prop$ and assume that it lies in all universes.

\begin{example}[$T_0$-spaces]\label{eg:topological-spaces}\index{space!$T_0$}\index{space!topological}
  Since a topology is a structure on one underlying set, to describe a structure for topological spaces it suffices to consider height-2 signatures with $\bottom{\L}\eqdef \onetype$, with $\derivdia{\L}{} : \U\to\U$ remaining to be specified.
  A first guess might be $\derivcat{\L}{M} \eqdef (M \to \Prop)$, so that an $\L$-structure would be a type $M$ with a predicate on its ``type of subsets'' $M \to \Prop$ representing ``is open''.
  Unfortunately, this is not a covariant exo-functor.
  We can make it covariant via direct images (using propositional truncation), but this is not \emph{strictly} exo-functorial.\index{powerset}

  One way around this problem is to introduce a separate sort for open sets, with the following diagram signature:
  \[
    \begin{tikzcd}
      & {[\in]} \ar[dl] \ar[dr] \\
      M & & O
    \end{tikzcd}
  \]
  where $M$ represents the set of points, $O$ the set of opens, and $[\in]$ the membership relation; we write $[\in](x,w)$ infix as $x\in w$.
  (Note that there are \emph{no} ``equality'' relations.)
  We assert the usual axioms of a topology, e.g., for all $u,v:O$ there exists a $w:O$ such that for all $x:M$ we have $(x\in w) \leftrightarrow (x\in u)\land (x\in v)$.
  Univalence at $[\in]$ makes it a proposition.
  An indiscernibility $u\fiso v$ for $u,v:O$ then asserts that $(x\in u) \leftrightarrow (x\in v)$ for all $x:M$; thus in a univalent structure an element $u:O$ is uniquely determined by a subset of $M$.
  Similarly, an indiscernibility $x\fiso y$ for $x,y:M$ asserts that $(x\in u) \leftrightarrow (y\in u)$ for all $u:O$, which if $O$ is a topology amounts to saying that the topology is $T_0$, i.e., no two distinct points belong to the same sets.

  However, a morphism of topological spaces, regarded as structures for this signature, is a function on sets and a function on open sets that preserves the membership relation.
  In other words, we have $f:M\to N$ together with, for each open subset $u$ of $M$, an open subset $f_!(u)$ of $N$, such that if $x\in u$ then $f(x)\in f_!(u)$.
  This is quite different from the usual notion of continuous map, and does not even coincide with the standard notion of open map (that would require that $y\in f_!(u)$ if and only if $y = f(x)$ for some $x\in u$).

  A different way to obtain covariant exo-functoriality is to use the \emph{double}-powerset functor $M\mapsto ((M\to \Prop) \to \Prop)$.\index{powerset!double}
  The covariant functorial action of a function $f:M\to N$ takes a set of subsets $\mathfrak{x}$ to the set of all subsets $U$ of $N$ such that $f^{-1}(U)\in \mathfrak{x}$.

  In this case we need a definition of topological spaces that refers to sets of subsets instead of individual subsets.
  Perhaps the simplest approach is to take $\derivcat{\L}{M} \eqdef ((M\to \Prop) \to \Prop)$, so that a structure consists of a type $M$ together with a family of sets of subsets of $M$.
  We regard a topological space $M$ as such a structure by equipping it with the family of all supersets of the set of open subsets, i.e., a predicate that holds of $\mathfrak{x}$ just when $U\in \mathfrak{x}$ for every open subset $U$ of $M$.
  We can characterize the structures arising in this way by axioms asserting that the family of sets of subsets has a least element, and the elements of that least element satisfy the axioms of a topology.

  In this representation, a morphism of structures between two topological spaces is a function $f:M\to N$ such that if $\mathfrak{x}$ contains all opens in $M$, then its image under $f$ contains all opens in $N$, which is to say that $f^{-1}(U) \in \mathfrak{x}$ for all opens $U$ in $N$.
  This is equivalent to saying that $f^{-1}(U)$ is open in $M$ for all opens $U$ in $N$, i.e., that $f$ is continuous.

  Of course, univalence at rank 1 says that this predicate on sets of subsets is a proposition.
  For $x,y:M$, an indiscernibility $x\fiso y$ is the assertion that for a set $\mathfrak{x}$ of subsets of $M+\onetype$, its image under $\copair{1_M}{x}$ contains all opens if and only if its image under $\copair{1_M}{y}$ does.
  Such a $\mathfrak{x}$ is determined by two sets of subsets of $M$, say $\mathfrak{x} = (\mathfrak{x}_1 + \emptyset) \cup (\mathfrak{x}_2 + \{\ttt\})$, and its image under $\copair{1_M}{x}$ consists of those sets in $\mathfrak{x}_1$ that don't contain $x$ and those sets in $\mathfrak{x}_2$ that do contain $x$.
  Thus, $x\fiso y$ is equivalent to saying that any open set $U$ contains $x$ if and only if it contains $y$.
  Hence $M$ is univalent just when it is $T_0$, as before.
  In addition, a continuous map $f:M\to N$ between not-necessarily univalent structures is an equivalence if surjective up to indiscernibility --- i.e., for any $y\in N$ there is an $x\in M$ such that $f(x)$ and $y$ belong to the same open sets --- and moreover $M$ has the topology induced from $N$.

  Another way to present topological spaces using double-powersets is in terms of a \emph{convergence} relation between filters (which are sets of subsets) and points.\index{convergence}\index{filter}
  This suggests a different signature with
  \[\derivcat{\L}{M} \eqdef ((M\to \Prop) \to \Prop)\times M\]
  so that a structure is a set $M$ equipped with a relation between sets-of-subsets and points.
  When regarding a topological space as a structure for this signature, we could require that this relation holds of $(\mathfrak{x},x)$ only when $\mathfrak{x}$ is itself a filter converging to $x$, or when $\mathfrak{x}$ \emph{contains} some filter converging to $x$.
  With either choice, we can characterize the structures arising from topological spaces by extending the usual axioms for a topology in terms of convergence.

  Note that covariant functoriality of the double-powerset specializes to the direct image of filters.
  Thus, under either representation, the $\L$-structure morphisms between topological spaces will be functions that preserve convergence, a property which is equivalent to continuity.

  Univalence of such a structure means that convergence is a proposition, that $M$ is a set, and that two points $a, b$ are identified if exactly the same filters converge to them.
  For topological spaces, the latter is equivalent to saying that the principal filter at point $a$ converges to $b$ and vice versa, which is an equivalent way of saying the space is $T_0$.
  Finally, the equivalences are again the continuous maps that are surjective up to indiscernibility and give their domain the induced topology.
\end{example}

Other topological structures such as uniform spaces and proximity spaces, with the usual morphisms between them, can be represented in a similar way.

\begin{example}[Suplattices, DCPOs]\label{eg:suplattices}\index{suplattice}\index{DCPO}
 A \defemph{suplattice} is a partially ordered set that has joins of all subsets, or equivalently of all indexed families.
 One suitable signature for the theory of suplattices is given as follows.
 Consider the height-2 signature $\L$ with $\bottom{\L} \eqdef \onetype$, and with
 \[\derivcat{\L}{M} \eqdef (M \times M) + ((\tsm{A : \Set} (A \to M)) \times M);\]
 this assignment is covariantly exo-functorial.
 Here, the first summand $M \times M$ stands for the partial ordering---$(m,n)$ meaning $m \leq n$---whereas the second summand denotes suprema:
 $(X,s)$ holds if and only if $s$ is a supremum of the family $X$ of elements of $M$.
 We assert axioms saying that $\leq$ is a preorder, that $m$ is indeed the supremum of $X$, and that there exists some supremum of any family $X$.
 
 Given a structure $M$ for this signature, two elements $m_1, m_2 : M$ of the carrier type of $M$ are indiscernible if $m_1 \leq m_2$ and $m_2 \leq m_1$.
 The facts that $m_1$ and $m_2$ are suprema of exactly the same families $X$, and are interchangeable as elements of a family $X$ without altering its suprema, are then automatic.
 Univalence at $\bottomlevel$ hence means that $M$ is a set, and that the preorder $\leq$ on $M$ is antisymmetric, like in \cref{eg:pre-po-sets}.
 A morphism of structures is a sup-preserving morphism of preorders (in the sense that it takes any supremum to some other supremum); it is an equivalence if it is (split) surjective up to isomorphism and reflects the preorder (and hence also suprema of families).
 
 Directed-complete partial orders (DCPOs) can be formulated similarly, by restricting the families $(A,f) : \sm{A : \Set} A \to M$ of which we take suprema to directed ones, i.e., those such that for any $i, j : A$, there is $k : A$ such that $f(i)\leq f(k)$ and $f(j) \leq f(k)$.
 This restriction can't be made in the signature, but we can assert as an axiom that the directed families are exactly those that have suprema.
 It should also be possible to omit to include the partial ordering explicitly, since $m_1\le m_2$ holds precisely when $m_2$ is a supremum of the doubleton $\{m_1,m_2\}$.

 However, this signature does have the disadvantage that it is ``larger'' than its structures, in the sense of universe level.
 For it to correctly represent the suplattices in some universe $\U$, the type $\Set$ appearing in the definition of $\L$ must consist of all the sets in $\U$, with the consequence that $\L$ itself lives in the next higher universe $\U'$.
 In particular, this implies that we cannot use the same signature $\L$ to describe suplattices in all universes, but rather we need a different $\L$ for each ``size'' of suplattice.
 This creates no actual problems for our results in this \paperorbook, but it might become problematic when constructing univalent completions.

 The representations of \cref{eg:topological-spaces} avoid this problem due to our assumption of propositonal resizing, since they use only $\Prop$ rather than $\Set$.
 With this in mind, we can give a different presentation of suplattices using a similar double-powerset encoding, which also remains in the same universe under the assumption of propositional resizing.\index{axiom!propositional resizing}\index{powerset!double}
 Namely, we use the height-2 signature with $\bottom{\L} \eqdef \onetype$, and with
 \[\derivcat{\L}{M} \eqdef (M \times M) + (((M\to \Prop)\to \Prop)\times M).\]
 The first summand stands for the partial ordering, as before, and for the second summand we assert that $(X,s)$ holds if and only if $X$ is of the form $\{ B \subseteq M \mid A \subseteq B \}$ for some $A\subseteq M$, and $s$ is a supremum of $A$.
 We again assert suitable axioms.
 This representation is chosen to ensure the correct morphisms of structures: if $f:M\to N$ is a morphism of carriers, then the induced map on double powersets takes $\{ B \subseteq M \mid A \subseteq B \}$ to $\{ C \subseteq N \mid f_!(A) \subseteq C \}$, where $f_!(A)$ is the image of $A$ under $f$.
 Thus, a morphism of structures is again a sup-preserving map of preorders.
 And once again, univalence means $M$ is a set and $\leq$ is antisymmetric.
\end{example}

\Cref{eg:topological-spaces,eg:suplattices} illustrate both the potential and pitfalls of using functorial signatures to represent higher-order theories.
On one hand, unlike ordinary higher-order logic, our functorial signatures come with a canonical notion of \emph{non-invertible morphism} between structures.\index{morphism!non-invertible}
By taking care with the representation, we can often arrange that this notion coincides with some desired one, including notions of morphism that behave either covariantly or contravariantly on subsets.

On the other hand, this flexibility comes at a cost: we have to encode single powersets using double powersets, and in general there will be many different ways to encode a particular higher-order theory.
In particular, we do not expect that any one general method of translating higher-order theories into functorial theories would produce the desired result in all cases; some customization will usually be required.
However, to make the point about the extreme generality of functorial signatures, we will sketch a proof of the following.

\begin{theorem}
  Assume propositional resizing.\index{axiom!propositional resizing}
  Then for any exofinite multi-sorted relational higher-order signature $\mathcal{S}$, there is a height-2 functorial theory $(\L,T)$ such that the type of $\mathcal{S}$-structures is equivalent to the type of $(\L,T)$-models.\index{signature!of higher-order logic}
\end{theorem}

Note that although $\mathcal{S}$ is only a signature (with no axioms), we have to impose some axioms on the $\L$-structures to obtain an equivalence.
A theory over $\mathcal{S}$, of course, can then be transferred to a larger theory over $\L$.

\begin{proof}
  As we have seen in \cref{eg:topological-spaces,eg:suplattices}, when translating a higher-order signature to a functorial theory, the difficult question is how to make type constructions formed from iterated powersets into covariant exo-functors.
  The general idea is that when powersets are iterated an even number of times, they are already covariantly exo-functorial; while when they are iterated an odd number of times, we can apply an extra ``unnecessary'' powerset to make them so.
  This requires keeping track of parity, or equivalently variance, in the types of a higher-order signature.
  Thus, we will work with the following slightly idiosyncratic definition of higher-order signature, which is nevertheless equivalent to any of the usual (purely relational) formulations, at least for the purpose of defining structures.

  First, let the \emph{higher-order operations} of arity $n:\exo{\Nat}$ be the exo-functors
  \[ F: \U^n \times (\U^{\mathrm{op}})^n \to \U \]
  inductively generated as composites of projections $\pi_i:\U^n \to \U$, cartesian products $\U\times \U\to \U$, and powersets $P : \U^{\mathrm{op}}\to \U$, where $PA \eqdef (A \to \Prop)$.
  Such an $F$ can be thought of as a formal expression involving $n$ type variables built from cartesian products and powersets; the domain $\U^n \times (\U^{\mathrm{op}})^n$ records separately the covariant and contravariant occurrences of each type variable.
  For instance, in the formal expression $X\times P(X\times PY)$, the first occurrence of $X$ is covariant while the second is contravariant, and the only occurrence of $Y$ is covariant since it is nested within two powersets.
  Thus, we would represent this as the higher-order operation
  \[ ((X,Y),(X',Y')) \mapsto X\times P(X'\times PY). \]
  Note that since projections and products preserve both embeddings and surjections, while $P$ interchanges embeddings and surjections, any higher-order operation $F$ takes an input of $n$ embeddings and $n$ surjections to an embedding, and $n$ surjections and $n$ embeddings to a surjection.

  Composing the action of a higher-order operation $F$ on objects with the diagonal, we obtain a function (not a functor!)
  \[ \ob{F}\Delta : \U^n \to \U^n \times \U^n \to \U. \]
  The idea is that here we forget the variances and identify the covariant and contravariant occurrences of each type variable.

  We now define a (finite, relational) \emph{higher-order signature} to consist of:
  \begin{itemize}
  \item An exo-natural number $n:\exo{\Nat}$.
    We write the elements of $\exo{\Nat}_{<n}$ as $A_1$,\ldots, $A_n$ and call them the \emph{base sorts}.
  \item An exo-natural number $m:\exo{\Nat}$.
    We write the elements of $\exo{\Nat}_{<m}$ as $R_1$,\ldots, $R_m$ and call them the \emph{relation symbols}.
  \item For each relation symbol, a higher-order operation $F_R$ called its \emph{domain}.
  \end{itemize}
  A \emph{structure} for such a signature consists of
  \begin{itemize}
  \item For each base sort $A$, a set $MA : \SetU$.
    (Note that we consider only models in sets, not higher types.)
  \item For each relation symbol $R$ with domain $F_R$, a predicate
    \[MR : \ob{(F_R)}\Delta(M) \to \Prop.\]
  \end{itemize}
  Note that the definition of structures does not use the exo-functorial action of the higher-order operations $F_R$, only their action on objects composed with the diagonal.
  For this reason, signatures for higher-order logic do not usually track the variance of occurrences of type variables.
  However, a syntactic type expression uniquely determines a variance for each occurrence by counting the powersets it appears inside, so it is always possible to extract one of our higher-order signatures from a more ordinary one; we leave it to the reader to make this precise.

  Now, given such a higher-order signature, let $\L$ be the height-2 functorial signature with $\bottom{\L} \eqdef \exo{\Nat}_{<n}$ and $\derivcat{\L}{}$ the composite exo-functor
  \[ [\bottom{\L},\U] \cong \U^n \xrightarrow{\Delta} \U^n \times \U^n \xrightarrow{1 \times P} \U^n \times (\U^{\mathrm{op}})^n \xrightarrow{(F_j)_{0\le j<m}} \U^m \xrightarrow{\Sigma_m} \Usharp \cong \Sig(1).
  \]
  Here $\Sigma_m$ denotes the exo-$\Sigma$-type over $\exo{\Nat}_{<m}$, i.e., $\Sigma_m X \eqdef \sm{j:\exo{\Nat}_{<m}} X_j$, which is sharp since each $X_j$ is fibrant.
  (Recall that $\Sig(1)$ is just the exo-catgory of sharp exotypes.)
  A structure for this signature $\L$ then consists of a family of types $M:\U^n$ and a type family $\derivdia{M} : \derivcat{\L}{M} \to \U$.

  For instance, suppose $\mathcal{S}$ is a higher-order signature with two sorts and two relations symbols, with domain exo-functors sending $(X,Y,X',Y') : \U^2 \times \U^2$ to
  \[ X\times P(X'\times PY) \qquad\text{and}\qquad X\times P Y'.
  \]
  Then $\derivdia{\L} : \U^2 \to \Usharp$ is defined (up to exo-isomorphism) by
  \[ \derivdia{\L}(X,Y) \eqdef (X\times P(PX \times PY)) \exosum (X\times PPY).\]
  Thus, a structure for this functorial signature consists of two types $X$ and $Y$ and two families of types indexed by $X\times P(PX \times PY)$ and $X\times PPY$ respectively.

  Now for any $A:\U$, there is a map $A \to PA$ sending $a:A$ to the ``singleton'' $\lambda x. \Vert x=a\Vert$; and if $A$ is a set, then this map is an embedding.
  Thus, by contravariance, for any higher-order operation $F$ we have a map
  \[ \ob{F}(1\times P)\Delta(M) \to \ob{F}\Delta(M) \]
  which is a surjection if $M$ is a family of sets.
  Therefore, an $\mathcal{S}$-structure is equivalent to an $\L$-structure $M$ such that
  \begin{enumerate}
  \item $\derivdia{M}$ consists of propositions;\label{item:ho1}
  \item $M:\U^n$ consists of sets; and\label{item:ho2}
  \item $\derivdia{M} : \derivcat{\L}{M} \to \Prop$ factors through
    \[\derivcat{\L}{M} \converts
      \Sigma_{j:\exo{\Nat}_{<m}} \ob{(F_j)}(1\times P)\Delta(M)
      \twoheadrightarrow 
      \Sigma_{j:\exo{\Nat}_{<m}} \ob{(F_j)}\Delta(M)
      \]
      (necessarily uniquely, by surjectivity and since $\Prop$ is a set).
  \end{enumerate}
  This is a subtype of $\Struc{\L}$; hence by our very general notion of ``theory'', it is the type of models of a theory $T$ over $\L$.
  (Note also that~(\ref{item:ho1}) is equivalent to univalence of $\derivdia{M}$, while~(\ref{item:ho2}) is then equivalent to univalence of $M$ if $\mathcal{S}$ includes equality relations.)
\end{proof}

We end with an example suggesting that there may at least be some interest in non-diagram signatures of height greater than 2.

\begin{example}[Ultracategories~\cite{makkai:sdfol,lurie:ultracats,ct:mtm}]\label{eg:ultracats}\index{ultracategory}\index{ultrafilter}
  There are two notions of ``ultracategory'' in the literature.
  A Makkai--Lurie ultracategory~\cite{makkai:sdfol,lurie:ultracats} is a category $\C$ equipped with a functor $\int_S (-)d\mu : \C^S\to\C$ for any set $S$ and any ultrafilter on $S$.
  (In fact Makkai's and Lurie's definitions differ somewhat in the axioms imposed, but the basic structure is the same.)
  This can be represented by a diagram signature in the style of \cref{eg:cat-struc}, but with the type $\sum_{S:\Set} \mathsf{Ultra}(S)$ of ``sets equipped with an ultrafilter'' indexing a family of rank-1 sorts, and similarly a family of rank-2 sorts for the functoriality of these operations.
  As usual, univalence reduces to ordinary univalence of the underlying category.
  Note that $\sum_{S:\Set} \mathsf{Ultra}(S)$ is a 1-type, so this example exhibits behavior similar to \cref{eg:unbiased-monoidal}.
  It is also ``large and universe-sensitive'' in the same way as the ``suprema of families'' presentation of suplattices (\cref{eg:suplattices}).

  By contrast, a Clementino--Tholen ultracategory~\cite{ct:mtm} is more like a multicategory: it has a set $O$ of objects together with, for every ultrafilter $\mathfrak{x}$ \emph{on the set $O$} and every $y:O$, a hom-set $A(\mathfrak{x},y)$, with composition operations and axioms.
  We can represent this with a height-3 functorial signature with $\bottom{\L}\eqdef \onetype$, and for $MO:\U$
  \[\bottom{\derivdia{\L}{MO}} \eqdef \mathsf{Ultra}(MO) \times MO, \]
  with the top rank encoding the identity and composition operations as in \cref{eg:multicats,eg:fat-sym-multicats}.
  We mention this because it is our only example of a non-diagram signature of height greater than 2, but we have not investigated it in detail.
  In particular, since we can in general expect $MO$ to be a proper 1-type rather than a set, the correct notion of ``ultrafilter'' on it is perhaps not entirely clear.
\end{example}

\chapter{Conclusion}\label{sec:conclusion}

In \cref{sec:struc-folds}, we described the univalence principle as stating that ``equivalent mathematical structures are indistinguishable.''
In the course of this work, we have made this statement precise in \cref{thm:hsip2}.
Specifically, we have given a precise notion of ``mathematical structure'', through our notion of signature in \cref{def:abstract_signature} and (univalent) structure in \cref{def:functorial-structure}, and of ``equivalence'', through our notion of equivalence of structure in~\cref{def:vss}.
The notion of ``indistinguishability'' is given by the mathematical foundation we are working in, specifically, by the type of identifications.
Our main theorem identifies any two equivalent structures.

The key to this result is the notion of \emph{univalent} structure.
Using a relativized form of the \emph{identity of indiscernibles},
we defined a general notion of indiscernibility of objects in a categorical structure, yielding a notion of univalence for such structures.
These notions depend only on the shape of the structures as specified by the signature, not on any axioms they satisfy.
We then showed, in \cref{thm:hsip2}, a univalence principle for univalent structures that specializes to known results for first-order logic and univalent 1-categories, as well as many other important examples.

Regarding the setting we have chosen for our work, it seems impossible to define a fully coherent notion of
signature without 2LTT.  A sufficiently-coherent
``wild'' notion (in the sense of~\cite{CapriottiKraus:csst}) might suffice for our particular results,
but further development of the theory may require the fully coherent version. In addition, 2LTT seems to be
necessary for treating diagram signatures of arbitrary height (cf.\ \cref{sec:folds-signatures-two}).

In this paper we have focused on laying out the basic definitions, proving the fundamental univalence principle, and describing a large number of examples to show the wide applicability of the theory.
However, there are many important questions that we have left open, including the following.
\begin{itemize}
\item Can we remove the splitness condition from \cref{thm:hsip}, as discussed at the end of \cref{sec:hsip}?
\item Is there a completion operation for structures, i.e., a universal way to turn a structure into a univalent one, generalizing the Rezk completion for categories \cite[Section~8]{AKS13}?
\item As discussed in \cref{eg:ax-weq}, it should be the case that axioms expressed in Makkai's language FOLDS are invariant under our notion of weak equivalence.
\item Also as discussed in \cref{eg:ax-weq}, is there a weak-equivalence-invariant notion of ``axiom'' that also includes our examples involving non-diagram signatures?
\item As discussed in \cref{rmk:heteq}, can we prove a general theorem that univalent structures with fully heterogeneous equality consist of sets, and is there a general method to add heterogeneous equalities to only some sorts?
\item Can the theory of univalence be extended from our functorial signatures to a wider class of Generalized Algebraic Theories?
  In particular, can we deal directly with theories that include functions, perhaps by finding a uniform way to encode their graphs as relations?
  (We thank Steve Awodey for raising this question to us.)
\item We have so far considered only signatures of finite height, which permit arguments by induction.
  Can the theory be extended to signatures of infinite height, perhaps using coinduction?
\item The results presented here should be formalizable in a computer proof assistant implementing 2LTT.
\end{itemize}

\backmatter

% Glossary of notations

\printnomenclature

% Index cross-references
\newcommand{\indexsee}[2]{\index{#1|see{#2}}}

% \indexsee{}{}
\indexsee{type!exo-}{exotype}
\indexsee{semicategory}{category, semi-}
\indexsee{category, double}{double category}
\indexsee{bicategory, double}{double bicategory}
\indexsee{gregarious equivalence}{equivalence, gregarious}
\indexsee{combinatorial species}{species, combinatorial}
\indexsee{partial order}{poset}
\indexsee{Kleisli category}{category, Kleisli}
\indexsee{Eilenberg-Moore category}{category, Eilenberg-Moore}
\indexsee{exo-equality}{equality, exo-}
\indexsee{identification}{type, identity}
\indexsee{contractible}{type, contractible}
\indexsee{univalence axiom}{axiom, univalence}
\indexsee{propositional truncation}{truncation, propositional}
\indexsee{finite type}{type, finite}
\indexsee{exofinite exotype}{exotype, exofinite}
\indexsee{cofibrant exotype}{exotype, cofibrant}
\indexsee{sharp exotype}{exotype, sharp}
\indexsee{equality!strict}{equality, exo-}
\indexsee{exo-isomorphism}{isomorphism, exo-}
\indexsee{discrete opfibration}{opfibration, discrete}
\indexsee{Artin gluing}{gluing, Artin}
\indexsee{functorial signature}{signature, functorial}
\indexsee{functorial structure}{structure, functorial}
\indexsee{functorial theory}{theory, functorial}
\indexsee{levelwise equivalence}{equivalence, levelwise}
\indexsee{indicator function}{function, indicator}
\indexsee{univalent functorial structure}{structure, functorial, univalent}
\indexsee{$T_0$-space}{space, $T_0$}
\indexsee{topological space}{space, topological}
\indexsee{propositional resizing}{axiom, propositional resizing}
\indexsee{resizing!propositional}{axiom, propositional resizing}
\indexsee{pointed set}{set, pointed}
\indexsee{precategory}{category, pre-}
\indexsee{E-category}{category, E-}
\indexsee{exo-category}{category, exo-}
\indexsee{exo-functor}{functor, exo-}
\indexsee{exo-natural transformation}{natural transformation, exo-}
\indexsee{inverse exo-category}{category, exo-, inverse}
\indexsee{diagram signature}{signature, diagram}
\indexsee{diagram theory}{theory, diagram}
\indexsee{axiom!excluded middle}{excluded middle}
\indexsee{functorality!of derivatives}{derivative, functoriality of}
\indexsee{exo-diagram}{diagram, exo-}

\printindex

%% Bibliography
\bibliographystyle{amsalpha}  
\bibliography{foldssatrefs}

\newcommand{\etalchar}[1]{$^{#1}$}
\newcommand{\predoi}{DOI:}\newcommand{\prearxiv}{Available on
  arXiv:}\newcommand{\preurl}{Available at:}\newcommand{\dodoi}[1]{\predoi
  \href{https://dx.doi.org/#1}{#1}}\newcommand{\doarxiv}[1]{\prearxiv
  \href{https://arxiv.org/abs/#1}{#1}}\newcommand{\dourl}[1]{\preurl \url{#1}}
\providecommand{\bysame}{\leavevmode\hbox to3em{\hrulefill}\thinspace}
\providecommand{\MR}{\relax\ifhmode\unskip\space\fi MR }
% \MRhref is called by the amsart/book/proc definition of \MR.
\providecommand{\MRhref}[2]{%
  \href{http://www.ams.org/mathscinet-getitem?mr=#1}{#2}
}
\providecommand{\href}[2]{#2}
\begin{thebibliography}{BdBLM20}

\bibitem[ACKS19]{2LTT}
Danil Annenkov, Paolo Capriotti, Nicolai Kraus, and Christian Sattler,
  \emph{Two-level type theory and applications}, \doarxiv{1705.03307v3}, 2019.

\bibitem[Acz11]{Aczel_SIP}
Peter Aczel, \emph{On {Voevodsky's} univalence axiom}, Mathematical Logic:
  Proof Theory, Constructive Mathematics (Samuel~R. Buss, Ulrich Kohlenbach,
  and Michael Rathjen, eds.), Mathematisches Forschungsinstitut Oberwolfach,
  Oberwolfach, 2011, \dodoi{10.4171/OWR/2011/52}, p.~2967.

\bibitem[AF18]{af:flagged}
David Ayala and John Francis, \emph{Flagged higher categories}, Topology and
  quantum theory in interaction \textbf{718} (2018), 137--173,
  \dodoi{10.1090/conm/718}. \doarxiv{1801.08973}.

\bibitem[AFM{\etalchar{+}}22]{DBLP:conf/rta/AhrensFMW19}
Benedikt Ahrens, Dan Frumin, Marco Maggesi, Niccolò Veltri, and Niels van~der
  Weide, \emph{Bicategories in univalent foundations}, Mathematical Structures
  in Computer Science (2022), 1–38, \dodoi{10.1017/S0960129522000032}.

\bibitem[AKS15]{AKS13}
Benedikt Ahrens, Krzysztof Kapulkin, and Michael Shulman, \emph{Univalent
  categories and the {Rezk} completion}, Mathematical Structures in Computer
  Science \textbf{25} (2015), no.~05, 1010--1039,
  \dodoi{10.1017/S0960129514000486}.

\bibitem[AL19]{DBLP:journals/lmcs/AhrensL19}
Benedikt Ahrens and Peter~LeFanu Lumsdaine, \emph{Displayed categories},
  Logical Methods in Computer Science \textbf{15} (2019), no.~1, 1--20,
  \dodoi{10.23638/LMCS-15(1:20)2019}.

\bibitem[ALV18]{alv1-lmcs}
Benedikt Ahrens, Peter~LeFanu Lumsdaine, and Vladimir Voevodsky,
  \emph{Categorical structures for type theory in univalent foundations},
  Logical Methods in Computer Science \textbf{14} (2018), no.~3, 1--18,
  \dodoi{10.23638/LMCS-14(3:18)2018}.

\bibitem[ANST20]{hsip_lics}
Benedikt Ahrens, Paige~Randall North, Michael Shulman, and Dimitris Tsementzis,
  \emph{A higher structure identity principle}, Proceedings of the 35th Annual
  ACM/IEEE Symposium on Logic in Computer Science (LICS '20), July 8--11, 2020,
  Saarbr\"ucken, Germany (New York, NY, USA), ACM, 2020,
  \dodoi{10.1145/3373718.3394755}.

\bibitem[Ara14]{ara:nqcats}
Dimitri Ara, \emph{Higher quasi-categories vs higher {Rezk} spaces}, Journal of
  K-theory \textbf{14} (2014), no.~3, 701--749,
  \dodoi{10.1017/S1865243315000021}. \doarxiv{1206.4354}.

\bibitem[AW09]{AW2007}
S.~Awodey and M.~A. Warren, \emph{Homotopy theoretic models of identity types},
  Math. Proc. Camb. Phil. Soc. \textbf{146} (2009), no.~45, 45--55.

\bibitem[Awo96]{Awo96}
Steve Awodey, \emph{Structure in mathematics and logic: a categorical
  perspective}, Philos. Math. (3) \textbf{4} (1996), no.~3, 209--237.
  \MR{MR1406758 (99c:03106)}

\bibitem[Awo04]{Awo04}
\bysame, \emph{An answer to \uppercase{H}ellman's question: `\uppercase{D}oes
  category theory provide a framework for mathematical structuralism?'},
  Philosophia Mathematica \textbf{12 (1)} (2004), 54--64.

\bibitem[Awo14]{Awo13}
S.~Awodey, \emph{Structuralism, invariance and univalence}, Philosophia
  Mathematica \textbf{22 (1)} (2014), 1--11.

\bibitem[Awo16]{awodey_natural}
Steve Awodey, \emph{Natural models of homotopy type theory}, Mathematical
  Structures in Computer Science (2016), 1--46,
  \dodoi{10.1017/S0960129516000268}. \doarxiv{1406.3219}.

\bibitem[BCH14]{bezem_et_al:LIPIcs:2014:4628}
Marc Bezem, Thierry Coquand, and Simon Huber, \emph{A model of type theory in
  cubical sets}, 19th International Conference on Types for Proofs and Programs
  (TYPES 2013) (Dagstuhl, Germany) (Ralph Matthes and Aleksy Schubert, eds.),
  Leibniz International Proceedings in Informatics (LIPIcs), vol.~26, Schloss
  Dagstuhl--Leibniz-Zentrum fuer Informatik, 2014,
  \dodoi{10.4230/LIPIcs.TYPES.2013.107}., pp.~107--128.

\bibitem[BD98]{bd:hda3}
John~C. Baez and James Dolan, \emph{Higher-dimensional algebra. {III}.
  {$n$}-categories and the algebra of opetopes}, Adv. Math. \textbf{135}
  (1998), no.~2, 145--206. \MR{MR1620826 (99j:18005)}

\bibitem[BdBLM20]{bblm:initiality}
Guillaume Brunerie, Menno de~Boer, Peter~LeFanu Lumsdaine, and Anders
  M\"{o}rtberg, \emph{Initiality for {Martin-L{\"o}f} type theory}, A
  formalization in Agda of the proof, 2020,
  \dourl{https://github.com/guillaumebrunerie/initiality}.

\bibitem[Ben65]{Benac}
P.~Benacerraf, \emph{What numbers could not be}, The Philosophical Review
  \textbf{74 (1)} (Jan. 1965), 47--73.

\bibitem[Ber09]{bergner:infty-one}
Julia~E. Bergner, \emph{A survey of $(\infty, 1)$-categories}, Towards Higher
  Categories (John~C. Baez and J.~Peter May, eds.), The IMA Volumes in
  Mathematics and its Applications, vol. 152, Springer, 2009,
  \doarxiv{math/0610239}, pp.~69--83.

\bibitem[BGMS21]{bgms:nets}
John~C. Baez, Fabrizio Genovese, Jade Master, and Michael Shulman,
  \emph{Categories of nets}, 36th Annual {ACM/IEEE} Symposium on Logic in
  Computer Science, {LICS} 2021, Rome, Italy, June 29 - July 2, 2021, {IEEE},
  2021, \dodoi{10.1109/LICS52264.2021.9470566}, pp.~1--13.

\bibitem[Bla78]{Blanc}
Georges Blanc, \emph{{\'Equivalence} naturelle et formules logiques en
  th\'eorie des cat\'egories}, Arch. Math. Logik Grundlag. \textbf{19} (1978),
  no.~3-4, 131--137, \dodoi{10.1007/BF02011874}. \MR{539867}

\bibitem[BMMS01]{10.1006/inco.2001.3050}
Roberto Bruni, Jos\'{e} Meseguer, Ugo Montanari, and Vladimiro Sassone,
  \emph{Functorial models for {Petri} nets}, Inf. Comput. \textbf{170} (2001),
  no.~2, 207--236, \dodoi{10.1006/inco.2001.3050}.

\bibitem[Bra16]{brandenburg_unbiased_sym_monoidal}
Martin Brandenburg, \emph{Unbiased symmetric monoidal categories}, 2016,
  \dourl{https://www.dropbox.com/s/seffmb47ipj9so5/sym-mon-cat-unbiased.pdf}.

\bibitem[Cam20]{campbell:greg}
Alexander Campbell, \emph{The gregarious model structure for double
  categories}, Talk slides available at
  \url{https://acmbl.github.io/greg_slides.pdf}, 2020.

\bibitem[Cap19]{capriotti:polynomials}
Paolo Capriotti, \emph{Polynomial monads as opetopic types}, Talk at {HoTTEST}
  seminar,
  \url{https://www.uwo.ca/math/faculty/kapulkin/seminars/hottestfiles/Capriotti-2019-04-18-HoTTEST.pdf},
  April 2019.

\bibitem[Car86]{Cart86}
John Cartmell, \emph{Generalized algebraic theories and contextual categories},
  Annals of Pure and Applied Logic \textbf{32} (1986), 209--243,
  \dodoi{10.1016/0168-0072(86)90053-9}.

\bibitem[CD13]{COQUAND20131105}
Thierry Coquand and Nils~Anders Danielsson, \emph{Isomorphism is equality},
  Indagationes Mathematicae \textbf{24} (2013), no.~4, 1105 -- 1120, In memory
  of N.G. (Dick) de Bruijn (1918--2012). \dodoi{10.1016/j.indag.2013.09.002}.

\bibitem[Che03]{cheng:opetopic-bicats}
Eugenia Cheng, \emph{Opetopic bicategories: comparison with the classical
  theory}, \doarxiv{math/0304285}, 2003.

\bibitem[CK17]{CapriottiKraus:csst}
Paolo Capriotti and Nicolai Kraus, \emph{Univalent higher categories via
  complete semi-{Segal} types}, Proc. ACM Program. Lang. \textbf{2} (2017),
  no.~POPL, 1--29, \dodoi{10.1145/3158132}.

\bibitem[CT03]{ct:mtm}
Maria~Manuel Clementino and Walter Tholen, \emph{Metric, topology and
  multicategory—a common approach}, Journal of Pure and Applied Algebra
  \textbf{179} (2003), no.~1, 13 -- 47, \dodoi{10.1016/S0022-4049(02)00246-3}.

\bibitem[DKS89]{dks:hocomm}
W.G. Dwyer, D.M. Kan, and J.H. Smith, \emph{Homotopy commutative diagrams and
  their realizations}, Journal of Pure and Applied Algebra \textbf{57} (1989),
  no.~1, 5 -- 24, \dodoi{10.1016/0022-4049(89)90023-6}.

\bibitem[DM82]{Tannakian}
Pierre Deligne and James~S. Milne, \emph{Tannakian categories}, pp.~101--228,
  Springer Berlin Heidelberg, Berlin, Heidelberg, 1982.

\bibitem[Dus01]{duskin:bicatnerve}
John~W. Duskin, \emph{Simplicial matrices and the nerves of weak $n$-categories
  {I}: nerves of bicategories}, Theory and Applications of Categories
  \textbf{9} (2001), no.~10, 198--308.

\bibitem[Fio12]{fiore_cwf}
Marcelo Fiore, \emph{Discrete generalised polynomial functors}, 2012, Slides
  from talk given at ICALP 2012,
  \url{http://www.cl.cam.ac.uk/~mpf23/talks/ICALP2012.pdf}.

\bibitem[FPP08]{fiore-paoli-pronk-2008}
Thomas~M. Fiore, Simona Paoli, and Dorette Pronk, \emph{Model structures on the
  category of small double categories}, Algebraic and Geometric Topology
  \textbf{8} (2008), 1855--1959, \dodoi{10.2140/agt.2008.8.1855}.

\bibitem[Fre76]{Freyd}
Peter Freyd, \emph{Properties invariant within equivalence types of
  categories}, Algebra, topology, and category theory (a collection of papers
  in honor of {S}amuel {E}ilenberg) (New York), Academic Press, 1976,
  pp.~55--61. \MR{0412249}

\bibitem[FS19a]{FongSpivak:hypergraph}
Brendan Fong and David~I. Spivak, \emph{Hypergraph categories}, Journal of Pure
  and Applied Algebra \textbf{223} (2019), no.~11, 4746 -- 4777,
  \dodoi{10.1016/j.jpaa.2019.02.014}.

\bibitem[FS19b]{FongSpivak:supply}
Brendan Fong and David~I Spivak, \emph{Supplying bells and whistles in
  symmetric monoidal categories}, \doarxiv{1908.02633}, 2019.

\bibitem[F{\"u}h99]{fuhrmann:abstract-kleisli}
Carsten F{\"u}hrmann, \emph{Direct models of the computational
  lambda-calculus}, Electronic Notes in Theoretical Computer Science
  \textbf{20} (1999), 245--292, MFPS XV, Mathematical Foundations of Progamming
  Semantics, Fifteenth Conference. \dodoi{10.1016/S1571-0661(04)80078-1}.

\bibitem[Gar09]{garner:soa}
Richard Garner, \emph{Understanding the small object argument}, Appl. Categ.
  Structures \textbf{17} (2009), no.~3, 247--285, \doarxiv{0712.0724}.
  \MR{MR2506256}

\bibitem[GHK17]{ghk:analytic-monads}
David Gepner, Rune Haugseng, and Joachim Kock, \emph{$\infty$-operads as
  analytic monads}, \doarxiv{1712.06469}, 2017.

\bibitem[GP04]{CTGDC_2004__45_3_193_0}
Marco Grandis and Robert Pare, \emph{Adjoint for double categories}, Cahiers de
  Topologie et G\'eom\'etrie Diff\'erentielle Cat\'egoriques \textbf{45}
  (2004), no.~3, 193--240 (en). \MR{2090335}

\bibitem[Gra18]{grayson-intro-to-uf}
Daniel~R. Grayson, \emph{An introduction to univalent foundations for
  mathematicians}, Bull. Amer. Math. Soc. \textbf{55} (2018), 427--450,
  \dodoi{10.1090/bull/1616}.

\bibitem[GT06]{gt:nwfs}
Marco Grandis and Walter Tholen, \emph{Natural weak factorization systems},
  Arch. Math. (Brno) \textbf{42} (2006), no.~4, 397--408. \MR{MR2283020
  (2008b:18006)}

\bibitem[Hen20]{henry:weak-modelcats}
Simon Henry, \emph{Weak model categories in classical and constructive
  mathematics}, Theory and Applications of Categories \textbf{35} (2020),
  no.~24, 875--958.

\bibitem[HMP00]{Mult1}
Claudio Hermida, Michael Makkai, and John Power, \emph{On weak higher
  dimensional categories {I}-1}, Journal of Pure and Applied Algebra
  \textbf{154} (2000), 221--246, \dodoi{10.1016/S0022-4049(99)00179-6}.

\bibitem[HS98]{HoffStreich}
Martin Hofmann and Thomas Streicher, \emph{The groupoid interpretation of type
  theory}, Twenty-five years of constructive type theory ({V}enice, 1995),
  Oxford Logic Guides, vol.~36, Oxford Univ. Press, New York, 1998,
  pp.~83--111. \MR{MR1686862}

\bibitem[HV12]{heunen-vicary-lectures-quantum-mechanics}
Chris Heunen and Jamie Vicary, \emph{Lectures on categorical quantum
  mechanics}, 2012, \dourl{https://www.cs.ox.ac.uk/files/4551/cqm-notes.pdf}.

\bibitem[Isa01]{isaksen:pross}
Daniel~C. Isaksen, \emph{A model structure on the category of pro-simplicial
  sets}, Trans. Amer. Math. Soc. \textbf{353} (2001), no.~7, 2805--2841
  (electronic), \doarxiv{math.AT/0106152}. \MR{MR1828474 (2001m:18015)}

\bibitem[Joy81]{joyal:species}
André Joyal, \emph{Une théorie combinatoire des séries formelles}, Advances
  in Mathematics \textbf{42} (1981), no.~1, 1 -- 82,
  \dodoi{10.1016/0001-8708(81)90052-9}.

\bibitem[Joy97]{joyal:disks}
Andr{\'e} Joyal, \emph{Disks, duality, and {$\Theta$}-categories},
  \dourl{https://ncatlab.org/nlab/files/JoyalThetaCategories.pdf}, 1997.

\bibitem[Joy02]{joyal:qcat-kan}
A.~Joyal, \emph{Quasi-categories and {K}an complexes}, Journal of Pure and
  Applied Algebra \textbf{175} (2002), 207--222.

\bibitem[JS91]{joyal-street-tensor-calculus-I}
Andr\'e Joyal and Ross Street, \emph{The geometry of tensor calculus, i},
  Advances in Mathematics \textbf{88} (1991), no.~1, 55--112,
  \dodoi{10.1016/0001-8708(91)90003-P}.

\bibitem[KL21]{KL12}
Krzysztof Kapulkin and Peter~LeFanu Lumsdaine, \emph{The simplicial model of
  univalent foundations (after voevodsky)}, Journal of the European
  Mathematical Society \textbf{23} (2021), no.~6, 2071--2126,
  \dodoi{10.4171/jems/1050}.

\bibitem[Koc70]{monads-kock}
Anders Kock, \emph{Monads on symmetric monoidal closed categories}, Archiv der
  Mathematik \textbf{21} (1970), 1--10, \dodoi{10.1007/BF01220868}.

\bibitem[Koc06]{kock:weakids}
Joachim Kock, \emph{Weak identity arrows in higher categories}, IMRP Int. Math.
  Res. Pap. (2006), 69163, 1--54.

\bibitem[Koc20]{kock_petri}
\bysame, \emph{Elements of {Petri} nets and processes}, \doarxiv{2005.05108},
  2020.

\bibitem[Lei04]{higher-ops-higher-cats}
Tom Leinster, \emph{Higher operads, higher categories}, London Mathematical
  Society Lecture Note Series, vol. 298, Cambridge University Press, Cambridge,
  2004, \doarxiv{math/0305049}.

\bibitem[Lev17]{DBLP:conf/popl/Levy17}
Paul~Blain Levy, \emph{Contextual isomorphisms}, Proceedings of the 44th {ACM}
  {SIGPLAN} Symposium on Principles of Programming Languages, {POPL} 2017,
  Paris, France, January 18-20, 2017 (New York, NY, USA) (Giuseppe Castagna and
  Andrew~D. Gordon, eds.), Association for Computing Machinery, 2017,
  \dourl{http://dl.acm.org/citation.cfm?id=3009898}, pp.~400--414.

\bibitem[Lev20]{levy:thunk-cent}
\bysame, \emph{Thunkable implies central}, 2020,
  \dourl{https://www.cs.bham.ac.uk/~pbl/papers/thunkcentral.pdf}.

\bibitem[LS12]{ls:limlax}
Stephen Lack and Michael Shulman, \emph{Enhanced 2-categories and limits for
  lax morphisms}, Advances in Mathematics \textbf{229} (2012), no.~1, 294--356,
  \dodoi{10.1016/j.aim.2011.08.014}.

\bibitem[LS19]{ls:hits}
Peter~LeFanu Lumsdaine and Michael Shulman, \emph{Semantics of higher inductive
  types}, Mathematical Proceedings of the Cambridge Philosophical Society
  (2019), 1--50, \dodoi{10.1017/S030500411900015X}. \doarxiv{1705.07088}.

\bibitem[Lur]{lurie:ultracats}
Jacob Lurie, \emph{Ultracategories},
  \dourl{https://www.math.ias.edu/~lurie/papers/Conceptual.pdf}.

\bibitem[Lur09]{Lurie}
J.~Lurie, \emph{Higher topos theory}, Annals of Mathematics Studies, no. 170,
  Princeton University Press, 2009.

\bibitem[Lur17]{lurie:ha}
Jacob Lurie, \emph{Higher algebra},
  \dourl{https://www.math.ias.edu/~lurie/papers/HA.pdf}, Sep 2017.

\bibitem[LW15]{lw:localuniv}
Peter~LeFanu Lumsdaine and Michael~A. Warren, \emph{The local universes model:
  An overlooked coherence construction for dependent type theories}, ACM Trans.
  Comput. Logic \textbf{16} (2015), no.~3, 23:1--23:31, \doarxiv{1411.1736}.

\bibitem[Mac65]{props}
Saunders MacLane, \emph{Categorical algebra}, Bull. Amer. Math. Soc.
  \textbf{71} (1965), 40--106, \dodoi{10.1090/S0002-9904-1965-11234-4}.

\bibitem[Mak87]{makkai:sdfol}
M.~Makkai, \emph{Stone duality for first order logic}, Advances in Mathematics
  \textbf{65} (1987), no.~2, 97 -- 170, \dodoi{10.1016/0001-8708(87)90020-X}.

\bibitem[Mak95]{MFOLDS}
Michael Makkai, \emph{First order logic with dependent sorts, with applications
  to category theory},
  \dourl{http://www.math.mcgill.ca/makkai/folds/foldsinpdf/FOLDS.pdf}, 1995.

\bibitem[Mak96]{makkai:avoiding-choice}
\bysame, \emph{Avoiding the axiom of choice in general category theory}, J.
  Pure Appl. Algebra \textbf{108} (1996), no.~2, 109--173. \MR{MR1382246
  (97h:18002)}

\bibitem[Mak98]{MakSFAM}
\bysame, \emph{Towards a categorical foundation of mathematics}, Lecture Notes
  Logic, vol.~11, pp.~153--190, Springer, Berlin, 1998.

\bibitem[Mak04]{Mak2004}
\bysame, \emph{The multitopic $\omega$-category of all multitopic
  $\omega$-categories}, 2004,
  \dourl{http://www.math.mcgill.ca/makkai/mltomcat04/mltomcat04.pdf}.

\bibitem[Mak21]{makkai-blpc}
\bysame, \emph{Notions of identity for and in higher dimensional categories},
  2021, Talk at the \href{https://bohemianlpc.github.io/}{Bohemian Logical \&
  Philosophical Café}, recordings available at
  \url{https://www.youtube.com/watch?v=I03DPz6nE3Q},
  \url{https://www.youtube.com/watch?v=pSYi2xjznnU}.

\bibitem[ML98]{CWM}
S.~Mac~Lane, \emph{Categories for the working mathematician (graduate texts in
  mathematics)}, Springer, 1998.

\bibitem[MM13]{munchmaccagnoni:thesis}
Guillaume Munch-Maccagnoni, \emph{Syntax and models of a non-associative
  composition of programs and proofs}, Theses, {Universit{\'e} Paris-Diderot -
  Paris VII}, December 2013,
  \dourl{https://tel.archives-ouvertes.fr/tel-00918642}.

\bibitem[MSV21]{moser-sarazola-verdugo-AGT}
Lyne Moser, Maru Sarazola, and Paula Verdugo, \emph{A model structure for
  weakly horizontally invariant double categories}, To be published in
  \emph{Algebraic and Geometric Topology}, \doarxiv{2007.00588v5}, 2021.

\bibitem[MSV22]{moser-sarazola-verdugo-2022}
\bysame, \emph{A 2cat-inspired model structure for double categories}, Cahiers
  de topologie et géométrie différentielle catégorique \textbf{LXIII}
  (2022), 184--236,
  \dourl{http://cahierstgdc.com/wp-content/uploads/2022/04/MoserSarazolaVerdugo-LXIII-2.pdf}.

\bibitem[{nLa}22]{ntypes-cover}
{nLab authors}, \emph{$n$-types cover},
  \url{http://ncatlab.org/nlab/show/n-types+cover}, accessed June 14, 2022.

\bibitem[Pal18]{palmgren:LIPIcs:2018:10055}
Erik Palmgren, \emph{On equality of objects in categories in constructive type
  theory}, 23rd International Conference on Types for Proofs and Programs
  (TYPES 2017) (Dagstuhl, Germany) (Andreas Abel, Fredrik~Nordvall Forsberg,
  and Ambrus Kaposi, eds.), Leibniz International Proceedings in Informatics
  (LIPIcs), vol. 104, Schloss Dagstuhl--Leibniz-Zentrum fuer Informatik, 2018,
  \dodoi{10.4230/LIPIcs.TYPES.2017.7}, pp.~7:1--7:7.

\bibitem[Pal19]{palmgren2019categories}
Erik Palmgren, \emph{Categories with families and first-order logic with
  dependent sorts}, Annals of Pure and Applied Logic \textbf{170} (2019),
  no.~12, 102715, \dodoi{10.1016/j.apal.2019.102715}. \doarxiv{1605.01586}.

\bibitem[Pow02]{power:premonoidal}
John Power, \emph{Premonoidal categories as categories with algebraic
  structure}, Theoretical Computer Science \textbf{278} (2002), 303--321,
  \dodoi{10.1016/S0304-3975(00)00340-6}.

\bibitem[PR97]{pr:premonoidal}
John Power and Edmund Robinson, \emph{Premonoidal categories and notions of
  computation}, Math. Structures Comput. Sci. \textbf{7} (1997), no.~5,
  453--468, \dodoi{10.1017/S0960129597002375}. \MR{1486319 (99m:68115)}

\bibitem[PT99]{pt:freyd-cats}
John Power and Hayo Thielecke, \emph{Closed {Freyd}- and $\kappa$-categories},
  Automata, Languages and Programming (Berlin, Heidelberg) (Ji{\v{r}}{\'i}
  Wiedermann, Peter van Emde~Boas, and Mogens Nielsen, eds.), Springer Berlin
  Heidelberg, 1999, pp.~625--634.

\bibitem[Rez98]{rezk:sharp-maps}
Charles Rezk, \emph{Fibrations and homotopy colimits of simplicial sheaves},
  \doarxiv{math/9811038}, 1998.

\bibitem[Rez01]{rezk:css}
Charles Rezk, \emph{A model for the homotopy theory of homotopy theory}, Trans.
  Amer. Math. Soc. \textbf{353} (2001), no.~3, 973--1007 (electronic),
  \doarxiv{math.AT/9811037}. \MR{MR1804411 (2002a:55020)}

\bibitem[Rez10]{rezk:theta}
Charles Rezk, \emph{A cartesian presentation of weak $n$-categories}, Geometry
  and Topology \textbf{14} (2010), 521--571, \doarxiv{0901.3602}.

\bibitem[Rie11]{riehl:nwfs-model}
Emily Riehl, \emph{Algebraic model structures}, New York Journal of Mathematics
  \textbf{17} (2011), 173--231, \doarxiv{0910.2733}.

\bibitem[Shu08]{shulman:frbi}
Michael Shulman, \emph{Framed bicategories and monoidal fibrations}, Theory and
  Applications of Categories \textbf{20} (2008), no.~18, 650--738 (electronic),
  \doarxiv{0706.1286}.

\bibitem[Shu15]{shulman:reedy}
\bysame, \emph{Reedy categories and their generalizations},
  \doarxiv{1507.01065}, 2015.

\bibitem[Shu17]{shulman_univalence_ei}
Michael Shulman, \emph{Univalence for inverse {EI} diagrams}, Homology,
  Homotopy and Applications \textbf{19} (2017), no.~2, 219--249,
  \dodoi{10.4310/HHA.2017.v19.n2.a12}.

\bibitem[Shu18]{shulman:contravariance}
Michael Shulman, \emph{Contravariance through enrichment}, Theory and
  Applications of Categories \textbf{33} (2018), no.~5, 95--130,
  \doarxiv{1606.05058}.

\bibitem[Shu19]{shulman:univinj}
\bysame, \emph{All $(\infty,1)$-toposes have strict univalent universes},
  \doarxiv{1904.07004}, 2019.

\bibitem[Shu20]{shulman_logic_space}
\bysame, \emph{Homotopy type theory: the logic of space}, New Spaces in
  Mathematics and Physics: Volume I -- New Spaces in Mathematics (Gabriel
  Catren and Mathieu Anel, eds.), Cambridge University Press, 2020,
  \doarxiv{1703.03007}.

\bibitem[SL13]{sl:univprop-impure}
Sam Staton and Paul~Blain Levy, \emph{Universal properties of impure
  programming languages}, SIGPLAN Not. \textbf{48} (2013), no.~1, 179--192,
  \dodoi{/10.1145/2480359.2429091}.

\bibitem[Str81]{street:cauchy-enr}
Ross Street, \emph{Cauchy characterization of enriched categories}, Rend. Sem.
  Mat. Fis. Milano \textbf{51} (1981), 217--233 (1983), Reprinted as Repr.
  Theory Appl. Categ. 4:1--16, 2004.

\bibitem[Str87]{street:orientals}
Ross Street, \emph{The algebra of oriented simplexes}, J. Pure Appl. Algebra
  \textbf{49} (1987), 283--335.

\bibitem[Tri13]{tringali:plots}
Salvatore Tringali, \emph{Plots and their applications - {Part} {I}:
  Foundations}, \doarxiv{1311.3524v1}, 2013.

\bibitem[Tse16]{tsem-foliso}
Dimitris Tsementzis, \emph{First-order logic with isomorphism},
  \doarxiv{1603.03092}, 2016.

\bibitem[Tse17]{tsem-ufassf}
Dimitris Tsementzis, \emph{Univalent foundations as structuralist foundations},
  Synthese \textbf{194} (2017), no.~9, 3583--3617,
  \dodoi{10.1007/s11229-016-1109-x}.

\bibitem[{Uni}13]{HTT}
The {Univalent Foundations Program}, \emph{Homotopy type theory: Univalent
  foundations of mathematics}, \url{https://homotopytypetheory.org/book},
  Institute for Advanced Study, 2013.

\bibitem[Ver92]{verity-phd-tac}
Dominic Verity, \emph{Enriched categories, internal categories and change of
  base}, Ph.D. thesis, Cambridge University, 1992, Reprints in Theory and
  Applications of Categories, No. 20 (2011) pp 1-266.
  \dourl{http://www.tac.mta.ca/tac/reprints/articles/20/tr20abs.html}.

\bibitem[Voe15]{voevodsky_2015}
Vladimir Voevodsky, \emph{An experimental library of formalized mathematics
  based on the univalent foundations}, Mathematical Structures in Computer
  Science \textbf{25} (2015), no.~5, 1278--1294,
  \dodoi{10.1017/S0960129514000577}.

\bibitem[Voe16]{Csystems}
\bysame, \emph{Subsystems and regular quotients of {C}-systems}, A panorama of
  mathematics: pure and applied, Contemp. Math., vol. 658, Amer. Math. Soc.,
  Providence, RI, 2016, \dodoi{10.1090/conm/658/13124}, pp.~127--137.
  \MR{3475277}

\bibitem[Woo82]{wood:proarrows-i}
R.~J. Wood, \emph{Abstract proarrows. {I}}, Cahiers Topologie G\'eom.
  Diff\'erentielle \textbf{23} (1982), no.~3, 279--290. \MR{MR675339
  (84g:18022)}

\bibitem[Woo85]{wood:proarrows-ii}
\bysame, \emph{Proarrows. {II}}, Cahiers Topologie G\'eom. Diff\'erentielle
  Cat\'eg. \textbf{26} (1985), no.~2, 135--168. \MR{MR794752 (86i:18010)}

\bibitem[Yor14]{yorgey:thesis}
Brent~Abraham Yorgey, \emph{Combinatorial species and labelled structures},
  Ph.D. thesis, University of Pennsylvania, 2014.

\end{thebibliography}

\end{document}